\numberwithin{equation}{section}
\def\A{\mathbb A}
\def\R{\mathbb R}
\def\N{\mathbb N}
\def\T{\mathbb T}
\def\Z{\mathbb Z}
\def\det{\mathrm{det}}
\def\diam{\mathrm{diam}\:}
\newtheorem{proposition}{Proposition}[section]
\newtheorem{theorem}[proposition]{Theorem}
\newtheorem*{theorem*}{Theorem}
\newtheorem{coro}[proposition]{Corollary}
\newtheorem*{problem*}{Problem}
\newtheorem{definition}[proposition]{Definition}
\newtheorem{lemma}[proposition] {Lemma}
\newtheorem{corollary}[proposition]{Corollary}
\newtheorem{claim}[proposition]{Claim}
\newtheorem{theoalph}{Theorem}
\newtheorem{coralph}[theoalph]{Corollary}
\newtheorem{defalph}[theoalph]{Definition}
\theoremstyle{remark}
\newtheorem{remark}[proposition]{Remark}
\newtheorem{notation}[proposition]{Notation}
\DeclareTextFontCommand{\emph}{\em\bf}
\begin{document}
	\title{Birkhoff attractors of dissipative billiards}
	\author{Olga Bernardi, Anna Florio\thanks{Partially supported by the ANR project CoSyDy (ANR-CE40-0014) and the ANR project GALS (ANR-23-CE40-0001).}, Martin Leguil\thanks{Partially supported by the ANR project CoSyDy (ANR-CE40-0014) and the ANR project Padawan (ANR-21-CE40-0012-01).}}
	\date{\today}
	\maketitle
	
	\begin{abstract}
	\noindent We study the dynamics of dissipative billiard maps within planar convex domains. Such maps have a global attractor. We are interested in the topological and dynamical complexity of the attractor, in terms both of the geometry of the billiard table and of the strength of the dissipation. We focus on the study of an invariant subset of the attractor, the so-called Birkhoff attractor. On the one hand, we show that for a generic convex table with ``pinched'' curvature, the Birkhoff attractor is a normally contracted manifold when the dissipation is strong. On the other hand, for a mild dissipation, we prove that generically the Birkhoff attractor is complicated, both from the topological and the dynamical point of view.
	\end{abstract}
\section{Introduction} \label{intro}

\indent In the present paper, given a convex planar domain, we consider a variant of the usual billiard map in order to model some dissipative phenomena, which result in the existence of a global attractor. For such dissipative maps, Birkhoff \cite{Birkhoff1} introduced an invariant subset of the attractor, the so-called Birkhoff attractor; as we shall see, it is minimal in some sense among all invariant sets which separate phase-space, and it is essentially the place where interesting dynamics occurs. We investigate the properties of  the Birkhoff attractor, in particular, how they change as the dissipation  parameter is varied.

Loosely speaking, like for conservative billiards, we consider a massless particle moving with unit velocity inside the billiard table $\Omega \subset \R^2$ according to the usual law except at collisions with the boundary $\partial\Omega$, which we now assume to be {\it{inelastic}}. More precisely (see Fig. \ref{billliard}), 
\begin{itemize}
	\item[--] the motion happens along straight lines between two collisions;
	\item[--] at each orthogonal collision, the velocity vector is replaced with its opposite, while at a non-orthogonal collision, it is changed in such a way that the (unoriented) outgoing angle of reflection is strictly smaller than the incoming angle of incidence, both being measured with respect to the normal to $\partial \Omega$. 
\end{itemize}  
In other words, the reflected angle bends toward the inner normal at the incidence point. We refer to Definition \ref{definit diss bill} here below for more details, and to Section \ref{DBM} for further properties of these billiard maps. \\
~\newline
\indent Billiards exhibiting some form of dissipation have already been considered in previous works. To the best of the authors' knowledge, for outer billiards, dissipation was first introduced in \cite[Page 83]{Tabachnikov}. Subsequently, dynamical properties of dissipative polygonal outer billiards have been studied in \cite{DMGG15}. Regarding standard billiards, the paper \cite{MarPujSam} by Markarian-Pujals-Sambarino is dedicated to the study of limit sets of dissipative billiards (called here \textit{pinball billiards}) for various types of tables (close to a circle, with semi-dispersing walls, which possess some hyperbolicity...), through the existence of a dominated splitting. Motivated by these rigorous results, the paper \cite{ArrMarSan09} numerically investigates and characterizes the bifurcations of the resulting attractors as the contraction
parameter is varied. In \cite{MarKamPDC} the authors construct simple examples of non elastic convex billiards with dominated splitting and attractors supporting a rational or irrational rotation. Let us conclude this brief overview by mentioning some works about dissipative billiards for tables with flat walls. The paper \cite{ArrMarSan12} concentrates on inelastic billiard dynamics in an equilateral triangular table and studies --mainly numerically-- the structure of fractal strange attractors and their evolution as the contraction parameter changes. Finally, in a series of works \cite{DelMDuarteII,DelMDuarteI,DelMDuarteIII,Duarte}, Del Magno-Lopes Dias-Duarte-Gaiv\~{a}o-Pinheiro and Soufi investigate dissipative billiards within various types of polygonal tables; in particular, they study the structure of the nonwandering sets of such billiards, the existence of hyperbolic attractors, and prove the existence of countably many SRB measures on these attractors under suitable conditions. \\
~\newline
\indent Let us now move on to the formal definition of dissipative billiard maps considered in the present work. Let $\Omega\subset\R^2$ be a strictly convex domain with $C^k$ boundary $\partial \Omega$, $k\geq 2$. We say that $\Omega\subset\R^2$ is \textit{strongly convex} if --additionally-- its curvature never vanishes. We assume that the perimeter of $\partial \Omega$ is normalized to one. We fix an orientation of $\partial \Omega$ and parametrize $\partial \Omega$ in arclength by some map $\Upsilon\colon \T \to \R^2$, where $\T:=\R/\Z$. The phase-space is the set of pairs $(x,v)$ consisting of a point  $x$ on $\partial \Omega$, and a unit vector $v\in T_x \Omega$ pointing inward, or tangent to $\partial \Omega$. It is naturally identified with the cylinder $\A:=\T \times [-1,1]$; indeed, any point $(x,v)$ in  phase-space corresponds to a pair $(s,r)\in \T \times [-1,1]$, where $x=\Upsilon(s)\in \partial \Omega$, and $r = \sin \varphi$ is the sine of the oriented angle $\varphi \in \left[-\frac{\pi}{2},\frac{\pi}{2}\right]$ from the vector $v$ to the inward normal to $\partial \Omega$ at $x$. The usual conservative billiard map is then defined as
\begin{equation}\label{stnd billi}
	f=f_1\colon\left\{ 
	\begin{array}{rcl}
	\mathbb{A}& \to& \mathbb{A}\, , \\
		(s,r) &\mapsto&  f(s,r)=(s',r_1')\, ,
		\end{array}
		\right.
\end{equation}
where $\Upsilon(s')$ represents the point where the trajectory starting at $\Upsilon(s)$ along the direction making an angle $\arcsin r$ with the normal at $\Upsilon(s)$, 
hits the boundary again, and $r_1'$ is the sine of the reflected angle at $\Upsilon(s')$, according to the standard reflection law (angle of incidence = angle of reflection). Let us now fix a dissipation parameter $\lambda \in (0,1)$. 

\begin{figure}[h] \label{billliard}
		\centering   
	\includegraphics[scale=0.6]
	{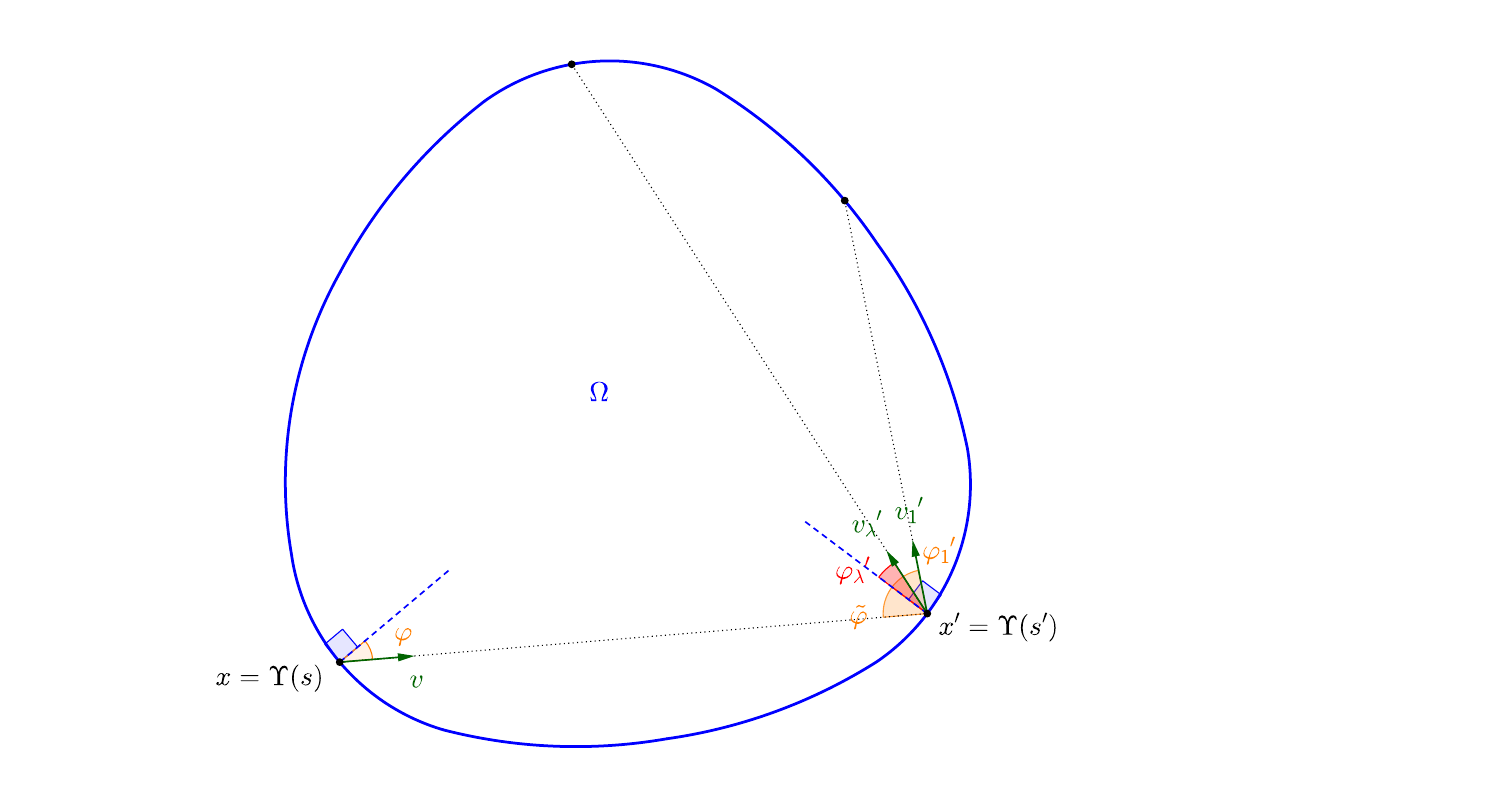}
	\caption{The standard billiard map and its dissipative counterpart.}
	\label{stdissbil}
\end{figure}
 
\begin{defalph}\label{definit diss bill}
Given a domain $\Omega$ as above, let us fix a $C^{k-1}$ function $\lambda\colon \A\to (0,1)$ such that  
\begin{equation}\label{cond lambda dis}
	0<\partial_r \lambda(s,r)r+\lambda(s,r)<1,\quad \forall\, (s,r)\in \mathrm{int}(A),
\end{equation}
	and let $\mathcal{H}_\lambda\colon (s,r)\mapsto(s,\lambda(s,r) r)$. The dissipative billiard map $f_\lambda$ associated to $\lambda$ is then defined as the map 
	\begin{equation*}
		f_\lambda:=\mathcal{H}_\lambda \circ f\colon\left\{ 
		\begin{array}{rcl}
			\mathbb{A}& \to& \mathbb{A}\, , \\
			(s,r) &\mapsto&  f_\lambda(s,r)=(s',r_\lambda')\, .
		\end{array}
		\right.
	\end{equation*}
	where
	$$
	r_\lambda'=r_\lambda'(s,r):=\lambda(s',r_1')r_1',
	$$ 
	for $r_1'=r_1'(s,r)$ as in \eqref{stnd billi}. 
	Note that for any $(s,r)\in \mathrm{int}(A)$, we have $\det D\mathcal{H}_\lambda(f_1(s,r))=\partial_r \lambda(s',r_1')r_1'+\lambda(s',r_1')$. In particular, since $\det Df_{1}(s,r)=1$, and by \eqref{cond lambda dis}, we obtain
	\begin{equation}\label{eq jac dfl}
		0 < \det Df_{\lambda}(s,r) =\det D\mathcal{H}_\lambda(f_1(s,r))< 1,\quad \forall\, (s,r) \in \mathrm{int}(\mathbb{A}).
	\end{equation}
\end{defalph}
	\noindent By \eqref{eq jac dfl}, the resulting billiard map $f_\lambda$ is no longer conservative; actually, it turns out to be a dissipative map in the sense of \cite{LeCalvez} (see Definition \ref{def diss map}). In particular, $f_{\lambda}$ contracts the standard area form $\omega = d r \wedge ds$. We refer to Section \ref{tre} for a few general facts about dissipative billiards. 

\begin{remark}
	For simplicity, in most of what follows, we will restrict ourselves to the case where $\lambda$ is actually a constant function, i.e., $\lambda \equiv \lambda_*\in (0,1)$. In that case, we will say that $f_\lambda$ has constant dissipation. Then, the dissipative billiard map associated to $\lambda$ simply becomes 
	\begin{equation*}
		f_\lambda\colon\left\{ 
		\begin{array}{rcl}
			\mathbb{A}& \to& \mathbb{A}\, , \\
			(s,r) &\mapsto&  f_\lambda(s,r)=(s',r_\lambda')\, ,
		\end{array}
		\right.
	\end{equation*}
	where
		$$
		r_\lambda'=r_\lambda'(s,r):= \lambda r_1'.
		$$ 
		In the following,  
		when it is clear from the context, we will 
		abbreviate $r_\lambda'=r'$. 
	
	For constant dissipation, there is a natural one-parameter family of dissipative billiard maps $\{f_\lambda\}_{\lambda\in (0,1)}$; in particular, we will study transitions in the behavior of the Birkhoff attractor as $\lambda$ changes. 
	However, the simplifying hypothesis that $\lambda$ is constant is not essential. 
	Indeed, as we will explain, most results shown in the present work hold under the more general assumption that $\lambda\colon \A \to (0,1)$ is a $C^1$ function as in Definition \ref{definit diss bill} that is close enough to being constant, namely $\|D\lambda\|\ll 1$. 
\end{remark} 
Due to the dissipative character of $f_{\lambda}$, there is a contraction of the phase-space which results in the existence of attractors.  Indeed, as $f_\lambda(\mathbb{A})\subset \mathrm{int}(\A)$,  there exists a {\it{global attractor}}
\begin{equation} \label{CON}
	\Lambda_\lambda^0:= \bigcap_{k \ge 0} f_\lambda^k(\A)\, .
\end{equation}
The attractor $\Lambda_\lambda^0$ is $f_\lambda$-invariant, non-empty, compact and connected. Moreover, $\Lambda_\lambda^0$ separates $\A$, i.e., $\A\setminus\Lambda_\lambda^0$ is the disjoint union of two connected open sets  $U_{\lambda},V_{\lambda}$. However, we can find a smaller invariant set --the so-called {\it{Birkhoff attractor}}-- by ``removing the hairs'' from $\Lambda_\lambda^0$ (see e.g. \cite[Page 91]{LeCalvez1990}). The Birkhoff attractor, here denoted $\Lambda_\lambda$, is then defined as
	\begin{equation}
		\Lambda_\lambda:= \overline{U}_{\lambda}\cap \overline{V}_{\lambda}\, .
	\end{equation}
We remark that, even if $\Lambda_\lambda$ is compact and $f_\lambda$-invariant, it may no longer be an attractor in the usual sense. Actually, $\Lambda_\lambda$ can also be characterized as the minimal element (with respect to inclusion) among all sets which are compact, connected, $f_\lambda$-invariant and separate $\A$. We refer to Section \ref{PRELIMINARI} for more details about the Birkhoff attractor and its properties. \\

 The notion of Birkhoff attractor was first introduced by Birkhoff in \cite{Birkhoff1}. In the framework of dissipative twist maps of the annulus, further properties of the Birkhoff attractor have been investigated by the works of Charpentier \cite{Charpentier} and of Le Calvez \cite{LeCalvez}. The Birkhoff attractor of the thickened Arnol'd family has been studied by Crovisier in \cite{CroPHD}. Different authors have derived criteria to guarantee the existence of chaotic behaviors for invariant annular continua, see \cite{BargeGillette,BargeGillette2,HockettHolmes,Koropecki,Casdagli,PasseggiPotrieSambarino,PasTal}. Recently, the notion of Birkhoff attractor has been generalised to higher dimensions for conformally symplectic maps of some symplectic manifolds by Arnaud, Humili\`ere and Viterbo, see \cite{ArnHumVit,Viterbo}. 
 \begin{notation}
 Fix some dissipative map $f\colon \A\to \A$, with a hyperbolic periodic point $p\in \A$, of period $q \geq 1$. If $p$ is of saddle type, we will denote its $1$-dimensional stable, resp. unstable manifold as
 \begin{align*}
 \mathcal{W}^{s}(p;f^q) &:= \{x\in\A :\ \lim_{n \to +\infty} d(f^{qn}(x),p)=0\}\,,\\
  \mathcal{W}^{u}(p;f^q) &:= \{x\in\A :\ \lim_{n \to +\infty} d(f^{-qn}(x),p)=0\}\,. 
 \end{align*}
 If $p$ is a sink, we will denote its $2$-dimensional stable manifold as
 \begin{equation*}
 	\mathcal{W}^{s}(p;f^q) := \{x\in\A :\ \lim_{n \to +\infty} d(f^{qn}(x),p)=0\}\,. 
 \end{equation*}
 In either case, for $*=s/u$, we will sometimes abbreviate $\mathcal{W}^{*}(\mathcal{O}_f(p)):=\cup_{i=0}^{q-1} \mathcal{W}^{*}(f^i(p);f^q)$, or simply $\mathcal{W}^{*}(\mathcal{O}_\lambda(p))$, when $f=f_\lambda$ is some dissipative billiard map. 
 \end{notation}  
\indent Considering the crucial role of elliptic tables in the conservative case, it is natural to start our study with dissipative billiard maps within ellipses. The detailed study of the corresponding dynamics is contained in Section \ref{quattro}, whose main result is the next theorem.

\begin{theoalph}\label{main theoreme ellipses}
Given an ellipse $\mathcal{E}$ of eccentricity $e\in (0,1)$, let $f_\lambda \colon \mathbb{A} \to f_\lambda(\mathbb{A})\subset \mathrm{int}(\A)$ be a dissipative billiard map within $\mathcal{E}$ in the sense of Definition  \ref{definit diss bill} (we allow non-constant dissipation). 
Then, the $2$-periodic orbits $\{H,f_\lambda(H)\}$ and  $\{E,f_\lambda(E)\}$ corresponding  to the trajectories along the major and minor axes are hyperbolic of saddle and sink type, respectively, and the Birkhoff attractor satisfies 
\begin{equation*} 
\Lambda_\lambda^0=\Lambda_\lambda=\mathcal{W}^u(\mathcal{O}_\lambda(H)) \cup \{E,f_\lambda(E)\} = \overline{\mathcal{W}^u(\mathcal{O}_\lambda(H))} \,.
\end{equation*}
Moreover, for $i=0,1$, $\mathcal{W}^u(f_\lambda^i(H);f_\lambda^2)\setminus \{f_\lambda^i(H)\}$ is the disjoint union of two branches $\mathscr{C}_i^1,\mathscr{C}_i^2$, with $\mathscr{C}_i^j\subset \mathcal{W}^s(f_\lambda^j(E);f_\lambda^2)$, $j=0,1$. 
\end{theoalph}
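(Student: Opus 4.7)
The plan is to combine the integrable structure of the conservative elliptic billiard, encoded by the Joachimsthal first integral, with the $r$-coordinate contraction of $\mathcal{H}_\lambda$ to produce a Lyapunov-type function for $f_\lambda$. This will force all $\omega$-limits to lie in the four perpendicular $2$-periodic points, from which the attractor structure and the landing of the unstable branches follow.

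First, I would carry out a local hyperbolic analysis at $H$ and $E$. Both periodic orbits lie on the axis $\{r = 0\}$, and condition~\eqref{cond lambda dis} specialized to $r = 0$ gives $D\mathcal{H}_\lambda(s, 0) = \mathrm{diag}(1, \lambda(s, 0))$, so the four perpendicular periodic points $H, f_\lambda(H), E, f_\lambda(E)$ are fixed by $\mathcal{H}_\lambda$ and $Df_\lambda^2$ at each of them is a product of two $Df$-factors interspersed with diagonal contractions. Using the reflection symmetries of the ellipse, the two $Df$-factors at $H$ (resp.~$E$) admit a common canonical form depending only on the chord length $2a$ (resp.~$2b$) and boundary curvature $a/b^2$ (resp.~$b/a^2$). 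A direct calculation then gives $\det Df_\lambda^2 < 1$ together with $|\mathrm{tr}(Df_\lambda^2(H))| > 1 + \det Df_\lambda^2(H)$ (hyperbolic saddle at $H$) and the opposite inequality with negative discriminant at $E$ (hyperbolic sink), for every $e \in (0,1)$ and every $\lambda$ as in Definition~\ref{definit diss bill}.

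Next I would exploit the Joachimsthal first integral $J(s, r) := \langle M v(s, r), v(s, r)\rangle$ of the conservative elliptic billiard, where $M = \mathrm{diag}(a^{-2}, b^{-2})$ and $v(s, r)$ is the outgoing unit velocity. Since $J \circ f = J$, the dissipative increment factors as $J \circ f_\lambda - J = (J \circ \mathcal{H}_\lambda - J) \circ f$. A direct computation in $(s, r)$ coordinates shows that this increment has constant sign on each of the four regions of $\A$ cut out by the conservative separatrix $\{J = J(H)\}$, so that $J$ or $-J$ serves as a strict Lyapunov function for $f_\lambda$ in each region outside $\{r = 0\}$. LaSalle's invariance principle then forces every $\omega$-limit of $f_\lambda$ into the $f_\lambda$-invariant part of $\{r = 0\}$, which is exactly $\{H, f_\lambda(H), E, f_\lambda(E)\}$. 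By the hyperbolic classification of the previous step, each forward orbit either converges to $\mathcal{O}_\lambda(E)$ or lies eventually in $\mathcal{W}^s(\mathcal{O}_\lambda(H))$. Applying the same argument to the inverse dynamics on the compact invariant set $\Lambda_\lambda^0 = \bigcap_{k \geq 0} f_\lambda^k(\A)$ (on which $f_\lambda^{-1}$ is defined, since $f_\lambda$ is a diffeomorphism onto its image) rules out all $\alpha$-limits except $\{H, f_\lambda(H)\}$, because $E$ and $f_\lambda(E)$ repel backward, and yields
\begin{equation*}
	\Lambda_\lambda^0 = \mathcal{W}^u(\mathcal{O}_\lambda(H)) \cup \{E, f_\lambda(E)\} = \overline{\mathcal{W}^u(\mathcal{O}_\lambda(H))}.
\end{equation*}

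Finally, each branch $\mathscr{C}_i^j$ of $\mathcal{W}^u(f_\lambda^i(H); f_\lambda^2) \setminus \{f_\lambda^i(H)\}$ is a connected $C^{k-1}$ curve whose $\omega$-limit, by the preceding paragraph, is a single sink in $\{E, f_\lambda(E)\}$; the $\mathbb{Z}/2 \times \mathbb{Z}/2$-symmetry of the ellipse, combined with transversality of the unstable eigendirections at $H$ to the major axis, forces the two branches at each $f_\lambda^i(H)$ to land in distinct sinks, giving after relabelling the inclusion $\mathscr{C}_i^j \subset \mathcal{W}^s(f_\lambda^j(E); f_\lambda^2)$. Equality $\Lambda_\lambda = \Lambda_\lambda^0$ follows because the closed unstable branches form loops winding around the cylinder $\A$, so $\Lambda_\lambda^0$ is already a connected $1$-dimensional continuum separating $\A$, and the minimal separating invariant continuum can be no smaller. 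The main obstacle is the Lyapunov step: in $(s, r)$ coordinates the integral $J$ contains a cross term $2 r \sqrt{1-r^2}\, \langle M n(s), \tau(s)\rangle$ whose coefficient changes sign with $s$, so $J$ is not globally monotone under $\mathcal{H}_\lambda$. One must carefully align the region-by-region sign analysis across the separatrix, or work instead with the monotone parameter of the confocal conic tangent to each trajectory, to obtain a genuinely global convergence statement on all of $\A \setminus \{r = 0\}$.
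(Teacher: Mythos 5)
Your overall architecture is the same as the paper's: a Lyapunov function extracted from the integrable structure of the conservative elliptic billiard forces all $\omega$-limits into the four perpendicular $2$-periodic points, a local analysis identifies the saddle and the sink, backward dynamics on $\Lambda_\lambda^0$ pins $\alpha$-limits to the saddle orbit, and minimality of the Birkhoff attractor inside the resulting essential topological circle gives $\Lambda_\lambda=\Lambda_\lambda^0$. However, the step you yourself single out as "the main obstacle" is a genuine gap, and it sits at the heart of the proof. The function $J(s,r)=\langle Mv,v\rangle$ with $M=\mathrm{diag}(a^{-2},b^{-2})$ is \emph{not} a first integral of the conservative elliptic billiard (for a whispering-gallery orbit the velocity direction sweeps out all directions, so $v_1^2/a^2+v_2^2/b^2$ is not conserved); the Joachimsthal integral is the \emph{linear} expression $L(x,v)=\langle Bx,v\rangle$, not the quadratic form in $v$. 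Consequently the factorization $J\circ f_\lambda-J=(J\circ\mathcal{H}_\lambda-J)\circ f$ fails at the first equality, and the subsequent sign analysis has nothing to stand on. The fallback you sketch --- a region-by-region monotonicity on the four components cut out by the conservative separatrix --- does not repair this: those regions are not $f_\lambda$-invariant, so a function that decreases only region-by-region does not feed into LaSalle. The correct fix is to use $L(x,v)=Bx\cdot v$ itself: a two-line computation using $B(x'-x)\cdot(x+x')=0$ and the reflection law shows $L\circ f_\lambda\le L$ globally, with equality exactly when the next bounce is perpendicular; then $L+L\circ f_\lambda$ has neutral set equal to the $2$-periodic points, and the LaSalle argument goes through globally with no separatrix bookkeeping.

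A secondary gap: to show the two unstable branches at each saddle land in \emph{distinct} sinks you invoke the $\mathbb{Z}/2\times\mathbb{Z}/2$ symmetry of the ellipse, but the theorem allows an arbitrary non-constant dissipation function $\lambda(s,r)$, which need not respect those symmetries, so $f_\lambda$ need not commute with the axial involutions. An argument that works in full generality uses only the dissipation: if both branches of $\mathcal{W}^u(H;f_\lambda^2)$ landed at the same sink, their union with the saddle and that sink would be an $f_\lambda^2$-invariant simple closed curve, which would bound (or cobound) an invariant region of positive, non-shrinking area --- impossible since $\det Df_\lambda<1$. Finally, note that your concluding minimality step implicitly assumes the branch-landing pattern already established (otherwise $\overline{\mathcal{W}^u(\mathcal{O}_\lambda(H))}$ need not be a single essential circle), so the order of the argument matters.
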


\begin{figure}[h] \label{ellisse figura}
	\centering
	\begin{overpic}[width=0.6\textwidth,tics=10]{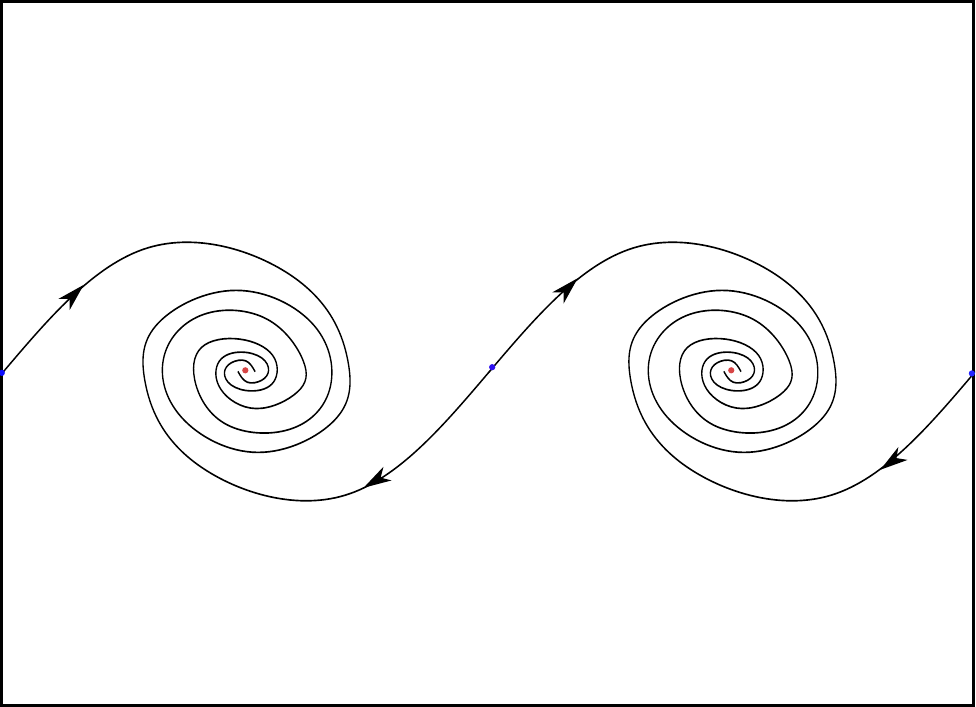}  
		\put(2,32){\color{blue}$H$}
		\put(49,30){\color{blue}$f_\lambda(H)$}
		\put(-6,35){\color{black}$\Lambda_\lambda$}
		\put(8,49){\color{black}$\mathcal{W}^u(H)$}
		\put(60,49){\color{black}$\mathcal{W}^u(f_\lambda(H))$}
		\put(25,30){\color{red}$E$}
		\put(75,30){\color{red}$f_\lambda(E)$}
	\end{overpic}
	\caption{Birkhoff attractor of a dissipative billiard map $f_\lambda$ within an ellipse of non-zero eccentricity when the dissipation is mild, i.e., $\lambda$ is close to $1$. }
	\label{figellipse}
\end{figure}

At the end of Section \ref{quattro}, we prove that the conclusion of Theorem \ref{main theoreme ellipses} remains true also for strictly convex domains whose boundary is sufficiently $C^2$-close to an ellipse, as stated in the next corollary. For simplicity, we state it in the case where the dissipation function $\lambda$ is a constant in $(0,1)$. 

\begin{coralph}\label{cor pert ellipse intro}
Let $\mathcal{E}$ be an ellipse of eccentricity $e\in (0,1)$. Let $\lambda \in (0,1)$. There exists $\epsilon=\epsilon(\mathcal{E},\lambda)>0$ such that for any $C^k$ ($k\geq 2$) domain $\Omega\subset \R^2$ 
 satisfying $d_{C^2}(\partial\Omega,\mathcal{E})<\epsilon$, 
	the following holds.
Denoting by $f_\lambda^\Omega$ the dissipative billiard map within $\Omega$, 
there exist $2$-periodic orbits $\{H,f_\lambda^\Omega(H)\}$ and $\{E,f_\lambda^\Omega(E)\}$  
		of saddle and sink type, respectively, and the Birkhoff attractor 
		is equal to 
		$$
		\Lambda_\lambda= 
		\mathcal{W}^u(\mathcal{O}_{f_\lambda^\Omega}(H)) \cup \{E,f_\lambda^\Omega(E)\}\, . 
		$$ 
		Moreover, the function $(\mathcal{E},\lambda)\mapsto \epsilon(\mathcal{E},\lambda)$ can be chosen to be continuous. 
\end{coralph}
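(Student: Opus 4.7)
The plan is to derive the corollary from Theorem A by a perturbation argument. The dissipative billiard map depends continuously in the $C^1$-topology on the boundary in the $C^2$-topology; hence $f_\lambda^\Omega$ is $C^1$-close to $f_\lambda^{\mathcal{E}}$ on $\mathbb{A}$ whenever $\partial\Omega$ is $C^2$-close to $\mathcal{E}$. Applying the implicit function theorem to the equation $f_\lambda^{\Omega,2}(x)=x$ at the hyperbolic $2$-periodic orbits given by Theorem A for the ellipse yields unique hyperbolic continuations, which --abusing notation-- we still denote $\{H,f_\lambda^\Omega(H)\}$ (saddle) and $\{E,f_\lambda^\Omega(E)\}$ (sink), together with $C^1$-continuity of their local invariant manifolds.

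Next, I would transfer the global heteroclinic structure. By Theorem A, for the ellipse each branch of $\mathcal{W}^u(H;f_\lambda^{\mathcal{E},2})\setminus\{H\}$ lies in the \emph{open} basin $\mathcal{W}^s(\mathcal{O}_{f_\lambda^{\mathcal{E}}}(E))$; by compactness of a fundamental domain and openness of the basin, such a branch enters a trapping neighborhood of $\mathcal{O}_{f_\lambda^{\mathcal{E}}}(E)$ in finitely many iterates. Persistence of the hyperbolic sink together with $C^1$-proximity of the maps transfers this finite-time capture to $f_\lambda^\Omega$: each branch of $\mathcal{W}^u(H;f_\lambda^{\Omega,2})\setminus\{H\}$ is therefore contained in $\mathcal{W}^s(\mathcal{O}_{f_\lambda^\Omega}(E))$ and accumulates on the sink orbit. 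Consequently
\begin{equation*}
\Gamma^\Omega:=\mathcal{W}^u(\mathcal{O}_{f_\lambda^\Omega}(H))\cup\{E,f_\lambda^\Omega(E)\}
\end{equation*}
is closed, connected and $f_\lambda^\Omega$-invariant, and is close in Hausdorff distance to the corresponding set $\Gamma^{\mathcal{E}}$ for the ellipse.

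To identify $\Gamma^\Omega$ with the Birkhoff attractor of $f_\lambda^\Omega$, I would exploit the fact that, by Theorem A, the \emph{global} attractor of $f_\lambda^{\mathcal{E}}$ coincides with $\overline{\mathcal{W}^u(\mathcal{O}_{f_\lambda^{\mathcal{E}}}(H))}=\Gamma^{\mathcal{E}}$. Upper semi-continuity of global attractors under $C^1$-perturbations then confines the global attractor $\Lambda_\lambda^0$ of $f_\lambda^\Omega$ to a prescribed neighborhood of $\Gamma^{\mathcal{E}}$; combined with the tautological inclusion $\Gamma^\Omega\subset\Lambda_\lambda^0$ and the Hausdorff-proximity of $\Gamma^\Omega$ to $\Gamma^{\mathcal{E}}$, this forces $\Lambda_\lambda^0=\Gamma^\Omega$. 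Since $\Gamma^\Omega$ is a continuum Hausdorff-close to the essential separating continuum $\Gamma^{\mathcal{E}}$, it still separates $\mathbb{A}$, and the characterization $\Lambda_\lambda=\overline{U}_\lambda\cap\overline{V}_\lambda$ yields $\Lambda_\lambda=\Gamma^\Omega$. Continuity of $\epsilon(\mathcal{E},\lambda)$ then follows because all quantitative ingredients (hyperbolicity constants at $H$ and $E$, size of the trapping neighborhood of the sink, modulus of continuity of the billiard map) depend continuously on $(\mathcal{E},\lambda)$, and the thresholds extracted from each step can be taken continuous in these data.

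The main difficulty is the passage from local $C^1$-closeness of invariant manifolds to the global topological identification of the Birkhoff attractor; the decisive point is that, on the ellipse, the global attractor already equals $\Gamma^{\mathcal{E}}$, so upper semi-continuity of global attractors shoulders the topological bookkeeping and prevents any spurious invariant structure from persisting in $\Lambda_\lambda$ after perturbation.
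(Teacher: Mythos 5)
Your first half (the hyperbolic continuation of the two $2$-periodic orbits, and the finite-time capture of each unstable branch of the saddle by a trapping neighborhood of the sink, transferred to the perturbed map by $C^1$-proximity) matches the paper's proof of Corollary \ref{coro proche ellipse}. The gaps are in the topological identification at the end, which is where the real work lies.

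First, the assertion that a continuum Hausdorff-close to a separating continuum still separates $\mathbb{A}$ is false: the arc $\{(s,0):\ s\in[0,1-\eta]\}$ is $\eta$-close in Hausdorff distance to the essential circle $\mathbb{T}\times\{0\}$ and does not separate the annulus. Separation of $\Gamma^\Omega=\overline{\mathcal{W}^u(\mathcal{O}_{f_\lambda^\Omega}(H))}$ is precisely the delicate point; the paper proves it by decomposing this set into two unstable arcs (each $C^0$-close to the corresponding arc for the ellipse, by continuous dependence of local invariant manifolds) together with two pieces contained in small contractible balls around the sinks, and checking that the resulting loop is homotopic to the Birkhoff attractor of the ellipse, hence essential. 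Some argument of this kind is unavoidable; Hausdorff proximity alone cannot deliver it. Second, even once $\Gamma^\Omega\in\mathcal{X}(f_\lambda^\Omega)$ is known, Proposition \ref{minimality} only yields $\Lambda_\lambda=\overline{U}_{\Gamma^\Omega}\cap\overline{V}_{\Gamma^\Omega}\subset\Gamma^\Omega$; the reverse inclusion is not a formal consequence of that characterization. The paper obtains it by showing, as in Claim \ref{claim local unstable}, that for $x$ in the local unstable manifold of $H$ the set $\Gamma^\Omega\setminus\{x\}$ does not separate $\mathbb{A}$, so that $x\in\Lambda_\lambda$ by Lemma \ref{lemma disconnected annulus}, and then using invariance and closedness of $\Lambda_\lambda$. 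Relatedly, your intermediate claim that upper semi-continuity of global attractors ``forces $\Lambda_\lambda^0=\Gamma^\Omega$'' does not follow: two nested compact invariant sets can both lie in a small neighborhood of $\Gamma^{\mathcal{E}}$ without being equal, since the perturbation may create additional invariant pieces inside that neighborhood. The paper is careful never to claim $\Lambda_\lambda^0=\Gamma^\Omega$ for the perturbed domain, and the corollary only concerns the Birkhoff attractor.
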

\indent The first examples of Birkhoff attractors for a dissipative billiard map $f_\lambda$ within a circle or an ellipse (see Fig. \ref{figellipse} illustrating the Birkhoff attractor in the case of an ellipse when the dissipation is mild, i.e., $\lambda$ close to $1$) naturally lead us to consider topological properties of Birkhoff attractors, in particular investigate when $\Lambda_{\lambda}$ is topologically as simple as it can be, namely, a graph. The main results in this direction are contained in  Section \ref{cinque}. Through the following definition, we introduce the class of billiards for which we can guarantee such a simple behavior of the Birkhoff attractor. 

\begin{defalph}\label{defi set d k}
	For any $k \geq 2$, let $\mathcal{D}^k$ be the set of strongly convex domains $\Omega$ with $C^k$ boundary $\partial \Omega$ such that, given a parametrization $\Upsilon \colon \mathbb{T} \to \R^2$ of $\partial \Omega$, the following geometric condition holds (see Fig. \ref{ConditionTauK}):
	\begin{equation}\label{condition geom norm hyp}
	\max_{s \in \T} \tau(s)\mathcal{K}(s)<-1\,,
	\end{equation}
	where $\mathcal{K}(s)<0$ denotes the curvature of $\partial \Omega$ at the point $\Upsilon(s)$,  and $\tau(s)>0$ is the length of the first segment of the $f_1$-orbit starting at $\Upsilon(s)$ perpendicularly to $\partial\Omega$. Alternatively, condition \eqref{condition geom norm hyp} amounts to asking that the centers of the osculating circles at the points of $\partial \Omega$ remain in $\Omega$. 
\end{defalph}

\begin{figure}[h]
	\centering
	\includegraphics[scale=0.35]
	{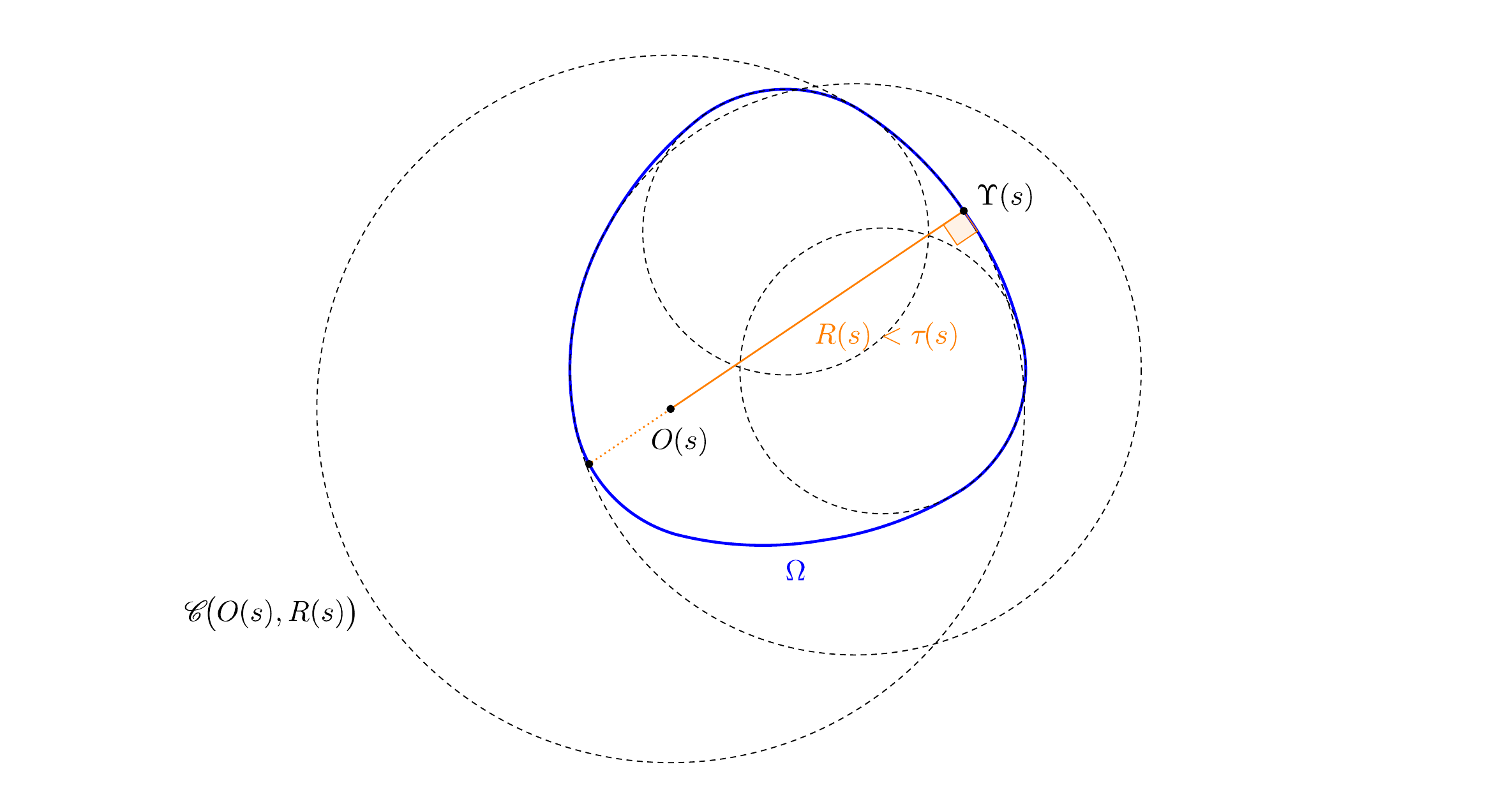}
	\caption{The geometric condition $\max_{s \in \T} \tau(s)\mathcal{K}(s)<-1$ in Definition \ref{defi set d k}. Here, $R(s):=-\frac{1}{\mathcal{K}(s)}$ is the radius of curvature, and  $\mathscr{C}\big(O(s),R(s)\big)$ is the osculating circle at $\Upsilon(s)$.}
	\label{ConditionTauK}
\end{figure} 

\noindent 
Clearly, the set $\mathcal{D}^k$ is $C^k$-open. More precisely, for any $\Omega \in \mathcal{D}^k$, there exists a $C^2$-open neighborhood $\mathcal{U}$ of $\Omega$ such that for any $C^k$ domain $\Omega'\in \mathcal{U}$, we have $\Omega'\in \mathcal{D}^k$. \\
The main result of Section \ref{cinque} is proving that the geometric condition contained in Definition \ref{defi set d k} together with strong dissipation ($\lambda$ close to $0$) suffice to guarantee that the corresponding Birkhoff attractor is a graph. Our result is also about the dynamics on the attractor and the graph's regularity. We use the notions of dominated splitting for an invariant set and of normally contracted manifold. We refer to Section \ref{cinque} for more details about such definitions.

\begin{theoalph}\label{main theoreme nh}
	Let $f_{\lambda} \colon \mathbb{A} \to f_\lambda(\mathbb{A})\subset \mathrm{int}(\A)$ be a dissipative billiard map with constant dissipation $\lambda \in (0,1)$ within some domain $\Omega \in \mathcal{D}^k$, $k\geq 2$. Then the following assertions hold.
	\begin{enumerate}
		\item There exists $\lambda(\Omega)\in (0,1)$ such that, for any $\lambda\in (0,\lambda(\Omega))$, the Birkhoff attractor $\Lambda_\lambda$ coincides with $\Lambda_\lambda^0$ and has a dominated splitting $E^s \oplus E^c$, where $E^s$ is uniformly contracted by $Df_\lambda$. Moreover, $\Lambda_\lambda$ is a normally contracted $C^{1}$ graph  over $\mathbb{T}\times\{0\}$ which is tangent to $E^c$.
		\item There exists $\lambda'(\Omega)\in(0,\lambda(\Omega))$ such that for any $\lambda \in (0,\lambda'(\Omega))$, $\Lambda_\lambda$ is actually a $C^{k-1}$ graph.
		\item There exists an open and dense set $\mathscr{U}$ of $C^k$ domains\footnote{We refer the reader e.g. to \cite[Section 2]{PintoC} for more details on the topology on the space of $C^k$ convex billiards.} such that if, moreover, $\Omega \in \mathcal{D}^k\cap\mathscr{U}$, then there exists $\lambda''(\Omega)\in (0,\lambda'(\Omega))$ such that, for any $\lambda \in (0,\lambda''(\Omega))$,   $\Lambda_\lambda$ is a $C^{k-1}$ normally contracted graph of rotation number $\frac 12$. Moreover, 
		\begin{equation*}
		\Lambda_\lambda=\bigcup_{i=1}^\ell \overline{\mathcal{W}^u(\mathcal{O}_\lambda(H_i))}\, ,
		\end{equation*}
for some finite collection $\{\mathcal{O}_\lambda(H_i)\}_{i=1,\cdots,\ell}=\{H_i,f_\lambda(H_i)\}_{i=1,\cdots,\ell}$ of $2$-periodic orbits of saddle type.  
	\end{enumerate}
\end{theoalph}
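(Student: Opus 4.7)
The strategy is to upgrade the hypothesis $\tau\mathcal{K}<-1$ of Definition \ref{defi set d k} into a dominated splitting on the global attractor. Writing $Df_1$ in Birkhoff coordinates $(s,r)$, one checks by direct computation that at any point of $\mathbb{T}\times\{0\}$ the condition $\tau(s)\mathcal{K}(s)<-1$ is equivalent to strict expansion of the horizontal vector $\partial_s$ by $Df_1$; by continuity, for a suitable $\alpha>0$ the horizontal cone $C^h:=\{a\partial_s+b\partial_r : |b|\leq \alpha|a|\}$ is strictly $Df_1$-invariant on some neighborhood $V$ of $\mathbb{T}\times\{0\}$. Since $D\mathcal{H}_\lambda=\operatorname{diag}(1,\,\lambda+r\partial_r\lambda)$ only flattens vectors toward the horizontal, $C^h$ is also $Df_\lambda$-invariant on $V$, and the complementary cone $C^v$ is $Df_\lambda^{-1}$-invariant. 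As $f_\lambda$ contracts $r$ by a factor of order $\lambda$ per bounce, $\Lambda_\lambda^0$ lies in an arbitrarily thin horizontal strip around $\mathbb{T}\times\{0\}$, hence inside $V$ once $\lambda$ is small enough. The invariant cone fields then yield a dominated splitting $E^s\oplus E^c$ on $\Lambda_\lambda^0$ with $E^s\subset C^v$ contracted at rate $O(\lambda)$ by $Df_\lambda$ and $E^c\subset C^h$ carrying expansion/contraction bounded uniformly in $\lambda$.

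\textbf{Step 2 (the graph and $\Lambda_\lambda=\Lambda_\lambda^0$).} The splitting is $r$-normally hyperbolic as soon as $\|Df_\lambda|_{E^s}\|\cdot\|(Df_\lambda|_{E^c})^{-1}\|^r<1$, which by Step 1 is automatic for $r=1$ when $\lambda<\lambda(\Omega)$ and for $r=k-1$ when $\lambda<\lambda'(\Omega)$. Hirsch--Pugh--Shub then provides a unique $f_\lambda$-invariant normally contracted $C^r$ manifold $\Gamma_\lambda$ tangent to $E^c$; transversality of $E^c$ to $\partial_r$ makes $\Gamma_\lambda$ a graph over $\mathbb{T}\times\{0\}$. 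Normal contraction endows $\Gamma_\lambda$ with an open basin of attraction, so by maximality of $\Lambda_\lambda^0$ we get $\Gamma_\lambda=\Lambda_\lambda^0$. Since graphs contain no ``hairs'' in the sense of \cite{LeCalvez1990}, this yields $\Lambda_\lambda=\Lambda_\lambda^0=\Gamma_\lambda$, establishing (1) and (2).

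\textbf{Step 3 (rotation number $1/2$ and saddle decomposition).} The restriction $g_\lambda:=f_\lambda|_{\Lambda_\lambda}$ is an orientation-preserving $C^{k-1}$ circle homeomorphism. Any double normal of $\partial\Omega$ is a genuine $2$-periodic orbit of $f_\lambda$ on $\{r=0\}\subset\Lambda_\lambda$, and at least two double normals exist by Lusternik--Schnirelmann/Birkhoff; the existence of such a period-$2$ orbit of the circle homeomorphism $g_\lambda$ then forces $\rho(g_\lambda)=1/2$. Define $\mathscr{U}\subset C^k$ as the open and dense set of strictly convex domains for which the Euclidean length functional on $\partial\Omega\times\partial\Omega\setminus\Delta$ is Morse (a standard transversality condition). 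For $\Omega\in\mathcal{D}^k\cap\mathscr{U}$ and $\lambda<\lambda''(\Omega)$, the $2$-periodic orbits of $f_\lambda$ on $\Lambda_\lambda$ are finitely many and hyperbolic; by the standard theory of circle homeomorphisms of rational rotation number $1/2$, they alternate between attracting and repelling along $\Lambda_\lambda$. The attractors become sinks of $f_\lambda$ (normal contraction adds no expansion), the repellers become the saddle orbits $\{H_i,f_\lambda(H_i)\}_{i=1,\ldots,\ell}$, and every non-periodic orbit of $g_\lambda$ is a heteroclinic arc from a saddle to an adjacent sink, yielding
\[\Lambda_\lambda=\bigcup_{i=1}^\ell \overline{\mathcal{W}^u(\mathcal{O}_\lambda(H_i))}.\]

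\textbf{Expected main obstacle.} The delicate step is the global identification $\Gamma_\lambda=\Lambda_\lambda^0$: Hirsch--Pugh--Shub produces $\Gamma_\lambda$ only as a local normally hyperbolic manifold inside $V$, so one has to control uniformly in $\lambda$ that $\Lambda_\lambda^0$ is contained in $V$ and that the basin of $\Gamma_\lambda$ exhausts a full neighborhood of the attractor. A secondary subtlety in Step 3 is the correspondence ``non-degenerate double normal $\leftrightarrow$ hyperbolic $2$-periodic orbit of $f_\lambda$ on $\Lambda_\lambda$'': the linearization $Df_\lambda^2$ at such an orbit mixes the conservative Hessian of length with the dissipation factor, and one needs the $E^c$-eigenvalue to stay bounded away from $1$ uniformly for $\lambda$ small so that hyperbolicity, and hence the alternation of saddles and sinks, persists.
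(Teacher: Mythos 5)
Your overall architecture (cone field $\Rightarrow$ dominated splitting on $\Lambda_\lambda^0$ $\Rightarrow$ graph tangent to $E^c$ $\Rightarrow$ higher regularity via Hirsch--Pugh--Shub $\Rightarrow$ circle dynamics of rotation number $\tfrac12$ for generic domains) is the same as the paper's. However, Step 1 contains a genuine error. The condition $\tau(s)\mathcal{K}(s)<-1$ is \emph{not} equivalent to expansion of the horizontal by $Df_1$, and the horizontal cone is \emph{not} $Df_1$-invariant on any neighborhood of $\T\times\{0\}$. From \eqref{matrice differ}, $Df_1(s,r)e_1$ has horizontal component $-(\tau\mathcal{K}+\nu)/\nu'$, which under the hypothesis is only bounded \emph{away from zero} (it equals $-(\tau\mathcal{K}+1)\in(0,1)$ whenever $\tau\mathcal{K}\in(-2,-1)$, a contraction), while its vertical component $\tau\mathcal{K}\mathcal{K}'+\mathcal{K}\nu'+\mathcal{K}'\nu$ is of order $1$ and does not shrink. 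A concrete counterexample to your claim: an ellipse of eccentricity $e\in(0,\tfrac{\sqrt2}{2})$ lies in $\mathcal{D}^\infty$, yet its minor-axis $2$-periodic orbit is \emph{elliptic} for $f_1$ ($k_{1,2}\in(0,1)$, complex eigenvalues), so $Df_1^2$ at that point on the zero section preserves no cone. The actual mechanism, as in Proposition \ref{normal hyp}, is that the bottom row of $Df_\lambda$ carries the factor $\lambda$, so the vertical component of the image of a cone vector is $O(\lambda)$ while the horizontal component stays $\geq c_0/2$; cone invariance therefore holds only for $\lambda<\lambda(\Omega)$ and is created by the dissipation, not merely ``helped'' by it. Your condition $\tau\mathcal{K}<-1$ is used precisely to prevent the horizontal image from degenerating (equivalently, to make the limit circle map $g_0'=-\tau\mathcal{K}-1$ positive), not to produce expansion.

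The second issue is the existence step you yourself flag as the ``main obstacle'': Hirsch--Pugh--Shub does not manufacture an invariant $C^1$ manifold out of a dominated splitting on a compact invariant set; it gives persistence and regularity of a manifold you already have. The paper resolves this by an explicit graph transform on the family $\mathscr{F}$ of $C^1$ graphs with tangents in the cone field, proving contraction in the sup-norm via the stable holonomy (Claim \ref{claim contr sup norm}), obtaining the $C^1$ limit graph $\gamma_\infty$ tangent to $E^c$, identifying it with both $\Lambda_\lambda^0$ and $\Lambda_\lambda$ (the latter via Lemma \ref{lemma disconnected annulus}, as you suggest), and only then invoking Theorem \ref{l normal contr man} to upgrade to $C^{k-1}$ under the stronger bunching of Corollary \ref{coro dom splitting}. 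Your Step 3 is essentially the paper's Theorem \ref{theo impr res norm contr}: the one point to make explicit is that the genericity needed is not only Morseness of the length functional but the conclusion of Corollary \ref{nondeg billiard in dk} (all $2$-periodic points are saddles or sinks for \emph{all} $\lambda\in[0,1)$, which for $\Omega\in\mathcal{D}^k$ follows from $k_{1,2}>0$ and $k_{1,2}\neq1$), together with the $C^1$-convergence $g_\lambda\to g_0$ guaranteeing that hyperbolicity of the $2$-periodic points of the circle map and the rotation number $\tfrac12$ persist for $\lambda<\lambda''(\Omega)$.
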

\begin{remark} 
	Given $k \geq 2$ and a domain $\Omega \in \mathcal{D}^k\cap\mathscr{U}$ as in the above statement, the conclusion of Theorem \ref{main theoreme nh} holds for general dissipative billiard maps $f_\lambda$ in the sense of Definition  \ref{definit diss bill}, provided that the dissipation function $\lambda \colon \A\to (0,1)$ satisfies $\|\lambda\|_{C^1}\ll 1$. See e.g. Remark \ref{argue variable} for more details. 
\end{remark}
\begin{remark}
	A consequence of Theorem \ref{main theoreme nh} 
	is that if $\partial \Omega$ is an ellipse $\mathcal{E}$ of eccentricity $e\in (0,\frac{\sqrt 2}{2})$, then $\mathcal{E} \in \mathcal{D}^\infty$ and, for any $\lambda \in (0,\lambda(\mathcal{E}))$, the corresponding Birkhoff attractor $\Lambda_\lambda$ is a normally contracted $C^1$ graph, which is actually $C^\infty$ except possibly at the $2$-periodic sink $\{E,f_\lambda(E)\}$, where $\Lambda_\lambda$ is tangent to the weak stable space of the sink, see Corollary \ref{coro dom splitting ellipse}. We will also see that, when the eccentricity $e$ is larger than $\frac{\sqrt 2}{2}$, then for $\lambda\in (0,1)$ small, the Birkhoff attractor $\Lambda_\lambda$ is no longer a graph (see Proposition \ref{propo not graph}). 
\end{remark}

We may wonder if 
Birkhoff attractors of dissipative billiards may exhibit more complex topological properties than the examples described in Sections \ref{quattro} and \ref{cinque}. In fact, following a result by M. Charpentier \cite{Charpentier}, a Birkhoff attractor for a dissipative diffeomorphism can be an ``indecomposable continuum'', and a sufficient condition for this to occur is that 
the Birkhoff attractor contains points with different rotation numbers. The aim of Section \ref{section different rho} is exploring this direction and discussing some topological and dynamical implications of such a phenomenon. \\
\noindent In order to state the main results, we need to premise the notion of upper and lower rotation number for $\Lambda_{\lambda}$. Denote by $V_\Lambda$ (resp. $U_\Lambda$) the connected component of $\A\setminus \Lambda_\lambda$ containing $\{(s,1)\in\A :\ s\in\T\}$ (resp. $\{(s,-1))\in\A :\ s\in\T\}$). For any $(s,r)\in\A$ the upper and lower vertical lines are, respectively
\[
V^+(s,r):=\{(s,y)\in\A :\ y\geq r\}\qquad \text{and}\qquad V^-(s,r):=\{(s,y)\in\A :\ y\leq r\}\, .
\]
Let us now define  
\[
\Lambda_\lambda^+:=\{ x\in\Lambda_\lambda :\  V^+(x)\setminus\{x\}\subset  V_\Lambda\}\qquad\text{and}\qquad\Lambda_\lambda^-:=\{ x\in\Lambda_\lambda :\  V^-(x)\setminus\{x\}\subset U_\Lambda\}\,.
\]
Given a covering $\pi\colon\R\times[-1,1]\to\T\times[-1,1]$ of $\A$, let $\tilde\Lambda_\lambda^\pm:=\pi^{-1}(\Lambda_\lambda^\pm)$. Moreover, let $\tilde \pi_1\colon\R\times[-1,1]\to\R$ be the first coordinate projection, and $F_\lambda\colon\R\times[-1,1]\to \R\times[-1,1]$ a continuous lift of $f_\lambda$. Then, by a result due to G.D. Birkhoff \cite{Birkhoff1} and rephrased in all details by P. Le Calvez  \cite{LeCalvez}, the sequence 
$$\left( \frac{\tilde \pi_1\circ F_\lambda^{-n}-\tilde \pi_1}{n} \right)_{n\in\N}$$
converges uniformly on $\tilde\Lambda_\lambda^+$ $(\text{resp.} \ \tilde\Lambda_\lambda^-)$ to a constant $\rho_\lambda^+$ $(\text{resp.} \ \rho_\lambda^-)$. The constants $\rho_\lambda^+$ and $\rho_\lambda^-$ -- called upper and lower rotation numbers -- do depend on the chosen lift, but not their difference.
We refer the reader to Subsection \ref{6 punto 1} for more details. \\
For the conservative billiard map $f=f_1$, let denote by $\mathscr{V}(f)$ the union of all $f$-invariant essential curves in $\A$. We recall that an instability region 
for $f$ is an open bounded connected component of $\A \setminus\mathscr{V}(f)$ that contains in its interior an essential curve. 
The main result of Section \ref{section different rho} is the next Theorem, whose proof is mainly based on an adaptation of some arguments of the work \cite{LeCalvez}. Let us recall that a continuum is a compact connected topological space. 

\begin{theoalph}\label{main theorem different nb rotation}
	Let $\Omega\subset\R^2$ be a strongly convex domain with $C^k$ boundary, $k\geq 2$. Let $f=f_1$ be the associated  conservative billiard map. If $f$ admits an instability region that contains the zero section $\mathbb{T}\times\{0\}$, then there exists $\lambda_0(\Omega)\in (0,1)$ such that, for any $\lambda\in[\lambda_0(\Omega),1)$, the Birkhoff attractor $\Lambda_\lambda$ of the dissipative billiard map $f_\lambda$ with constant dissipation $\lambda$ has $\rho_\lambda^+-\rho^-_\lambda>0$, with $\frac 12 \in (\rho_\lambda^-,\rho^+_\lambda)\mod \Z$.
\end{theoalph}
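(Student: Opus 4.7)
The plan is to use the Aubry--Mather structure inside the instability region of the conservative map $f_1$ to produce, for $\lambda$ close to $1$, periodic orbits of $f_\lambda$ lying in the Birkhoff attractor $\Lambda_\lambda$ with distinct rotation numbers straddling $1/2$, adapting Le Calvez's arguments from \cite{LeCalvez}.

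First, I would analyze the geometry inside the instability region $\mathcal{I}$ of $f_1$ that contains $\mathbb{T}\times\{0\}$. By definition, $\mathcal{I}$ is an open bounded connected component of $\A \setminus \mathscr{V}(f_1)$, so its boundary consists of two essential $f_1$-invariant curves $\gamma^+, \gamma^-$ above and below $\mathcal{I}$, with rotation numbers $\rho^+>\rho^-$. The strict convexity of $\Omega$ ensures the existence of a $2$-periodic orbit of $f_1$ along a diameter of $\Omega$; this orbit lies on $\mathbb{T}\times\{0\}\subset\mathcal{I}$ and has rotation number $1/2$. By the monotonicity of rotation numbers between invariant essential curves for the twist map $f_1$ (Birkhoff), we deduce $\rho^- < 1/2 < \rho^+$.

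Next, pick rationals $p^\pm/q^\pm$ with $\rho^- < p^-/q^- < 1/2 < p^+/q^+ < \rho^+$. By Birkhoff's theorem on periodic orbits inside instability regions of conservative twist maps, $f_1$ admits hyperbolic periodic orbits $\mathcal{O}_1^\pm$ of rotation numbers $p^\pm/q^\pm$; by choosing $p^\pm/q^\pm$ close enough to $\rho^\pm$, the orbits $\mathcal{O}_1^\pm$ can be positioned arbitrarily close to $\gamma^\pm$, in particular strictly above and below the zero section. By structural stability of hyperbolic orbits, for $\lambda\in[\lambda_0(\Omega),1)$ with $\lambda_0(\Omega)$ close enough to $1$, the map $f_\lambda$ admits hyperbolic continuations $\mathcal{O}_\lambda^\pm$ of $\mathcal{O}_1^\pm$; the mild dissipation keeps them of saddle type (the unstable eigenvalue persists; the stable one only becomes more contracting), preserves their rotation numbers $p^\pm/q^\pm$, and keeps them above/below $\mathbb{T}\times\{0\}$. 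The unstable manifolds $\mathcal{W}^u(\mathcal{O}_\lambda^\pm)$ are contained in the global attractor $\Lambda_\lambda^0$.

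The crux of the argument, and the main obstacle, is to establish the stronger inclusions $\mathcal{O}_\lambda^+\subset\Lambda_\lambda^+$ and $\mathcal{O}_\lambda^-\subset\Lambda_\lambda^-$, and not merely $\mathcal{O}_\lambda^\pm\subset\Lambda_\lambda$. In the conservative case, Aubry--Mather periodic orbits are variationally extremal, which forces them to lie on the boundary of any invariant annular continuum containing them. In the dissipative regime this variational characterization is lost, and one must rely on a topological argument in the spirit of Le Calvez: since $f_\lambda$ admits no essential invariant curve whose rotation number lies in the interval $(p^-/q^-,p^+/q^+)$ (the obstruction inherited from the instability region of $f_1$ persists by closeness for $\lambda$ near $1$), the Birkhoff attractor $\Lambda_\lambda$, being the minimal compact connected invariant set that separates $\A$, must meet both orbits $\mathcal{O}_\lambda^\pm$ on its upper and lower frontiers respectively. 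Once this localization is secured, the uniform convergence defining $\rho_\lambda^\pm$ on $\tilde\Lambda_\lambda^\pm$ gives $\rho_\lambda^+ = p^+/q^+$ and $\rho_\lambda^- = p^-/q^-$ for the appropriate lifts, so that $\rho_\lambda^+-\rho_\lambda^- >0$ and $\frac 12\in(\rho_\lambda^-,\rho_\lambda^+)\mod \Z$, as claimed.
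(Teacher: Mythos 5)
Your setup — the two boundary graphs $\Gamma_{\phi^\pm}$ of the instability region, their rotation numbers $\rho^-<\tfrac12<\rho^+$, and the goal of showing that $\Lambda_\lambda^\pm$ carry rotation numbers close to $\rho^\pm$ — is consistent with the paper, but the step you yourself identify as the crux contains a genuine gap, and the justification you sketch for it does not work. First, the claim that ``$f_\lambda$ admits no essential invariant curve with rotation number in $(p^-/q^-,p^+/q^+)$, by closeness to $f_1$'' is not a soft perturbation statement: $f_\lambda$ is not area-preserving, and its Birkhoff attractor can perfectly well be an essential invariant graph even when $f_1$ has a large instability region (this is exactly what happens for strong dissipation in Theorem~\ref{main theoreme nh}); mere $C^1$-proximity of $f_\lambda$ to $f_1$ does not exclude this, and excluding it is essentially the content of the theorem. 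Second, even granting the absence of such curves, the minimality of $\Lambda_\lambda$ in $\mathcal{X}(f_\lambda)$ only says that $\Lambda_\lambda$ is \emph{contained in} every compact connected invariant separating set; it gives no lower bound, so it does not force $\Lambda_\lambda$ to contain the continued orbits $\mathcal{O}_\lambda^\pm$, let alone to contain $\mathcal{O}_\lambda^+$ in the radially accessible set $\Lambda_\lambda^+$ and $\mathcal{O}_\lambda^-$ in $\Lambda_\lambda^-$. The results you implicitly lean on (Barge--Gillette, \cite[Proposition 14.2]{LeCalvez}) place periodic orbits of rotation number in $(\rho_\lambda^-,\rho_\lambda^+)$ inside $\Lambda_\lambda$ only \emph{after} one knows $\rho_\lambda^+-\rho_\lambda^->0$, so invoking them here would be circular.

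What closes this gap in the paper is a quantitative area argument rather than a topological one. Since $\det Df_\lambda=\lambda$ is close to $1$, the Lebesgue measure of the component $C_\lambda^-$ of $C\setminus\Lambda_\lambda$ below the attractor is controlled by a geometric series and cannot exceed (roughly) the conservative value $m(C^-)$; this forces $\Lambda_\lambda^-$ to contain at least one point with $r\le\min_s\phi^+(s)/2$. On the other hand, if $\Lambda_\lambda$ stayed a definite distance above $\Gamma_{\phi^-}$, Birkhoff's instability property ($\Gamma_{\phi^+}\subset\overline{\bigcup_{n}f^{-n}(H_M)}$ for a strip $H_M$ just above $\Gamma_{\phi^-}$), together with the backward propagation of radial accessibility under $f_\lambda^{-1}$ (Proposition~\ref{prop technique}), would place points of $\Lambda_\lambda^-$ near the top boundary over every short interval of $\T$; the one-sided Lipschitz (cotangent) bound on the graph $\mu_\lambda^-$ coming from the twist condition then contradicts the existence of the low point. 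This yields $\inf_s\bigl(\mu_\lambda^\pm(s)-\phi^\pm(s)\bigr)\to0$ as $\lambda\to1$, hence $\rho_\lambda^\pm\to\rho^\pm$ and the conclusion. To make your localization step rigorous you would need an argument of this quantitative type (or an equivalent substitute); the passage from ``no invariant curve'' to ``$\mathcal{O}_\lambda^\pm$ sit on the upper/lower frontiers of $\Lambda_\lambda$'' cannot be obtained from minimality alone.
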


\begin{remark} 
	Theorem \ref{main theorem different nb rotation} was stated for a dissipative map $f_\lambda$ with constant dissipation. However, the result holds for general dissipative billiard maps in the sense of Definition \ref{definit diss bill}, as long as the dissipation function $\lambda \colon \A \to (0,1)$ is sufficiently close to the constant function $1$ in the $C^1$-topology, i.e., $\|1-\lambda\|_{C^1}\ll 1$. See e.g. Proposition \ref{prop different rho} for more details in this direction. 
\end{remark}

\noindent The above theorem has several interesting consequences for $\Lambda_\lambda$. In fact, in the case where $\rho_\lambda^+-\rho^-_\lambda>0$, the corresponding Birkhoff attractor turns out to be complicated both topologically and dynamically. In particular, 
\begin{itemize}
		\item[--] $\Lambda_\lambda$ is an indecomposable continuum, i.e., it cannot be written as the union of two proper continua  (directly from the work \cite{Charpentier} of M. Charpentier; see also \cite{BargeGillette2}).
		\item[--] Each rational $\frac{p}{q}\in(\rho_\lambda^-,\rho^+_\lambda)$ is the rotation number of a periodic orbit in $\Lambda_\lambda$ (as a straightforward application of \cite{BargeGillette}).
		\item[--] If $x$ is a saddle periodic point of type $(p,q)$, with $\frac{p}{q}\in (\rho_\lambda^-,\rho^+_\lambda)$, then its unstable manifold $\mathcal{W}^u(x;f^q)$ satisfies $\overline{\mathcal{W}^u(x;f^q)}\subset \Lambda_\lambda$ (by \cite[Proposition 14.3]{LeCalvez}). 
		\item[--] There exists $n_0\in \mathbb{N}$ so that $f_\lambda^{n_0}$ has a rotational horseshoe (by \cite[Theorem A]{PasseggiPotrieSambarino}). In particular, $f_\lambda|_{\Lambda_\lambda}$  has positive topological entropy. 
	\end{itemize} 

Applying essentially \cite{PintoC}, we prove that the conclusions of Theorem \ref{main theorem different nb rotation} hold generically for $C^k$ strongly convex domains, $k \ge 3$, as explained in the next corollary.   
\begin{coralph}\label{coro main theorem different nb rotation}
For $k \geq 3$, there exists an open and dense subset $\mathscr{U}$ of the set of $C^k$ strongly convex domains such that for every $\Omega\in\mathscr{U}$, the following assertions hold. 
\begin{enumerate}
		\item\label{pppppppun} There exists $\lambda_0(\Omega)\in (0,1)$ such that, for any $\lambda\in[\lambda_0(\Omega),1)$, the Birkhoff attractor $\Lambda_\lambda$ of the corresponding dissipative billiard map $f_\lambda$ has $\rho_\lambda^+-\rho^-_\lambda>0$, with $\frac 12 \in (\rho_\lambda^-,\rho^+_\lambda)\mod \Z$. 
		\item There exists $\lambda_1(\Omega)\in [\lambda_0(\Omega),1)$ such that, for any $\lambda\in[\lambda_1(\Omega),1)$ 
	and any $2$-periodic point $p$ of saddle type\footnote{e.g., when the $2$-periodic orbit $\{p,f_\lambda(p)\}$ corresponds to a diameter of the table}, there exists a horseshoe $K_\lambda(p)\subset \Lambda_\lambda$ in the homoclinic class of the $2$-periodic point $p$. 
	\end{enumerate}
\end{coralph}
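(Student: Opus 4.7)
The plan is to combine Theorem~\ref{main theorem different nb rotation} with a Mather-type destruction of invariant essential curves through the zero section for $C^k$ convex billiards, available via \cite{PintoC}. The rotation-interval conclusion of~(1) will then be immediate from Theorem~\ref{main theorem different nb rotation}, and the horseshoe claim of~(2) will follow by combining the four bullet points stated after Theorem~\ref{main theorem different nb rotation} with a Kupka--Smale-type transversality argument.

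For~(1), I would define $\mathscr{U}$ as the set of $C^k$ strongly convex domains $\Omega$ for which $f_1^\Omega$ admits no essential invariant curve meeting $\T\times\{0\}$, so that $\T\times\{0\}$ lies in an instability region. Openness follows from the Birkhoff graph property for invariant essential curves and upper semicontinuity of $\mathscr{V}(f_1^\Omega)$ under $C^2$-perturbations of $\partial\Omega$. For density, I would invoke the Mather-type destruction of invariant curves of rotation number close to $\tfrac 12$ carried out in \cite{PintoC}: any $C^k$ strongly convex $\Omega_0$ is arbitrarily close, in $C^k$, to some $\Omega$ for which all essential invariant curves of $f_1^\Omega$ of rotation number in a neighborhood of $\tfrac 12$ are destroyed; by the Birkhoff graph theorem, any invariant essential curve crossing $\T\times\{0\}$ must have rotation number in such a neighborhood, so $\Omega\in\mathscr{U}$. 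Theorem~\ref{main theorem different nb rotation} then yields $\lambda_0(\Omega)\in (0,1)$ with the claimed rotation interval.

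For~(2), fix $\Omega\in\mathscr{U}$ and $\lambda\in [\lambda_0(\Omega),1)$ so that $\tfrac 12\in(\rho^-_\lambda,\rho^+_\lambda)$, and pick a saddle $2$-periodic point $p$. The third bullet point, \cite[Prop.~14.3]{LeCalvez}, yields $\overline{\mathcal W^u(p;f_\lambda^2)}\subset\Lambda_\lambda$. I would then refine $\mathscr{U}$ by intersecting with the open and dense set of $\Omega$ for which every $2$-periodic saddle of $f_\lambda$ has a transverse homoclinic intersection with its own stable manifold (a Kupka--Smale-type statement adapted to the family $\Omega\mapsto f_\lambda^\Omega$ via the boundary-perturbation framework of \cite{PintoC}; tangencies form a codimension-one condition that can be removed by local perturbations of $\partial\Omega$ supported near the footpoints of the finitely many $2$-periodic orbits, and hyperbolic persistence ensures that the resulting transversality is stable for $\lambda$ in a subinterval $[\lambda_1(\Omega),1)$). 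Smale--Birkhoff then produces a horseshoe $K_\lambda(p)$ in the homoclinic class of $p$, sitting inside $\overline{\mathcal W^u(p;f_\lambda^2)}\subset\Lambda_\lambda$.

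The main obstacle I anticipate is the perturbation theory itself: both the destruction of invariant curves in~(1) and the Kupka--Smale step in~(2) must be realized by deformations of the \emph{boundary} $\partial\Omega$ rather than of the map $f_\lambda$ directly, in which setting the available perturbative freedom is constrained; this is precisely the role played by \cite{PintoC}. A secondary technical point is the uniformity of the thresholds $\lambda_0(\Omega)$ and $\lambda_1(\Omega)$ under $C^k$-perturbations of $\Omega$, which should follow from continuity of $\Omega\mapsto f_\lambda^\Omega$ together with the structural stability of hyperbolic $2$-periodic orbits.
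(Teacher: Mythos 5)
Your overall architecture matches the paper's: point (1) is Theorem \ref{main theorem different nb rotation} plus a generic instability region containing $\T\times\{0\}$ obtained via \cite{PintoC}, and point (2) is transverse homoclinic intersections for the $2$-periodic saddles plus Smale--Birkhoff, with the horseshoe landing in $\overline{\mathcal{W}^u(\mathcal{O}_\lambda(p))}\subset\Lambda_\lambda$. However, two steps are not justified as written. First, in the density argument for (1) you assert that ``by the Birkhoff graph theorem, any invariant essential curve crossing $\T\times\{0\}$ must have rotation number in a neighborhood of $\tfrac12$''. The graph theorem gives no such bound. The fact the paper actually uses is the time-reversal symmetry $I\colon(s,r)\mapsto(s,-r)$: if $\Gamma=\mathrm{graph}(\phi)$ is invariant and crosses the zero section, then $I(\Gamma)$ is also invariant and $\Gamma\cap I(\Gamma)\subset\T\times\{0\}$ is nonempty and consists of $2$-periodic points, so $\rho(\Gamma)=\tfrac12$ exactly. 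This reduces the problem to excluding invariant essential curves of the single rational rotation number $\tfrac12$, which is precisely what the genericity statements of \cite{PintoC} (finitely many $2$-periodic orbits, all hyperbolic or elliptic, no saddle connections) deliver; destroying curves of a whole interval of rotation numbers, as your phrasing suggests, is not what \cite{PintoC} provides.

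Second, for (2) your ``Kupka--Smale-type'' refinement only addresses making homoclinic intersections transverse (a codimension-one condition); it does not produce the homoclinic intersections in the first place, and for billiards, where perturbations must be realized through the boundary, their existence is a nontrivial theorem rather than a routine local transversality argument. The paper invokes the Xia--Zhang result recorded as the third item of Theorem \ref{nondeg pc}: for an open and dense set of $C^k$ strongly convex domains, $k\geq 3$ (this is where the hypothesis $k\geq 3$ enters), each branch of $\mathcal{W}^{s}(p;f_1^2)\setminus\{p\}$ and $\mathcal{W}^{u}(p;f_1^2)\setminus\{p\}$ of every $2$-periodic saddle of the \emph{conservative} map already carries a transverse homoclinic point; this is then transferred to $f_\lambda$ for $\lambda$ close to $1$ by $C^1$-persistence of transverse homoclinic intersections (Corollary \ref{nondeg billiard in dk}), which yields the threshold $\lambda_1(\Omega)$. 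Once that is in place, your concluding step producing $K_\lambda(p)\subset H_\lambda(p)\subset\overline{\mathcal{W}^u(\mathcal{O}_\lambda(p))}\subset\Lambda_\lambda$ via Corollary \ref{coro fer a chev} (or \cite[Proposition 14.3]{LeCalvez}) is correct.
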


\noindent Finally, as a consequence of \cite{Mather82}, we emphasize that the conclusion of point \eqref{pppppppun} in Corollary \ref{coro main theorem different nb rotation} also holds for \emph{any} 
convex domain $\Omega$ whose boundary is $C^2$ and contains some point at which the curvature vanishes. In this case  (see Corollary \ref{coro mather}), for any $\epsilon>0$, there exists $\lambda_0=\lambda_0(\Omega,\epsilon)\in (0,1)$ such that for any $\lambda\in[\lambda_0,1)$, the corresponding Birkhoff attractor $\Lambda_\lambda$ has $\rho^+_\lambda-\rho^-_\lambda\in (1-\epsilon,1)$. \\


From the results presented above, it is possible to highlight a phase transition for Birkhof attractors of dissipative billiards when the parameter $\lambda$ varies. We would like to emphasize how the topological and dynamical properties of the Birkhoff attractor change in terms of the dissipative parameter. 
\noindent From Corollary \ref{cor pert ellipse intro} and Corollary \ref{coro main theorem different nb rotation}, we obtain the following conclusion.
\begin{coralph}\label{coralph ellipse}
	Let $\mathcal{E}$ be an ellipse of eccentricity $e\in (0,1)$. Fix $k \ge 3$. There exists an open and dense set $\mathscr{G}$ of $C^k$ domains such that the following holds. For any $0<\lambda_1<\lambda_2<1$ there exists $\delta>0$ so that if $\Omega\in\mathscr{G}$ and $d_{C^2}(\partial \Omega,\mathcal{E})<\delta$, then:
	\begin{enumerate}
		\item there are $2$-periodic orbits $\{H,f_\lambda(H)\}$ and $\{E,f_\lambda(E)\}$ of saddle and sink type, respectively;
		\item\label{pppppoint un cor}  for any $\lambda\in [\lambda_1,\lambda_2]$,	
		\begin{equation}\label{conclusion point un}\Lambda_\lambda=\overline{\mathcal{W}^u(\mathcal{O}_\lambda(H))}=\mathcal{W}^u(\mathcal{O}_\lambda(H))
			\cup \{E,f_\lambda(E)\} \, .
		\end{equation} 
		In particular, $\Lambda_\lambda$ has rotation number $\frac 12$;
		\item\label{final point phase tran} 
		there exists $\lambda_0 (\Omega)>\lambda_2$ such that, if $\lambda \in [\lambda_0(\Omega),1)$, then $\rho_\lambda^+-\rho^-_\lambda>0$, with $\frac 12 \in (\rho_\lambda^-,\rho^+_\lambda)\mod \Z$. In particular, $\Lambda_\lambda$ is an indecomposable continuum that contains a horseshoe. 
	\end{enumerate}
\end{coralph}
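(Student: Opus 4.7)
The plan is to synthesize Corollary \ref{cor pert ellipse intro} (which handles dissipation parameters bounded away from $0$ and $1$, for tables $C^2$-close to $\mathcal{E}$) with Corollary \ref{coro main theorem different nb rotation} (which handles dissipation close to $1$, for a generic table). Since the two $\lambda$-regimes covered, namely $[\lambda_1,\lambda_2]$ and $[\lambda_0(\Omega),1)$, can be made disjoint, the argument is essentially an assembly step: the only input to be verified is that a single $C^2$-neighborhood of $\mathcal{E}$ can be chosen to work simultaneously for every $\lambda\in[\lambda_1,\lambda_2]$.

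I would take $\mathscr{G}$ to be the open and dense subset $\mathscr{U}$ of $C^k$ strongly convex domains supplied by Corollary \ref{coro main theorem different nb rotation}. For any $\Omega\in\mathscr{G}$, that corollary yields some $\lambda_0(\Omega)\in(0,1)$ such that $\rho_\lambda^+-\rho_\lambda^->0$ and $\tfrac12\in(\rho_\lambda^-,\rho_\lambda^+)\mod\Z$ for every $\lambda\in[\lambda_0(\Omega),1)$. Since enlarging the left endpoint preserves this property, we may always assume $\lambda_0(\Omega)>\lambda_2$, which immediately gives (3); the indecomposable-continuum and horseshoe assertions then follow from the bulleted list of consequences of Theorem \ref{main theorem different nb rotation} together with point (2) of Corollary \ref{coro main theorem different nb rotation}.

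For (1) and (2), I would exploit the continuity of the map $(\mathcal{E},\lambda)\mapsto\epsilon(\mathcal{E},\lambda)$ asserted in the last sentence of Corollary \ref{cor pert ellipse intro}. As $\epsilon(\mathcal{E},\cdot)$ is a continuous positive function on the compact interval $[\lambda_1,\lambda_2]$, it attains a positive minimum
\[
\delta \; := \; \min_{\lambda\in[\lambda_1,\lambda_2]} \epsilon(\mathcal{E},\lambda) \; > \; 0,
\]
which I take as the $\delta$ of the statement. Then for every $\Omega\in\mathscr{G}$ with $d_{C^2}(\partial\Omega,\mathcal{E})<\delta$ and every $\lambda\in[\lambda_1,\lambda_2]$, Corollary \ref{cor pert ellipse intro} simultaneously furnishes the $2$-periodic saddle and sink orbits $\{H,f_\lambda(H)\}$, $\{E,f_\lambda(E)\}$ and the identification $\Lambda_\lambda=\mathcal{W}^u(\mathcal{O}_\lambda(H))\cup\{E,f_\lambda(E)\}$. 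The remaining equality $\overline{\mathcal{W}^u(\mathcal{O}_\lambda(H))}=\mathcal{W}^u(\mathcal{O}_\lambda(H))\cup\{E,f_\lambda(E)\}$ in \eqref{conclusion point un} is a consequence of the closedness of $\Lambda_\lambda$ together with the inclusion of the two branches of $\mathcal{W}^u(\mathcal{O}_\lambda(H))$ in $\mathcal{W}^s(\mathcal{O}_\lambda(E))$ -- a feature of Theorem \ref{main theoreme ellipses} that is preserved under the perturbative setup of Corollary \ref{cor pert ellipse intro} -- which forces $E$ and $f_\lambda(E)$ to be accumulation points of the unstable manifold. That $\Lambda_\lambda$ has rotation number $\tfrac12$ is then immediate, since every orbit in $\Lambda_\lambda$ is either one of the two $2$-periodic orbits of rotation number $\tfrac12$ or a heteroclinic connection between them. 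The only mild obstacle is the bookkeeping of these periodic orbits across the perturbation, which is already built into Corollary \ref{cor pert ellipse intro}, so no genuinely new dynamical ingredient is needed.
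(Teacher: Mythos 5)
Your proposal is correct and follows essentially the same route as the paper: take $\delta$ as the (positive) minimum of the continuous function $\lambda\mapsto\epsilon(\mathcal{E},\lambda)$ over the compact interval $[\lambda_1,\lambda_2]$ and apply Corollary \ref{cor pert ellipse intro} for each such $\lambda$ to get points (1)--(2), then invoke Corollary \ref{coro main theorem different nb rotation} (after enlarging $\lambda_0(\Omega)$ past $\lambda_2$) for point (3). The extra details you supply --- the closure identity in \eqref{conclusion point un} and the rotation number $\tfrac 12$ --- are already contained in Corollary \ref{coro proche ellipse}, so nothing new is needed there.
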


\begin{proof}
	Let $\lambda\in(0,1)\mapsto \epsilon(\mathcal{E},\lambda)>0$ be the continuous function given by Corollary \ref{cor pert ellipse intro}; let $$\delta:=\frac 12 \min_{\lambda \in [\lambda_1,\lambda_2]} \epsilon(\mathcal{E},\lambda)>0\, .$$ Fix $k \geq 2$. Then, for any $C^k$ domain $\Omega$ with $d_{C^2}(\partial \Omega,\mathcal{E})<\delta$, and for any $\lambda \in [\lambda_1,\lambda_2]$, we have $d_{C^2}(\partial\Omega,\mathcal{E})<\epsilon(\mathcal{E},\lambda)$, hence 
	\eqref{conclusion point un} holds for $2$-periodic orbits $\{H,f_\lambda(H)\}$ and $\{E,f_\lambda(E)\}$  
	of saddle and sink type, respectively. Now, by Corollary \ref{coro main theorem different nb rotation}, we obtain point \eqref{final point phase tran} of the Corollary if, moreover, $\Omega$ is chosen within an open and dense set of $C^k$ domains.  
\end{proof}

\noindent We refer to Fig. \ref{phtransition}. The phase transition described for perturbations of elliptic tables also holds for domains in $\mathcal{D}^k$, $k \geq 3$. The following corollary is a consequence of Theorem \ref{main theoreme nh} and Corollary \ref{coro main theorem different nb rotation}.
\begin{coralph}\label{coralph bis}
	For any $k\geq 3$, there exists an open and dense set $\mathscr{U}$ of $C^k$ domains such that for every $\Omega \in \mathcal{D}^k\cap\mathscr{U}$, the following holds. 
	There exist $0<\lambda''(\Omega)<\lambda_0(\Omega)<1$ such that:
	\begin{enumerate}
		\item\label{pppppoint un cor bisbis} if $\lambda\in (0,\lambda''(\Omega))$, then $\Lambda_\lambda$ is equal to the attractor $\Lambda_\lambda^0$ and it is a $C^{k-1}$ normally contracted graph of rotation number $\frac 12$ satisfying 
		$$
		\Lambda_\lambda=\bigcup_{i=1}^\ell \overline{\mathcal{W}^u(\mathcal{O}_\lambda(H_i))}\, ,
		$$
		for some finite collection $\{\mathcal{O}_\lambda(H_i)\}_{i=1,\cdots,\ell}=\{H_i,f_\lambda(H_i)\}_{i=1,\cdots,\ell}$ of $2$-periodic orbits of saddle type;
		\item if $\lambda\in [\lambda_0(\Omega),1)$, then $\rho_\lambda^+-\rho^-_\lambda>0$, with $\frac 12 \in (\rho_\lambda^-,\rho^+_\lambda)\mod \Z$. In particular, $\Lambda_\lambda$ is an indecomposable continuum that contains a horseshoe. 
	\end{enumerate}
\end{coralph}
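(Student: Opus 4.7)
The plan is to assemble the statement directly from the two generic results already at our disposal, namely Theorem~\ref{main theoreme nh} (strong dissipation regime) and Corollary~\ref{coro main theorem different nb rotation} (mild dissipation regime), by intersecting the two open and dense subsets of tables they produce.

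First, I would let $\mathscr{U}_{\mathrm{s}}$ denote the open and dense subset of $C^k$ strongly convex domains supplied by Theorem~\ref{main theoreme nh}(3), and $\mathscr{U}_{\mathrm{w}}$ the analogous open and dense subset furnished by Corollary~\ref{coro main theorem different nb rotation}. Their intersection $\mathscr{U} := \mathscr{U}_{\mathrm{s}} \cap \mathscr{U}_{\mathrm{w}}$ is still open and dense in the space of $C^k$ strongly convex domains. Because $\mathcal{D}^k$ is itself $C^2$-open, the set $\mathcal{D}^k \cap \mathscr{U}$ is open and dense within $\mathcal{D}^k$, and is the set on which we shall work. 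Fix any $\Omega \in \mathcal{D}^k \cap \mathscr{U}$.

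Point (1) is then read off directly from Theorem~\ref{main theoreme nh}(3): it produces a threshold $\lambda''(\Omega) \in (0,1)$ such that for every $\lambda \in (0,\lambda''(\Omega))$ the Birkhoff attractor $\Lambda_\lambda$ coincides with the global attractor $\Lambda_\lambda^0$, is a $C^{k-1}$ normally contracted graph of rotation number $\frac{1}{2}$, and decomposes as $\bigcup_{i=1}^\ell \overline{\mathcal{W}^u(\mathcal{O}_\lambda(H_i))}$ for finitely many $2$-periodic saddle orbits. For point (2), Corollary~\ref{coro main theorem different nb rotation} furnishes a threshold $\lambda_0(\Omega) \in (0,1)$ such that $\rho_\lambda^+ - \rho_\lambda^- > 0$ and $\frac{1}{2} \in (\rho_\lambda^-,\rho_\lambda^+) \pmod{\Z}$ for every $\lambda \in [\lambda_0(\Omega),1)$. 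If the ordering $\lambda''(\Omega) < \lambda_0(\Omega)$ is not automatic, I would simply replace $\lambda_0(\Omega)$ by $\max\{\lambda_0(\Omega),\lambda''(\Omega)+\eta\}$ for some small $\eta>0$: the conclusion of Corollary~\ref{coro main theorem different nb rotation} is monotone in this threshold, so it is preserved. The indecomposability of $\Lambda_\lambda$ and the existence of a horseshoe in $\Lambda_\lambda$ then follow, respectively, from Charpentier's theorem and from the Passeggi--Potrie--Sambarino result, both invoked in the list of consequences of $\rho_\lambda^+-\rho_\lambda^->0$ recalled immediately after Theorem~\ref{main theorem different nb rotation}.

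There is in fact no genuine obstacle to this proof: the entire content of the corollary is the juxtaposition of the two regimes on a common generic subclass of tables. The only mild point of care is the ordering of the two thresholds, which is handled as above, together with the observation that intersecting two open and dense sets of $C^k$ strongly convex domains still yields an open and dense set, so that combining the two generic statements does not shrink the class of admissible tables beyond what is claimed.
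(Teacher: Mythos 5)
Your proposal is correct and is essentially the paper's own argument: the paper states Corollary \ref{coralph bis} as a direct consequence of Theorem \ref{main theoreme nh}(3) and Corollary \ref{coro main theorem different nb rotation}, obtained exactly by intersecting the two open and dense sets of domains and taking the two thresholds from the respective regimes. Your extra care about the ordering of $\lambda''(\Omega)$ and $\lambda_0(\Omega)$, and the observation that the horseshoe already follows from $\rho_\lambda^+-\rho_\lambda^->0$ via Passeggi--Potrie--Sambarino, are both consistent with what the paper does.
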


\begin{figure}[h]
	\centering
	\includegraphics[scale=0.7]
	{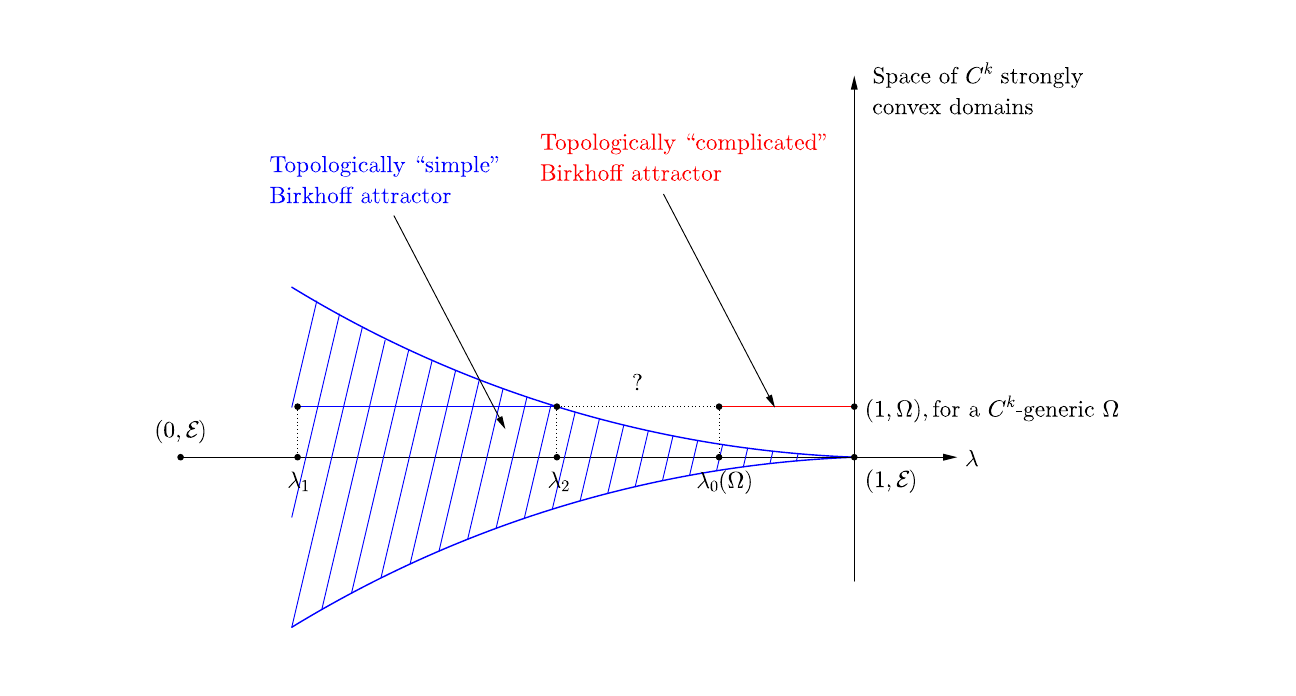}
	\caption{Phase transition for a $C^k$-generic domain near an ellipse of non-zero eccentricity, $k \geq 3$.}
	\label{phtransition}
\end{figure} 

\subsection*{Acknowledgements}

We are grateful to Marie-Claude Arnaud for some useful discussions and suggestions, and to Patrice Le Calvez for suggesting an alternative proof of Proposition \ref{prop different rho}.


\section{Dissipative maps and Birkhoff attractors} \label{PRELIMINARI}

Let $\T:=\R/\Z$ and $\A:=\T\times[-1,1]$, with coordinates $(s,r)\in\A$. Endow $\A$ with the standard 2-form $\omega=dr\wedge ds= d\alpha$, where $\alpha= rds$ is the standard Liouville 1-form. The form $\omega$ induces then an orientation on $\A$ and the Lebesgue measure denoted by $m$. For the following definition of dissipative map we refer to \cite[Page 245]{LeCalvez}.

\begin{definition}\label{def diss map}
	For two continuous maps $\phi^-,\phi^+\colon\T\to\R$ such that $\phi^-<\phi^+$, let us denote
	$$C := \{(s,r)\in\A :\ \phi^-(s)\leq r\leq \phi^+(s)\}.$$
A map $f\colon C\to\mathrm{int}(C)$ is a \emph{dissipative map} if:
	\begin{enumerate}
		\item $f$ is a homeomorphism of $C$ into its image, homotopic to the identity;
		\item $f$ is a $C^1$ diffeomorphism of $\mathrm{int}(C)$ into its image;
		\item\label{cond tttrois} there exists $\lambda\in(0,1)$ such that for any Borel set $Y\subset C$ it holds $m(f(Y))\leq \lambda m(Y)$.
	\end{enumerate}
\end{definition}

\noindent Observe that the following condition is equivalent to condition \ref{cond tttrois} in Definition \ref{def diss map}: 
\begin{itemize}
	\item[\textit{3'.}] there exists $\lambda\in(0,1)$ such that for every $(s,r)\in\mathrm{int}(C)$ it holds $0<\det Df(s,r)\leq\lambda$.
\end{itemize}

\noindent When considering dynamical systems with dissipation, it is natural to mention the notion of conformally symplectic maps, which is stated here in the more general framework of symplectic manifolds. See \cite{MaroSorr,ArnFej,ArnFloRoo}. 

\begin{definition}
	Let $(\mathcal{M},\omega)$ be a symplectic manifold. A diffeomorphism $f$ from $\mathcal{M}$ into its image (contained in $\mathcal{M}$) is \emph{conformally symplectic} if there exists a smooth function $a\colon\mathcal{M}\to\R$ such that $f^*\omega=a\omega$.
\end{definition}

\noindent As shown by Libermann in \cite[Page 210]{Libermann}, 
 if the dimension of $\mathcal{M}$ is greater than or equal to $4$, then the smooth function $a$ is a constant, called the \textit{conformality ratio}. In our case, i.e., for $\dim\mathcal{M}=2$, the function $a$ is not \textit{a priori} constant. This motivates the next definition.
\begin{definition}
	Let $(\mathcal{M},\omega)$ be a symplectic manifold with $\dim\mathcal{M}=2$. A diffeomorphism $f$ from $\mathcal{M}$ into its image (contained in $\mathcal{M}$) is constant conformally symplectic if there exists a constant $a>0$ such that $f^*\omega=a\omega$.
\end{definition}
\noindent Observe that conformally symplectic maps are stable under symplectic changes of coordinates. This is not true anymore for constant conformally symplectic maps.\\

\noindent Let $\mathcal{X}$ be the set of compact sets of $\mathrm{int}(C)$, endowed with the Hausdorff distance. We say that an element $X \in \mathcal{X}$ {\it{separates}} $C$ if its complement has two connected components: a lower one 
$U_{X}\supset \{(s,\phi^-(s))\in\A :\ s\in\T\}$, and an upper one $V_X\supset \{(s,\phi^+(s))\in\A:\ s\in\T\}$. 
We denote by $\mathcal{X}(f)\subset \mathcal{X}$ the 
subset of $\mathcal{X}$ consisting of the sets which are compact, connected, $f$-invariant and separate $C$. \\
\indent Let $f$ be a dissipative map according to Definition \ref{def diss map}. We can define the \emph{Birkhoff attractor} of $f$ as follows, see \cite{Birkhoff1}, \cite[Section 2]{LeCalvez} and \cite[Chapter 6]{LeCalvez1990} for an exhaustive treatment of the argument.
First, observe that, by the dissipative character of $f$ and as $f(C)\subset \mathrm{int}(C)$,  there exists an attractor
\begin{equation} \label{CON}
	\Lambda^0:= \bigcap_{k \ge 0} f^k(C)\,
	\end{equation}
which is $f$-invariant, non-empty, compact and connected. Moreover, it separates $C$, i.e., $C\setminus\Lambda^0$ is the disjoint union of two connected open sets  $U_{\Lambda^0},V_{\Lambda^0}$ as above. In other words, $\Lambda^0 \in \mathcal{X}(f)$. 
\begin{definition}\label{def Birk attr}
	Let $f$ a dissipative map and let $\Lambda^0$ be its corresponding attractor, see \eqref{CON}. Let $U_{\Lambda^0},V_{\Lambda^0}$ be the two connected components of $C\setminus\Lambda^0$. Then, the Birkhoff attractor $\Lambda$ is defined as 
	\begin{equation}
		\Lambda:= \overline{U}_{\Lambda^0}\cap \overline{V}_{\Lambda^0}\, .
	\end{equation}
	\end{definition}
\noindent The Birkhoff attractor of a dissipative map can also be described as the minimal set, with respect to the inclusion, among the elements of $\mathcal{X}(f)$, as we will state immediately.

\begin{proposition}{\cite[Proposition 6.1]{LeCalvez1990}}\label{minimality}
	\begin{enumerate}
		\item\label{minimality Lambda} The set $\mathcal{X}(f)$ contains a minimal element with respect to the inclusion, which is the Birkhoff attractor $\Lambda$ for $f$.
		\item\label{pt due} If $X \in \mathcal{X}(f)$ then $\Lambda =\overline{U}_X \cap \overline{V}_X$; in particular, $\Lambda = \mathrm{Fr}\, U_\Lambda =  \mathrm{Fr} \, V_\Lambda$.
	\end{enumerate}
\end{proposition}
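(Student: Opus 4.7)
\medskip

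\noindent\textbf{Plan.} The strategy is to prove both parts at once by showing that the intersection $\overline{U}_X \cap \overline{V}_X$ is in fact \emph{independent} of $X \in \mathcal{X}(f)$ and coincides with $\Lambda$. Specializing to $X = \Lambda^0$ recovers the definition of $\Lambda$, while the elementary inclusion $\mathrm{Fr}\, U_X \subset X$ gives
$$\Lambda \,=\, \overline{U}_X \cap \overline{V}_X \,\subset\, X,$$
which is exactly the minimality statement in part (1). Applying this to $X = \Lambda$ itself (once we have shown $\Lambda \in \mathcal{X}(f)$) then yields $\Lambda = \overline{U}_\Lambda \cap \overline{V}_\Lambda = \mathrm{Fr}\, U_\Lambda = \mathrm{Fr}\, V_\Lambda$, which is part (2).

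The heart of the argument is the equality of closures $\overline{U}_X = \overline{U}_{\Lambda^0}$ for every $X \in \mathcal{X}(f)$. First, $f$-invariance of $X$ gives $X = f^n(X) \subset f^n(C)$ for all $n \geq 0$, hence $X \subset \Lambda^0$; thus $U_{\Lambda^0}$ is a connected subset of $C \setminus X$ reaching the lower boundary, so $U_{\Lambda^0} \subset U_X$, and symmetrically $V_{\Lambda^0} \subset V_X$. To upgrade this to the reverse inclusion after closure, I pick $y \in U_X \setminus U_{\Lambda^0}$ and check that necessarily $y \in \Lambda^0 \setminus X$. Dissipativity yields $m(\Lambda^0)=0$, so $\Lambda^0$ has empty interior, and every neighborhood of $y$ meets $U_{\Lambda^0} \cup V_{\Lambda^0}$. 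Since $y$ lies in the open set $U_X$, which is disjoint from $V_X \supset V_{\Lambda^0}$, a small neighborhood of $y$ avoids $V_{\Lambda^0}$ entirely, forcing $y \in \overline{U}_{\Lambda^0}$. Hence $\overline{U}_X = \overline{U}_{\Lambda^0}$, symmetrically $\overline{V}_X = \overline{V}_{\Lambda^0}$, and so $\overline{U}_X \cap \overline{V}_X = \Lambda$.

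It remains to verify that $\Lambda \in \mathcal{X}(f)$. Compactness is immediate. For $f$-invariance, since $f$ is a homeomorphism onto its image and homotopic to the identity, it preserves the lower and upper sides, giving $f(\overline{U}_{\Lambda^0}) \subset \overline{U}_{\Lambda^0}$ and $f(\overline{V}_{\Lambda^0}) \subset \overline{V}_{\Lambda^0}$, whence $f(\Lambda) \subset \Lambda$; the reverse inclusion follows by lifting a limit of points in $U_{\Lambda^0}$ back through the homeomorphism $f^{-1}\colon f(C)\to C$ and using $f(\Lambda^0) = \Lambda^0$. To see that $\Lambda$ separates $C$, I argue by contradiction: a path $\gamma \colon [0,1]\to C\setminus \Lambda$ joining $U_{\Lambda^0}$ to $V_{\Lambda^0}$ would yield the decomposition $[0,1] = \gamma^{-1}(\overline{U}_{\Lambda^0}) \cup \gamma^{-1}(\overline{V}_{\Lambda^0})$ into two disjoint nonempty closed sets, since $\overline{U}_{\Lambda^0} \cup \overline{V}_{\Lambda^0} = C$ (again by emptiness of $\mathrm{int}\,\Lambda^0$), contradicting the connectedness of $[0,1]$.

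The main obstacle is the connectedness of $\Lambda$, which does not follow from the set-theoretic manipulations above. My plan is to invoke a classical continuum-theoretic fact in the annulus, used in the same way by Le~Calvez and by Charpentier: the common frontier of two disjoint, open, connected, essential subsets of the cylinder $C$ whose closures cover $C$ is necessarily a continuum. This applies here to $U_\Lambda$ and $V_\Lambda$, which are essential since each contains a boundary component of $C$, and whose closures cover $C$ by the preceding step. Together with the previous items, this yields $\Lambda \in \mathcal{X}(f)$ and completes the proof.
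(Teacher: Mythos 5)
The paper does not actually prove this proposition: it is quoted verbatim from Le~Calvez's book (\cite[Proposition 6.1]{LeCalvez1990}), so there is no in-paper argument to compare against. Judged on its own, your reconstruction is correct and follows the classical Birkhoff--Charpentier--Le~Calvez route. The decisive observation — that area contraction forces $m(\Lambda^0)=0$, hence $\mathrm{int}(\Lambda^0)=\emptyset$, hence $\overline{U}_{\Lambda^0}\cup\overline{V}_{\Lambda^0}=C$ — is exactly what makes $\overline{U}_X\cap\overline{V}_X$ independent of $X\in\mathcal{X}(f)$, and your chain $U_{\Lambda^0}\subset U_X$, $U_X\subset\overline{U}_{\Lambda^0}$ (via ``a neighborhood of $y\in U_X\cap\Lambda^0$ avoids $V_{\Lambda^0}$ but must meet $U_{\Lambda^0}\cup V_{\Lambda^0}$'') is airtight. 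Minimality then drops out of $\overline{U}_X\cap\overline{V}_X\subset C\setminus(U_X\cup V_X)=X$, and part (2) is the case $X=\Lambda$.

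Two places rest on facts you invoke rather than prove, and you should at least signpost why they hold. First, that $f$ does not swap $U_{\Lambda^0}$ and $V_{\Lambda^0}$: being homotopic to the identity alone does not suffice (the flip $(s,r)\mapsto(s,-r)$ is homotopic to the identity), so you also need orientation preservation, which here comes from $\det Df>0$; an orientation-preserving embedding homotopic to the identity cannot exchange the two ends. Second, the continuum-theoretic lemma giving connectedness of $\Lambda$: as you state it for the cylinder it needs the essentialness hypothesis (the cylinder is not unicoherent — two half-annuli meet in two arcs), and the clean justification is to collapse the two boundary circles of $C$ to points, obtain two disjoint connected open sets in $S^2$ whose closures cover $S^2$, and apply unicoherence of the sphere; the collapsed points lie in the interiors of the images, so connectedness of the common frontier pulls back to $\Lambda$. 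Finally, your path argument shows only that $C\setminus\Lambda$ is disconnected; that it has \emph{exactly} two components follows from the sandwich $U_{\Lambda^0}\subset C\setminus\overline{V}_{\Lambda^0}\subset\overline{U}_{\Lambda^0}$ (a set between a connected set and its closure is connected). These are all standard and easily filled, so I would call the proof complete in outline with three small points to flesh out, not a gap.
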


\noindent From the properties of the Birkhoff attractor, we can deduce the following two lemmas.

\begin{lemma}\label{lemma disconnected annulus}
	Let $X\in \mathcal{X}(f)$. Let $x\in X$ be such that $C\setminus(X\setminus\{x\})$ is connected. Then, $x\in\Lambda$.
	\end{lemma}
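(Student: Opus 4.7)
The plan is to leverage Proposition \ref{minimality}\eqref{pt due}, which tells us that for any $X\in\mathcal{X}(f)$, the Birkhoff attractor can be recovered as $\Lambda=\overline{U}_X\cap\overline{V}_X$, where $U_X,V_X$ are the two connected components of $C\setminus X$ (with $U_X$ the lower and $V_X$ the upper one). Thus it suffices to show that the distinguished point $x$ lies in the closure of both $U_X$ and $V_X$.

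First I would write $C\setminus(X\setminus\{x\})=U_X\sqcup V_X\sqcup\{x\}$, which is the key structural identity since $U_X$ and $V_X$ are open disjoint nonempty subsets of $C$ and $x\notin U_X\cup V_X$. The assumption is that this set is connected as a subspace of $C$.

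Next I would argue by contradiction: suppose, say, $x\notin\overline{U}_X$. Then there is an open neighborhood $W$ of $x$ in $C$ with $W\cap U_X=\emptyset$. I would then observe that, relative to $C\setminus(X\setminus\{x\})$:
\begin{itemize}
\item[--] $U_X$ is open (as it is open in $C$ and contained in $C\setminus(X\setminus\{x\})$);
\item[--] $V_X\cup\{x\}$ is open, because it equals $V_X\cup\bigl(W\cap(C\setminus(X\setminus\{x\}))\bigr)$, using that $W\cap(C\setminus(X\setminus\{x\}))=(W\cap V_X)\cup\{x\}$ by the choice of $W$.
\end{itemize}
These two sets are disjoint, nonempty, open in the subspace topology, and their union is all of $C\setminus(X\setminus\{x\})$, contradicting its connectedness. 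By the same argument with the roles of $U_X$ and $V_X$ reversed, $x\in\overline{V}_X$ as well. Applying Proposition \ref{minimality}\eqref{pt due} yields $x\in\overline{U}_X\cap\overline{V}_X=\Lambda$, as required.

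The only subtle point in the plan is the verification that $V_X\cup\{x\}$ is genuinely open in the relative topology of $C\setminus(X\setminus\{x\})$, which requires the choice of a neighborhood $W$ disjoint from $U_X$; there is no dynamical content, and one does not use $f$-invariance of $X$ beyond what is already packaged in Proposition \ref{minimality}.
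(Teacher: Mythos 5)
Your proof is correct, and it takes a route that differs in an interesting way from the paper's. The paper uses part (1) of Proposition \ref{minimality}: from $\Lambda\subset X$ one gets $U_X\subset U_\Lambda$ and $V_X\subset V_\Lambda$, and then, assuming $x\notin\Lambda$, the connected set $U_X\cup\{x\}\cup V_X$ must lie entirely in one of the two disjoint open sets $U_\Lambda$, $V_\Lambda$ — forcing, say, $V_X\subset U_\Lambda\cap V_\Lambda=\emptyset$, a contradiction. You instead stay entirely at the level of $X$ and use part (2) of the same proposition, $\Lambda=\overline{U}_X\cap\overline{V}_X$: the separation $U_X\sqcup\bigl(V_X\cup\{x\}\bigr)$ you build when $x\notin\overline{U}_X$ correctly contradicts connectedness of $U_X\sqcup V_X\sqcup\{x\}$ (your verification that $V_X\cup\{x\}$ is relatively open is the right subtle point, and it goes through), so $x$ lies in the closure of both components and hence in $\Lambda$. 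Both arguments are short connectedness arguments hinging on the same proposition; the paper's is marginally more economical once $\Lambda\subset X$ is in hand, while yours gives the more transparent geometric picture that $x$ is accessible from both sides of $X$, and has the mild advantage of never needing to compare the components of $C\setminus X$ with those of $C\setminus\Lambda$.
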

\begin{proof}
	By Proposition \ref{minimality}\eqref{minimality Lambda}, it holds that $\Lambda\subset X$; in particular, $U_X\subset U_\Lambda$ and $V_X\subset V_\Lambda$. By hypothesis, $U_X\cup V_X\cup\{x\}$ is connected.
	Assume by contradiction that $x\in X\setminus\Lambda\subset C\setminus\Lambda=U_\Lambda\cup V_\Lambda$. Without loss of generality, we can assume that $x\in U_\Lambda$. Therefore
	$$
	U_X\cup\{x\}\cup V_X\subset U_\Lambda\cup V_\Lambda\, .
	$$
The set on the left hand side is connected, while the set on the right hand side is the disjoint union of two open sets. We then conclude that the left term is contained in one of the two open sets. Without loss of generality, we suppose that $U_X\cup\{x\}\cup V_X\subset U_\Lambda$. In particular, $V_X\subset U_\Lambda\cap V_\Lambda=\emptyset$, which is the required contradiction.
\end{proof}
\begin{lemma} \label{X f invariant}
		Let $X\subset \mathrm{int}(\A)$ be $f$-invariant and such that $\Lambda \subset X$. 
		Then $X$ separates the annulus.
	\end{lemma}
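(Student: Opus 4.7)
The plan is to reduce the statement to the separation property already enjoyed by the Birkhoff attractor $\Lambda$. By Proposition \ref{minimality}, $\Lambda$ itself belongs to $\mathcal{X}(f)$, so it separates $\A$: we may write $\A \setminus \Lambda = U_\Lambda \sqcup V_\Lambda$, where $U_\Lambda, V_\Lambda$ are the two open connected components of the complement, the first containing the lower boundary circle $\T \times \{-1\}$ and the second containing the upper boundary circle $\T \times \{1\}$.

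Since $\Lambda \subset X$ and $X \subset \mathrm{int}(\A)$, we immediately get the inclusion $\A \setminus X \subset \A \setminus \Lambda = U_\Lambda \sqcup V_\Lambda$, and moreover both boundary circles of $\A$ still lie in $\A \setminus X$. Now I would argue by pure topology: any continuous path $\gamma\colon [0,1] \to \A \setminus X$ is, in particular, a path in the disjoint union $U_\Lambda \sqcup V_\Lambda$ of two open sets, hence its image is contained entirely in one of $U_\Lambda$ or $V_\Lambda$. In particular, no path in $\A \setminus X$ can join a point of the lower boundary $\T \times \{-1\}$ to a point of the upper boundary $\T \times \{1\}$. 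This shows that these two boundary circles lie in distinct connected components of $\A \setminus X$, which is exactly the separation property claimed.

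The main (quite minor) point to watch is that the argument only uses the inclusion $\Lambda \subset X$ and $X \subset \mathrm{int}(\A)$; the $f$-invariance of $X$ is not actually invoked. The reason it is stated in the hypothesis is presumably contextual: the lemma will be applied to $f$-invariant sets $X$ containing $\Lambda$ in order to insert them, along with the separation just established, into the framework of $\mathcal{X}(f)$ used earlier (for instance to appeal to the minimality statement of Proposition \ref{minimality}\eqref{minimality Lambda}). There is no real analytic obstacle in the proof itself.
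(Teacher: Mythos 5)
Your core argument --- that $\A\setminus X\subset \A\setminus\Lambda=U_\Lambda\sqcup V_\Lambda$, so any connected subset of $\A\setminus X$ lies entirely in $U_\Lambda$ or in $V_\Lambda$, whence $\T\times\{-1\}$ and $\T\times\{1\}$ lie in distinct components of $\A\setminus X$ --- is correct and is essentially the same observation the paper runs as a proof by contradiction. However, it proves something strictly weaker than what "separates" means in this paper. Recall the definition preceding $\mathcal{X}(f)$: $X$ separates $C$ if its complement has \emph{two} connected components, a lower one containing the lower boundary and an upper one containing the upper boundary. Your argument shows the complement has \emph{at least} two components with the boundary circles in different ones; it does not exclude additional components (for instance, if $X$ were $\Lambda$ together with an extra essential circle inside $U_\Lambda$, the complement would have three components, and your path argument would go through unchanged). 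The "exactly two" conclusion is what is actually needed downstream, e.g.\ in the proof of Theorem \ref{ellisse}, where the lemma is used to place $\overline{\mathcal{W}^u(\mathcal{O}_\lambda(H_1))}$ in $\mathcal{X}(f_\lambda)$.

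This is precisely where your closing remark goes wrong: the $f$-invariance of $X$ is not merely contextual, it is the hypothesis that rules out extra components. The paper handles this in a footnote: $\A\setminus X$ "cannot disconnect $\A$ into more than $2$ connected components, by the dissipative character of $f$." The idea is that a component of $\A\setminus X$ not containing a boundary circle would be a bounded open set whose backward iterates under the invariant dynamics stay disjoint from $X$ and uniformly bounded in measure, contradicting the area contraction of $f$ (the same mechanism used at the end of the proof of Proposition \ref{hyp:stable-unstable}). So you should either add this measure-theoretic step, or explicitly downgrade your conclusion to "the two boundary circles lie in different components," which is not the statement of the lemma.
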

	\begin{proof}
		Observe that, since $X\subset \mathrm{int}(\A)$, clearly both $\T\times\{1\}$ and $\T\times\{-1\}$ are contained in $\A\setminus X$. By contradiction, assume that $X$ does not separate the annulus, i.e. $\A\setminus X$ is connected.\footnote{Indeed, it cannot disconnect $\A$ into more than $2$ connected components, by the dissipative character of $f$.} Moreover, since $\Lambda \subset X$, it holds that $\A\setminus X$ is contained in either $U_{\Lambda}$ or $V_{\Lambda}$, the two connected components of $\A\setminus\Lambda$. Without loss of generality, assume that $\A\setminus X\subset U_{\Lambda
	}$. Thus, since $\T\times\{1\}\subset \A\setminus X$, we have $\T\times\{1\}\subset U_{\Lambda}\cap V_{\Lambda}=\emptyset$. This provides the required contradiction and concludes the proof.
	\end{proof}
	
Given a dissipative map $f \colon  C \to \mathrm{int} (C)$ and a hyperbolic periodic point $p\in C$ of $f$ of period $q \geq 1$, we will denote by $\mathcal{W}^s(p;f^q)$ its stable manifold and by $\mathcal{W}^u(p;f^q)$ its  unstable manifold for the iterate $f^q$. 
 Note that $p$ cannot be a source by the dissipative character of $f$. We will also sometimes abbreviate $\mathcal{W}^*(p; f^q)$ simply as $\mathcal{W}^*(p)$ when the information about the map and the period are clear from the context. 
Observe that, for any hyperbolic point in the attractor $\Lambda^0$, its unstable manifold is also contained in $\Lambda^0$. 
Similarly, the next proposition guarantees that for any periodic point of saddle type in the Birkhoff attractor $\Lambda$, certain branches of its stable/unstable manifold have to belong to the Birkhoff attractor. See also \cite[Section 14.3]{LeCalvez} for related results.

\begin{proposition}\label{hyp:stable-unstable} Let $f\colon C \to \mathrm{int} (C)$ be a dissipative map. Assume that $p \in \Lambda$ is a hyperbolic periodic point of saddle type, with period $q \geq 1$. 
	Then at least one the two branches of $\mathcal{W}^u(p;f^q)\setminus \{p\}$ is contained in $\Lambda$, unless $\Lambda$ locally coincides near $p$ with the local stable manifold $\mathcal{W}_{\mathrm{loc}}^s(p;f^q)$ of $p$; in the latter case, $\mathcal{W}^s(p;f^q)\subset \Lambda$. 
In particular, at least one of the following non-exclusive properties holds:
\begin{itemize}
	\item $\overline{\mathcal{W}^s(p;f^q)}\subset \Lambda$; 
	\item $\overline{\mathcal{W}^u(p;f^q)}\subset \Lambda$; 
	\item $\exists\,\delta>0$ such that $\Lambda\cap B(p,\delta)\setminus \{p\}=\mathcal{B}^u \cup \mathcal{B}^s$, where $\mathcal{B}^*$ is a branch of $\mathcal{W}_{\mathrm{loc}}^*(p;f^q)\setminus \{p\}$, $*=s,u$. 
\end{itemize} 
\end{proposition}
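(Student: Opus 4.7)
The plan is to analyze the local structure of $\Lambda$ in a small ball $B$ around the saddle $p$, using the minimality of $\Lambda$ (Proposition~\ref{minimality}) and the hyperbolic dynamics of $f^q$ at $p$. First I reduce to the case where $p$ is a fixed point by replacing $f$ with $f^q$, and if needed further with $f^{2q}$, so that the eigenvalues of $Df^q(p)$ are positive and each branch $\mathcal{B}^{u/s}_\pm$ of $\mathcal{W}^{u/s}_{\mathrm{loc}}(p;f^q)\setminus\{p\}$ is $f^q$-invariant. Since $f(\Lambda)=\Lambda$ (by minimality in $\mathcal{X}(f)$) and $f^q$ is a homeomorphism onto its image homotopic to the identity, the two complementary components $U_\Lambda,V_\Lambda$ of $C\setminus\Lambda$ are each $f^q$-invariant. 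By the Hartman--Grobman theorem, I work in local coordinates $(x,y)$ on $B=B(p,\delta)$ where $f^q(x,y)=(\mu x,\nu y)$ with $0<\nu<1<\mu$, so that the four branches $\mathcal{B}^{u/s}_\pm$ correspond to the positive and negative coordinate semi-axes.

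The central structural step is: for $\delta$ sufficiently small, $\Lambda\cap B$ does \emph{not} accumulate at $p$ inside any open sector $Q_{\pm\pm}$, unless a branch of $\mathcal{W}^u\setminus\{p\}$ is contained in $\Lambda$. Indeed, if $z_k=(x_k,y_k)\in\Lambda\cap Q_{++}$ satisfies $z_k\to p$, then for any $\delta'\in(0,\delta)$ the forward iterates $f^{qn}(z_k)=(\mu^n x_k,\nu^n y_k)$ exit $B(p,\delta')$ at a first time $n_k$ with $\mu^{n_k}x_k\in[\delta',\mu\delta']$; a direct computation gives
\begin{equation*}
\nu^{n_k}y_k=y_k\cdot(x_k/\delta')^{-\log\nu/\log\mu}\longrightarrow 0,
\end{equation*}
so along a subsequence the exit points converge to some $(x_\infty,0)\in\mathcal{B}^u_+$ with $x_\infty\in[\delta',\mu\delta']$. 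By closedness of $\Lambda$, all such limit points lie in $\Lambda$; letting $\delta'$ vary yields $\mathcal{B}^u_+\cap B\subset\Lambda$, whence $\mathcal{B}^u_+\subset\Lambda$ by forward iteration. Analogous conclusions in the other three sectors show that any accumulation of $\Lambda$ at $p$ inside an open sector forces a branch of $\mathcal{W}^u\setminus\{p\}$ into $\Lambda$.

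Assume now that neither branch of $\mathcal{W}^u\setminus\{p\}$ is contained in $\Lambda$. Then the above yields $\Lambda\cap B\subset\{p\}\cup\mathcal{W}^u_{\mathrm{loc}}\cup\mathcal{W}^s_{\mathrm{loc}}$ for $\delta$ small; moreover $\Lambda\cap\mathcal{W}^u_{\mathrm{loc}}$ is $f^q$-invariant, so a non-trivial sub-interval would iterate forward to a full branch, contradicting our hypothesis. By Proposition~\ref{minimality}\eqref{pt due}, $p\in\mathrm{Fr}\, U_\Lambda\cap\mathrm{Fr}\, V_\Lambda$, so $\Lambda\cap B$ must separate $B$; combined with a minimality argument ruling out extraneous isolated points of $\Lambda$ on $\mathcal{W}^u_{\mathrm{loc}}$, this forces $\Lambda\cap B=\mathcal{W}^s_{\mathrm{loc}}(p;f^q)\cap B$. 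Iterating $f^{-q}$ (which expands $\mathcal{W}^s_{\mathrm{loc}}$) then gives $\mathcal{W}^s(p;f^q)\subset\Lambda$, hence $\overline{\mathcal{W}^s(p;f^q)}\subset\Lambda$. If instead both $\mathcal{B}^u_\pm\subset\Lambda$, then $\overline{\mathcal{W}^u(p;f^q)}\subset\Lambda$; and if exactly one $\mathcal{B}^u_\pm$, say $\mathcal{B}^u_+$, is in $\Lambda$, separation forces at least one $\mathcal{B}^s_\pm$ to be in $\Lambda$ as well, the minimal such configuration yielding the cross $\Lambda\cap B\setminus\{p\}=\mathcal{B}^u_+\cup\mathcal{B}^s_\pm$ of the third alternative.

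The main obstacle is making rigorous the structural claim of equality $\Lambda\cap B=\mathcal{W}^s_{\mathrm{loc}}(p;f^q)\cap B$ (rather than mere containment), in particular excluding isolated orbital remnants of $\Lambda$ on $\mathcal{W}^u_{\mathrm{loc}}$ near $p$. This relies on a careful use of the minimality of $\Lambda$ via truncation arguments, together with the $f^q$-invariance of $U_\Lambda$ and $V_\Lambda$, in the spirit of the arguments in \cite[Section~14.3]{LeCalvez}.
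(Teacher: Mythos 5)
Your local analysis via Hartman--Grobman and the exit-time computation correctly reproduces the first half of the paper's argument: if $\Lambda$ does not locally coincide with $\mathcal{W}^s_{\mathrm{loc}}(p;f^q)$, then pushing forward points of $\Lambda$ that approach $p$ off the stable manifold produces points of $\Lambda\cap\mathcal{W}^u_{\mathrm{loc}}(p;f^q)$ distinct from $p$, and hence (by backward invariance and closedness) points of $\Lambda$ on one unstable branch accumulating at $p$. However, the step ``letting $\delta'$ vary yields $\mathcal{B}^u_+\cap B\subset\Lambda$, whence $\mathcal{B}^u_+\subset\Lambda$ by forward iteration'' does not follow: for each $\delta'$ your construction produces \emph{one} limit point $(x_\infty(\delta'),0)$ with $x_\infty(\delta')\in[\delta',\mu\delta']$, so varying $\delta'$ only gives a sequence of points of $\Lambda$ on $\mathcal{B}^u_+$ accumulating at $p$. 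A priori this could be a single backward orbit $\{(\mu^{-n}x_0,0)\}_{n\geq 0}$, whose forward iteration is still a discrete set and not the branch; nothing in your argument distinguishes the full branch from such a closed invariant subset. The same gap resurfaces, as you acknowledge, when you try to exclude ``isolated orbital remnants'' on $\mathcal{W}^u_{\mathrm{loc}}$ to obtain $\Lambda\cap B=\mathcal{W}^s_{\mathrm{loc}}\cap B$, and deferring it to unspecified ``truncation arguments'' leaves the proof incomplete at its central point.

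The missing ingredient is the area contraction of $f$ (condition 3 of Definition~\ref{def diss map}), which your proof never invokes. The paper closes exactly this gap as follows: if the branch were not entirely contained in $\Lambda$, then since $\Lambda$ is closed and meets $\mathcal{W}^u_{\mathrm{loc}}$ arbitrarily close to $p$, one finds an arc $\Gamma_0\subset\mathcal{W}^u_{\mathrm{loc}}$ with endpoints in $\Lambda$ and interior disjoint from $\Lambda$; choosing a bounded connected component $O_0$ of $C\setminus(\Lambda\cup\Gamma_0)$ with $\Gamma_0\subset\partial O_0$, the backward iterates $f^{-qn}(O_0)$ remain in a fixed bounded set (because $\Lambda$ is invariant and $f^{-qn}(\Gamma_0)$ stays in a neighborhood of the local unstable manifold), hence have uniformly bounded measure --- contradicting the fact that $f^{-1}$ expands area by a factor at least $\lambda^{-1}>1$. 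Some such use of dissipativity is unavoidable here, so the gap is essential rather than technical; your reduction to the local picture, the trichotomy at the end, and the appeal to minimality and separation are otherwise consistent with the paper's strategy.
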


\begin{proof}
Let $p \in \Lambda$ be a hyperbolic periodic point of saddle type, with period $q \geq 1$. 
We denote by $0<\mu_1<1<\mu_2$ the eigenvalues of $Df^q(p)$. By Hartman-Grobman Theorem, the dynamics can be linearized near $p$. Let then $U$ be an open neighborhood of $p$, and $\Psi\colon U\to\R^2$ be a homeomorphism  such that $\Psi(p)=0$ and $\Psi\circ f^q|_U= A\circ \Psi|_U$, where
\[
A=\begin{bmatrix}
	\mu_1 & 0 \\ 0 & \mu_2
\end{bmatrix}\, ,\quad 0<\mu_1<1<\mu_2\,.
\]
\indent Let us assume that $\Lambda$ does not locally coincide with the local stable manifold of $p$, that is, for any neighborhood $V$ of $p$ there exists a point in $\Lambda\setminus\mathcal{W}_{\mathrm{loc}}^s(p;f^q)$. Let $\epsilon>0$ be such that the ball of radius $\epsilon$ centred at the origin $B(0,\epsilon)$ is contained in $\Psi(U)$. Then there exists $n_0\in\N$ such that for every $n\geq n_0$ there exists a point $q_n\in\Lambda\cap U$ such that
\begin{itemize}
	\item $\Psi(q_n)=(x_n,y_n)\in B(0,\epsilon)$;
	\item $|y_n|\in(\frac{\epsilon}{2}\mu_2^{-n},\epsilon\mu_2^{-n})$.
\end{itemize}
Observe that each $q_n$ does not belong to the local stable manifold of $p$ and that$f^{qk}(q_n)\in U$ for all $0\leq k\leq n$.
Thus, we can consider the sequence $(f^{qn}(q_n))_n$ of points in $\Lambda$. Denote $\Psi\circ f^{qn}(q_n)=:(X_n,Y_n)$; in particular, for every $n \in \mathbb{N}$ it holds $|X_n|<\epsilon\mu_1^n$ and $|Y_n|\in(\frac{\epsilon}{2},\epsilon)$. Therefore --up to passing to a 
subsequence-- the sequence $(f^{qn}(q_n))_n$ converges to a point $Q\neq p$ belonging to the local unstable manifold of $p$. Since $\Lambda$ is closed and invariant, we conclude that
\begin{equation}\label{points dans lambda un}
	f^{qn}(Q)\in(\Lambda\cap \mathcal{W}^u(p;f^q))\setminus \{p\},\quad \forall\, n \in \mathbb{Z}.
\end{equation}

Let us now prove that the branch $\mathcal{B}^u(p,Q)$ of $\mathcal{W}^u_{\mathrm{loc}}(p;f^q)\setminus \{p\}$ containing $Q$ is contained in $\Lambda$, and therefore, by the invariance of $\Lambda$, that the whole branch of the unstable manifold of $p$ also does. Assume this is not the case: since $\Lambda$ is closed, and since, by \eqref{points dans lambda un}, in any neighborhood of $p$ there exists a point in $\Lambda\cap \mathcal{W}^u_{\mathrm{loc}}(p;f^q)$, for every neighborhood $V\subset U$ of $p$, we can find two points $q_1=\Psi^{-1}(0,a_1),q_2=\Psi^{-1}(0,a_2)\in (\Lambda\cap U)\setminus\{p\}$ belonging to the local unstable manifold of $p$ such that 
\[
\Gamma_0:=\Psi^{-1}(\{ 0 \}\times (a_1,a_2))\subset V,\quad \text{and}\quad \Gamma_0\cap\Lambda=\emptyset.
\] 
Let $\mathcal{U}$ be a bounded neighborhood of $\Lambda$ such that $f^{-1}|_{\mathcal{U}}$ is a diffeomorphism onto its image. Up to considering $V$ small enough, we have that
\[
\bigcup_{n \in \mathbb{N}}f^{-qn}\big(\Lambda\cup \Gamma_0\big)\subset \mathcal{U}\,.
\]
Let $O_0\subset\mathcal{U}$ be a bounded open set among the connected components of $C\setminus(\Lambda\cup \Gamma_0)$, with $\Gamma_0 \subset\partial O_0$. 
Then, the family $(f^{-qn}(O_0))_{n \in \mathbb{N}}$ is uniformly bounded in measure, which contradicts the dissipative character of $f$.
\end{proof}

\noindent With the notations of Proposition \ref{hyp:stable-unstable}, we can show that, in some cases,  $\Lambda$ contains both the stable manifold $\mathcal{W}^s(p;f^q)$ and the unstable manifold $\mathcal{W}^u(p;f^q)$ of the point $p$. In the following, for $*=s,u$, we denote $\mathcal{W}^*(\mathcal{O}_f(p))=\bigcup_{k=0}^{q-1}\mathcal{W}^*(f^k(p);f^q)$. 
\begin{lemma}\label{remark intersection} 
Let $p$ be a periodic hyperbolic point belonging to $\Lambda$. Let us assume that one connected component $\mathcal{B}$ of $\mathcal{W}^s(p;f^q)\setminus\{p\}$ belongs to $\Lambda$, and that $\mathcal{B}$ intersects  
$\mathcal{W}^u(p;f^q)$ 
transversally. 
Then $$\overline{\mathcal{W}^u(\mathcal{O}_f(p))\cup\mathcal{W}^s(\mathcal{O}_f(p))}\subset\Lambda\, . $$ 
\end{lemma}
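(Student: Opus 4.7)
The strategy combines an adaptation of the dissipativity argument in the proof of Proposition~\ref{hyp:stable-unstable} with the classical $\lambda$-lemma (inclination lemma) at the transverse homoclinic point. Let $z \in \mathcal{B}\cap\mathcal{W}^u(p;f^q)$ denote the given transverse intersection point, and let $\mathcal{W}^{u,\star}$ be the branch of $\mathcal{W}^u(p;f^q)\setminus\{p\}$ containing $z$. Since $\mathcal{B}\subset\Lambda$ is $f^q$-invariant, the entire $f^q$-orbit of $z$ lies in $\mathcal{B}\cap\mathcal{W}^u(p;f^q)\subset\Lambda$; in particular, the backward iterates $f^{-qn}(z)$ converge to $p$ along $\mathcal{W}^{u,\star}$, so $\Lambda\cap\mathcal{W}^{u,\star}$ contains a sequence accumulating at $p$.

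The first key step is to show that $\mathcal{W}^{u,\star}\subset\Lambda$. Otherwise, the preceding accumulation together with the closedness of $\Lambda$ produces a bounded open arc $\Gamma_0\subset\mathcal{W}^{u,\star}\setminus\Lambda$ whose endpoints lie in $\Lambda\cap\mathcal{W}^{u,\star}$, and which is contained in an arbitrarily small neighborhood $V$ of $p$. Mimicking the argument in the proof of Proposition~\ref{hyp:stable-unstable}, let $O_0$ be a bounded connected component of $C\setminus(\Lambda\cup\Gamma_0)$ with $\Gamma_0\subset\partial O_0$; for $V$ small enough, $\bigcup_{n\geq 0} f^{-qn}(\Lambda\cup\Gamma_0)\subset\mathcal{U}$, where $\mathcal{U}$ is a bounded neighborhood of $\Lambda$ on which $f^{-1}$ is a diffeomorphism. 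The sets $f^{-qn}(O_0)$ then remain uniformly bounded in measure, which contradicts the exponential growth of $m(f^{-qn}(O_0))$ forced by the dissipative character of $f$. Hence $\mathcal{W}^{u,\star}\subset\Lambda$.

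The second step upgrades this one-branch result to both full invariant manifolds via the $\lambda$-lemma. Choose a small $C^1$-arc $\gamma\subset\mathcal{W}^{u,\star}\cap\Lambda$ around $z$ (transverse to $\mathcal{W}^s(p;f^q)$ at $z$) and a small $C^1$-arc $\gamma'\subset\mathcal{B}\cap\Lambda$ around $z$ (transverse to $\mathcal{W}^u(p;f^q)$ at $z$). Near $z$, each of these arcs has two halves lying on opposite sides of its complementary invariant manifold. By the $\lambda$-lemma applied at the hyperbolic fixed point $p$ of $f^q$, the forward iterates $f^{qn}(\gamma)\subset\Lambda$ accumulate in the $C^1$-topology on compact arcs of $\mathcal{W}^u_{\mathrm{loc}}(p;f^q)$, and the two-sided structure of $\gamma$ across $\mathcal{W}^s(p;f^q)$ guarantees that both local branches are approached; symmetrically, $f^{-qn}(\gamma')\subset\Lambda$ accumulates on both local branches of $\mathcal{W}^s_{\mathrm{loc}}(p;f^q)$. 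Since $\Lambda$ is closed, $\mathcal{W}^u_{\mathrm{loc}}(p;f^q)\cup\mathcal{W}^s_{\mathrm{loc}}(p;f^q)\subset\Lambda$; by $f^q$-invariance, $\mathcal{W}^u(p;f^q)\cup\mathcal{W}^s(p;f^q)\subset\Lambda$; by $f$-invariance over the orbit, $\mathcal{W}^u(\mathcal{O}_f(p))\cup\mathcal{W}^s(\mathcal{O}_f(p))\subset\Lambda$, and passing to the closure completes the argument. The main technical obstacle lies in the first step: one must locate a ``gap'' in $\mathcal{W}^{u,\star}\setminus\Lambda$ close enough to $p$ that the bounded-iterate/dissipativity argument goes through; once $\mathcal{W}^{u,\star}\subset\Lambda$ is secured, the $\lambda$-lemma handles the remaining branches in a fairly standard manner.
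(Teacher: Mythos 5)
Your proof is correct and follows essentially the same route as the paper's: the area/dissipativity argument from Proposition \ref{hyp:stable-unstable} to place the branch of $\mathcal{W}^u(p;f^q)$ through the homoclinic point inside $\Lambda$, and the $\lambda$-lemma (applied once to an arc of $\mathcal{B}$ under backward iteration and once to an arc of the unstable branch under forward iteration) to capture the closures of the full stable and unstable manifolds. The only difference is the order of the two steps, which is immaterial.
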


\begin{proof}
Let $z$ be a point of transverse intersection between $\mathcal{B}$ and 
$\mathcal{W}^u(p;f^q)$. The standard $\lambda$-lemma (see e.g. \cite[Chapter 2.7]{PalisMelo}) guarantees that there exists a $1$-dimensional disk $D\subset \mathcal{B}$ whose past iterates $(f^{-qn}(D))_{n \geq 0}$ accumulate $\mathcal{W}^s(p;f^q)$. 
As $f^{-qn}(D)\subset \Lambda$ for every $n \geq 0$ and since $\Lambda$ is closed, we deduce that $\overline{\mathcal{W}^s(p;f^q)}\subset\Lambda$. Let us denote by $\mathcal{B}'$ the branch of $\mathcal{W}^u(p;f^q)\setminus \{p\}$ containing $z$. By invariance of $\Lambda$, all the points $(f^{-qn}(z))_{n \geq 0}$ belong to $\mathcal{W}^u(p;f^q)\cap \Lambda$; in particular, for any $\delta>0$, $\Lambda\cap B(p,\delta)$ contains points of the branch $\mathcal{B}'\subset\mathcal{W}^u(p;f^q)\setminus \{p\}$. Thus, arguing as in the proof of Proposition \ref{hyp:stable-unstable}, we deduce that  $\mathcal{B}'\subset\Lambda$. Another application of the $\lambda$-lemma (now for a small disk $D'\subset \mathcal{B}'$) gives that in fact, $\overline{\mathcal{W}^u(p;f^q)}\subset \Lambda$. By $f$-invariance of $\Lambda$, we conclude that  $\overline{\mathcal{W}^u(\mathcal{O}_f(p))\cup\mathcal{W}^s(\mathcal{O}_f(p))}\subset\Lambda$. 
\end{proof}

\begin{coro}\label{coro fer a chev}
Let $p$ be a periodic hyperbolic point belonging to $\Lambda$.  Let us assume that for each pair of branches $\mathcal{B}^u\subset\mathcal{W}^u(p;f^q)\setminus \{p\}$ and $\mathcal{B}^s\subset\mathcal{W}^s(p;f^q)\setminus \{p\}$, $\mathcal{B}^u$ and $\mathcal{B}^s$  intersect transversally. Then, $\Lambda$ contains a horseshoe $K(p)$; more precisely, it holds
$$
K(p)\subset H(p):=\overline{\mathcal{W}^s(\mathcal{O}_f(p))\pitchfork \mathcal{W}^u(\mathcal{O}_f(p))}\subset \overline{
	\mathcal{W}^u(\mathcal{O}_f(p))}\subset \Lambda.
$$ 
\end{coro}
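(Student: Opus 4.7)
The strategy is to reduce the statement to Lemma \ref{remark intersection} combined with the classical Smale--Birkhoff homoclinic theorem. First I would apply Proposition \ref{hyp:stable-unstable} to the saddle $p\in \Lambda$, which yields three non-exclusive cases: (a) $\overline{\mathcal{W}^s(p;f^q)}\subset \Lambda$; (b) $\overline{\mathcal{W}^u(p;f^q)}\subset\Lambda$; (c) there exists $\delta>0$ such that $\Lambda\cap B(p,\delta)\setminus\{p\}=\mathcal{B}^u\cup \mathcal{B}^s$, with $\mathcal{B}^*$ a branch of $\mathcal{W}^*_{\mathrm{loc}}(p;f^q)\setminus\{p\}$.

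In cases (a) and (c) one has at least one branch of $\mathcal{W}^s(p;f^q)\setminus\{p\}$ contained in $\Lambda$ (in case (c) using $f^q$-invariance of $\Lambda$ to propagate the local branch $\mathcal{B}^s$ to its full global branch). By the transversality hypothesis of the corollary, this stable branch meets $\mathcal{W}^u(p;f^q)$ transversally, so Lemma \ref{remark intersection} applies and yields
$$\overline{\mathcal{W}^u(\mathcal{O}_f(p))\cup \mathcal{W}^s(\mathcal{O}_f(p))}\subset \Lambda.$$
In case (b), the inclusion $\overline{\mathcal{W}^u(\mathcal{O}_f(p))}\subset \Lambda$ is immediate from $\overline{\mathcal{W}^u(p;f^q)}\subset\Lambda$ together with $f$-invariance of $\Lambda$ (taking the union over the orbit of $p$). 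So in all cases one obtains $\overline{\mathcal{W}^u(\mathcal{O}_f(p))}\subset \Lambda$. The inclusion $H(p)\subset \overline{\mathcal{W}^u(\mathcal{O}_f(p))}$ is then tautological, since every transverse intersection of $\mathcal{W}^s(\mathcal{O}_f(p))$ with $\mathcal{W}^u(\mathcal{O}_f(p))$ lies in $\mathcal{W}^u(\mathcal{O}_f(p))$, so the closure is contained in the closure of $\mathcal{W}^u(\mathcal{O}_f(p))$.

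For the horseshoe, I would invoke the classical Smale--Birkhoff homoclinic theorem applied to $f^q$: the existence (guaranteed by hypothesis) of a transverse intersection between a branch of $\mathcal{W}^u(p;f^q)$ and a branch of $\mathcal{W}^s(p;f^q)$ produces, in any sufficiently small neighborhood of the corresponding homoclinic orbit, an integer $n_0\geq 1$ and an $f^{qn_0}$-invariant hyperbolic set $K(p)$ topologically conjugate to a Bernoulli shift on finitely many symbols. By construction $K(p)\subset H(p)$, so chaining the inclusions gives
$$K(p)\subset H(p)\subset \overline{\mathcal{W}^u(\mathcal{O}_f(p))}\subset \Lambda,$$
as claimed.

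The main point requiring care is case (c) of Proposition \ref{hyp:stable-unstable}: one must verify that the local branch $\mathcal{B}^s$ contained in $\Lambda$ actually saturates to the full global branch of $\mathcal{W}^s(p;f^q)\setminus\{p\}$ required by the hypothesis of Lemma \ref{remark intersection}, which follows from $f^q$-invariance of $\Lambda$ since iterating $\mathcal{B}^s$ backwards under $f^q$ exhausts the global branch. Once this is handled, the remainder is a clean concatenation of Lemma \ref{remark intersection} and the Smale--Birkhoff theorem.
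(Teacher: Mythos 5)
Your proposal is correct and follows essentially the same route as the paper's proof: the horseshoe comes from the Smale--Birkhoff theorem applied to a transverse homoclinic point, and the inclusion $\overline{\mathcal{W}^u(\mathcal{O}_f(p))}\subset \Lambda$ is obtained by combining Proposition \ref{hyp:stable-unstable} with Lemma \ref{remark intersection} exactly as in the paper (which phrases the case split as a dichotomy on whether a stable branch lies in $\Lambda$, equivalent to your three-case enumeration). Your explicit saturation of the local stable branch in case (c) to a full global branch via backward invariance is a detail the paper leaves implicit, but it is the same argument.
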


\begin{proof}
	The fact that $\mathcal{W}^u(p;f^q)\setminus \{p\}$ and $\mathcal{W}^s(p;f^q)\setminus \{p\}$   intersect transversally guarantees the existence of a horseshoe $K(p)$ which is contained in the homoclinic class $H(p):=\overline{\mathcal{W}^s(\mathcal{O}_f(p))\pitchfork \mathcal{W}^u(\mathcal{O}_f(p))}$ of $p$ (see \cite{SmaleI,SmaleII} and \cite[Section 2]{Newhouse}). If $\Lambda$ contains one branch $\mathcal{B}^s\subset\mathcal{W}^s(p;f^q)\setminus \{p\}$, then by Lemma \ref{remark intersection}, $\overline{\mathcal{W}^s(\mathcal{O}_f(p))\cup \mathcal{W}^u(\mathcal{O}_f(p))}\subset \Lambda$; otherwise, by Proposition \ref{hyp:stable-unstable}, it holds that $\overline{\mathcal{W}^u(p;f^q)}\subset\Lambda$. In either case, $K(p)\subset H(p)\subset \overline{
		\mathcal{W}^u(\mathcal{O}_f(p))}\subset \Lambda$.
\end{proof}

\section{General facts about dissipative billiards} \label{tre} 
\subsection{Dissipative billiard map} \label{DBM} 
	Examples of dissipative maps are given by dissipative billiard maps within convex domains of the plane.  
Let $\Omega$ be a convex domain of the plane $\R^2$ with $C^k$ boundary, $k \geq 2$.  Let $\Upsilon\colon\T\to\R^2$ be an arclength parametrization of $\partial\Omega$. We denote by $f_1$ the conservative billiard map associated to $\Omega$. Now, as in Definition \ref{definit diss bill}, we take a $C^{k-1}$ function  $\lambda\colon \A\to (0,1)$ satisfying condition \eqref{cond lambda dis}.  
As in Definition \ref{definit diss bill}, the dissipative billiard map within $\Omega$ associated to the dissipation function $\lambda$ is then given by
\begin{equation*}
	f_\lambda\colon 
	\left\{
	\begin{array}{rcl}
	\mathbb{A} &\to& \mathbb{A}\, ,\\
	(s,r) &\mapsto& f_\lambda(s,r)=(s',r')\, ,
	\end{array}
	\right.
\end{equation*}
where $\Upsilon(s')$ is the point where the half line, starting at $\Upsilon(s)$ and making an angle $\varphi=\arcsin r$ with the normal, hits the boundary $\partial \Omega$ again, and $r'= \lambda(s',r_1') r_1'$, $r_1'$ being the sine of the outgoing angle of reflection in the case of an elastic collision. 
Letting $\mathcal{H}_\lambda \colon \A\ni (s,r)\mapsto (s,\lambda(s,r) r)\in\A$, we observe that 
	\begin{equation}\label{dec f lambda}
	f_\lambda=\mathcal{H}_\lambda\circ f_1\, .
	\end{equation} 
Here are some basic properties of the dissipative billiard map.
\begin{enumerate}
	\item\label{perp bounces} The equality $r' = r_1'$ happens if and only if $r' = r_1'= 0$, i.e., the bounce at $x'$ is perpendicular. 
	\item We recall that the standard billiard map $f_1$ preserves the area form $\omega = d r \wedge ds = d\alpha$, where $\alpha$ denotes the 1-form $r\,ds$. 
	Thus, by \eqref{dec f lambda}, $f_{\lambda}$ is a conformally symplectic map; indeed, 
	$$
	f_{\lambda}^*\omega=\big(\partial_r \lambda(s',r_1')r_1'+\lambda(s',r_1')\big)\omega.
	$$ 
	In particular, if $\lambda$ is a constant function, then $f_\lambda$ is a conformally symplectic map with constant factor $\lambda$, as  
	$f_{\lambda}^*\omega= f_1^*(\mathcal{H}_\lambda^*\omega) = \lambda \omega$. 
	\item The map $f_{\lambda}$ is a dissipative map of $\A$, according to Definition \ref{def diss map}. 
		In particular, $f_{\lambda}$ verifies: 
		\begin{enumerate}[label=(\roman*)]
			\item for any $(s,r) \in \mathrm{int}(\mathbb{A})$, it holds
			\begin{equation}\label{determinant appl billard}
			0 < \det Df_{\lambda}(s,r) =\partial_r \lambda(s',r_1')r_1'+\lambda(s',r_1') < 1\,;
			\end{equation} 
			in particular, if $\lambda$ is constant, then for any $(s,r) \in \mathrm{int}(\mathbb{A})$, it holds $0 < \det Df_{\lambda}(s,r) =\lambda< 1$; 
			\item it holds $f_{\lambda}(\mathbb{T} \times [-1,1])\subset \mathbb{T} \times (-1,1)$; in particular, if $\lambda$ is constant, then
			\begin{equation} \label{inside}
			f_{\lambda}(\mathbb{T} \times [-1,1]) = \mathbb{T}\times[-\lambda,\lambda]\subset \mathbb{T} \times (-1,1)\,.
			\end{equation} 
	\end{enumerate}
	Moreover, we will see in Section \ref{section different rho} that the map $f_\lambda$ is a positive twist map. 
	\item Again from \eqref{dec f lambda}, for every $(s,r)\in\mathrm{int}(\A)$, and $(s',r'):=f_\lambda(s,r)$, we have
	\begin{align}
	Df_{\lambda}(s,r) &=D\mathcal{H}_\lambda(f_1(s,r))\, Df_1(s,r) \nonumber\\
	&=
	\begin{bmatrix}
		1 & 0\\
		\partial_s \lambda(s',r_1') & \partial_r \lambda(s',r_1')r_1'+\lambda(s',r_1')
	\end{bmatrix}
	\begin{bmatrix} \label{matrice differ}
	-\frac{\tau \mathcal{K} + \nu}{\nu'} & \frac{\tau}{\nu \nu'} \\ \\
	\tau \mathcal{K} \mathcal{K}' + \mathcal{K} \nu'+ \mathcal{K}' \nu & -\frac{\tau \mathcal{K}'+ \nu'}{\nu}
	\end{bmatrix},
	\end{align}
	where $\tau=\ell(s,s'):=\|\Upsilon(s)-\Upsilon(s')\|$ is the Euclidean distance between the points $x=\Upsilon(s)$, $x'=\Upsilon(s')$, $\mathcal{K}$, $\mathcal{K}'$ denote the curvatures at $\Upsilon(s),\Upsilon(s')$ respectively and $\nu = \sqrt{1 - r^2}$, $\nu' = \sqrt{1 - (r_1')^2}=\sqrt{1 - \left(\frac{r'}{\lambda}\right)^2}$. Formula \eqref{matrice differ} can be deduced from \cite[Section 2.11]{CheMar_book}, by applying the change of coordinates $(s,\varphi)\mapsto (s,r=\sin \varphi)$. 
	
	If the dissipation is constant, equal to $\lambda \in (0,1)$, then with same notations as above, for   $(s,r)\in\mathrm{int}(\A)$,
	\begin{equation*}
				Df_{\lambda}(s,r) =
				\begin{bmatrix} \label{matrice differ}
						-\frac{\tau \mathcal{K} + \nu}{\nu'} & \frac{\tau}{\nu \nu'} \\ \\
						\lambda\big(\tau \mathcal{K} \mathcal{K}' + \mathcal{K} \nu'+ \mathcal{K}' \nu \big) & -\lambda \frac{\tau \mathcal{K}'+ \nu'}{\nu}
					\end{bmatrix}\,.
	\end{equation*}
\end{enumerate} 

	\begin{notation}
		In the sequel, while considering the dissipative billiard map $f_\lambda$, we denote its attractor by $\Lambda_\lambda^0$ and its Birkhoff attractor by $\Lambda_\lambda$.
	\end{notation}


\subsection{Properties of the Birkhoff attractor for axially symmetric billiards}

This subsection is devoted to proving some properties of the Birkhoff attractor for a dissipative billiard map under some symmetric assumptions on the domain $\Omega$. In particular, they can be applied to the case of elliptic billiards considered in Section \ref{quattro}. In this subsection, we assume that the billiard map $f_\lambda$ has constant dissipation $\lambda\in(0,1)$. In fact, this would more generally as long as the dissipation function $\lambda \colon \A\to (0,1)$ respects the symmetries of $\Omega$. 
\begin{definition}[Axially symmetric billiard table]
Let $\Omega\subset \R^2$ be a strictly convex domain with $C^2$ boundary. We say that $\Omega$ is axially symmetric with respect to some line $\Delta\subset\R^2$ if $\Omega$ is invariant under the reflection along the line $\Delta$. 
\end{definition}


\begin{lemma}\label{lemme sym axiale}
		Let $\Omega\subset \R^2$ be a strictly convex domain with $C^2$ boundary, with perimeter $2L>0$, which is axially symmetric with respect to some line $\Delta$. Let $s_0$ and $s_0+L$ the arclength parameters of the  points in $\partial\Omega\cap \Delta$. For $\lambda \in (0,1)$, let $f_\lambda$ be the associated dissipative billiard map. Then:
		\begin{enumerate}
			\item\label{point un sym ax} the pair $\{(s_0,0),(s_0+L,0)\}$ corresponds to a $2$-periodic orbit; 
			\item let us denote by $\mathcal{I}_\Delta$ the involution $\mathcal{I}_\Delta\colon (s_0+s,\varphi)\mapsto (s_0-s,-\varphi)$; then,
			\begin{equation}\label{point un sym ax bis} 
			\mathcal{I}_\Delta \circ f_\lambda = f_\lambda \circ \mathcal{I}_\Delta. 
			\end{equation} 
		\end{enumerate}  
\end{lemma}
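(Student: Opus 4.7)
The plan is to interpret the coordinate involution $\mathcal{I}_\Delta$ as the phase-space lift of the Euclidean reflection $\sigma\colon\R^2\to\R^2$ through the axis $\Delta$, and then to exploit the $\sigma$-invariance of $\Omega$.

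First, I would identify $\partial\Omega\cap\Delta$. Since $\sigma(\Omega)=\Omega$, $\sigma$ restricts to an orientation-reversing involution on the smooth strictly convex curve $\partial\Omega$, and therefore has exactly two fixed points, which must be $\Upsilon(s_0)$ and $\Upsilon(s_0+L)$. At each such fixed point the tangent line to $\partial\Omega$ is $\sigma$-invariant and distinct from $\Delta$ (otherwise $\partial\Omega$ would contain a segment of $\Delta$, violating strict convexity), hence perpendicular to $\Delta$. Therefore the inward normal at each of these points is directed along $\Delta$, and the two points see each other along a trajectory perpendicular to $\partial\Omega$ at both ends. This yields point \eqref{point un sym ax}: the orbit starting at $(s_0,0)$ corresponds to a particle launched from $\Upsilon(s_0)$ along the inward normal, which hits $\partial\Omega$ perpendicularly at $\Upsilon(s_0+L)$, so $r_1'=0$ and, by Definition~\ref{definit diss bill}, $r'_\lambda=\lambda(s_0+L,0)\cdot 0=0$. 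The reverse identity $f_\lambda(s_0+L,0)=(s_0,0)$ is analogous.

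For the equivariance \eqref{point un sym ax bis}, I would use the arclength parametrization centered so that $\Upsilon(s_0)\in\Delta$; the orientation reversal induced by $\sigma$ on $\partial\Omega$ then gives $\sigma\circ\Upsilon(s_0+s)=\Upsilon(s_0-s)$ for all $s$. Since $\sigma$ is a Euclidean isometry, the unoriented angle between a velocity vector and the inward normal at a boundary point is preserved, while the oriented angle $\varphi$ (and hence $r=\sin\varphi$) changes sign because $\sigma$ reverses planar orientation. This shows that $\mathcal{I}_\Delta$ agrees precisely with the phase-space lift of $\sigma$. Straight-line motion and the elastic reflection law are intrinsically geometric, so they commute with $\sigma$, giving $\mathcal{I}_\Delta\circ f_1=f_1\circ\mathcal{I}_\Delta$. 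Finally, when $\lambda$ is constant, the contraction $\mathcal{H}_\lambda\colon(s,r)\mapsto(s,\lambda r)$ commutes with $\mathcal{I}_\Delta$ by direct inspection, so the decomposition $f_\lambda=\mathcal{H}_\lambda\circ f_1$ yields $\mathcal{I}_\Delta\circ f_\lambda=f_\lambda\circ\mathcal{I}_\Delta$.

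The main delicate point is the identification of $\mathcal{I}_\Delta$ with the phase-space lift of $\sigma$: it amounts to carefully tracking the interplay between the orientation reversal of $\sigma|_{\partial\Omega}$ (which accounts for $s\mapsto 2s_0-s$ in the arclength parameter) and the sign conventions used to define the oriented angle $\varphi$ and its sine $r$. Once this identification is in place, the commutation relation reduces to the elementary $\sigma$-symmetry of the reflection rule together with the fact that a position-independent dissipation factor is automatically $\mathcal{I}_\Delta$-invariant.
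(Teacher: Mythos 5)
Your proof is correct and follows essentially the same route as the paper: perpendicularity of the chord joining the two points of $\partial\Omega\cap\Delta$ for item (1) (the paper phrases this via the $\mathcal{I}_\Delta$-invariance of the osculating circles, you via the $\sigma$-invariance of the tangent lines), and the decomposition $f_\lambda=\mathcal{H}_\lambda\circ f_1$ with both factors commuting with $\mathcal{I}_\Delta$ for item (2). One cosmetic remark: to rule out the tangent at a fixed point being $\Delta$ itself, the cleaner argument is that the tangent would then be a supporting line of the convex domain, forcing $\Omega$ to lie on one side of $\Delta$, which contradicts the $\sigma$-symmetry of a domain with nonempty interior; the "segment of $\Delta$ in $\partial\Omega$" phrasing is not quite the right obstruction.
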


\begin{proof}
	Item \eqref{point un sym ax} follows by noticing that the osculating circles at $s_0$ and $s_0+L$ are invariant under $\mathcal{I}_\Delta$, hence the line segment connecting the points $s_0,s_0+L$ (which is collinear to $\Delta$) is perpendicular to the boundary of $\Omega$ at the points $s_0,s_0+L$. The second point is immediate once we have observed that $f_\lambda=\mathcal{H}_\lambda\circ f_1$, as $\mathcal{H}_\lambda$ commutes with $\mathcal{I}_\Delta$ and, under the assumption of axial symmetry, $f_1$ and $\mathcal{I}_\Delta$ also commute. 
\end{proof}

\begin{corollary} \label{COR a}
	Let $\Omega\subset \R^2$ be a strictly convex domain with $C^2$ boundary with perimeter $2L>0$, axially symmetric with respect to some line $\Delta$. Let $s_0$ and $\mathcal{I}_\Delta$ be the arclength parameters of the points in $\partial\Omega\cap\Delta$. For $\lambda \in (0,1)$, let $f_\lambda$ be the associated dissipative billiard map and $\Lambda_\lambda$ the corresponding Birkhoff attractor. Then:
	\begin{enumerate}
		\item\label{point un bill centr bir} $\mathcal{I}_\Delta(\Lambda_\lambda)=\Lambda_\lambda$;
		\item\label{point deux bill centr bir} $\{(s_0,0),(s_0+L,0)\}\subset \Lambda_\lambda$. 
	\end{enumerate}
\end{corollary}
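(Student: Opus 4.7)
The plan is to derive both assertions from the minimality characterization of $\Lambda_\lambda$ in $\mathcal{X}(f_\lambda)$ provided by Proposition~\ref{minimality}, combined with the commutation relation $\mathcal{I}_\Delta \circ f_\lambda = f_\lambda \circ \mathcal{I}_\Delta$ of Lemma~\ref{lemme sym axiale}. The whole argument rests on one structural observation: the involution $\mathcal{I}_\Delta$ exchanges the two connected components of $\A \setminus \Lambda_\lambda$.

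For point (1), I would first show that $\mathcal{I}_\Delta(\Lambda_\lambda) \in \mathcal{X}(f_\lambda)$. Compactness and connectedness are immediate since $\mathcal{I}_\Delta$ is a homeomorphism of $\A$, while $f_\lambda$-invariance follows from the commutation relation \eqref{point un sym ax bis}. For the separation property, I would read $\mathcal{I}_\Delta$ in the coordinates $(s,r) = (s, \sin \varphi)$, where it takes the form $(s_0+s, r) \mapsto (s_0 - s, -r)$; in particular it exchanges the two boundary circles $\T \times \{1\}$ and $\T \times \{-1\}$. Since $V_{\Lambda_\lambda}$ (resp. $U_{\Lambda_\lambda}$) is the connected component of $\A \setminus \Lambda_\lambda$ containing $\T \times \{1\}$ (resp. $\T \times \{-1\}$), and since the images $\mathcal{I}_\Delta(U_{\Lambda_\lambda})$ and $\mathcal{I}_\Delta(V_{\Lambda_\lambda})$ are again open, connected, and disjoint, I conclude that $\mathcal{I}_\Delta(U_{\Lambda_\lambda}) = V_{\Lambda_\lambda}$ and $\mathcal{I}_\Delta(V_{\Lambda_\lambda}) = U_{\Lambda_\lambda}$; in particular, $\mathcal{I}_\Delta(\Lambda_\lambda)$ separates $\A$. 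The minimality assertion in Proposition~\ref{minimality}\eqref{minimality Lambda} then yields $\Lambda_\lambda \subset \mathcal{I}_\Delta(\Lambda_\lambda)$, and applying $\mathcal{I}_\Delta$ together with $\mathcal{I}_\Delta^2 = \id$ gives the reverse inclusion, hence equality.

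For point (2), I would use that both points $(s_0, 0)$ and $(s_0 + L, 0)$ are fixed by $\mathcal{I}_\Delta$, as read off directly from the formula $(s_0+s, r) \mapsto (s_0-s, -r)$. Given any fixed point $p$ of $\mathcal{I}_\Delta$, the swap $\mathcal{I}_\Delta(U_{\Lambda_\lambda}) = V_{\Lambda_\lambda}$ established above forbids $p$ from lying in $U_{\Lambda_\lambda}$: otherwise $p = \mathcal{I}_\Delta(p)$ would also lie in $V_{\Lambda_\lambda}$, contradicting $U_{\Lambda_\lambda} \cap V_{\Lambda_\lambda} = \emptyset$; the symmetric argument rules out $V_{\Lambda_\lambda}$. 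Hence $p \in \A \setminus (U_{\Lambda_\lambda} \cup V_{\Lambda_\lambda}) = \Lambda_\lambda$, which gives the claim.

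I do not expect a genuine obstacle in this argument. The only delicate point is the coordinate bookkeeping, namely ensuring that $\mathcal{I}_\Delta$ really reverses $r$ and therefore interchanges (rather than preserves) the two boundary circles of the annulus; once this is correctly identified, both assertions fall out directly from Proposition~\ref{minimality} and Lemma~\ref{lemme sym axiale}, with no further machinery required.
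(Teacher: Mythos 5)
Your proposal is correct and follows essentially the same route as the paper: point (1) via minimality of $\Lambda_\lambda$ in $\mathcal{X}(f_\lambda)$ applied to $\mathcal{I}_\Delta(\Lambda_\lambda)$ together with $\mathcal{I}_\Delta^2=\id$, and point (2) via the swap $\mathcal{I}_\Delta(U_{\Lambda_\lambda})=V_{\Lambda_\lambda}$. The only differences are cosmetic: the paper runs point (2) through an $\mathcal{I}_\Delta$-invariant neighborhood $U'=U\cap\mathcal{I}_\Delta(U)$ rather than your (equally valid, slightly more direct) pointwise contradiction, and strictly speaking the identification $\mathcal{I}_\Delta(U_{\Lambda_\lambda})=V_{\Lambda_\lambda}$ is only literally meaningful once $\mathcal{I}_\Delta(\Lambda_\lambda)=\Lambda_\lambda$ is known, whereas for the separation step in point (1) you only need that $\A\setminus\mathcal{I}_\Delta(\Lambda_\lambda)$ has one component per boundary circle.
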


\begin{proof}
	Item \eqref{point un bill centr bir} follows from the fact that $\Lambda_\lambda$ is the smallest, with respect to inclusion, compact connected $f_\lambda$-invariant subset which separates the annulus. Indeed, $\mathcal{I}_\Delta(\Lambda_\lambda)$ is compact, connected; it is $f_\lambda$-invariant, by the fact that $\Lambda_\lambda$ is $f_\lambda$-invariant, and by \eqref{point un sym ax bis}; moreover, $\mathcal{I}_\Delta(\Lambda_\lambda)$ separates the annulus. Thus, $\Lambda_\lambda\subset \mathcal{I}_\Delta(\Lambda_\lambda)$. We conclude because $\mathcal{I}^2_\Delta=\mathrm{Id}$ and so $\mathcal{I}_\Delta(\Lambda_\lambda)\subset \mathcal{I}^2_\Delta(\Lambda_\lambda)=\Lambda_\lambda$.
	
	To show point \eqref{point deux bill centr bir}, let us argue by contradiction, assuming that $(s_0,0)\notin \Lambda_\lambda$. By compactness of $\Lambda_\lambda$, there exists a connected open neighborhood $U$ of $(s_0,0)$ that is disjoint from $\Lambda_\lambda$. Since $\mathcal{I}_\Delta(s_0,0)=(s_0,0)$, the set $U':=U\cap \mathcal{I}_\Delta(U)$ is a connected open neighborhood of $(s_0,0)$ that is $\mathcal{I}_\Delta$-invariant and disjoint from $\Lambda_\lambda$. Recall that $\Lambda_\lambda$ separates the annulus $\mathbb{A}$, i.e.,  $\mathbb{A}\setminus\Lambda_\lambda$ is the disjoint union of two open connected components, denoted by $U_{\Lambda_\lambda}$ and $ V_{\Lambda_\lambda}$. Since $\mathcal{I}_\Delta$ maps the top boundary $\T\times\{1\}$ to  the bottom boundary $\T\times\{-1\}$ and viceversa,  we have $\mathcal{I}_\Delta(U_{\Lambda_\lambda})=V_{\Lambda_\lambda}$. As $U'\cap\Lambda_\lambda=\emptyset$, we can assume without loss of generality that $U' \subset U_{\Lambda_\lambda}$. But then, $U'=\mathcal{I}_\Delta(U')\subset\mathcal{I}_\Delta(U_{\Lambda_\lambda})= V_{\Lambda_\lambda}$, which is a contradiction. Thus, $(s_0,0)\in \Lambda_\lambda$, and $(s_0+L,0)=f_\lambda(s_0,0)\in \Lambda_\lambda$.
\end{proof}

%
%

\subsection{Bifurcations of $2$-periodic points} \label{conti 2 orbite}

As we are going to see, $2$-periodic points play a special role for dissipative billiards; this is partly due to the fact that by point \eqref{perp bounces} in Subsection \ref{DBM}, the usual reflection law and the dissipative one have the same effect at an orthogonal collision. In the previous section, we already saw that for convex billiards with symmetries, symmetric $2$-periodic orbits have to belong to the Birkhoff attractor. Here we investigate the eigenvalues of $2$-periodic points and their bifurcations as the dissipation parameter changes. In fact, as we will see here, although the set of $2$-periodic orbits is independent of the value of the dissipation $\lambda\colon \A\to (0,1)$, their type will depend on the strength of the perturbation.  Throughout the rest of Subsection \ref{conti 2 orbite}, except in Lemma \ref{lemma noncst disp} and in the last point of Corollary \ref{nondeg billiard in dk}, we will assume for simplicity that the  dissipation is constant, equal to some $\lambda \in (0,1)$. Yet, the result of Lemma \ref{lemme vp reelles} could be fully adapted to the case of a non-constant dissipation $\lambda$, but the proof would be even more computational; the version we give in Lemma \ref{lemma noncst disp} is a little less precise but suffices for our purpose. 
The proofs of the main technical lemma of this subsection, namely  Lemma \ref{lemme vp reelles} and Lemma \ref{lemma noncst disp}  are mainly computational: for this reason, we postpone them to Appendix \ref{appendix proof bifurcation eigenvalues}. 


Let us denote by $\mathrm{II}$ the set of $2$-periodic points for $\{f_\lambda\}_{\lambda \in [0,1]}$.  
In the following, for $p=(s,0) \in \mathrm{II}$,  let $(s',0):=f_\lambda(p)$ (the point $(s',0)$ is independent of the value of $\lambda$ as observed above), and denote by $\tau=\ell(s,s'):=\|\Upsilon(s)-\Upsilon(s')\|$ the Euclidean distance between the points $\Upsilon(s),\Upsilon(s')$. We also denote by $\mathcal{K}_1$, $\mathcal{K}_2$ the respective curvatures at $\Upsilon(s),\Upsilon(s')$, and let 
\begin{equation}\label{k un deuxx}
	k_{1,2}=k_{1,2}(p):=(\tau \mathcal{K}_1 + 1)(\tau \mathcal{K}_2 + 1).
\end{equation}
\begin{lemma}\label{lemme vp reelles}
	Let $p\in \mathrm{II}$, and let $\tau,\mathcal{K}_1,\mathcal{K}_2,k_{1,2}$ be as above. Fix $\lambda \in (0,1)$, and denote by $\{\mu_1=\mu_1(\lambda),\mu_2=\mu_2(\lambda)\}$  the eigenvalues of $Df_\lambda^2(p)$, with $|\mu_1|\leq |\mu_2|$. 
\begin{enumerate}[label=(\alph*)]
\item\label{item aa en} If $k_{1,2}>1$, then $0<\mu_1<\lambda^2<1<\mu_2$, and the $2$-periodic orbit $\{p,f_\lambda(p)\}$ is a \emph{saddle}. 
\item\label{item bb en} If $k_{1,2}=1$, then $\mu_1=\lambda^2$, $\mu_2=1$, and the $2$-periodic orbit $\{p,f_\lambda(p)\}$ is \emph{parabolic}.  
\item\label{item cc en} If $k_{1,2}\in (0,1)$, then the $2$-periodic orbit $\{p,f_\lambda(p)\}$ is a \emph{sink};  moreover, let
\begin{equation} \label{serve dopo}
\lambda_-=\lambda_-(p):=\frac{1-\sqrt {1-k_{1,2}}}{1+\sqrt {1-k_{1,2}}}\in (0,1).
\end{equation}
It holds:
\begin{enumerate}[label=(\roman*)]
	\item if $\lambda \in (0,\lambda_-)$, then $\mu_1,\mu_2$ are real, with $\lambda^2<\mu_1< \mu_2 <1$;
	\item if $\lambda=\lambda_-$, then $\mu_1=\mu_2=\lambda\in (0,1)$;
	\item if $\lambda \in (\lambda_-,1)$, then $\mu_1,\mu_2$ are complex conjugate of modulus $\lambda$.
\end{enumerate} 
\item \label{item ii en} If $k_{1,2}=0$, then the $2$-periodic orbit $\{p,f_\lambda(p)\}$ is a \emph{sink}, with $\mu_1=\mu_2=-\lambda$, and  $Df_\lambda^2(p)=-\lambda\, \mathrm{id}$. 
\item\label{item dd en} If $k_{1,2}\in (-1,0)$, 
let 
$$
\bar \lambda=\bar \lambda(p):=\frac{1-\sqrt{-k_{1,2}}}{1+\sqrt{-k_{1,2}}}\in (0,1).
$$
It holds:
\begin{enumerate}[label=(\roman*)]
	\item if $\lambda \in (0,\bar \lambda)$, then $-1<\mu_2<-\lambda<\mu_1<0$, and the $2$-periodic orbit $\{p,f_\lambda(p)\}$ is a \emph{sink};
	\item\label{ppoint parab cas deux} if $\lambda=\bar \lambda$, then $\mu_1=-\lambda^2$, $\mu_2=-1$, and the $2$-periodic orbit $\{p,f_\lambda(p)\}$ is \emph{parabolic};
	\item if $\lambda \in (\bar \lambda,1)$, $\mu_2<-1<-\lambda^2<\mu_1<0$, and the $2$-periodic orbit $\{p,f_\lambda(p)\}$ is a \emph{saddle}.  
\end{enumerate} 
\item\label{il reste un cas} If $k_{1,2}\leq -1$, then $\mu_2<-1<-\lambda^2<\mu_1<0$, and the $2$-periodic orbit $\{p,f_\lambda(p)\}$ is a \emph{saddle}.
\end{enumerate}
Moreover, for $\lambda=0$, the eigenvalues of $Df_0^2(p)$ are $\mu_1=0$ and $\mu_2=k_{1,2}$, and the respective eigenspaces are vertical and horizontal. 
\end{lemma}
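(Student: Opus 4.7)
The plan is to read off the dynamical type of $\{p,f_\lambda(p)\}$ from the characteristic polynomial $P(\mu):=\mu^2-T(\lambda)\mu+\lambda^2$ of $Df_\lambda^2(p)$, whose coefficients I would first compute explicitly from \eqref{matrice differ}. Specialising to the orthogonal bounce $r=r_1'=0$ (so that $\nu=\nu'=1$), and setting $a_i:=\tau\mathcal{K}_i+1$ (so $a_1a_2=k_{1,2}$) and $b:=\tau\mathcal{K}_1\mathcal{K}_2+\mathcal{K}_1+\mathcal{K}_2$, the differentials at the two points of the orbit are
\[
Df_\lambda(p)=\begin{pmatrix} -a_1 & \tau \\ \lambda b & -\lambda a_2\end{pmatrix},\qquad Df_\lambda(f_\lambda(p))=\begin{pmatrix} -a_2 & \tau \\ \lambda b & -\lambda a_1\end{pmatrix}.
\]
Multiplying them gives $\det Df_\lambda^2(p)=\lambda^2$ (which also follows from $\det Df_\lambda\equiv\lambda$), and using the elementary identity $a_1a_2-\tau b=1$ (an immediate expansion) the trace simplifies to
\[
T(\lambda)=\mathrm{tr}\,Df_\lambda^2(p)=(1+\lambda^2)\,a_1a_2+2\lambda\tau b=k_{1,2}(1+\lambda)^2-2\lambda.
\]

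The discriminant of $P$ then factorises directly as $\Delta(\lambda)=T^2-4\lambda^2=k_{1,2}(1+\lambda)^2\bigl(k_{1,2}(1+\lambda)^2-4\lambda\bigr)$, so its zeros in $\lambda$ are easy to locate: for $k_{1,2}\in(0,1)$, solving $k_{1,2}(1+\lambda)^2=4\lambda$ and keeping the root in $(0,1)$ yields exactly the value $\lambda_-(p)$ of \eqref{serve dopo}, while for $k_{1,2}\in(-1,0)$ the relevant bifurcation is the moment an eigenvalue crosses $-1$, i.e.\ the vanishing of $P(-1)=(1-\lambda)^2+k_{1,2}(1+\lambda)^2$, which gives $\bar\lambda(p)$. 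Since $\mu_1\mu_2=\lambda^2>0$ the two eigenvalues always share the sign of $T$, so it then suffices to compare $\lambda$ with $\lambda_-(p)$ or $\bar\lambda(p)$ in each regime of $k_{1,2}$ and to evaluate $P$ at the bounding values $\pm 1$ (note $P(1)=(1+\lambda)^2(1-k_{1,2})$); an elementary sign analysis then pins down the eigenvalues in the intervals asserted in each of the six cases of the statement.

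The degenerate case $\lambda=0$ is handled separately but immediately: by \eqref{matrice differ} the second row of $Df_0$ vanishes identically, so $Df_0^2(p)$ has rank at most one with characteristic polynomial $\mu(\mu-k_{1,2})$, and the claimed eigenspaces are read off by inspection. The only real difficulty is the purely clerical task of tracking signs and inequalities in all six regimes of $k_{1,2}$ against the three possible positions of $\lambda$ relative to the bifurcation values $\lambda_-(p),\bar\lambda(p)$; this bookkeeping is exactly what is deferred to Appendix~\ref{appendix proof bifurcation eigenvalues}.
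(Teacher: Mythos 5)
Your proposal follows essentially the same route as the paper's Appendix~\ref{appendix proof bifurcation eigenvalues}: compute $\det Df_\lambda^2(p)=\lambda^2$ and $\mathrm{tr}\,Df_\lambda^2(p)=(1+\lambda)^2k_{1,2}-2\lambda$ from \eqref{matrice differ} at an orthogonal bounce, factor the discriminant as $k_{1,2}(1+\lambda)^2\bigl((1+\lambda)^2k_{1,2}-4\lambda\bigr)$, and run the sign analysis case by case; your identification of $\bar\lambda$ as the root of $P(-1)=(1-\lambda)^2+k_{1,2}(1+\lambda)^2=0$ is a mild streamlining of the paper's computation, which instead writes $\mu_2(\lambda)$ explicitly and solves $\mu_2(\lambda)=-1$, but the content is identical. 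The bookkeeping you defer is exactly what the appendix carries out, so the proposal is correct.
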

\begin{proof}
	See Appendix \ref{appendix proof bifurcation eigenvalues}. 
\end{proof}

\noindent \begin{remark}\label{lemme vp reelles sym}
	We use the same notations as above. In the special case of a point $p \in \mathrm{II}$ with  $\mathcal{K}_1=\mathcal{K}_2=:\mathcal{K}\leq 0$, we have $k_{1,2}=(1+\tau \mathcal{K})^2\geq 0$, and $\tau \mathcal{K}\leq 0$, hence the previous result gives the following outcome.
	\begin{enumerate}[label=(\alph*)]
		\item\label{item aa en bis} If $\tau\mathcal{K}<-2$, then the 
		$2$-periodic orbit $\{p,f_\lambda(p)\}$ is a saddle. 
		\item\label{item bb en bis} If $\tau\mathcal{K}\in\{-2,0\}$, then $\mu_1=\lambda^2$,  $\mu_2=1$ and the 
		$2$-periodic orbit $\{p,f_\lambda(p)\}$ is parabolic.
		\item\label{item cc en bis} If $-2<\tau\mathcal{K}< 0$, then the 2-periodic orbit $\{p,f_\lambda(p)\}$ is a sink; for $\tau\mathcal{K}=-1$, it holds $Df_\lambda^2(p)=-\lambda\,\mathrm{id}$.
	\end{enumerate}
\end{remark}
 
In the case of dissipative billiard maps associated to a non-constant dissipation $\lambda \in (0,1)$, we also have:
\begin{lemma}\label{lemma noncst disp}
	Let $\lambda\colon \A\to (0,1)$ be a $C^{k-1}$ function such that  $f_\lambda:=\mathcal{H}_\lambda \circ f$ is a dissipative billiard map in the sense of Definition \ref{cond lambda dis}, where  $\mathcal{H}_\lambda\colon (s,r)\mapsto(s,\lambda(s,r) r)$. In particular, $f_\lambda$ has the same set $\mathrm{II}$ of $2$-periodic points as $f$. Fix a $2$-periodic orbit $\{p,f_\lambda(p)\}$, and assume that $k_{1,2}\geq 0$, with $k_{1,2}=k_{1,2}(p):=(\tau \mathcal{K}_1 + 1)(\tau \mathcal{K}_2 + 1)$ as in \eqref{k un deuxx}. Then $\{p,f_\lambda(p)\}$  is parabolic for $f_\lambda$ if and only if it is for the conservative billiard map $f=f_1$, and this happens if and only if $k_{1,2}=1$. 
	
	Otherwise, the orbit $\{p,f_\lambda(p)\}$ is either a sink or a saddle for $f_\lambda$; more precisely, it is a saddle if and only if $k_{1,2}>1$, and it is a sink if and only if $k_{1,2}<1$.
\end{lemma}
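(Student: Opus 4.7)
The plan is to parallel the computation behind Lemma \ref{lemme vp reelles}, exploiting the fact that every $2$-periodic orbit of $f_1$ corresponds to orthogonal collisions, i.e., $r = 0$. Since $\mathcal{H}_\lambda(s, r) = (s, \lambda(s,r) r)$, both the off-diagonal entry $\partial_s \lambda(s,r)\cdot r$ and the correction $\partial_r \lambda(s,r) \cdot r$ appearing in $D\mathcal{H}_\lambda$ vanish at $r = 0$, so
\begin{equation*}
D\mathcal{H}_\lambda(s, 0) = \mathrm{diag}(1, \lambda(s, 0)).
\end{equation*}
Hence, writing $p = (s_1, 0)$, $f_\lambda(p) = (s_2, 0)$ and $\lambda_i := \lambda(s_i, 0) \in (0,1)$, the differential of $f_\lambda^2$ at $p$ depends on $\lambda$ only through the two scalars $\lambda_1, \lambda_2$, reducing the problem to an asymmetric version of the constant-dissipation case.

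Using \eqref{dec f lambda} and the formula \eqref{matrice differ} with $\nu = \nu' = 1$, I would then write
\begin{equation*}
Df_\lambda^2(p) = \mathrm{diag}(1, \lambda_1) \cdot Df_1(s_2, 0) \cdot \mathrm{diag}(1, \lambda_2) \cdot Df_1(s_1, 0),
\end{equation*}
where each $Df_1(s_i, 0)$ has a simple form in terms of $a := \tau \mathcal{K}_1 + 1$, $b := \tau \mathcal{K}_2 + 1$, $c := \tau \mathcal{K}_1 \mathcal{K}_2 + \mathcal{K}_1 + \mathcal{K}_2$; the two identities $ab = k_{1,2}$ and $ab - \tau c = \det Df_1 = 1$ are what ultimately make the algebra collapse. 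A short matrix multiplication yields
\begin{equation*}
\det Df_\lambda^2(p) = \lambda_1 \lambda_2, \qquad \mathrm{tr}\, Df_\lambda^2(p) = k_{1,2}(1 + \lambda_1 \lambda_2) + (k_{1,2} - 1)(\lambda_1 + \lambda_2),
\end{equation*}
and the characteristic polynomial $P(t) = t^2 - (\mathrm{tr}) t + \det$ evaluated at $t = \pm 1$ factors cleanly as
\begin{equation*}
P(1) = (1 - k_{1,2})(1 + \lambda_1)(1 + \lambda_2), \qquad P(-1) = (1 - \lambda_1)(1 - \lambda_2) + k_{1,2}(1 + \lambda_1)(1 + \lambda_2).
\end{equation*}

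With the hypothesis $k_{1,2} \geq 0$ and $\lambda_i \in (0,1)$, we have $P(-1) > 0$ strictly, so $-1$ can never be an eigenvalue; hence a parabolic orbit can only arise from $P(1) = 0$, which is equivalent to $k_{1,2} = 1$, in which case the eigenvalues are $1$ and $\lambda_1\lambda_2$, matching the conservative threshold. If $k_{1,2} > 1$, then $P(1) < 0$, so $1$ lies strictly between the two roots of $P$; together with $\det > 0$ (which forces the two real roots to share a sign), this produces a saddle with one eigenvalue in $(0,1)$ and one strictly greater than $1$. If $0 \leq k_{1,2} < 1$, then $P(1), P(-1) > 0$ and $\det < 1$, which forces both eigenvalues to lie in the open unit disk (treating the real and complex-conjugate subcases separately), yielding a sink.

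I do not expect a conceptual obstacle: the substance of the argument is the clean factorization of $P(\pm 1)$, which simultaneously encodes the independence of the parabolic threshold from the dissipation and the bifurcation between sink and saddle at $k_{1,2} = 1$. The most delicate point is the bookkeeping in the trace computation and the use of the identity $ab - \tau c = 1$ to rewrite $\tau c = k_{1,2} - 1$; without that substitution the factorizations above would not appear, and the classification would be invisible.
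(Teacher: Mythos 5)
Your proposal is correct and follows essentially the same route as the paper's proof: reduce to $D\mathcal{H}_\lambda(\cdot,0)=\mathrm{diag}(1,\lambda_i)$, compute $\det Df_\lambda^2(p)=\lambda_1\lambda_2$ and the trace, and classify via the sign of the characteristic polynomial at $\pm 1$, with $\chi(1)=(1-k_{1,2})(1+\lambda_1)(1+\lambda_2)$ giving the parabolic threshold and $\chi(-1)>0$ excluding the eigenvalue $-1$ when $k_{1,2}\geq 0$. The only (cosmetic) difference is that you factor $\chi(-1)$ exactly as $(1-\lambda_1)(1-\lambda_2)+k_{1,2}(1+\lambda_1)(1+\lambda_2)$ where the paper only records the lower bound $(1-\lambda_1)(1-\lambda_2)$, and in the sink case you argue via $\chi(\pm1)>0$ together with $\det<1$ rather than via the location of the vertex of $\chi$; both are sound.
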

\begin{proof}
	See Appendix \ref{appendix proof bifurcation eigenvalues}. 
\end{proof}

In the next statement, we summarize some results obtained by Dias Carneiro, Oliffson Kamphorst and Pinto-de-Carvalho, see \cite[Theorem 1]{PintoC} and \cite{CarneiroOliffsonPC}, and Xia-Zhang \cite[Theorem 1.1-Corollary 4.4]{XiaZhang}. 

\begin{theorem}\label{nondeg pc}
	Fix $k \geq 2$. 
	\begin{enumerate}
		\item For every $q\geq 2$, there exists an open and dense set $\mathscr{U}^q$ of strongly convex domains with $C^k$ boundary such that the number of $q$-periodic points (for the usual conservative billiard map) is finite; moreover, all the $q$-periodic points are either elliptic or hyperbolic. 
		\item There exists a $G_\delta$-dense set $\mathcal{G}^k$ of strongly convex domains with $C^k$ boundary such that, for each $q \ge 2$, the number of $q$-periodic points is finite. Moreover, all the periodic points are either hyperbolic or elliptic.
		\item There exists an open and dense set $\mathscr{U}$ of strongly convex domains with $C^k$ boundary, $k\geq 3$, such that for each $2$-periodic point $p \in \mathrm{II}$ of saddle type, each branch of $\mathcal{W}^s(p;f^2)\setminus \{p\}$ and $\mathcal{W}^u(p;f^2)\setminus \{p\}$ contains a transverse homoclinic point.
		\end{enumerate}
\end{theorem}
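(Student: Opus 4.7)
The plan is to recognize that Theorem~\ref{nondeg pc} compiles three results from the literature, so my task is to outline the mechanism behind each assertion while deferring the technical details to the cited works.

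For item~(1), the starting point is the classical variational characterization: $q$-periodic orbits of the conservative billiard map $f=f_1$ correspond (modulo cyclic permutation) to critical points of the perimeter functional
\[
\mathcal{L}(s_1,\dots,s_q) := \sum_{i=1}^q \|\Upsilon(s_{i+1})-\Upsilon(s_i)\|, \qquad s_{q+1}:=s_1,
\]
on a suitable open subset of $\mathbb{T}^q$. A $q$-periodic orbit is non-degenerate (as a critical point of $\mathcal{L}$, or equivalently $1\notin\mathrm{Spec}(Df^q(p))$) if and only if $\mathcal{L}$ has a non-degenerate Hessian there. Following \cite{PintoC,CarneiroOliffsonPC}, I would first verify that non-degeneracy is a $C^k$-open condition on the table and, via an explicit variation of $\partial\Omega$ supported near one of the bounce points $\Upsilon(s_i)$, show that it is dense as well. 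Non-degeneracy immediately yields local finiteness (hence finiteness on the compact set $\mathbb{T}^q$) by the implicit function theorem. The elliptic/hyperbolic dichotomy then follows because $f$ is area-preserving: the eigenvalues of $Df^q(p)$ come in reciprocal pairs $\{\mu,\mu^{-1}\}$, and non-degeneracy rules out the cases $\mu=\pm1$ with non-trivial Jordan block.

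For item~(2), I would simply set $\mathcal{G}^k := \bigcap_{q\geq 2} \mathscr{U}^q$. Each $\mathscr{U}^q$ is open and dense in the space of strongly convex $C^k$ domains by item~(1); since this space (endowed with the $C^k$ topology) is a Baire space, the Baire category theorem gives that $\mathcal{G}^k$ is $G_\delta$-dense, and all three conclusions (finiteness for every $q$, and elliptic/hyperbolic dichotomy) hold on $\mathcal{G}^k$ by construction.

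The main obstacle, and the most delicate step, is item~(3): a Kupka-Smale type transversality statement adapted to convex billiards, established by Xia-Zhang \cite{XiaZhang}. Here one starts from a table for which all $2$-periodic saddles have been made non-degenerate via item~(1) with $q=2$, fixes such a saddle $p$, and shows that within the restricted family of perturbations induced by $C^k$ deformations of $\partial\Omega$ (with $k\geq3$), one can produce a transverse intersection point on each of the four pairs of branches of $\mathcal{W}^s(p;f^2)\setminus\{p\}$ and $\mathcal{W}^u(p;f^2)\setminus\{p\}$. The subtlety is that perturbations of $\partial\Omega$ produce only a proper subfamily of diffeomorphism perturbations of $f$, so one must exhibit explicit localized boundary deformations whose effect on the stable/unstable manifolds spans a direction transverse to their tangency locus; this is precisely what is carried out in \cite[Theorem~1.1 and Corollary~4.4]{XiaZhang}. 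Once such a transverse homoclinic point is created, the $\lambda$-lemma and persistence of transverse intersections yield openness, while a standard Baire-type argument across the (finitely many) $2$-periodic saddles yields the open and dense set $\mathscr{U}$. I would then conclude by simply invoking the statements of \cite{PintoC,CarneiroOliffsonPC,XiaZhang}.
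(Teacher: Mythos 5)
Your proposal is correct and matches the paper's treatment: the paper states Theorem \ref{nondeg pc} purely as a summary of results from \cite{PintoC}, \cite{CarneiroOliffsonPC} and \cite{XiaZhang} and gives no proof of its own, which is exactly what you do, with a reasonable sketch of the underlying mechanisms (variational non-degeneracy for item (1), Baire intersection for item (2), the Xia--Zhang restricted Kupka--Smale argument for item (3)). The only minor imprecision is that non-degeneracy of the critical point of the length functional rules out the eigenvalue $1$ but not by itself the eigenvalue $-1$; excluding the latter requires an additional genericity step, which is however carried out in the cited works to which you defer.
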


\begin{coro}\label{nondeg billiard in dk}
	Let $k \geq 3$. There exists an open and dense set $\mathscr{U}$ of strongly convex domains with $C^k$ boundary such that, for every $\Omega\in\mathscr{U}$, the following assertions hold.
	
	\noindent If $f_\lambda$ is a dissipative billiard maps with constant dissipation $\lambda \in (0,1)$, then:
	\begin{enumerate} 
		\item for any $\lambda \in [0,1]$, the set $\mathrm{II}$ of $2$-periodic points of $f_\lambda$ is finite;
		\item for all but at most finitely many $\lambda \in (0,1)$, all the $2$-periodic points are non-degenerate, i.e., they are either saddles or sinks;
		\item\label{ppppppoint trois} there exists $\lambda_*(\Omega)\in (0,1)$ such that for any $\lambda \in [\lambda_*(\Omega),1)$, and for any point $p \in \mathrm{II}$ of saddle type, each branch of $\mathcal{W}^s(p;f_\lambda^2)\setminus \{p\}$ and $\mathcal{W}^u(p;f_\lambda^2)\setminus \{p\}$ contains a transverse homoclinic point.
		\end{enumerate} 
	\noindent If, moreover, $\Omega \in \mathcal{D}^k \cap \mathscr{U}$, where $\mathcal{D}^k$ is the subset of strongly convex domains with $C^k$ boundary as in Definition \ref{defi set d k}, then for a general dissipative billiard map $f_\lambda$ in the sense of Definition \ref{definit diss bill} (with possibly non-constant dissipation),
		all the $2$-periodic points are either saddles or sinks. It is also true for the (degenerate) map $f_0$, namely, when the dissipation $\lambda$ vanishes.  
\end{coro}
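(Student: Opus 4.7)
The plan is to take $\mathscr{U}$ as the finite intersection of the open and dense sets of $C^k$ strongly convex domains provided by Theorem \ref{nondeg pc}(1) (applied to $q=2$) and Theorem \ref{nondeg pc}(3), which is again open and dense. Since the set $\mathrm{II}$ of $2$-periodic points is independent of $\lambda$ (it consists exactly of the perpendicular $2$-bounces of the conservative map $f_1$), point (1) is immediate from Theorem \ref{nondeg pc}(1). For point (2), I would classify the orbits via Lemma \ref{lemme vp reelles}. The genericity $\Omega \in \mathscr{U}$ forces $k_{1,2}(p) \notin \{0,1\}$ for every $p \in \mathrm{II}$: otherwise $p$ would be parabolic for $f_1$, contradicting the fact that every $2$-periodic point of $f_1$ is elliptic or hyperbolic. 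Running through cases (a)--(f) of Lemma \ref{lemme vp reelles}, the only way $\{p,f_\lambda(p)\}$ can fail to be hyperbolic for some $\lambda \in (0,1)$ is case (e) at the bifurcation value $\lambda = \bar\lambda(p)$ (which requires $k_{1,2}(p) \in (-1,0)$). As $\mathrm{II}$ is finite, this yields finitely many exceptional $\lambda$.

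For point (3), Theorem \ref{nondeg pc}(3) supplies, for the conservative map $f=f_1$ and $\Omega \in \mathscr{U}$, a transverse homoclinic point on each branch of $\mathcal{W}^s(p;f^2)\setminus\{p\}$ and $\mathcal{W}^u(p;f^2)\setminus\{p\}$ for every $2$-periodic saddle $p$. By the bifurcation analysis of Lemma \ref{lemme vp reelles}, those saddles are exactly the $p \in \mathrm{II}$ with $k_{1,2}(p) \in (-\infty, 0)\cup(1,\infty)$, and each remains a saddle for $f_\lambda$ with $\lambda$ close to $1$. Since $f_\lambda \to f_1$ in $C^{k-1}$ as $\lambda \to 1$, standard $C^1$-continuous dependence of local stable and unstable manifolds on the dynamics, together with the openness of transverse intersection, ensures that each branch of $\mathcal{W}^{s/u}(p;f_\lambda^2)$ still contains a transverse homoclinic point for $\lambda$ close enough to $1$. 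Taking the maximum of the finitely many thresholds (one per saddle and per branch) produces the desired $\lambda_*(\Omega) < 1$.

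Finally, assume $\Omega \in \mathcal{D}^k \cap \mathscr{U}$. For each $p = (s,0) \in \mathrm{II}$ the underlying perpendicular $2$-segment has common length $\tau = \tau(s) = \tau(s')$, and the defining inequality $\tau(s)\mathcal{K}(s) < -1$ of Definition \ref{defi set d k} applied at both endpoints yields $\tau \mathcal{K}_1 + 1 < 0$ and $\tau \mathcal{K}_2 + 1 < 0$, whence $k_{1,2}(p) > 0$. Combined with $k_{1,2}(p) \neq 1$, cases (a) and (c) of Lemma \ref{lemme vp reelles} give a saddle or a sink for every $\lambda \in (0,1)$, and Lemma \ref{lemma noncst disp} promotes this conclusion to the non-constant dissipation setting. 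For $\lambda = 0$, the last sentence of Lemma \ref{lemme vp reelles} gives eigenvalues $0$ and $k_{1,2}(p)$, which (being positive and different from $1$) again produces a (degenerate) saddle or a sink. The main obstacle I expect is the uniformity in point (3), namely producing a single $\lambda_*(\Omega) < 1$ that works across all branches of all $2$-periodic saddles simultaneously; this reduces to the classical $C^1$-persistence of transverse intersections along finitely many compact arcs of invariant manifolds, and is therefore handled by standard hyperbolic theory.
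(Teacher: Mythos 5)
Your proposal is correct and follows essentially the same route as the paper: finiteness of $\mathrm{II}$ from Theorem \ref{nondeg pc}, the case analysis of Lemma \ref{lemme vp reelles} (with genericity ruling out $k_{1,2}=1$ via parabolicity for $f_1$, and the finitely many $\bar\lambda(p)$ as the only exceptional parameters), $C^1$-persistence of transverse homoclinic points for $\lambda$ near $1$, and the positivity of $k_{1,2}$ on $\mathcal{D}^k$ combined with Lemma \ref{lemma noncst disp} for the non-constant dissipation and $\lambda=0$ cases. The only (harmless) cosmetic difference is that you also exclude $k_{1,2}=0$ by genericity, which the paper does not need since that case is already a sink by Lemma \ref{lemme vp reelles}\ref{item ii en}.
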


\begin{proof}
By the definition of the (dissipative) billiard law given in Definition \ref{definit diss bill}, given a convex domain, the set $\mathrm{II}$ of $2$-periodic points is common to all the maps $\{f_\lambda\}_{\lambda \in [0,1]}$. By Theorem \ref{nondeg pc}, we deduce that there exists an open and dense set $\mathscr{U}$ of strongly convex $C^k$ domains, $k\geq 3$, such that, for every $\Omega\in\mathscr{U}$ 
and for any $\lambda \in [0,1]$, the billiard map $f_\lambda$ has finitely many $2$-periodic points. Fix $\Omega\in \mathscr{U}$, and denote by $\mathrm{II}$ its finite set of $2$-periodic points. 
	For any $p \in \mathrm{II}$, let $\tau,\mathcal{K}_1,\mathcal{K}_2$ be as in Lemma \ref{lemme vp reelles}, and let $k_{1,2}=k_{1,2}(p):= (\tau\mathcal{K}_1+1)(\tau\mathcal{K}_2+1)$. For any $\lambda\in (0,1)$, the $2$-periodic $f_\lambda$-orbit $\{p,f_\lambda(p)\}$ is a saddle or a sink, unless $k_{1,2}=1$ (see Lemma \ref{lemme vp reelles}\ref{item bb en}), or $k_{1,2}\in (-1,0)$ and $\lambda=\bar \lambda(p)$ (see Lemma \ref{lemme vp reelles}\ref{item dd en}\ref{ppoint parab cas deux}). On the one hand, let us examine the case where $k_{1,2}=1$ for the $2$-periodic orbit $\{p,f_1(p)\}$ of the conservative billiard map $f_1$.  
 By equation \eqref{trace d f lambda carre} for $\lambda=1$, $k_{1,2}=1$ if and only if $\mathrm{tr} Df_1^2(p)=2$, i.e., the $2$-periodic $f_1$-orbit $\{p,f_1(p)\}$ is parabolic, which does not occur for the domain $\Omega$, since it is in $\mathscr{U}$. On the other hand, again since $\Omega\in\mathscr{U}$, $\mathrm{II}$ is finite, hence so is the set 
$$
\mathcal{F} := \bigcup_{p\in \mathrm{II}: \ k_{1,2}(p)\in (-1,0)} \{\bar\lambda(p)\}\subset (0,1)\,.
$$ 
By the above discussion, and by Lemma \ref{lemme vp reelles}, we conclude that for any $p\in \mathrm{II}$, 
and for any $\lambda \in (0,1)\setminus \mathcal{F}$, the $2$-periodic orbit $\{p,f_\lambda(p)\}$ is non-degenerate, i.e. it is either a saddle or a sink.

Point \eqref{ppppppoint trois} follows immediately from Theorem \ref{nondeg billiard in dk}; indeed, for the conservative billiard map $f_1$, there exist transverse homoclinic points on each of the branchs of any $2$-periodic point of saddle type; the existence of transverse homoclinic points is stable under $C^1$-small perturbations of the dynamics, hence the same property holds true for any $f_\lambda$ with $\lambda \in (0,1)$ close enough to $1$. 

\indent Finally assume that $\Omega$ belongs to the set $\mathcal{D}^k\cap \mathscr{U}$. Let $f_\lambda$ be a general dissipative billiard map for $\Omega$ in the sense of Definition \ref{cond lambda dis}.  It has the same set $\mathrm{II}$ of $2$-periodic points as $f_1$. 
	By condition \eqref{condition geom norm hyp}, for any $p \in \mathrm{II}$, we have  $k_{1,2}(p)> 0$. Moreover, $k_{1,2}(p)\neq 1$, since $\Omega\in \mathscr{U}$, hence by Lemma \ref{lemma noncst disp}, 
	the $2$-periodic orbit $\{p,f_\lambda(p)\}$ is either a saddle or a sink.  Besides, for $\lambda=0$, Lemma \ref{lemme vp reelles} says that the eigenvalues of $p$ are $\mu_1=0$ and $\mu_2=k_{1,2}(p)$; as $k_{1,2}(p)>0$ and $k_{1,2}(p)\neq 1$, we deduce that the $2$-periodic $\{p,f_0(p)\}$ of $f_0$ is either a saddle or a sink. 
\end{proof}

Let us remind some classical definitions and results for conservative billiard maps. Let $\Omega$ be a strongly convex domain with $C^2$ boundary. Then, the billiard map expressed in coordinates $(s,\varphi)\in \T\times[-\frac{\pi}{2},\frac{\pi}{2}]$ is a $C^1$ diffeomorphism, see \cite[Proposition I.3.2]{Douady} and \cite[Page 11]{LeCalvez1990}.
Let $F_1\colon\R\times[-1,1]\to\R\times[-1,1]$ be a lift of the conservative billiard map. Let $\tilde\pi_1\colon\R\times[-1,1]\to\R$ be the projection onto the first coordinate. Then the $F_1$-orbit of a point $(S,r)$ is completely determined by the bi-infinite sequence $(S_i)_{i\in\Z}:=(\tilde\pi_1\circ F_1^i(S,r))_{i\in\Z}$.
\begin{definition}
	Let $(S,r)\in\R\times(-1,1)$. The rotation number of $(S,r)$ is
	$$
	\rho(S,r):=\lim_{n\to \infty}\dfrac{\tilde\pi_1\circ F_1^n(S,r)}{n} \, ,
	$$
	whenever the limit exists. 
\end{definition}
\noindent Observe that the rotation number depends on the chosen lift $F_1$. Up to the choice of the lift, i.e., a lift such that $F_1$ is the identity on the lower boundary $\R\times\{-1\}$,  the rotation number of any point belongs to the interval $[0,1]$. Observe that, for such a lift, the rotation number of a periodic (conservative) billiard trajectory corresponds to $\frac{\text{winding number}}{\text{number of reflections}}$.

Let $\ell\colon\R^2\to\R$ be the generating function of the conservative billiard map. From the geometric point of view, the quantity $\ell(S_i,S_{i+1})$ corresponds to the Euclidean distance on $\R^2$ between $\Upsilon(s_i)$ and $\Upsilon(s_{i+1})$, where $\pi\colon\R\to\T$ is a covering and $s_i=\pi(S_i)$. This is also the quantity previously denoted as $\tau(s_i,s_{i+1})$. In the next proposition, with an abuse of notation, since $\ell$ is invariant under the action of $\Z$, we also denote by $\ell$ the function induced on $\T^2$.  \\
By a standard construction due to Birkhoff, see e.g. \cite[Theorem 1.2.4]{Siburg}, it is well-known that there exist at least two periodic orbits for every rational rotation number. They are obtained by considering the length functional, given by $\sum_{i \in \mathbb{Z}} \ell(S_i,S_{i+1})$. In particular, the first orbit is given by maximizing the functional, while the other one is given by a min-max procedure (sometimes referred to as the \textit{``Mountain Pass Lemma''}). In particular, for the rotation number $\frac 1 2$, we obtain two $2$-periodic orbits for the conservative billiard map. Then --as remarked at the beginning of the section-- the dissipative billiard map $f_\lambda$ has two 2-periodic orbits for any $\lambda\in [0,1]$. More precisely, for every $\lambda\in(0,1)$, the set of 2-periodic points is non empty and it contains at least 2 different orbits. 
\begin{proposition}\label{remark point crit func}
	Let $f_\lambda$ be the dissipative billiard map of a strongly convex $C^k$domain $\Omega$ with $C^k$ boundary, $k \geq 2$, that belongs to the open and dense set $\mathscr{U}$ of Theorem \ref{nondeg pc}. Assume that $\{p=(s_1,0),f_\lambda(p)=(s_2,0)\}$ is a $2$-periodic orbit. We denote by $\mathcal{K}_1,\mathcal{K}_2<0$ the respective curvatures at the points $\Upsilon(s_1)$ and $\Upsilon(s_2)$, where 
	$\Upsilon\colon\T\to\R^2$ is an arclength parametrization of the boundary. 
	Let $\tau:=\ell(s_1,s_2)$ and $k_{1,2}:=(\tau \mathcal{K}_1 + 1)(\tau \mathcal{K}_2 + 1)$. 
	Denote by $\{\mu_1,\mu_2\}$ the eigenvalues of $Df_\lambda^2(p)$, with $|\mu_1|\leq |\mu_2|$. Then: 
	\begin{enumerate}[label=(\alph*)]
		\item\label{premier ppooiinntt} if $(s_1,s_2)$ corresponds to a 
		local maximum of $\ell$ 
		(e.g. when $[\Upsilon(s_1),\Upsilon(s_2)]$ is a diameter), then $k_{1,2}>1$, and  
		the $2$-periodic orbit $\{p,f_\lambda(p)\}$ is a saddle;
		\item\label{sssecond ppooiinntt} if $(s_1,s_2)$ corresponds to a 
		critical point of saddle type of $\ell$, 
		then $k_{1,2}<1$, and 
		it holds:
		\begin{enumerate}[label=(\roman*)]
			\item if $k_{1,2}\geq 0$\footnote{Note that it is always the case when $\mathcal{K}_1=\mathcal{K}_2$, or when $\Omega \in \mathcal{D}^k$.}, then the $2$-periodic orbit $\{p,f_\lambda(p)\}$ is a sink;
			\item if $k_{1,2}\in (-1,0)$, 
			let
			$
			\bar \lambda=\bar \lambda(p):=\frac{1-\sqrt{-k_{1,2}}}{1+\sqrt{-k_{1,2}}}\in (0,1)
			$; 
			then, we have:
			\begin{itemize} 
				\item for any $\lambda \in (0,\bar \lambda)$, the $2$-periodic orbit $\{p,f_\lambda(p)\}$ is a sink;
				\item for $\lambda=\bar \lambda$, the $2$-periodic orbit $\{p,f_\lambda(p)\}$ is parabolic;
				\item for any $\lambda \in (\bar \lambda,1)$, the $2$-periodic orbit $\{p,f_\lambda(p)\}$ is a saddle; 
			\end{itemize} 
			\item  if $k_{1,2}\leq-1$, then for any $\lambda \in (0,1)$, the $2$-periodic orbit $\{p,f_\lambda(p)\}$ is a saddle. 
			\end{enumerate}
	\end{enumerate}
\end{proposition}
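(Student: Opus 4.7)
The plan is to reduce both parts to Lemma~\ref{lemme vp reelles} by establishing a single geometric identity linking the Hessian of $\ell$ at $(s_1,s_2)\in\T^2$ to $k_{1,2}$, namely
\begin{equation*}
k_{1,2} - 1 \,=\, \tau^2 \det(H\ell)(s_1,s_2),
\end{equation*}
where $H\ell$ denotes the Hessian at the critical point $(s_1,s_2)$ of the length function $\ell\colon\T^2\to\R$ associated with the $2$-periodic orbit $\{p,f_\lambda(p)\}$. Once this identity is in hand, the sign of $\det(H\ell)$ tells us whether $(s_1,s_2)$ is a local maximum of $\ell$ (equivalently $k_{1,2}>1$) or a saddle critical point (equivalently $k_{1,2}<1$), and each eigenvalue conclusion becomes a direct reading of the relevant item in Lemma~\ref{lemme vp reelles}.

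To prove the identity, I compute $H\ell$ at the critical point directly from $\ell(s_1,s_2)=|\Upsilon(s_2)-\Upsilon(s_1)|$, writing $\hat u=(\Upsilon(s_2)-\Upsilon(s_1))/\tau$. A first differentiation gives $\partial_{s_1}\ell=-\langle\Upsilon'(s_1),\hat u\rangle$ and $\partial_{s_2}\ell=\langle\Upsilon'(s_2),\hat u\rangle$, so the critical-point equations reduce to perpendicular incidence at both endpoints, which is exactly the $2$-periodic condition. Differentiating once more, using $\partial_{s_i}\tau=0$ at the critical point, the relation $\langle\Upsilon''(s_i),\hat u\rangle=-\mathcal{K}_i$ (valid under the paper's convention $\mathcal{K}_i<0$ for strongly convex domains), together with $\Upsilon'(s_2)=-\Upsilon'(s_1)$ along a perpendicular back-and-forth orbit, yields
\begin{equation*}
H\ell(s_1,s_2) \,=\, \begin{pmatrix} \mathcal{K}_1 + 1/\tau & 1/\tau \\ 1/\tau & \mathcal{K}_2 + 1/\tau \end{pmatrix}.
\end{equation*}
Expanding the determinant gives $\tau^2\det(H\ell)=\tau^2\mathcal{K}_1\mathcal{K}_2+\tau(\mathcal{K}_1+\mathcal{K}_2)=(\tau\mathcal{K}_1+1)(\tau\mathcal{K}_2+1)-1=k_{1,2}-1$, as claimed.

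For part~\ref{premier ppooiinntt}, a local maximum of $\ell$ at $(s_1,s_2)$ forces $\det(H\ell)>0$, hence $k_{1,2}>1$, and Lemma~\ref{lemme vp reelles}\ref{item aa en} gives that $\{p,f_\lambda(p)\}$ is a saddle for every $\lambda\in(0,1)$. The diameter is covered as the special case where $\ell$ attains its global, hence local, maximum. For part~\ref{sssecond ppooiinntt}, a saddle critical point of $\ell$ means $\det(H\ell)<0$, hence $k_{1,2}<1$; the three subcases are then direct applications of Lemma~\ref{lemme vp reelles}: $k_{1,2}\in[0,1)$ gives a sink by items~\ref{item cc en} and~\ref{item ii en}, $k_{1,2}\in(-1,0)$ is exactly item~\ref{item dd en} with its bifurcation at $\bar\lambda$, and $k_{1,2}\leq -1$ is item~\ref{il reste un cas}. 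The footnote is a short check: if $\mathcal{K}_1=\mathcal{K}_2$ then $k_{1,2}=(1+\tau\mathcal{K}_1)^2\geq 0$; and if $\Omega\in\mathcal{D}^k$, then at the $2$-periodic orbit the first-segment length appearing in Definition~\ref{defi set d k} agrees with $\tau$, so $\tau\mathcal{K}_i<-1$ at both endpoints, making both factors of $k_{1,2}$ strictly negative.

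The main (mild) obstacle is bookkeeping of sign conventions: the paper takes $\mathcal{K}<0$ for strongly convex domains, and one must check that this is consistent both with the expression~\eqref{matrice differ} for $Df_\lambda$ (which underlies the definition of $k_{1,2}$ in Lemma~\ref{lemme vp reelles}) and with the sign of $\langle\Upsilon''(s_i),\hat u\rangle$ in the Hessian formula above. I would verify this on the ellipse with its major and minor axes, where $k_{1,2}$ can be computed in closed form from both sides of the identity, as a sanity check before concluding.
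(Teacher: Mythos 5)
Your proof follows essentially the same route as the paper's: both compute the Hessian of the generating function $\ell$ at the critical point $(s_1,s_2)$, obtain the identity $\tau^2\det(H\ell)=k_{1,2}-1$ (the paper cites this matrix from the literature, you derive it directly, correctly), and then read off each case from Lemma~\ref{lemme vp reelles}. One small imprecision: a local maximum of $\ell$ only forces $\det(H\ell)\geq 0$ (negative \emph{semi}-definiteness), so to get the strict inequality $k_{1,2}>1$ you must invoke the hypothesis $\Omega\in\mathscr{U}$ to rule out $k_{1,2}=1$ (which would make the orbit parabolic for $f_1$), and likewise for the strict inequality in the saddle case --- the paper makes this step explicit and you should too.
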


\begin{proof}
	Fix a  $2$-periodic orbit $\{p=(s_1,0),f_\lambda(p)=(s_2,0)\}$. 
	Since $\partial_1 \ell(s,s')=-\sin \varphi$ and $\partial_2 \ell(s,s')=\sin \varphi'$, 
the point $(s_1,s_2)$ is a critical point of $\ell$. Moreover, 
we have (see e.g. \cite[Lemma 2.1]{KaloshinZhang})
	$$
	\ell(s_1+\delta s,s_2+\delta s')-\ell(s_1,s_2)=\frac 12\begin{bmatrix} \delta s & \delta s'
	\end{bmatrix}\underbrace{\begin{bmatrix}
		\mathcal{K}_1+\frac{1}{\tau} & \frac{1}{\tau}\\
		\frac{1}{\tau} & \mathcal{K}_2+\frac{1}{\tau}
		\end{bmatrix}}_{=:A}\begin{bmatrix} \delta s \\ \delta s'
	\end{bmatrix}+o((\delta s)^2+(\delta s')^2).
	$$
	Let us distinguish between two cases, namely, when the pair $(s_1,s_2)$ corresponds to a local maximum or a critical point of saddle type of the length functional. 
	\begin{enumerate}[label=(\alph*)]
		\item In the first case, when $\ell$ is locally maximal at $(s_1,s_2)$, the Hessian matrix $A$ of $\ell$ is negative semi-definite, i.e., $\mathrm{tr}A=\mathcal{K}_1+\mathcal{K}_1+\frac{2}{\tau}\leq 0$, and $\det A=\frac{1}{\tau^2}(k_{1,2}-1)\geq 0$ with $k_{1,2}:=(1+\tau\mathcal{K}_1)(1+\tau\mathcal{K}_2)$. Since $\mathcal{K}_1,\mathcal{K}_2<0$, from the inequality for $\det A$, we deduce that $
		k_{1,2}\geq 1
		$.
		Therefore, by Lemma \ref{lemme vp reelles}\ref{item aa en}-\ref{item bb en}, the real eigenvalues $\mu_1\leq \mu_2$ of $Df_\lambda^2(p)$ satisfy $0<\mu_1\leq\lambda^2<1\leq\mu_2$. In particular, if the local maximum is non-degenerate, then $\det A>0$, hence $k_{1,2}>1$, and by Lemma \ref{lemme vp reelles}\ref{item aa en}, the $2$-periodic point is a saddle, with $0<\mu_1<\lambda^2<1<\mu_2$. Note that local maxima of $\ell$ are always non-degenerate if $\Omega \in \mathscr{U}$; indeed, as in the proof of Corollary \ref{nondeg billiard in dk}, we see that in that case, $k_{1,2}\neq 1$.  
		\item In the second case, the matrix $A$ satisfies $\det A=\frac{1}{\tau^2}(k_{1,2}-1)\leq 0$, hence
		$
		k_{1,2}\leq 1
		$. 
		When the critical point is non-degenerate (in particular, when $\Omega \in \mathscr{U}$), it holds $\det A<0$, hence $k_{1,2}<1$. Then, the conclusion of point \ref{sssecond ppooiinntt} in the above statement follows respectively from Lemma \ref{lemme vp reelles}\ref{item cc en}-\ref{item ii en}, when $k_{1,2}\in [0,1)$, from Lemma \ref{lemme vp reelles}\ref{item dd en}, when $k_{1,2}\in (-1,0)$, and from Lemma \ref{lemme vp reelles}\ref{il reste un cas}, when $k_{1,2}\leq-1$. \qedhere
	\end{enumerate}
\end{proof} 
 
\section{Birkhoff attractor for circular and elliptic billiards} \label{quattro}

This section is devoted to the study of the Birkhoff attractor for the dissipative billiard map of (circles and) ellipses. To fix ideas, we can imagine that in what follows, the billiard maps have constant dissipation $\lambda$, but in fact, all the results presented in this section hold for a general dissipative billiard map as in Definition \ref{definit diss bill}. A useful tool through the whole section is the notion of Lyapunov function.
\begin{definition}\label{def lyap function}
	Let $(X,d)$ be a metric space and let $f\colon X\to X$ be a continuous map. 
	A continuous function $L\colon X\to\R$ is a Lyapunov function for $f$ if $L\circ f(x)\leq L(x)$ for every $x\in X$. 
	If $L$ is a Lyapunov function for $f$, the corresponding neutral set is defined as 
	$\mathcal{N}(L):=\{x\in X :\ L\circ f(x)=L(x)\}$.
\end{definition}
As in the Birkhoff case, the simplest example of dissipative billiard is when the boundary of the billiard table is a circle

$$
\mathcal{C}:=\left\{x=(x_1,x_2)\in \R^2: x_1^2 + x_2^2 = R^2 \right\}. 
$$
\noindent The proof of the next result is straightforward.

\begin{proposition} Let $f_\lambda \colon \mathbb{A} \to \mathbb{A}$ be a dissipative billiard map within a circle $\mathcal{C}$. The corresponding Birkhoff attractor $\Lambda_\lambda$ is equal to the attractor $\Lambda_\lambda^0$, and
	$$
	\Lambda_\lambda = \Lambda_\lambda^0 = \mathbb{T} \times \{0\}.
	$$
\end{proposition}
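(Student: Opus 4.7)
The key feature of the circular billiard is rotational symmetry: the conservative map $f_1$ preserves the $r$-coordinate, since reflecting in a circle preserves the angle to the normal. So $f_1(s,r)=(s+\alpha(r),r)$ for some function $\alpha$ depending only on $r$. The plan is to exploit this to build a simple Lyapunov function and pin down both the attractor and the Birkhoff attractor.

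First I would verify that $L(s,r):=|r|$ is a strict Lyapunov function for $f_\lambda$ away from $\mathbb{T}\times\{0\}$. Indeed, by the decomposition $f_\lambda=\mathcal{H}_\lambda\circ f_1$ and the fact that $f_1$ preserves $r$ in the circle, writing $f_\lambda(s,r)=(s',\lambda(s',r)\,r)$ gives
\[
L(f_\lambda(s,r))=\lambda(s',r)\,|r|<|r|=L(s,r)\quad \text{whenever } r\neq 0,
\]
since $\lambda$ takes values in $(0,1)$. Moreover $f_\lambda(s,0)=(s+\tfrac12,0)$, so $\mathbb{T}\times\{0\}$ is $f_\lambda$-invariant and is precisely the neutral set $\mathcal{N}(L)$.

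Next I would extract uniform contraction of $|r|$. Since $\lambda\colon \mathbb{A}\to (0,1)$ is continuous on a compact set, there exists $\lambda_{\max}\in(0,1)$ with $\lambda\leq \lambda_{\max}$. Iterating the relation above yields $|r_n|\leq \lambda_{\max}^{\,n}\,|r_0|\to 0$ for every initial condition $(s_0,r_0)\in\mathbb{A}$. Consequently every orbit accumulates on $\mathbb{T}\times\{0\}$, so
\[
\Lambda_\lambda^0=\bigcap_{k\geq 0}f_\lambda^k(\mathbb{A})\subset \mathbb{T}\times\{0\}.
\]
The reverse inclusion is immediate from the invariance of $\mathbb{T}\times\{0\}$: it is a compact $f_\lambda$-invariant subset contained in every forward image $f_\lambda^k(\mathbb{A})$, hence $\mathbb{T}\times\{0\}\subset \Lambda_\lambda^0$. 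This gives $\Lambda_\lambda^0=\mathbb{T}\times\{0\}$.

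Finally, for the Birkhoff attractor, Proposition \ref{minimality} gives $\Lambda_\lambda\subset \Lambda_\lambda^0=\mathbb{T}\times\{0\}$, and $\Lambda_\lambda$ is compact, connected, and separates $\mathbb{A}$. But the only compact connected subset of $\mathbb{T}\times\{0\}$ (which is topologically a circle) that separates $\mathbb{A}=\mathbb{T}\times[-1,1]$ is the whole circle itself; any proper compact connected subset is an arc or a point, which fails to disconnect the annulus. Therefore $\Lambda_\lambda=\mathbb{T}\times\{0\}=\Lambda_\lambda^0$. There is no real obstacle here — the argument is essentially bookkeeping once one notices that the $r$-coordinate is preserved by $f_1$ in the circular case, which reduces the dissipative dynamics to a pure contraction in the $r$ direction.
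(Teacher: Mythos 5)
Your argument is correct and follows essentially the same route as the paper: the $r$-coordinate is uniformly contracted by a factor $\max_{\mathbb{A}}\lambda<1$ at each step, forcing $\Lambda_\lambda^0=\mathbb{T}\times\{0\}$, and then $\Lambda_\lambda=\Lambda_\lambda^0$ because no proper compact connected subset of the circle $\mathbb{T}\times\{0\}$ separates the annulus, so it is the minimal element of $\mathcal{X}(f_\lambda)$. The Lyapunov-function framing is a harmless elaboration of the same contraction estimate.
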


\begin{proof} We notice that, since for any $M \in \left( 0,1 \right]$,
	\begin{equation*}
		f_{\lambda}(\mathbb{T} \times [-M,M]) \subset \mathbb{T} \times \big[-\max_\A\lambda \, M,\max_\A\lambda\, M\big] \subset \mathbb{T} \times (-M,M),
	\end{equation*}
	the attractor (see \ref{CON}) corresponds to $\Lambda_\lambda^0= \mathbb{T} \times \{0\}$. Since $\mathbb{T} \times \{0\}$ is the minimal element, with respect to the inclusion, in $\mathcal{X}(f_{\lambda})$, this concludes the proof.
\end{proof}
\begin{remark}The following are easy observations about dissipative maps inside a circular billiard. For this remark, we assume that the dissipation $\lambda$ is constant.  
	\begin{enumerate}
		\item Since $\mathcal{D}$ is axially symmetric with respect to every line passing through its center, the fact that $\mathbb{T} \times \{0\} \subset \Lambda_\lambda$ is a direct application of Corollary \ref{COR a}.
		\item It is worth noting that, in the case of the map $f_1$ on the disc, the angle $\varphi=\arcsin r$ stays constant along every orbit and it represents an integral of motion; as a consequence, in the dissipative case, $L(s,r) = r$ is a Lyapunov function for $f_{\lambda}$ and $\Lambda_\lambda$ corresponds to the neutral set $\mathcal{N}(L)$ of $L$ (see Definition \ref{def lyap function}). 
		\item The foliation $\{\T\times\{r\} :\ r\in[-1,1]\}$ is $f_\lambda$-invariant, i.e., for every $r\in[-1,1]$ there exists $r'\in [-1,1]$ such that $f_\lambda(\T\times\{r\})=\T\times\{r'\}$. In particular, $r$ and $r'$ have the same sign and $\vert r'\vert\leq\vert r\vert$.
	\end{enumerate} 
\end{remark}
\indent In the following, we investigate the dynamics of the dissipative billiard map within an ellipse $\mathcal{E}$ of non-zero eccentricity $e$. As the dynamics is unchanged under rigid motion of the table (affine maps of $\R^2$), without loss of generality, we assume that the major axis is horizontal, and the minor axis is vertical, i.e., for some parameters $a_1>a_2>0$, we have
$$
\mathcal{E}:=\left\{x=(x_1,x_2)\in \R^2: \frac{x_1^2}{a_1^2}+\frac{x_2^2}{a_2^2}=1\right\},\quad e:=\frac{\sqrt{a_1^2-a_2^2}}{a_1}\in (0,1)\,.
$$ 
Denoting by $\cdot$ the Euclidean scalar product, and by $B$ the diagonal matrix 
$$B:=\begin{bmatrix}\frac{1}{a_1^2} & 0 \\
	0 & \frac{1}{a_2^2}\end{bmatrix},$$ 
the equation of $\mathcal{E}$ can be abbreviated as $\mathcal{E}=\{Bx\cdot x=1\}$. In particular, for any $x \in \mathcal{E}$, the vector $Bx$ is collinear with the normal to $\mathcal{E} $ at $x$; in fact, $Bx$ points outside the convex domain bounded by $\mathcal{E}$. For $\lambda \in (0,1)$, let $f_\lambda \colon \mathbb{A}\to \mathbb{A}$ be the associated dissipative billiard map where, in such a case, it is convenient to describe the phase-space with the $(x,v)$ coordinates:
$$\big\{(x,v)\in \mathcal{E} \times T^1 \mathcal{E}: Bx\cdot v\leq 0\big\}\,.$$
\noindent With an abuse of notation, we will refer to this set of coordinates $\{(x,v)\in \mathcal{E} \times T^1 \mathcal{E}: Bx\cdot v\leq 0\}$ also as $\A$. In order to lighten notation, $f_\lambda$ will also denote the dissipative billiard map in $(x,v)$-coordinates.

\noindent Since a point $(s,r)\in \mathbb{A}$ is $2$-periodic for $f_\lambda$ if and only if it is $2$-periodic for  the standard billiard map $f_1$, the set of $2$-periodic points is reduced to 
$$
\mathrm{II}:=\{E_1,E_2,H_1,H_2\}\,, 
$$
where we denote by 
$$
\{E_1,E_2=f_\lambda(E_1)\} \quad \text{ and } \quad \{H_1,H_2=f_\lambda(H_1)\}
$$ 
the $2$-periodic orbits of $f_\lambda$ corresponding to the minor and the major axis respectively. \\ 

\indent The next result is a direct outcome of Lemma \ref{lemma noncst disp}. 

\begin{lemma}\label{lemme vp reelles ellipse} Let $f_\lambda \colon \mathbb{A} \to \mathbb{A}$ be a dissipative billiard map within an ellipse
	$\mathcal{E}$ of non-zero eccentricity. 
	The $2$-periodic orbit $\{E_1,E_2=f_{\lambda}(E_1)\}$, corresponding to the minor axis, is a sink. The $2$-periodic orbit $\{H_1, H_2 =f_{\lambda}(H_1)\}$, corresponding to the major axis, is a saddle. 
\end{lemma}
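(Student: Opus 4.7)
The plan is to invoke Lemma \ref{lemma noncst disp} after computing, for each of the two $2$-periodic orbits, the quantity
\[
k_{1,2}(p)=(\tau\mathcal{K}_1+1)(\tau\mathcal{K}_2+1).
\]
Since both orbits bounce orthogonally at two vertices of the ellipse that have the same curvature, the first step is just to recall (or derive from the standard parametrization $\theta\mapsto(a_1\cos\theta,a_2\sin\theta)$) the curvatures at the endpoints of the axes: at the minor axis vertices $(0,\pm a_2)$ one finds $\mathcal{K}=-a_2/a_1^2$, while at the major axis vertices $(\pm a_1,0)$ one finds $\mathcal{K}=-a_1/a_2^2$. The lengths of the two axes give $\tau=2a_2$ for the minor axis orbit and $\tau=2a_1$ for the major axis orbit.

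Plugging these in, the minor axis orbit gives $\tau\mathcal{K}+1=1-2a_2^2/a_1^2=2e^2-1$, so
\[
k_{1,2}(E_1)=(2e^2-1)^2\in[0,1)
\]
for any $e\in(0,1)$; and the major axis orbit gives $\tau\mathcal{K}+1=1-2a_1^2/a_2^2<-1$, so
\[
k_{1,2}(H_1)=\Bigl(1-\tfrac{2a_1^2}{a_2^2}\Bigr)^2>1.
\]
In particular both quantities are non-negative, so Lemma \ref{lemma noncst disp} applies: the orbit $\{E_1,E_2\}$ is a sink because $0\le k_{1,2}(E_1)<1$, and the orbit $\{H_1,H_2\}$ is a saddle because $k_{1,2}(H_1)>1$.

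There is essentially no obstacle: the argument is a one-line computation of $k_{1,2}$ at the vertices followed by a direct appeal to the already-established Lemma \ref{lemma noncst disp}. The only point worth a brief remark is that the special value $e=\tfrac{\sqrt 2}{2}$ gives $k_{1,2}(E_1)=0$, which still falls in the regime $k_{1,2}<1$ covered by Lemma \ref{lemma noncst disp}, so the conclusion that $\{E_1,E_2\}$ is a sink is uniform in the eccentricity $e\in(0,1)$. Note also that the conclusion holds without any assumption on $\lambda$ being close to constant, since the relevant statement of Lemma \ref{lemma noncst disp} dichotomizes saddle vs.\ sink purely through the sign of $k_{1,2}-1$ when $k_{1,2}\geq 0$.
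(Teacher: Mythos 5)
Your proof is correct and follows essentially the same route as the paper: compute $k_{1,2}=(\tau\mathcal{K}+1)^2$ at the two axes and invoke Lemma \ref{lemma noncst disp}, obtaining $k_{1,2}(E_1)=(2e^2-1)^2\in[0,1)$ (sink) and $k_{1,2}(H_1)>1$ (saddle). Your remark about $e=\tfrac{\sqrt 2}{2}$ giving $k_{1,2}(E_1)=0$ is a small point of extra care (the paper writes $k_{1,2}\in(0,1)$ there), but the conclusion is unaffected since Lemma \ref{lemma noncst disp} covers all $k_{1,2}\in[0,1)$.
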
 
\begin{proof} 
	The statement immediately follows from Lemma \ref{lemma noncst disp}. Indeed, as in Section \ref{conti 2 orbite}, for a 2-periodic orbit $\{p,f_\lambda(p)\}=\{p,f_1(p)\}$, denote by $\tau$ and $\mathcal{K}$  respectively the distance between the two bounces and the common curvature at these points, and as in \eqref{k un deuxx}, let 
	$$
	k_{1,2}(p):=(\tau \mathcal{K} + 1)^2\geq 0\,.
	$$ 
	On the one hand, when $p\in\{E_1,E_2\}$, $\tau \mathcal{K} = 2 a_2 (-\frac{a_2}{a_1^2}) = -2 (\frac{a_2}{a_1})^2 \in (-2,0)$. Thus, $k_{1,2}(p)\in (0,1)$, and then, the $2$-periodic orbit $\{E_1,E_2=f_{\lambda}(E_1)\}$ is a sink. On the other hand, when $p\in\{H_1,H_2\}$, $\tau \mathcal{K} = 2 a_1( -\frac{a_1}{a_2^2}) = -2 (\frac{a_1}{a_2})^2 <-2$. Thus, $k_{1,2}(p)>1$, and then, the $2$-periodic orbit $\{H_1,H_2=f_{\lambda}(H_1)\}$ is a saddle. 
\end{proof}

\begin{notation}For $i = 1,2$, we denote by $\mathcal{W}^s(H_i;f_\lambda^2)$ (resp. $\mathcal{W}^u(H_i;f_\lambda^2)$) the ($1$-dimensional) stable (resp. unstable) manifold of $H_i$ for $f_\lambda^2$. In order to lighten the notation, for $*=s,u$, $i=1,2$, we also denote by $\mathcal{W}^*(\mathcal{O}_\lambda(H_i))$ the union $\mathcal{W}^*(H_1;f_\lambda^2)\cup\mathcal{W}^*(H_2;f_\lambda^2)$. Similarly, let $\mathcal{W}^s(E_i;f_\lambda^2)$ be the ($2$-dimensional) stable manifold of $E_i$ for $f_\lambda^2$. Again, to lighten the notation, for $i=1,2$, we denote by $\mathcal{W}^s(\mathcal{O}_\lambda(E_i)) $ the union $\mathcal{W}^s(E_1;f_\lambda^2)\cup\mathcal{W}^s(E_2;f_\lambda^2)$.\end{notation}

\indent The main result of the present section is the following characterization of the Birkhoff attractor for dissipative billiard maps within an ellipse. 
\begin{theorem} \label{ellisse} Let $f_\lambda \colon \mathbb{A} \to \mathbb{A}$ be a dissipative billiard map within an ellipse
	$\mathcal{E}$ of non-zero eccentricity. The corresponding Birkhoff attractor $\Lambda_\lambda$ is equal to the attractor $\Lambda_\lambda^0$, and we have
	\begin{equation}\label{egalite lambda lambda zero w u}
		\Lambda_\lambda^0=\Lambda_\lambda=\mathcal{W}^u(\mathcal{O}_\lambda(H_1))\cup \{E_1,E_2\} = \overline{\mathcal{W}^u(\mathcal{O}_\lambda(H_1))} \,.
	\end{equation}
	Moreover, for $i=1,2$, $\mathcal{W}^u(H_i;f_\lambda^2)\setminus \{H_i\}$ is the disjoint union of two branches $\mathscr{C}_i^1,\mathscr{C}_i^2$, with $\mathscr{C}_i^j\subset \mathcal{W}^s(E_j;f_\lambda^2)$, $j=1,2$. 
\end{theorem}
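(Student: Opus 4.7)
My approach has three logical stages: (i) showing the easy inclusions via symmetry and invariance of $\Lambda_\lambda^0$; (ii) establishing the reverse inclusion via a Lyapunov function built from the Joachimsthal integral; and (iii) identifying the branches of $\mathcal{W}^u(H_i)$ and deducing $\Lambda_\lambda = \Lambda_\lambda^0$ from symmetry.

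For stage (i), Lemma \ref{lemme vp reelles ellipse} identifies $\{H_1, H_2\}$ as a saddle orbit and $\{E_1, E_2\}$ as a sink. Since $\mathcal{E}$ is axially symmetric with respect to both its major and minor axes, Corollary \ref{COR a} places both 2-periodic orbits inside $\Lambda_\lambda \subset \Lambda_\lambda^0$. Moreover, $f_\lambda \colon \A \to f_\lambda(\A) \subset \mathrm{int}(\A)$ is a diffeomorphism, so $\Lambda_\lambda^0$ is bijectively mapped onto itself and is therefore also $f_\lambda^{-1}$-invariant; combined with closedness, this gives $\overline{\mathcal{W}^u(\mathcal{O}_\lambda(H_1))} \cup \{E_1, E_2\} \subset \Lambda_\lambda^0$.

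For stage (ii), the heart of the proof, I would use the classical Joachimsthal integral $G \colon \A \to \R$ of the conservative billiard $f_1$ within $\mathcal{E}$: setting $G(s, r)$ to be the parameter of the confocal conic tangent to the trajectory through $\Upsilon(s)$ with direction $v(s, r)$, one has $G \circ f_1 = G$, $G(s, -r) = G(s, r)$, and, for each fixed $s$, $G(s, \cdot)$ is strictly monotone in $r^2$. Combining $f_\lambda = \mathcal{H}_\lambda \circ f_1$ with the strict contraction of $|r|$ by $\mathcal{H}_\lambda$ away from $r = 0$ yields
\[
G \circ f_\lambda(s, r) = G(s', \lambda r_1') \leq G(s', r_1') = G(s, r),
\]
with equality if and only if $r_1'(s, r) = 0$. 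Setting $\mathcal{N} := \{r_1' = 0\}$, the maximal $f_\lambda$-invariant subset of $\mathcal{N}$ is exactly $\{H_1, H_2, E_1, E_2\}$: indeed, if $(s, r)$ and $f_\lambda(s, r)$ both lie in $\mathcal{N}$, then $f_\lambda(s, r) \in \T \times \{0\}$ and the perpendicular trajectory through this point must land perpendicularly at its next collision, forcing the chord to be a perpendicular diameter, which for an ellipse of positive eccentricity is the major or minor axis. For any $y \in \Lambda_\lambda^0$, the backward orbit stays in $\Lambda_\lambda^0$, so the $\alpha$-limit $\alpha(y)$ is a nonempty compact $f_\lambda$-invariant subset of $\mathcal{N}$, hence $\alpha(y) \subset \{H_1, H_2, E_1, E_2\}$. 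Sink-stability rules out $E_j \in \alpha(y)$ unless $y = E_j$, so $\alpha(y) \subset \{H_1, H_2\}$, and the local unstable manifold theorem concludes $y \in \mathcal{W}^u(\mathcal{O}_\lambda(H_1))$.

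For stage (iii), applying the Lyapunov argument forward to each branch $\mathscr{C}_i^j$ of $\mathcal{W}^u(H_i) \setminus \{H_i\}$ shows that the $\omega$-limit of $\mathscr{C}_i^j$ reduces to a single sink (homoclinic or heteroclinic connections among the saddles would contradict the strict decrease of $L := G$). The axial reflection $\mathcal{I}_\Delta$ with $\Delta$ the major axis fixes each $H_i$ and acts as $-\mathrm{Id}$ on $T_{H_i} \A$ by Lemma \ref{lemme sym axiale}, hence it swaps the two branches of $\mathcal{W}^u(H_i)$; since it also swaps $E_1 \leftrightarrow E_2$, the two branches converge to distinct sinks, yielding $\mathscr{C}_i^j \subset \mathcal{W}^s(E_j; f_\lambda^2)$. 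Proposition \ref{hyp:stable-unstable} puts at least one branch of $\mathcal{W}^u(H_i)$ in $\Lambda_\lambda$ (the alternative $\mathcal{W}^s(H_i) \subset \Lambda_\lambda \subset \Lambda_\lambda^0$ would produce the forbidden homo/heteroclinic connections excluded above), and $\mathcal{I}_\Delta$-invariance of $\Lambda_\lambda$ places both branches in $\Lambda_\lambda$, giving $\Lambda_\lambda \supset \Lambda_\lambda^0$ and hence equality. The main obstacle lies in constructing and analyzing the Lyapunov function: describing $G$ explicitly, verifying the strict monotonicity of $G(s, \cdot)$ in $|r|$, and identifying the maximal invariant subset of $\mathcal{N}$.
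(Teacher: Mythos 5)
Your proposal is correct in its essentials and shares the paper's central mechanism: a Lyapunov function whose neutral set is the preimage of the perpendicular bounces, so that every point of $\Lambda_\lambda^0$ other than $E_1,E_2$ has $\alpha$-limit in the saddle orbit and hence lies on $\mathcal{W}^u(\mathcal{O}_\lambda(H_1))$ (this is the content of Propositions \ref{ELLE} and \ref{coro attra}). Two of your steps, however, take a genuinely different route, and both lean on the axial symmetry $\mathcal{I}_\Delta$. First, you place $H_1,H_2$ in $\Lambda_\lambda$ and pull the second branch of $\mathcal{W}^u(H_i;f_\lambda^2)$ into $\Lambda_\lambda$ via Corollary \ref{COR a} together with the fact that $D\mathcal{I}_\Delta(H_i)=-\mathrm{Id}$ swaps the two branches; the paper instead proves $\mathcal{W}^u_\delta(\mathcal{O}_\lambda(H_1))\subset\Lambda_\lambda$ topologically (Claim \ref{claim local unstable} combined with Lemma \ref{lemma disconnected annulus}: deleting a point of the local unstable manifold does not disconnect the complement of $\overline{\mathcal{W}^u(\mathcal{O}_\lambda(H_1))}$). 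Second, you get $\mathscr{C}_i^1,\mathscr{C}_i^2$ landing on distinct sinks because $\mathcal{I}_\Delta$ swaps the branches while exchanging $E_1\leftrightarrow E_2$; the paper argues by contradiction that if both branches fell into $\mathcal{W}^s(E_j;f_\lambda^2)$ for the same $j$, then $\mathcal{W}^u(H_1;f_\lambda^2)\cup\{E_j\}$ would be an $f_\lambda^2$-invariant simple closed curve bounding an invariant open set of positive measure, contradicting dissipativity. Your symmetry route is shorter, but it only applies when the dissipation function commutes with $\mathcal{I}_\Delta$ (e.g.\ constant $\lambda$), whereas the theorem is asserted for arbitrary dissipation as in Definition \ref{definit diss bill}; the paper's topological arguments are exactly what remove that restriction. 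Two smaller points: passing from $\alpha(y)\subset\{H_1,H_2\}$ to $y\in\mathcal{W}^u(H_i;f_\lambda^2)$ for a definite $i$ requires showing the backward orbit is eventually trapped near one of the two points (the paper's $d(f_\lambda^{n+2}(y),f_\lambda^n(y))\to 0$ argument), which you invoke implicitly; and your Lyapunov function is the confocal-caustic parameter, whose monotonicity in $|r|$ you rightly flag as the main technical burden, while the paper works with $L(x,v)=Bx\cdot v$, for which the Lyapunov property is a two-line computation, and mentions the caustic parameter only as an equivalent alternative.
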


The next two propositions will be used in the proof of Theorem \ref{ellisse}. 

\begin{proposition}\label{ELLE} Let $f_\lambda \colon \mathbb{A} \to \mathbb{A}$ be a dissipative billiard map within an ellipse
	$\mathcal{E}$ of non-zero eccentricity. The function 
	$$
	L\colon \mathbb{A} \to \R, \quad (x,v) \mapsto Bx \cdot v
	$$ 
	is a Lyapunov function for $f_\lambda$. Moreover, its neutral set $\mathcal{N}(L)$ is equal to 
	$
	f_\lambda^{-1}(\A_\perp)$, where $\A_\perp
	$ is the set of points $\{(x,v) \in \A :\ \text{$v$ is collinear to $x$}\}$. 
	More precisely, there exists a continuous function $\delta \colon \mathbb{R}_+\to \mathbb{R}_+$ with $\lim_{\varepsilon\to 0} \delta(\varepsilon)=0$ such that for any $(x,v)\in \mathbb{A}$, 
	\begin{equation}\label{proximite ensemble neutre}
		|L(f_\lambda(x,v))-L(x,v)|<\varepsilon\implies d((x,v),f_\lambda^{-1}(\A_\perp))<\delta(\varepsilon)\,,
	\end{equation}
	where $d$ is the usual distance on $\A$.
\end{proposition}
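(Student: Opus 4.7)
The strategy is to factor $f_\lambda = \mathcal{H}_\lambda \circ f_1$ into a free flight from $(x,v)$ to $(x',v)$, with $x':=x+\tau v$ the next collision point, followed by the dissipative reflection at $x'$, and to track $L$ through each step. The key algebraic identity for the ellipse $\mathcal{E}=\{Bx\cdot x=1\}$ is
\begin{equation*}
B(x+x')\cdot(x'-x)=Bx'\cdot x'-Bx\cdot x=0,
\end{equation*}
valid for any $x,x'\in\mathcal{E}$ by symmetry of $B$. Dividing by $\tau>0$ and using $v=(x'-x)/\tau$ yields
\begin{equation*}
L(x',v)=Bx'\cdot v=-Bx\cdot v=-L(x,v),
\end{equation*}
so the free-flight step flips the sign of $L$.

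Next I would compute the effect of the reflection at $x'$. Writing $n_{x'}:=Bx'/|Bx'|$ for the unit outer normal and $t_{x'}$ for a unit tangent to $\mathcal{E}$ at $x'$, I decompose the incoming velocity as $v=\cos\varphi_1'\,n_{x'}+\sin\varphi_1'\,t_{x'}$, with $\cos\varphi_1'\geq 0$. By the definition of $\mathcal{H}_\lambda$, the dissipative reflection scales the tangential component by $\lambda$ and reconstructs a unit vector, giving
\begin{equation*}
v_\lambda'=-\sqrt{1-\lambda^2\sin^2\varphi_1'}\,n_{x'}+\lambda\sin\varphi_1'\,t_{x'}.
\end{equation*}
Since $Bx'\perp t_{x'}$ and $Bx'\cdot n_{x'}=|Bx'|$, I obtain $L(f_\lambda(x,v))=-|Bx'|\sqrt{1-\lambda^2\sin^2\varphi_1'}$, whereas the previous step gives $L(x,v)=-L(x',v)=-|Bx'|\cos\varphi_1'=-|Bx'|\sqrt{1-\sin^2\varphi_1'}$. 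The inequality $L(f_\lambda(x,v))\leq L(x,v)$ then reduces to $\sqrt{1-\lambda^2\sin^2\varphi_1'}\geq\sqrt{1-\sin^2\varphi_1'}$, which is immediate from $\lambda\leq 1$, and is an equality precisely when $\sin\varphi_1'=0$. This condition says that $v$ is collinear to $Bx'$, i.e.\ $f_1(x,v)\in\A_\perp$; the perpendicular reflection then sends $v$ to $-v$, so $f_\lambda(x,v)\in\A_\perp$ as well, giving $\mathcal{N}(L)=f_\lambda^{-1}(\A_\perp)$.

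For the quantitative statement \eqref{proximite ensemble neutre}, I would set $g(x,v):=L(x,v)-L(f_\lambda(x,v))$; by the above, $g$ is continuous and non-negative on the compact cylinder $\A$, and its zero set is exactly $f_\lambda^{-1}(\A_\perp)$. A standard compactness argument then produces a non-decreasing function $\delta$ with $\delta(0)=0$ and $\lim_{\varepsilon\to 0^+}\delta(\varepsilon)=0$ such that $g(x,v)\leq\varepsilon$ implies $d((x,v),f_\lambda^{-1}(\A_\perp))\leq\delta(\varepsilon)$, and one may pass to a continuous majorant if continuity is desired. The main (and essentially only) obstacle is spotting the elliptic identity $B(x+x')\cdot(x'-x)=0$; once that is in hand, the proof reduces to a direct sign/angle computation.
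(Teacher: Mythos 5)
Your proof is correct and takes essentially the same route as the paper's: the same elliptic identity $B(x+x')\cdot(x'-x)=0$ yields $L(x',v)=-L(x,v)$, after which one checks that the dissipative reflection can only decrease $Bx'\cdot v'$ relative to $-Bx'\cdot v$, with equality exactly at perpendicular bounces, and the quantitative estimate follows by compactness. Your explicit normal/tangential decomposition of $v_\lambda'$ merely spells out the step the paper compresses into ``due to the reflection law, $Bx'\cdot(v+v')\le 0$''.
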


\begin{proof}
	For $(x,v)\in \mathbb{A}$, let $(x',v'):=f_\lambda (x,v)$. We first observe that
	$$
	B(x'-x)\cdot (x+x') = Bx' \cdot x + Bx' \cdot x' - Bx \cdot x -Bx \cdot x' =  0$$
	\noindent since $x,x' \in \mathcal{E}$ and the matrix $B$ is symmetric. 
	As $x'-x$ is collinear to the vector $v$, the previous relation yields $Bv \cdot (x+x')=0$; by the symmetry of $B$, we thus obtain
	\begin{equation}\label{xprime v}
		-Bx' \cdot v=Bx \cdot v\,.
	\end{equation}
	Moreover, due to the reflection law, $Bx' \cdot (v+v') < 0$, except when $v'$ is collinear with the normal at $x'$, in which case $Bx' \cdot (v+v')=0$. By \eqref{xprime v}, we conclude that 
	$$
	L(x',v')=Bx' \cdot v' \le Bx\cdot v=L(x,v)\,,
	$$
	with equality exactly when the bounce at $x'$ is perpendicular to $\mathcal{E}$. This means that $|L(x',v')-L(x,v)|\ll 1$ if and only if $d((x,v),f_\lambda^{-1}(\A_\perp))\ll 1$. 
\end{proof}

\begin{remark} It is worth noting that $f^{-1}_{\lambda}(\A_{\perp})$ can be alternatively detected as a neutral set in the following way. For $\zeta \in [0,a_2)\cup(a_2,a_1)$, let consider the family of quadrics: 
	$$
	\mathcal{E}_\zeta:=\left\{x=(x_1,x_2)\in \R^2: \frac{x_1^2}{a_1^2-\zeta}+\frac{x_2^2}{a_2^2-\zeta}=1\right\}\,.
	$$
	For $\zeta \in [0,a_2)$, $\mathcal{E}_\zeta$ is an ellipse confocal to $\mathcal{E}$ and, for $\zeta \in (a_2,a_1)$, $\mathcal{E}_\zeta$ is a hyperbola confocal to $\mathcal{E}$. Let $F_1=(-c,0)$ and $F_2=(c,0)$ the two foci of $\mathcal{E}$, where $c:=\sqrt{a_1^2-a_2^2}$. We extend the previous definition by letting $\mathcal{E}_{a_2}:=((-\infty,0),F_1]\cup [F_2,(0,+\infty))$ and $\mathcal{E}_{a_1}:=\{0\}\times \R$. 
	By the theory of usual elliptic billiards, for $(x,v)\in \mathbb{A}\setminus \mathrm{II}$, there exists a unique $\zeta=\zeta(x,v)>0$ such that any orbit segment of the $f_1$-trajectory starting at $(x,v)$ is tangent to $\mathcal{E}_{\zeta}$; moreover, $\mathcal{E}_{\zeta}$ is an ellipse when the segment  $[x,x']$ does not intersect $[F_1,F_2]$, and it is a (possibly degenerate) 
	hyperbola when $[x,x']$ intersects $[F_1,F_2]$. Finally, we set $\zeta(H_1)=\zeta(H_2):=a_2$ and $\zeta(E_1)=\zeta(E_2):=a_1$. Then, comparing the standard reflection law to the dissipative one, it can be proved that the function $-\zeta$ is a Lyapunov function for $f_{\lambda}$, with neutral set $\mathcal{N}(-\zeta)=f_\lambda^{-1}(\A_\perp)$.
	
	\begin{figure}[h]
		\centering
		\includegraphics[scale=0.6, trim=2cm 2cm 2cm 1.2cm]{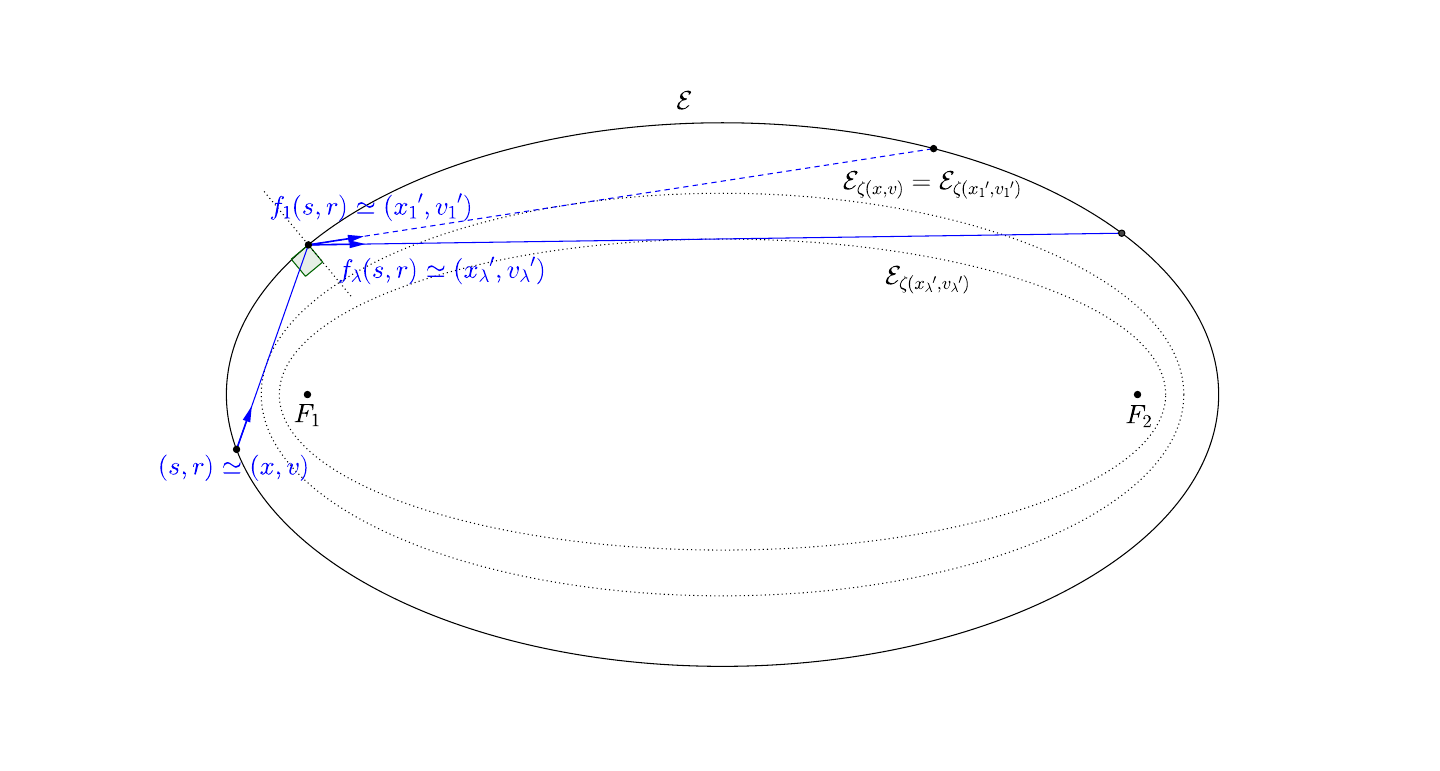}
		\caption{The Lyapunov function $-\zeta$. 
		}
		\label{figcone}
	\end{figure} 
\end{remark}

\begin{remark}\label{remark min max}\quad
	\begin{enumerate}[label=(\alph*)]
		\item\label{remark ensemble neutre mathcal l}  Let us recall that $\mathrm{II}:=   \{E_1,E_2,H_1,H_2\}$ is the set of $2$-periodic points. By Proposition \ref{ELLE}, the function $\mathcal{L}_\lambda:=L + L \circ f_{\lambda}$ is also a Lyapunov function for $f_{\lambda}$, with neutral set 
		$$
		\mathcal{N}(\mathcal{L}_\lambda) = f_\lambda^{-1}(\A_\perp) \cap f_\lambda^{-2}(\A_\perp) =f_\lambda^{-1}\big(f_\lambda^{-1}(\A_\perp) \cap \A_\perp\big)=f_\lambda^{-1}(\mathrm{II})=\mathrm{II}\,.
		$$ 
		Indeed, an orbit with two consecutive perpendicular bounces is necessarily $2$-periodic. Moreover, by \eqref{proximite ensemble neutre}, there exists a continuous function $\hat\delta \colon \mathbb{R}_+\to \mathbb{R}_+$ with $\lim_{\varepsilon\to 0} \hat\delta(\varepsilon)=0$ such that for any $(x,v)\in \mathbb{A}$, 
		\begin{equation}\label{proximite ensemble neutre bis}
			0\leq \mathcal{L}_\lambda(x,v)-\mathcal{L}_\lambda(f_\lambda(x,v))<\varepsilon\implies d((x,v),\mathrm{II})<\hat \delta(\varepsilon)\,,
		\end{equation}
		where $d$ is the usual distance on $\A$. 
		\item\label{remark bis ensemble neutre mathcal l} For any $(x,v)\in \A_\perp$, $Bx$ and $v$ are collinear, with opposite orientations, hence 
		$$
		L(x,v) =-\Vert Bx\Vert= -\sqrt{\frac{x_1^2}{a_1^4}+\frac{x_2^2}{a_2^4}}=-\sqrt{\frac{x_2^2}{a_2^2}\big(\frac{1}{a_2^2}-\frac{1}{a_1^2}\big)+\frac{1}{a_1^2}}\,.
		$$
		Therefore, $L|_{\A_\perp}$ is maximal when $x_2=0$ (and takes the value $-\frac{1}{a_1}$), i.e., for $(x,v)\in \{H_1,H_2\}$, and $L|_{\A_\perp}$ is minimal when $x_2^2=a_2^2$ (and takes the value $-\frac{1}{a_2}$), i.e.,  for $(x,v)\in \{E_1,E_2\}$. 
		
		\item\label{Lyap funct in the past} Let $X\subset \mathrm{int}(\A)$ be a $f_\lambda$-invariant set. Then, both $-L\vert_{X}$ and $-\mathcal{L}_\lambda\vert_{X}$ are Lyapunov functions for $f_\lambda^{-1}$.
	\end{enumerate}
\end{remark}

\begin{proposition}\label{coro attra} Let $f_\lambda \colon \mathbb{A} \to \mathbb{A}$ be a dissipative billiard map within an ellipse
	$\mathcal{E}$ of non-zero eccentricity. All the orbits are attracted by a $2$-periodic orbit,  i.e., for any $(s,r)\in \mathbb{A}$, there exists $p_{+}=p_+(s,r)\in \mathrm{II} = \{E_1,E_2,H_1,H_2\}$ such that 
	$$
	\lim_{n \to +\infty}f_\lambda^{2n}(s,r)=p_{+},\quad \lim_{n \to +\infty}f_\lambda^{2n+1}(s,r)=f_{\lambda}(p_{+})\,.
	$$
	In particular, the set of periodic points for $f_\lambda$ is reduced to the set $\mathrm{II}$ of $2$-periodic points. 
	Moreover, $\mathrm{II}\subset\Lambda_\lambda\subset \Lambda_\lambda^0$, and for any $(s,r)\in \Lambda_\lambda^0\setminus \mathrm{II}$, there exist $i_-,i_+\in \{1,2\}$ such that 
	\begin{align*}
		\lim_{n \to -\infty}f_\lambda^{2n}(x,v)=H_{i_-},\quad \lim_{n \to -\infty}f_\lambda^{2n-1}(x,v)=f_{\lambda}(H_{i_-})\,,\\
		\lim_{n \to +\infty}f_\lambda^{2n}(x,v)=E_{i_+},\quad \lim_{n \to +\infty}f_\lambda^{2n-1}(x,v)=f_{\lambda}(E_{i_+})\,.
	\end{align*} 
\end{proposition}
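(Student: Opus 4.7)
The plan is to exploit the Lyapunov function $\mathcal{L}_\lambda:=L+L\circ f_\lambda$ introduced in Remark \ref{remark min max}\ref{remark ensemble neutre mathcal l}, whose neutral set is exactly $\mathrm{II}$, together with the quantitative estimate \eqref{proximite ensemble neutre bis}. A preliminary computation using Remark \ref{remark min max}\ref{remark bis ensemble neutre mathcal l} yields $\mathcal{L}_\lambda\equiv -2/a_1$ on $\{H_1,H_2\}$ and $\mathcal{L}_\lambda\equiv -2/a_2$ on $\{E_1,E_2\}$, and these two values are distinct since $a_1>a_2$. This will be the key to asymptotically distinguishing the two $2$-periodic orbits.

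For the forward statement, fix $(s,r)\in\A$. The sequence $(\mathcal{L}_\lambda(f_\lambda^n(s,r)))_{n\ge 0}$ is non-increasing and bounded, hence convergent; consecutive differences tend to zero, so \eqref{proximite ensemble neutre bis} yields $d(f_\lambda^n(s,r),\mathrm{II})\to 0$. Since $\mathrm{II}$ consists of four isolated points, for large $n$ each $f_\lambda^n(s,r)$ lies in a small ball around a uniquely determined $p_n\in\mathrm{II}$, and continuity of $f_\lambda$ together with the separation of $\mathrm{II}$ forces $p_{n+1}=f_\lambda(p_n)$. Hence $(p_n)$ cycles through a single $2$-periodic orbit, and setting $p_+:=p_{2n_0}$ for $n_0$ large gives the required convergence. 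The absence of other periodic orbits follows by the same Lyapunov argument: on a periodic orbit, $\mathcal{L}_\lambda$ is both non-increasing and cyclic, hence constant, so each point lies in $\mathcal{N}(\mathcal{L}_\lambda)=\mathrm{II}$. Finally, $\mathrm{II}\subset\Lambda_\lambda$ follows by applying Corollary \ref{COR a} to the two axes of symmetry (major and minor) of $\mathcal{E}$, while $\Lambda_\lambda\subset\Lambda_\lambda^0$ is general.

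For a point $(s,r)\in\Lambda_\lambda^0\setminus\mathrm{II}$, invariance and compactness of $\Lambda_\lambda^0$ allow the full backward orbit $(f_\lambda^{-n}(s,r))_{n\ge 0}$ to be defined inside $\Lambda_\lambda^0$. By Remark \ref{remark min max}\ref{Lyap funct in the past}, $-\mathcal{L}_\lambda|_{\Lambda_\lambda^0}$ is a Lyapunov function for $f_\lambda^{-1}|_{\Lambda_\lambda^0}$, whose neutral set (arguing via the $f_\lambda$-invariance of $\mathrm{II}$) is again $\mathrm{II}$. Applying \eqref{proximite ensemble neutre bis} to the shifted orbit and repeating the argument of the previous paragraph, one obtains a $2$-periodic orbit through some $q_-\in\mathrm{II}$ with $f_\lambda^{-2n}(s,r)\to q_-$. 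Let $\ell^\pm$ denote the limits of $\mathcal{L}_\lambda$ along the forward and backward orbits respectively; since $(s,r)\notin\mathcal{N}(\mathcal{L}_\lambda)$, strict decrease holds at $(s,r)$, giving
\[
\ell^+\le \mathcal{L}_\lambda(f_\lambda(s,r)) < \mathcal{L}_\lambda(s,r)\le \ell^-.
\]
As $\ell^\pm\in\{-2/a_1,-2/a_2\}$ and $-2/a_2<-2/a_1$, the only possibility is $\ell^-=-2/a_1$ and $\ell^+=-2/a_2$, i.e.\ $q_-\in\{H_1,H_2\}$ and $p_+\in\{E_1,E_2\}$; renaming $q_-=H_{i_-}$ and $p_+=E_{i_+}$ yields the stated asymptotics.

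The only genuinely nontrivial step is upgrading $d(f_\lambda^n(s,r),\mathrm{II})\to 0$ to convergence toward a specific $2$-periodic orbit, handled by disjointness of the four points of $\mathrm{II}$ together with continuity of $f_\lambda$; all remaining conclusions reduce to a direct use of the two Lyapunov functions $\mathcal{L}_\lambda$ and $-\mathcal{L}_\lambda|_{\Lambda_\lambda^0}$ and of Corollary \ref{COR a}.
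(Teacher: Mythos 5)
Your proof is correct and follows essentially the same strategy as the paper: the Lyapunov function $\mathcal{L}_\lambda=L+L\circ f_\lambda$ with neutral set $\mathrm{II}$, the quantitative estimate \eqref{proximite ensemble neutre bis} to get $d(f_\lambda^n(s,r),\mathrm{II})\to 0$, the backward Lyapunov function on the invariant sets, and the comparison $\mathcal{L}_\lambda(H_i)=-2/a_1>-2/a_2=\mathcal{L}_\lambda(E_i)$ to sort the $\alpha$- and $\omega$-limits. Two sub-steps differ. First, to upgrade $d(f_\lambda^n(s,r),\mathrm{II})\to 0$ to convergence toward a single $2$-periodic orbit, you invoke uniform continuity of $f_\lambda$ to force $p_{n+1}=f_\lambda(p_n)$, hence $p_{n+2}=f_\lambda^2(p_n)=p_n$; the paper instead shows directly that $d(f_\lambda^{n+2}(s,r),f_\lambda^n(s,r))\to 0$ because the bounces become asymptotically perpendicular. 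Both work, and yours is arguably cleaner since it only uses that the points of $\mathrm{II}$ are isolated and $2$-periodic. Second, for $\mathrm{II}\subset\Lambda_\lambda$ you appeal to Corollary \ref{COR a} (axial symmetry); note that this corollary is established under the hypothesis that the dissipation is constant (or respects the symmetries of the table), whereas the proposition is intended to cover general dissipation functions as in Definition \ref{definit diss bill}, for which the symmetry argument is unavailable. In that generality one should instead extract the inclusion from the machinery you already set up: $\Lambda_\lambda$ is nonempty, compact, invariant and separates $\A$, hence contains a point outside the finite set $\mathrm{II}$, and the $\alpha$- and $\omega$-limit sets of that point, namely $\{H_1,H_2\}$ and $\{E_1,E_2\}$, lie in the closed invariant set $\Lambda_\lambda$. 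The paper records your symmetry argument only as a remark valid in the constant-dissipation case.
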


\begin{proof}
	Fix $(s,r)\in \mathbb{A}$. By Remark \ref{remark min max}\ref{remark ensemble neutre mathcal l}, the function $\mathcal{L}_\lambda$ is a Lyapunov function for $f_\lambda$ whose neutral set is $\mathrm{II}$; consequently the  omega-limit set $\omega_{f_\lambda}(s,r)$ satisfies $\omega_{f_\lambda}(s,r)\subset\mathcal{N}(\mathcal{L}_\lambda)=\mathrm{II}$. For each $n\geq 0$, we set  $u_{n}:=\mathcal{L}_\lambda(f_\lambda^{n}(s,r)).
	$ 
	The sequence $(u_{n})_{\geq 0}$ is decreasing and bounded from below by $\min_{\mathbb{A}} \mathcal{L}_\lambda>-\infty$, hence is convergent. In particular, $\lim_{n \to +\infty} (u_n-u_{n+1})= 0$; by \eqref{proximite ensemble neutre bis}, we deduce that 
	$$\lim_{n \to +\infty} d(f_{\lambda}^{n}(s,r),\mathrm{II})=0\,,$$ 
	where $d$ is the usual distance inherited from $\A$. Recall that $\mathrm{II}$ is formed of only four different points and let $\epsilon:=\frac{1}{3}\min_{p\neq q \in \mathrm{II}}d(p,q)>0$. 
	From previous limit, there exists $n_0 \in \mathbb{N}$ such that for any $n \geq n_0$, we have  $d(f_{\lambda}^{n}(s,r),\mathrm{II})<\epsilon$. Actually, for $n \geq n_0$, there exists a unique point $p(s,r,n)\in \mathrm{II}$ such that  $d(f_{\lambda}^{n}(s,r),p(s,r,n))<\epsilon$. In particular, since the bounce at $f_{\lambda}^{n+1}(s,r)$ gets closer and closer to being perpendicular as $n\to +\infty$, we have that 
	$$\lim_{n\to+\infty}d(f_{\lambda}^{n+2}(s,r),f_{\lambda}^{n}(s,r))=0\,.$$ 
	Let us then fix $n_1\geq n_0$ such that for any $n\geq n_1$, it holds 
	$$
	d(f_{\lambda}^{n+2}(s,r),f_{\lambda}^{n}(s,r))<\epsilon\, .
	$$
	Then, for any $n\geq n_1$, we have
	\begin{align*}
		&d(p(s,r,n),p(s,r,n+2))\\
		&\leq d(p(s,r,2n),f_{\lambda}^{n}(s,r))+d(f_{\lambda}^{n}(s,r),f_{\lambda}^{n+2}(s,r))+d(f_{\lambda}^{n+2}(s,r),p(s,r,n+2))<3 \epsilon\,.
	\end{align*}
	By the choice of $\epsilon > 0$, it follows that $p(s,r,n+2)=p(s,r,n)$, for any $n \geq n_1$. Let us then set $p_+=p_+(s,r):=p(s,r,2n)$, for any $2n\geq n_1$ (it is well-defined by the previous discussion).
	Then we conclude that  
	$$
	\lim_{n \to +\infty}f_\lambda^{2n}(s,r)=p_{+},\quad \lim_{n \to +\infty}f_\lambda^{2n+1}(s,r)=f_{\lambda}(p_{+})\,.
	$$
	In particular, we deduce also that $p_+(f_\lambda(s,r))=f_\lambda(p_+(s,r))$ and that $\omega_{f_\lambda}(s,r)=\{p_+,f_\lambda(p_+)\}$. \\
	\noindent The sets $\Lambda_\lambda,\Lambda_\lambda^0$ are $f_\lambda$-invariant; moreover, $f_{\lambda}|_{\Lambda_\lambda}$, resp. $f_{\lambda}|_{\Lambda_\lambda^0}$ is invertible, and $\tilde{\mathcal{L}}_\lambda:=-\mathcal{L}_\lambda|_{\Lambda_\lambda}$, resp. $\tilde{\mathcal{L}}_\lambda^0:=-\mathcal{L}_\lambda|_{\Lambda_\lambda^0}$ is a Lyapunov function for  $(f_{\lambda}|_{\Lambda_\lambda})^{-1}$, resp. $(f_{\lambda}|_{\Lambda_\lambda^0})^{-1}$, see Remark \ref{remark min max}\ref{Lyap funct in the past}. Since $\Lambda_\lambda,\Lambda_\lambda^0$ are compact, for any $(s,r)\in \Lambda_\lambda$, resp. $(s,r)\in \Lambda_\lambda^0$, we have that the alpha-limit set $\alpha_{f_\lambda}(s,r)$ satisfies $\emptyset\neq\alpha_{f_\lambda}(s,r)\subset \mathcal{N}(\tilde{\mathcal{L}}_\lambda)\cap \Lambda_\lambda\subset\mathrm{II}$, resp. $\emptyset\neq\alpha_{f_\lambda}(s,r)\subset \mathcal{N}(\tilde{\mathcal{L}}_\lambda^0)\cap \Lambda_\lambda^0\subset\mathrm{II}$; in particular, we deduce that $\emptyset\neq \Lambda_\lambda \cap \mathrm{II} \subset \Lambda_\lambda^0 \cap \mathrm{II}$. Fix $(s,r)\in \Lambda_\lambda$, resp. $(s,r)\in \Lambda_\lambda^0$. Arguing as above, we see that there exists $p_-=p_-(s,r)\in \mathrm{II}\cap \Lambda_\lambda$, resp. $p_-=p_-(s,r)\in \mathrm{II}\cap \Lambda_\lambda^0$, such that 
	$$
	\lim_{n \to -\infty}f_\lambda^{2n}(s,r)=p_{-}, \quad \lim_{n \to -\infty}f_\lambda^{2n-1}(s,r)=f_{\lambda}(p_{-})\,.$$ 
	Then, it holds that $\alpha_{f_\lambda}(s,r)=\{p_-,f_\lambda(p_-)\}$. There are two cases for the point $(s,r)\in \Lambda_\lambda$, resp. $\Lambda^0_\lambda$:
	\begin{itemize}
		\item either $\alpha_{f_\lambda}(s,r) \cap \omega_{f_\lambda}(s,r)\neq \emptyset$, and then, the whole orbit $(f_\lambda^k(s,r))_{k \in \mathbb{Z}}$ is in the neutral set  $\mathcal{N}(\mathcal{L}_\lambda)$, i.e., $(s,r)=p_-(s,r)=p_+(s,r)$ is $2$-periodic; 
		\item otherwise, $(s,r)\notin \mathrm{II}$,\footnote{This case clearly occurs, as $\Lambda_\lambda$ separates $\mathbb{A}$, while $\mathrm{II}$ is a finite set.} and the $2$-periodic orbits $\alpha_{f_\lambda}(s,r)=\{p_-,f_\lambda(p_-)\}$ and $\omega_{f_\lambda}(s,r)=\{p_+,f_\lambda(p_+)\}$ are distinct. By Remark \ref{remark min max}\ref{remark bis ensemble neutre mathcal l} and since the orbits of $p_+$ and $p_-$ are different, we have that $\mathcal{L}_\lambda(p_-)>\mathcal{L}_\lambda(p_+)$ and actually,  $\alpha_{f_\lambda}(s,r)=\{H_1,H_2\}$ and $\omega_{f_\lambda}(s,r)=\{E_1,E_2\}$. As $\Lambda_\lambda,\Lambda_\lambda^0$ are closed, we also deduce that $\mathrm{II}\subset \Lambda_\lambda\subset \Lambda_\lambda^0$. \qedhere
	\end{itemize}
\end{proof}

\begin{remark}
	When the dissipation $\lambda$ is constant, the fact that $\mathrm{II}\subset\Lambda_\lambda\subset \Lambda_\lambda^0$ proven in Proposition \ref{coro attra} also follows from Lemma \ref{lemme sym axiale}, due to the symmetries of the ellipse $\mathcal{E}$.  
\end{remark} 


\noindent Let us also note that, by Proposition \ref{coro attra}, for any $(s,r) \in \mathbb{A}\setminus \mathcal{W}^s(H_1;f_\lambda)$, the forward orbit of $(s,r)$ converges to the $2$-periodic orbit $\{E_1,f_{\lambda}(E_1) = E_2\}$. \\ \\
\indent We are now ready to give the proof of Theorem \ref{ellisse}. 
\begin{notation}Given some small $\delta>0$, we denote by $\mathcal{W}_{\delta}^{u}(H_i;f_\lambda^2)$ the $\delta$-local unstable manifold of $H_i$ with respect to $f_\lambda^2$, i.e., the set 
	$$\mathcal{W}_{\delta}^{u}(H_i;f_\lambda^2) := \{(s,r)\in\A :\ d(f_\lambda^{2n}(s,r),H_i)\leq \delta,\ \forall\, n\leq 0\}\,.$$ 
	Similarly, the $\delta$-local stable manifold of $H_i$ with respect to $f_\lambda^2$ is
	$$\mathcal{W}_{\delta}^{s}(H_i;f_\lambda^2):=\{(s,r)\in\A:\ d(f_\lambda^{2n}(s;r),H_i)\leq \delta,\ \forall\, n\geq 0\}\,.$$
	 For $*=s,u$, $i=1,2$, the notation $\mathcal{W}_\delta^{*}(\mathcal{O}_\lambda(H_i))$ refers to $\mathcal{W}_{\delta}^{*}(H_1;f_\lambda^2)\cup\mathcal{W}_{\delta}^{*}(H_2;f_\lambda^2)$. 
	We denote by 
	$$\mathcal{W}_{\delta}^{s}(E_i;f_\lambda^2):=\{(s,r)\in\A:\ d(f_\lambda^{2n}(s,r),E_i)\leq\delta,\ \forall\, n\geq 0\}$$ 
	the $\delta$-local stable manifold of $E_i$ with respect to $f_\lambda^2$. Similarly, for $i=1,2$, the notation $\mathcal{W}_{\delta}^{s}(\mathcal{O}_\lambda(E_i))$ refers to $\mathcal{W}_{\delta}^{s}(E_1;f_\lambda^2)\cup\mathcal{W}_{\delta}^{s}(E_2;f_\lambda^2)$.
\end{notation}

\begin{proof}[Proof of Theorem \ref{ellisse}]
	
	 By Proposition \ref{coro attra}, for any $(s,r) \in \Lambda_\lambda\setminus\{E_1,E_2\}$, resp. $(s,r) \in \Lambda_\lambda^0\setminus\{E_1,E_2\}$, there exists $i_-\in \{1,2\}$ such that  
	$\lim_{n \to -\infty}f_\lambda^{2n}(s,r)=H_{i_-}$, hence $(s,r)\in\mathcal{W}^{u}(\mathcal{O}_\lambda(H_{i_-}))$. We deduce that
	\begin{equation}\label{premiere inclusion}
	\Lambda_\lambda\subset \Lambda_\lambda^0\subset
	\mathcal{W}^{u}(\mathcal{O}_\lambda(H_1))\cup \{E_1,E_2\}=\overline{	\mathcal{W}^{u}(\mathcal{O}_\lambda(H_1))}\, ,
	\end{equation}
	where the last equality follows again from Proposition \ref{coro attra}; indeed, the points $E_1,E_2$ are accumulated by the forward orbit of any point $(s,r) \in \Lambda_\lambda\setminus\{E_1,E_2\}\subset 	\mathcal{W}^{u}(\mathcal{O}_\lambda(H_1))$. From \eqref{premiere inclusion}, and applying Lemma \ref{X f invariant} to $\overline{ 	\mathcal{W}^{u}(\mathcal{O}_\lambda(H_1))}$, we deduce that $\overline{ 	\mathcal{W}^{u}(\mathcal{O}_\lambda(H_1))}$ separates $\A$. Since it is also compact, connected and $f_\lambda$-invariant, it holds that $\overline{ 	\mathcal{W}^{u}(\mathcal{O}_\lambda(H_1)) } \in \mathcal{X}(f_\lambda)$. 
	\begin{claim}\label{claim local unstable}
		There is $\delta>0$ such that, for any $x\in \mathcal{W}_\delta^{u}(\mathcal{O}_\lambda(H_1))$, $\overline{ \mathcal{W}^{u}(\mathcal{O}_\lambda(H_1)) } \setminus \{x\}$ does not separate the annulus. 
	\end{claim}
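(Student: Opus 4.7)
The plan combines the asymptotic dynamics from Proposition \ref{coro attra}, Hartman--Grobman linearization near each saddle $H_i$, and the reflection symmetry of the ellipse. Let $U^0, V^0$ denote the two connected components of $\mathbb{A}\setminus\overline{\mathcal{W}^u(\mathcal{O}_\lambda(H_1))}$; since $f_\lambda$ is a homeomorphism onto its image, homotopic to the identity, and preserves $\overline{\mathcal{W}^u(\mathcal{O}_\lambda(H_1))}$, the components $U^0, V^0$ are $f_\lambda$-invariant. The claim reduces to showing that every $x \in \mathcal{W}_\delta^u(\mathcal{O}_\lambda(H_1))$ is a limit point of both $U^0$ and $V^0$, because then $\mathbb{A}\setminus(\overline{\mathcal{W}^u(\mathcal{O}_\lambda(H_1))}\setminus\{x\}) = U^0 \cup V^0 \cup \{x\}$ is connected.

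The first step is to locally isolate $\overline{\mathcal{W}^u(\mathcal{O}_\lambda(H_1))}$ near each saddle. I would fix Hartman--Grobman charts $B_1, B_2$ around $H_1, H_2$ in which $f_\lambda^2$ is conjugate to a linear saddle and the local unstable manifold is a segment of the $u$-axis. By Proposition \ref{coro attra}, forward iterates of any $p \in \mathcal{W}_\delta^u(H_i; f_\lambda^2) \setminus \{H_i\}$ converge to a sink $E_j$, so they enter a trapping neighborhood of $E_j$ after finitely many steps. By choosing trapping neighborhoods around $E_1, E_2$ first, then charts $B_1, B_2$ disjoint from them, and finally shrinking $\delta$, one arranges that $\overline{\mathcal{W}^u(\mathcal{O}_\lambda(H_1))} \cap B_i = \mathcal{W}_\delta^u(H_i; f_\lambda^2) \cap B_i$, i.e., simply a segment of the $u$-axis in the chart.

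The second step uses the axial symmetry. By Lemma \ref{lemme sym axiale}, the reflection $\mathcal{I}_\Delta$ along the major axis of $\mathcal{E}$ commutes with $f_\lambda$, fixes $H_i$, swaps the top and bottom boundaries of $\mathbb{A}$, and preserves $\overline{\mathcal{W}^u(\mathcal{O}_\lambda(H_1))}$; it therefore exchanges $U^0$ and $V^0$. Its differential at $H_i$ equals $-\mathrm{Id}$, so in the chart $B_i$ it exchanges the two open half-disks on either side of the $u$-axis. Combined with the first step, exactly one of these half-disks lies in $U^0$ and the other in $V^0$. For any $x \in \mathcal{W}_\delta^u(H_i; f_\lambda^2)$ and any small neighborhood $N \subset B_i$ of $x$, the arc $\mathcal{W}_\delta^u(H_i)\cap N$ divides $N$ into two open pieces lying in $U^0$ and $V^0$ respectively, whence $x \in \overline{U^0} \cap \overline{V^0}$.

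The main technical obstacle is the isolation step: one must ensure that the finitely many forward iterates of $\mathcal{W}_\delta^u(H_{i'})$ for $i' \neq i$ that have not yet entered the trapping neighborhoods of the sinks also miss $B_i$. This is achievable via the ordering above --- first trapping neighborhoods, then charts, then $\delta$ --- together with the compactness of the local unstable arcs and the attracting dynamics at each sink. Once isolation is in place, the symmetry argument immediately identifies the two sides of the local unstable axis in each chart with the two global components $U^0, V^0$, and the claim follows.
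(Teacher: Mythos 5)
Your reduction (show every point of the local unstable arc lies in $\overline{U^0}\cap\overline{V^0}$) and your isolation step (trapping neighborhoods of the sinks via Proposition \ref{coro attra}, then a chart around $H_i$, then shrink $\delta$ so that $\overline{\mathcal{W}^u(\mathcal{O}_\lambda(H_1))}$ meets the chart only in the local unstable arc) are exactly what the paper does. Where you diverge is the key final step, namely showing that the two local half-disks $\mathcal{U},\mathcal{V}$ on either side of $\mathcal{W}^u_\delta(H_i;f_\lambda^2)$ fall into \emph{different} global components. The paper gets this from the facts that $H_i\in\Lambda_\lambda$ (Proposition \ref{coro attra}, proved via the Lyapunov function $\mathcal{L}_\lambda$) and that $\overline{\mathcal{W}^u(\mathcal{O}_\lambda(H_1))}\in\mathcal{X}(f_\lambda)$, so that Proposition \ref{minimality}\eqref{pt due} forces $H_i\in\mathrm{Fr}(U^0)\cap\mathrm{Fr}(V^0)$ and hence $\mathcal{U},\mathcal{V}$ cannot sit in the same component. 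You instead invoke the involution $\mathcal{I}_\Delta$: since $D\mathcal{I}_\Delta(H_i)=-\mathrm{Id}$ preserves the ambient orientation and reverses the orientation of the unstable arc, it reverses its co-orientation and swaps $\mathcal{U}$ with $\mathcal{V}$, while globally it swaps $U^0$ with $V^0$ because it exchanges the two boundary circles of $\A$. That is a correct and rather elegant alternative for the symmetric situation.

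The caveat is that your route is strictly less general than the statement it is meant to serve. Theorem \ref{main theoreme ellipses} (and Theorem \ref{ellisse}) explicitly allow a non-constant dissipation function $\lambda\colon\A\to(0,1)$, and Lemma \ref{lemme sym axiale} gives $\mathcal{I}_\Delta\circ f_\lambda=f_\lambda\circ\mathcal{I}_\Delta$ only when $\lambda$ respects the symmetry (e.g., is constant). For a general dissipation the commutation fails, $\mathcal{I}_\Delta$ need not preserve $\overline{\mathcal{W}^u(\mathcal{O}_\lambda(H_1))}$, and your swap argument breaks down; moreover the same claim is reused for non-symmetric $C^2$-perturbations of the ellipse in Corollary \ref{coro proche ellipse}, where no symmetry is available at all. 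The paper's argument via $H_i\in\Lambda_\lambda$ and Proposition \ref{minimality}\eqref{pt due} is what survives in those settings, so you should either restrict your proof to symmetric dissipation or replace the symmetry step by that frontier argument.
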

	
	\begin{proof}[Proof of the claim.]
		Let $\eta>0$ be small enough such that the balls of radius $\eta$ centered at points in $\{H_1,H_2,E_1,E_2\}$ are pairwise disjoint. From Proposition \ref{coro attra}, for $i=1,2$, we have that $\mathcal{W}^u(H_i;f_\lambda^2)\setminus\{H_i\}$ is contained in the stable manifold of $\{E_1,E_2\}$; in particular, there exists $N\in\N$ such that for every $n>N$, it holds
			\begin{equation}\label{cc unstable}
			f_\lambda^n(\mathcal{W}^u_\eta(\mathcal{O}_\lambda(H_1))\setminus f_\lambda^N(\mathcal{W}^u_\eta(\mathcal{O}_\lambda(H_1))\subset B(E_1,\eta)\cup B(E_2,\eta)\, .
			\end{equation}
			We can then choose $\delta>0$ small enough such that, for $i=1,2$, the $\delta$-local unstable manifold of $H_i$ is a $C^1$ graph over the first coordinate projection of $B(H_i,\delta)$ and such that $B(H_i,\delta)\cap \mathcal{W}^u(H_i;f_\lambda^2)= \mathcal{W}^u_\delta(H_i;f_\lambda^2)$, i.e., the unstable manifold meets the ball only at the local unstable manifold. This last property is possible thanks to \eqref{cc unstable}. The $\delta$-local unstable manifold $\mathcal{W}^u_\delta(H_i;f_\lambda^2)$ separates the ball $B(H_i,\delta)$, i.e., we have $B(H_i,\delta)\setminus \mathcal{W}^u_\delta(H_i;f_\lambda^2)= \mathcal{U}\cup\mathcal{V}$, for two disjoint connected open sets $\mathcal{U}$ and $\mathcal{V}$. For any $x\in\mathcal{W}^u_\delta(H_i;f_\lambda^2)$, the set $B(H_i,\delta)\setminus(\mathcal{W}^u_\delta(H_i;f_\lambda^2)\setminus\{x\})$ is path-connected. 
			We conclude that $\overline{\mathcal{W}^u(\mathcal{O}_{f_\lambda}(H_1))}\setminus\{x\}$ does not separate the annulus if $$
			\mathcal{U}\subset U_{\overline{\mathcal{W}^u(\mathcal{O}_{f_\lambda}(H_1))}}\quad \text{and}\quad \mathcal{V}\subset V_{\overline{\mathcal{W}^u(\mathcal{O}_{f_\lambda}(H_1))}}\, ,
			$$
			where $\A\setminus\overline{\mathcal{W}^u(\mathcal{O}_{f_\lambda}(H_1))}=U_{\overline{\mathcal{W}^u(\mathcal{O}_{f_\lambda}(H_1))}}\cup V_{\overline{\mathcal{W}^u(\mathcal{O}_{f_\lambda}(H_1))}}$.
			This follows from the fact that $H_i\in\Lambda_\lambda$ and that $\overline{\mathcal{W}^u(\mathcal{O}_{f_\lambda}(H_1))}\in\mathcal{X}(f_\lambda)$: indeed, by Proposition \ref{minimality}\eqref{pt due}, $H_i\in \mathrm{Fr}(U_{\overline{\mathcal{W}^u(\mathcal{O}_{f_\lambda}(H_1))}}\cap\mathrm{Fr}(V_{\overline{\mathcal{W}^u(\mathcal{O}_{f_\lambda}(H_1))}})$ and in particular the two connected open sets $\mathcal{U}$ and $\mathcal{V}$ cannot be contained in the same connected component of $\A\setminus\overline{\mathcal{W}^u(\mathcal{O}_{f_\lambda}(H_1))}$.
	\end{proof}
	\noindent Thus, by Lemma \ref{lemma disconnected annulus}, we have that
	\begin{equation}\label{deuxie inclusion}
	\mathcal{W}_{\delta}^u(\mathcal{O}_\lambda(H_1))\subset \Lambda_\lambda\, .
	\end{equation}
	Let us also recall that for $i=1,2$, we have  $\mathcal{W}^u(\mathcal{O}_\lambda(H_1))=\bigcup_{j\geq 0}f_\lambda^{j}(\mathcal{W}_{\delta}^u(\mathcal{O}_\lambda(H_1)))$. By \eqref{deuxie inclusion}, and as $\Lambda_\lambda$ is $f_\lambda$-invariant and closed, we obtain
	\begin{equation}\label{troisie inclusion}
	\overline{\mathcal{W}^u(\mathcal{O}_{f_\lambda}(H_1))}\subset \Lambda_\lambda.
	\end{equation}
	Comparing \eqref{premiere inclusion} and \eqref{troisie inclusion}, we deduce that all the inclusions are actually equalities, which concludes the proof of \eqref{egalite lambda lambda zero w u}. \\
\noindent By the previous discussion, $\Lambda_\lambda \setminus \mathrm{II}$ is the disjoint union of four connected components $\mathscr{C}_1,\mathscr{C}_1',\mathscr{C}_2=f_\lambda(\mathscr{C}_1),\mathscr{C}_2'=f_\lambda(\mathscr{C}_1')$, where $\mathscr{C}_i$ and $\mathscr{C}_i'$ correspond to the two branches of $ \mathcal{W}^u(H_i;f_\lambda^2)\setminus \{H_i\}$, for $i=1,2$. Moreover, by Corollary \ref{coro attra}, $\mathscr{C}_1\subset \mathcal{W}^s(E_j;f_\lambda^2)$ and $\mathscr{C}_1'\subset \mathcal{W}^s(E_k;f_\lambda^2)$ for some $j,k\in \{1,2\}$. We claim that $j\neq k$. Assume by contradiction that $j=k$ and set $\widehat{\mathscr{C}}_1:=\mathscr{C}_1\cup\mathscr{C}_1'\cup \{H_1,E_j\}=\mathcal{W}^u(H_1;f_\lambda^2)\cup \{E_j\}$. Since $\mathcal{W}^u(H_1;f_\lambda^2)$ has no self-intersection, we have that $\widehat{\mathscr{C}}_1$ is a $f_\lambda^2$-invariant simple closed curve. We distinguish between two cases:
	\begin{enumerate}
		\item either $\widehat{\mathscr{C}}_1$ separates the annulus $\A$; then, we would have $\Lambda_\lambda=\widehat{\mathscr{C}}_1\sqcup f_\lambda(\widehat{\mathscr{C}}_1)$, where $\widehat{\mathscr{C}}_1$ and $f_\lambda(\widehat{\mathscr{C}}_1)$ are compact, connected, and both separate $\A$. This would imply that $\A\setminus\Lambda_\lambda$ is the disjoint union of $3$ connected open sets, one of whose is a $f_\lambda$-invariant bounded open set. This would contradict the dissipative character of the map;
		\item or the curve $\widehat{\mathscr{C}}_1$ is homotopic to a point; in particular it bounds a $f_\lambda^2$-invariant open set. Again, this would contradict the dissipative character of $f_\lambda^2$. 
	\end{enumerate} 
	Thus, $j\neq k$. setting $\mathscr{C}_1^j:=\mathscr{C}_1$, $\mathscr{C}_1^k:=\mathscr{C}_1'$, $\mathscr{C}_2^k:=\mathscr{C}_2=f_\lambda(\mathscr{C}_1^j)$, $\mathscr{C}_2^j:=\mathscr{C}_2'=f_\lambda(\mathscr{C}_1^k)$, this concludes the proof. 
\end{proof}

\noindent In the next corollary, we prove that the conclusion of Theorem \ref{ellisse} remains true for strictly convex domains whose boundary is sufficiently $C^2$-close to an ellipse. For simplicity, in the rest of this section, we will assume that the dissipation $\lambda$ is constant. 
	\begin{corollary}\label{coro proche ellipse} 
		Let $\mathcal{E}$ be an ellipse and fix $\lambda \in (0,1)$. Then, there exists $\epsilon=\epsilon(\mathcal{E},\lambda)>0$ such that for any domain $\Omega\subset \R^2$ whose boundary $\partial \Omega$ is $C^k$, $k \geq 2$, and satisfies $d_{C^2}(\partial\Omega,\mathcal{E})<\epsilon$, 
		the following holds. Let $f_{\lambda}\colon \mathbb{A} \to \mathbb{A}$ be the dissipative billiard map within $\Omega$. There exist $2$-periodic orbits $\{H_1(\Omega),H_2(\Omega)\}$ and $\{E_1(\Omega),E_2(\Omega)\}$  
		of saddle and sink type respectively, and the Birkhoff attractor 
		is equal to 
		$$
		\Lambda_\lambda= 
		\mathcal{W}^u(\mathcal{O}_{f_\lambda}(H_1(\Omega)) \cup \{E_1(\Omega),E_2(\Omega)\} =
		\overline{\mathcal{W}^u(\mathcal{O}_{f_\lambda}(H_1(\Omega)) }\,,
		$$ 
		where $	\mathcal{W}^u(\mathcal{O}_{f_\lambda}(H_1(\Omega)) :=\mathcal{W}^u(H_1(\Omega);f_\lambda^2) \cup \mathcal{W}^u(H_2(\Omega);f_\lambda^2)$.
		Moreover, the function $(\mathcal{E},\lambda)\mapsto \epsilon(\mathcal{E},\lambda)$ can be chosen to be continuous. 
\end{corollary}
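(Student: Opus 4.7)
My approach combines three ingredients: (i) persistence of the hyperbolic 2-periodic orbits of $f_\lambda^\mathcal{E}$ under small $C^2$-perturbation of $\partial \mathcal{E}$; (ii) a \emph{quasi-Lyapunov} argument, inherited from the ellipse Lyapunov function $L(x,v) = Bx \cdot v$ of Proposition \ref{ELLE}, to show that every forward $f_\lambda^\Omega$-orbit still converges to one of the 2-periodic orbits; and (iii) the same invariance/separation scheme as in the proof of Theorem \ref{ellisse} to identify the Birkhoff attractor with the closure of the unstable manifolds.

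For (i), I would use that $f_\lambda^\Omega$ depends continuously on $\partial \Omega$ in the $C^1$-topology when $\partial \Omega$ varies in $C^2$, together with the hyperbolicity of $\{H_1,H_2\}$ and $\{E_1,E_2\}$ given by Lemma \ref{lemme vp reelles ellipse}, to deduce via the implicit function theorem that for every $\Omega$ sufficiently $C^2$-close to $\mathcal{E}$, there exist nearby hyperbolic 2-periodic orbits $\{H_1(\Omega), H_2(\Omega)\}$ and $\{E_1(\Omega), E_2(\Omega)\}$ of saddle and sink type respectively, with local stable and unstable manifolds depending continuously on $\Omega$ in the $C^1$-topology.

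For (ii), I would reuse the ellipse function $L$ and set $\mathcal{L}^\Omega := L + L \circ f_\lambda^\Omega$. Then the increment $\Phi^\Omega := \mathcal{L}^\Omega \circ f_\lambda^\Omega - \mathcal{L}^\Omega$ is $C^0$-close to its ellipse analog $\Phi := \mathcal{L}_\lambda \circ f_\lambda^\mathcal{E} - \mathcal{L}_\lambda$, which satisfies $\Phi \leq 0$ with $\Phi^{-1}(0) = \mathrm{II}$ by Remark \ref{remark min max}\ref{remark ensemble neutre mathcal l}. Fixing a small open neighborhood $U$ of $\mathrm{II}$, we have $\Phi \leq -c_U < 0$ on $\mathbb{A} \setminus U$; for $\Omega$ close enough to $\mathcal{E}$, also $\Phi^\Omega \leq -c_U/2$ outside $U$. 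Since $\mathcal{L}^\Omega$ is bounded, every forward $f_\lambda^\Omega$-orbit must eventually enter $U$. Near a sink, the orbit is trapped in its basin and converges; near a saddle, either the orbit remains (on the local stable manifold) and converges to the saddle, or it escapes and cannot return, since $\mathcal{L}^\Omega$ is monotonically decreasing and takes strictly higher values near saddles than near sinks (by continuity of the ellipse gap from Remark \ref{remark min max}\ref{remark bis ensemble neutre mathcal l}). Applying the dual argument with $-\mathcal{L}^\Omega$ to points of the compact $f_\lambda^\Omega$-invariant set $\Lambda_\lambda^0(\Omega)$, as in the proof of Proposition \ref{coro attra}, yields that the $\alpha$-limit of any non-sink point of $\Lambda_\lambda^0(\Omega)$ equals $\{H_1(\Omega), H_2(\Omega)\}$.

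For (iii), the previous step gives $\Lambda_\lambda^0(\Omega) \subset \overline{\mathcal{W}^u(\mathcal{O}_{f_\lambda}(H_1(\Omega)))}$, and the reverse inclusion uses the persistent $C^1$ local unstable manifolds of the saddles verbatim as in Theorem \ref{ellisse}: the adaptation of Claim \ref{claim local unstable} combined with Lemma \ref{lemma disconnected annulus} places these local unstable manifolds inside $\Lambda_\lambda(\Omega)$, and invariance together with closedness extends this to the full closure. Continuity of $(\mathcal{E}, \lambda) \mapsto \epsilon(\mathcal{E}, \lambda)$ follows because the hyperbolicity constants, the gap $c_U$, and the error $\|\Phi^\Omega - \Phi\|_{C^0}$ depend continuously on $(\mathcal{E}, \lambda)$ on compact regions. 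The main obstacle is step (ii): without an exact Lyapunov function, one must carefully use the spectral gap between saddle and sink values of $\mathcal{L}^\Omega$ to preclude oscillation between their neighborhoods, and quantitatively relate $\|\Phi^\Omega - \Phi\|_{C^0}$ to $d_{C^2}(\partial\Omega, \mathcal{E})$.
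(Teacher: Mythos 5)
Your steps (i) and (iii) match the paper's strategy (persistence of the hyperbolic $2$-periodic orbits via $C^1$-continuity of $f_\lambda$ in $\partial\Omega$, then the adaptation of Claim \ref{claim local unstable} together with Lemma \ref{lemma disconnected annulus} for the lower inclusion). The genuine gap is in step (ii). For the perturbed domain the function $\mathcal{L}^\Omega$ is \emph{not} monotone along $f_\lambda^\Omega$-orbits: $\Phi^\Omega$ is only $\leq -c_U/2$ outside the neighborhood $U$ of $\mathrm{II}$ and may be positive inside $U$, so the assertion that an orbit leaving a saddle neighborhood ``cannot return, since $\mathcal{L}^\Omega$ is monotonically decreasing'' is unjustified. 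Your gap argument (values of $\mathcal{L}^\Omega$ near the sinks lie a definite amount $g>0$ below those near the saddles, while the possible gain during a single visit to a component of $U$ is bounded by the oscillation of $\mathcal{L}^\Omega$ there) does correctly forbid transitions from a sink neighborhood back to a saddle neighborhood, but it does not forbid an orbit from leaving the saddle neighborhoods and returning to them infinitely often: the net $\mathcal{L}^\Omega$-variation over such an excursion is controlled only by the oscillation of $\mathcal{L}^\Omega$ on the $\delta$-balls (a state function, independent of the excursion's length), so one obtains that each excursion is short, not that there are finitely many. Moreover the definite decrease $c_U(\delta)$ degenerates like $\delta^2$ as $\delta\to 0$ (since $\Phi\leq 0$ attains its maximum on $\mathrm{II}$, so $D\Phi$ vanishes there) while the oscillation of $\mathcal{L}^\Omega$ on the balls is of order $\delta$, so the naive bookkeeping ``each excursion costs $c_U/2$, each visit gains little'' does not close. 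Repairing step (ii) requires the extra dynamical input that orbits exiting $B(H_i(\Omega),\delta)$ do so near the local unstable manifold, whose forward iterates reach the sink neighborhoods in a uniformly bounded number of steps --- i.e.\ precisely the transported version of inclusion \eqref{cc unstable}; the same care is needed for the dual backward argument on $\Lambda_\lambda^0$.

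The paper circumvents all of this: it never proves that every orbit of $f_\lambda^\Omega$ converges to a $2$-periodic orbit. Instead it shows that $K:=\overline{\mathcal{W}^u(\mathcal{O}_{f_\lambda}(H_1(\Omega)))}$ is compact, connected, $f_\lambda$-invariant and --- this is the key step --- \emph{separates} $\A$, by decomposing $K$ into two arcs near the sinks plus two finite-time unstable arcs and showing that this configuration is homotopic to the analogous decomposition of the Birkhoff attractor of the ellipse, which separates by Theorem \ref{ellisse}. Minimality (Proposition \ref{minimality}) then yields $\Lambda_\lambda\subset K$ with no global orbit analysis, and the reverse inclusion is obtained exactly as in your step (iii). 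This route only requires controlling finitely many iterates of compact pieces of the local unstable manifolds, which is robust under $C^1$-perturbation; I would recommend adopting it, or else supplying the missing no-return argument in your step (ii).
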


\begin{proof} 
	Denote by $g_\lambda$ the dissipative billiard map of $\mathcal{E}$ with dissipation parameter $\lambda\in (0,1)$. 
		Let $\{H_1,H_2\}$ and $\{E_1,E_2\}$ be the $2$-periodic orbits for $g_\lambda$ of saddle and sink type respectively, see Lemma \ref{lemme vp reelles ellipse}.
		For $\partial\Omega$ sufficiently $C^2$-close to $\mathcal{E}$, the associated domain $\Omega$ is still strongly convex, as the curvature function depends continuously on the domain in the $C^2$-topology, and $\mathcal{E}$ is strongly convex. Without loss of generality, we can assume that the perimeter of $\partial \Omega$ is still one (as the dynamics is invariant under rescaling), so that the dissipative billiard map $f_\lambda$ is defined on the same phase space $\A$ as $g_\lambda$. Moreover, for any $\eta>0$, there exists $\epsilon_0(\eta)>0$ such that if $d_{C^2}(\partial \Omega,\mathcal{E})<\epsilon_0(\eta)$, then $d_{C^1}(f_\lambda,g_\lambda)<\eta$.\footnote{Indeed, by \eqref{matrice differ}, the differentials of $f_\lambda,g_\lambda$ depend continuously on the curvature function.}  
		Fix $\eta>0$ small enough such that for any $\Omega$ with $d_{C^2}(\partial \Omega,\mathcal{E})<\epsilon_0:=\epsilon_0(\eta)$, the $2$-periodic orbits $\{H_1,H_2\}$ and $\{E_1,E_2\}$ have continuations $\{H_1(\Omega),H_2(\Omega)\}$ (of saddle type) and $\{E_1(\Omega),E_2(\Omega)\}$ (of sink type) for $f_\lambda$. \\
Let $\delta_0>0$ be such that for every $0<\delta<\delta_0$, the balls $B(E_1,\delta), B(E_2,\delta)$ are both contractible, i.e., any closed path contained in $B(E_1,\delta)$, resp. $B(E_2,\delta)$, is homotopic to a point. As already noticed with inclusion (\ref{cc unstable}) in Proposition \ref{coro attra}, we can fix $0<\delta<\delta_0$ small enough such that there exists $N\in\N$ with the property that for any $n>N$, the set
		$$
		g_\lambda^n(\mathcal{W}^u_{\delta/2}(\mathcal{O}_{g_\lambda}(H_1)))\setminus g_\lambda^N(\mathcal{W}^u_{\delta/2}(\mathcal{O}_{g_\lambda}(H_1))) \subset B(E_1,{\delta/2})\cup B(E_2,{\delta/2})\, .
		$$
		where $\mathcal{W}^u_{\delta/2}(\mathcal{O}_{g_\lambda}(H_1)):=\mathcal{W}^u_{\delta/2}(H_1;g_\lambda^2)\cup\mathcal{W}^u_{\delta/2}(H_2;g_\lambda^2)$.
By Hadamard-Perron Theorem --see e.g. \cite[Proposition 5.6.1]{BrinStuck}-- local invariant manifolds of hyperbolic fixed points depend continuously on the dynamics in the $C^1$-topology (hence depend continuously on $\partial\Omega$ in the $C^2$-topology). Consequently, there exists $0<\epsilon_1<\epsilon_0$ such that for every domain $\Omega$ with $d_{C^2}(\partial\Omega,\mathcal{E})<\epsilon_1$, for any $n>N$, the set
		$$
		f_\lambda^n(\mathcal{W}^u_\delta(\mathcal{O}_{f_\lambda}(H_1(\Omega))))\setminus f_\lambda^N(\mathcal{W}^u_\delta(\mathcal{O}_{f_\lambda}(H_1(\Omega))))$$
is contained in $B(E_1,{\delta})\cup B(E_2,{\delta})$, where $\mathcal{W}^u_\delta(\mathcal{O}_{f_\lambda}(H_1(\Omega))):=\mathcal{W}^u_{\delta}(H_1(\Omega);f_\lambda^2)\cup\mathcal{W}^u_{\delta}(H_2(\Omega);f_\lambda^2)$. Observe that $E_i(\Omega)\in B(E_i,\delta)$, for $i=1,2$.
In particular, the $2$-periodic orbit $\{E_1(\Omega),E_2(\Omega)\}$ belongs to $$\overline{\mathcal{W}^u(\mathcal{O}_{f_\lambda}(H_1(\Omega))}:=\overline{\mathcal{W}^u(H_1(\Omega);f_\lambda^2)\cup\mathcal{W}^u(H_2(\Omega);f_\lambda^2)}$$ and the latter is an $f_\lambda$-invariant, compact and connected set.
\begin{claim} The set $\overline{\mathcal{W}^u(\mathcal{O}_{f_\lambda}(H_1(\Omega))}$ separates $\A$.
		\end{claim}
		\begin{proof}[Proof of the claim.]
			As for $\overline{\mathcal{W}^u(\mathcal{O}_{f_\lambda}(H_1(\Omega))}$, we use the notation 
$$\overline{\mathcal{W}^u(\mathcal{O}_{g_\lambda}(H_1))} := \overline{\mathcal{W}^u(H_1(\Omega);g_\lambda^2)  \cup \mathcal{W}^u(H_2(\Omega);g_\lambda^2)}\, . $$

Observe that the second one separates the annulus because, by Theorem \ref{ellisse}, it is the Birkhoff attractor of $g_\lambda$. The claim then follows if we show that they are homotopic. From the previous choice of $\epsilon_1>0$ we can decompose
			$$
			\overline{\mathcal{W}^u(\mathcal{O}_{f_\lambda}(H_1(\Omega)))}=\gamma_1\cup\gamma_2\cup\hat\gamma_1\cup \hat \gamma_2\,,
			$$
			where for $i=1,2$, $\gamma_i\subset B(E_i,\delta)$ is a continuous path containing $E_i(\Omega)$ whose endpoints $q_i^1,q_i^2$ lie on the circle $\mathcal{C}(E_i,\delta)$ (centered at $E_i$ of radius $\delta$), while $\hat\gamma_i\subset f_\lambda^N(\mathcal{W}^u_\delta(H_i(\Omega);f_\lambda^2))$ is an unstable arc with endpoints $q_1^i$ and $q_2^i$. Similarly,
			$$
			\overline{\mathcal{W}^u(\mathcal{O}_{g_\lambda}(H_1))}=\Gamma_1\cup\Gamma_2\cup\hat\Gamma_1\cup \hat \Gamma_2\,,
			$$
			where for $i=1,2$, $\Gamma_i\subset B(E_i,\delta)$ is a continuous path containing $E_i$ whose endpoints $Q_i^1,Q_i^2$ lie on the circle $\mathcal{C}(E_i,\delta)$, while $\hat\Gamma_i\subset g_\lambda^N(\mathcal{W}^u_\delta(H_i;g_\lambda^2))$ is an unstable arc with endpoints $Q_1^i$ and $Q_2^i$. \\
Fix $0<\delta'\ll \delta$. We can retract a $\delta'$-neighborhood of the circles $\mathcal{C}(E_1,\delta)$ and $\mathcal{C}(E_2,\delta)$ in such a way that the respective images $\{\gamma_i',\hat \gamma_i',\Gamma_i',\hat \Gamma_i'\}_{i=1,2}$ of $\{\gamma_i,\hat \gamma_i,\Gamma_i,\hat \Gamma_i\}_{i=1,2}$ after retraction satisfy that for $i=1,2$, $\gamma_i',\Gamma_i'$ have the same endpoints, and $\hat\gamma_i',\hat\Gamma_i'$ have the same endpoints. In particular, it holds that $\gamma_1\cup\gamma_2\cup\hat\gamma_1\cup\hat\gamma_2$ is homotopic to $\Gamma_1\cup\Gamma_2\cup\hat\Gamma_1\cup\hat\Gamma_2$ if and only if $\gamma_1'\cup\gamma_2'\cup\hat\gamma_1'\cup\hat\gamma_2'$ is homotopic to $\Gamma_1'\cup\Gamma_2'\cup\hat\Gamma_1'\cup\hat\Gamma_2'$. We denote by $B_1',B_2'$ the respective images of $B(E_1,\delta)$ and $B(E_2,\delta)$ after retraction. \\
Since for $i=1,2$, the set $B_i'$ is contractible, and $\gamma_i'\cup\Gamma_i'$ is a closed loop, we deduce that $\gamma_i'$ and $\Gamma_i'$ are homotopic. Besides, by the continuous dependence of the local unstable manifolds on the dynamics, the unstable arcs $\hat \gamma_i$ and $\hat \Gamma_i$ are $C^0$-close to each other, and then, the paths $\hat \gamma_i'$ and $\hat \Gamma_i'$ are homotopic. We conclude that
			$\gamma_1'\cup\gamma_2'\cup\hat\gamma_1'\cup \hat \gamma_2'$ is homotopic to $\Gamma_1'\cup\Gamma_2'\cup\hat\Gamma_1'\cup \hat \Gamma_2'$; by construction, it follows that  $\overline{\mathcal{W}^u(\mathcal{O}_{f_\lambda}(H_1(\Omega)))}=\gamma_1\cup\gamma_2\cup\hat\gamma_1\cup \hat \gamma_2$ is also homotopic to $	\overline{\mathcal{W}^u(\mathcal{O}_{g_\lambda}(H_1))}=\Gamma_1\cup\Gamma_2\cup\hat\Gamma_1\cup \hat \Gamma_2$. \qedhere 

		\end{proof} 
\noindent As a consequence of the previous claim, we have  $\overline{\mathcal{W}^u(\mathcal{O}_{f_\lambda}(H_1(\Omega))} \in\mathcal{X}(f_\lambda)$, hence --by Proposition \ref{minimality}-- the Birkhoff attractor of $f_\lambda$ is contained in it. Since the Birkhoff attractor cannot be reduced to $\{E_1(\Omega),E_2(\Omega)\}$, it must contain points in the unstable manifold of the saddle periodic orbit. Since the Birkhoff attractor is invariant and closed, it holds that $H_1(\Omega), H_2(\Omega)\in \Lambda_\lambda$. Repeating the proof of Claim \ref{claim local unstable}, we can show that for any point $x$ in the local unstable manifold of $\{H_1(\Omega),H_2(\Omega)\}$, the set $\overline{\mathcal{W}^u(\mathcal{O}_{f_\lambda}(H_1(\Omega)))} \setminus\{x\}$ does not separate the annulus. By Lemma \ref{lemma disconnected annulus}, we deduce that the local unstable manifold is contained in the Birkhoff attractor. Again, since $\Lambda_\lambda$ is invariant and closed, we deduce that $\Lambda_\lambda=\overline{\mathcal{W}^u(\mathcal{O}_{f_\lambda}(H_1(\Omega))) }$. \\
Finally, the fact that the function $(\mathcal{E},\lambda)\mapsto \epsilon(\mathcal{E},\lambda)$ can be chosen to be continuous follows from the fact that the objects for which certain conditions have to be satisfied while choosing $\epsilon_0,\epsilon_1$ above depend continuously on the dynamics $f_\lambda$ (in the $C^1$-topology), which itself depends continuously on the eccentricity of the ellipse $\mathcal{E}$ and on the dissipation parameter $\lambda\in (0,1)$.  
		\qedhere

\end{proof} 


\section{Birkhoff attractors for strong dissipation} \label{cinque}

In the previous section, as a first example, we have seen that, in the case of a circular table, the Birkhoff attractor is the simplest possible, i.e. the graph of the zero function. We are thus 
	naturally led to investigate when $\Lambda_{\lambda}$ is topologically simple, that is, it is a graph. The main result of the present section is proving that the geometric condition contained in Definition \ref{defi set d k} 
	together with the hypothesis that the dissipation is strong, i.e., with $\lambda$ close to $0$, are sufficient for the Birkhoff attractor to be a graph. The corresponding result (Theorem \ref{coro norm hyp}) contains also details on dynamics' and graphs' regularity and utilizes the notions of dominated splitting for an invariant set and of normally contracted (called also hyperbolic) manifold. These definitions are recalled at the beginning of the next section.
	 
	Throughout this section, for simplicity, we will assume that the dissipative billiard maps $f_\lambda$ have constant dissipation $\lambda \in (0,1)$. Yet, we will argue in Remark \ref{argue variable} that the same results can be obtained for a general dissipative billiard map as in Definition \ref{definit diss bill}.  

\subsection{Normally contracted Birkhoff attractors for strong dissipation}

Let $f_{\lambda}\colon\A \to \A, \ (s,r) \mapsto (s',r')$ be the dissipative billiard map within a convex domain $\Omega \subset \R^2$. We use notations of Section \ref{DBM}.
\begin{proposition}\label{normal hyp}
	Assume that $\Omega \in \mathcal{D}^k$, where $\mathcal{D}^k$ has been introduced in Definition \ref{defi set d k}. 
	Then, there exists $\lambda(\Omega)\in (0,1)$ and a cone-field $\mathcal{C}=(\mathcal{C}(s,r))_{(s,r)\in\A}$ containing the horizontal direction:
	$$
	\mathcal{C}(s,r):=\{v\in T_{(s,r)}\A :\ v=(v_s,v_r), \vert v_r\vert \leq\eta(r)\vert v_s\vert\}\, ,
	$$ 
where $\eta:[-1,1]\to \R_+$ is a continuous function, such that for each $ \lambda\in (0,\lambda(\Omega))$, and for each $(s,r)\in \mathbb{T}\times [-\lambda(\Omega),\lambda(\Omega)]$
	$$
	Df_\lambda(s,r)\mathcal{C}(s,r)\subset \mathrm{int}\,  \mathcal{C} (f_\lambda(s,r))  \cup\{0\} \, .
	$$
\end{proposition}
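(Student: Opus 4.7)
The plan is to read off the proposition from the explicit formula \eqref{matrice differ} for $Df_\lambda$ by exploiting a degeneracy at $\lambda=0$. For constant dissipation one has
$$
Df_\lambda(s,r)=\begin{bmatrix} A(s,r) & B(s,r)\\ \lambda\, C(s,r) & \lambda\, D(s,r)\end{bmatrix},
$$
where $A=-(\tau\mathcal{K}+\nu)/\nu'$, $B=\tau/(\nu\nu')$, $C=\tau\mathcal{K}\mathcal{K}'+\mathcal{K}\nu'+\mathcal{K}'\nu$ and $D=-(\tau\mathcal{K}'+\nu')/\nu$. Thus $Df_0(s,r)$ has vanishing bottom row and sends every vector into the horizontal line, so any cone around the horizontal direction is trivially strictly invariant under $Df_0$. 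The hypothesis $\Omega\in\mathcal{D}^k$ enters precisely to ensure that this degenerate map is not itself degenerate on horizontal vectors: along the zero section a perpendicular bounce satisfies $r_1'=-r=0$, hence $\nu=\nu'=1$ and $A(s,0)=-(\tau(s)\mathcal{K}(s)+1)$; the condition $\max_s\tau(s)\mathcal{K}(s)<-1$ then yields a constant $A_0>0$ with $A(s,0)\geq A_0$ for all $s\in\mathbb{T}$. The whole argument is a stability statement in $\lambda$ around this rank-one situation.

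I would then fix, by continuity of $\tau,\mathcal{K},\nu,\nu'$ and compactness of $\mathbb{T}$, a radius $r_0>0$ such that $A(s,r)\geq A_0/2$ uniformly on $\mathbb{T}\times[-r_0,r_0]$, together with a constant $M$ bounding $|B|$, $|C|$ and $|D|$ on the same strip. Set $\mathcal{C}(s,r)=\{(v_s,v_r):|v_r|\leq\eta_0|v_s|\}$ with $\eta(r)\equiv\eta_0$ a small positive constant; this cone field is continuous in $r$ and contains the horizontal direction. For any input $v_s=1$, $|v_r|\leq\eta_0$, the image under $Df_\lambda(s,r)$ satisfies
$$
|v'_s|\geq A(s,r)-|B|\eta_0\geq A_0/2-M\eta_0,\qquad |v'_r|\leq \lambda M(1+\eta_0).
$$
I first choose $\eta_0$ small enough that $A_0/2-M\eta_0\geq A_0/4$, and then $\lambda(\Omega)\in(0,r_0)$ small enough that $\lambda(\Omega)\,M(1+\eta_0)\leq \eta_0 A_0/8$. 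These inequalities give $|v'_r|/|v'_s|\leq\eta_0/2<\eta_0$, which is exactly the strict containment in $\mathcal{C}(f_\lambda(s,r))$ demanded by the statement. Since $f_\lambda(\mathbb{A})\subset \mathbb{T}\times[-\lambda,\lambda]\subseteq\mathbb{T}\times[-\lambda(\Omega),\lambda(\Omega)]$, the image point lies again in the strip where the cone estimate holds, so the invariance persists under further iteration.

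The only step that is anything other than bookkeeping is recognising that the geometric hypothesis $\max_s\tau(s)\mathcal{K}(s)<-1$ in Definition \ref{defi set d k} is exactly what guarantees that $A(s,0)=-(\tau(s)\mathcal{K}(s)+1)$ is strictly positive and uniformly bounded below; if one only knew $\tau\mathcal{K}<0$, the quantity $A(s,0)$ could approach $0$ (as $\tau\mathcal{K}\to-1$), the lower bound on $|v'_s|$ would degenerate, and both $\eta_0$ and $\lambda(\Omega)$ would be forced to zero. Once this sign condition is secured, the whole cone-invariance statement becomes an elementary perturbation from the rank-one map $Df_0$. The same scheme extends verbatim to the $C^1$-small non-constant dissipation setting announced in the paper, because only the $O(\lambda)$ structure of the bottom row of the differential is exploited.
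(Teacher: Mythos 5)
Your proof is correct and follows essentially the same route as the paper's: both treat the statement as a perturbation of the rank-one map $Df_0$, use the hypothesis $\max_s\tau(s)\mathcal{K}(s)<-1$ to bound the $(1,1)$-entry of $Df_\lambda$ away from zero on a strip around the zero section, and then choose the cone aperture and $\lambda(\Omega)$ small (the paper merely takes the slightly different aperture $\alpha_0\,\nu(r)$ in place of your constant $\eta_0$). One small inaccuracy: along the zero section the \emph{next} bounce is generally not perpendicular, so $r_1'\neq 0$ and $\nu'\neq 1$ there; this is harmless, since $\nu'\in(0,1]$ only improves the lower bound $A(s,0)=-(\tau\mathcal{K}+1)/\nu'\geq -(\tau\mathcal{K}+1)\geq c_0$, and $\nu'$ stays bounded below on a compact strip $\mathbb{T}\times[-r_0,r_0]$ because $f_1$ maps it into the interior of $\A$, so your constant $M$ bounding $|B|,|C|,|D|$ still exists.
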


\begin{proof} We recall that for any $(s,r)\in \A$, and $(s',r'):=f_\lambda(s,r)$, $\tau(s,r):=\ell(s,s')=\|\Upsilon(s)-\Upsilon(s')\|$ is the Euclidean distance between the  consecutive bounces $\Upsilon(s)$, $\Upsilon(s')$, so that the quantity $\tau(s)$ in Definition \ref{defi set d k} is merely $\tau(s) = \tau(s,0)$. Moreover, since $\Omega \in \mathcal{D}^k$, there exists a constant $c_0>0$ such that
	\begin{equation*}\label{geom condit}
	\max_{s \in \mathbb{T}} \tau(s,0)\mathcal{K}(s)<-1-c_0\,.
	\end{equation*}
By compactness and continuity of the involved functions, we can fix $\delta_0 > 0$ and $K_0>0$ such that
$$
\max_{(s,r)\in\A}\tau(s,r)\leq \diam \Omega<\delta_0,\qquad \max_{s \in \mathbb{T}}|\mathcal{K}(s)|<\mathcal{K}_0\, .
$$
We can find $\lambda_1\in(0,1)$ small enough such that
	\begin{equation}\label{cond cones}
	\max_{(s,r)\in \mathbb{T} \times [-\lambda_1,\lambda_1]} \tau(s,r)\mathcal{K}(s)+\nu(r)<-c_0,\quad  \max_{(s,r)\in \mathbb{T} \times [-\lambda_1,\lambda_1]} \frac{\tau(s,r)}{\nu(r)}<\delta_0\, ,
	\end{equation}
	where $\nu(r):= \sqrt{1 - r^2}$. By \eqref{matrice differ}, for each $(s,r)\in \mathrm{int}(\mathbb{A})$, we have
	\begin{equation}\label{cond diff}
	Df_{\lambda}(s,r) e_1=
	\begin{bmatrix}  
	- \frac{\tau \mathcal{K} + \nu}{\nu'} 
	\\
	\lambda\big(\tau \mathcal{K} \mathcal{K}' + \mathcal{K} \nu'+ \mathcal{K}' \nu \big) 
	\end{bmatrix},\qquad
	Df_{\lambda}(s,r) e_2=
	\begin{bmatrix}  
	\frac{\tau}{\nu \nu'} \\ 
	\\
	-\lambda \frac{\tau \mathcal{K}'+ \nu'}{\nu}
	\end{bmatrix},
	\end{equation}
	where $e_{1}=(1,0)^T$, $e_2(0,1)^T$ are the vectors of the canonical basis. Moreover, $\mathcal{K}$, $\mathcal{K}'$ denote the curvatures at the points corresponding to $s,s'$, while $\nu := \nu(r) = \sqrt{1 - r^2}$, $\nu' := \nu' (r') = \sqrt{1 - \left(\frac{r'}{\lambda}\right)^2}$. \\ 
	\noindent Let $\alpha_0:=\frac{c_0}{2\delta_0}>0$. At each $(s,r)\in \A$, we identify $T_{(s,r)}\A$ with $\R^2$, and define the cone $$
	\mathcal{C}^{\alpha_0}(s,r):=\{u=a e_1+ b e_2\in \mathbb{R}^2: |b|\leq \alpha_0\nu(r) |a|\}.
	$$
We note that this cone always contains the horizontal direction $\R\times \{0\}$. 
	Fixed now $\lambda \in (0,\lambda_1)$, let $(s,r)\in f_\lambda(\mathbb{A})=\mathbb{T} \times [-\lambda,\lambda]$, $(s',r'):=f_\lambda(s,r)$. 
By \eqref{cond cones} and \eqref{cond diff}, for any $u=ae_1+be_2\in \mathcal{C}^{\alpha_0}(s,r)$, its image $u'$ by $Df_\lambda(s,r)$ is equal to
$$u' = \left[-a \frac{\tau \mathcal{K} + \nu}{\nu'} + b \frac{\tau}{\nu \nu'} , a \lambda\big(\tau \mathcal{K} \mathcal{K}' + \mathcal{K} \nu'+ \mathcal{K}' \nu \big) - b\lambda \frac{\tau \mathcal{K}'+ \nu'}{\nu} \right]^T=: a'e_1+b'e_2\, ,$$
where $\nu=\nu(r),\nu'=\nu'(r')$. Thus, we have
	\begin{align*}
	\nu(r') |a'|& \geq \nu'(r') |a'| \geq |a|c_0-|b|\delta_0\geq \frac{c_0}{2}|a|,\\
	|b'|&\leq |a| \lambda \left((\delta_0\mathcal{K}_0^2+2\mathcal{K}_0)+\alpha_0(\delta_0 \mathcal{K}_0+1)\right).
	\end{align*}
	Given now $\mu_0\in (0,1)$, it holds 
	$$
	Df_\lambda(s,r) \mathcal{C}^{\alpha_0}(s,r)\subset \mathcal{C}^{\mu_0 \alpha_0}(s',r')\,,
	$$
	provided that  
	$
	\lambda\in (0,\lambda(\Omega))$, with 
	$$
	\lambda(\Omega)=\lambda(\delta_0,\mathcal{K}_0,c_0,\mu_0):=\min\left(\lambda_1,\frac{\mu_0 \alpha_0 c_0}{2(\delta_0\mathcal{K}_0^2+2\mathcal{K}_0)+2\alpha_0(\delta_0 \mathcal{K}_0+1)}\right)\in (0,1)\, .
	$$
	
Setting $\mathcal{C}(s,r)=\mathcal{C}^{\alpha_0}(s,r)$ for each $(s,r)\in\A$, we conclude the proof.
 \end{proof}
\noindent In order to continue on this section, we need to recall some notions and results: the definition of dominated splitting for an invariant set, the definition of normally contracted (hyperbolic) manifold (see e.g. \cite[Definition 2.2]{Sambarino} and \cite{BergerBounemoura} respectively) and a Theorem by Hirsch-Pugh-Shub on the regularity of such normally contracted manifolds (see \cite{HirschPughShub}).
 \begin{definition}[Dominated splitting]
 	Let $M$ be a compact Riemannian manifold without boundary. Let $f\colon M \to M$ be a $C^\ell$ diffeomorphism onto its image, $\ell\geq 1$. Let $K$ be an invariant set for $f$. Then, $K$ has a \emph{dominated splitting} if the tangent bundle over $K$ splits into two subbundles $T_KM=E\oplus F$ such that
 	\begin{enumerate}
 		\item $E$ and $F$ are invariant by $Df$;
 		\item the subbundles $E$ and $F$ vary continuously with respect to the point $x\in K$;
 		\item there exist $C>0$ and $0<\nu<1$ such that for any $x\in K$,
 		$$
 		\Vert Df^n(x)\vert_{E}\Vert \,\cdot \Vert Df^{-n}(f^n(x))\vert_{F}\Vert \leq C\, \nu^n,\quad \forall\, n \geq 0.
 		$$ 
 	\end{enumerate}
 \end{definition}
\noindent Roughly speaking, the previous definition says that any direction not contained in the subbundle $E$ converges exponentially fast to the direction $F$ under iteration of $Df$. 
For the following definitions of ($\ell$-)normal contraction, we refer to \cite{HirschPughShub} and \cite{CrovisierPotrie}.

\begin{definition}[Normally contracted manifold]\label{defi norm contra man}
	Let $M$ be a compact Riemannian manifold without boundary. Let $f\colon M \to M$ be a $C^\ell$ diffeomorphism onto its image, $\ell\geq 1$. Let $N$ be a closed $C^1$ manifold, invariant under $f$. Then, we say that $N$ is \emph{normally contracted} if $N$ has a dominated splitting $T_NM=E^s \oplus TN$ such that $E^s$ is uniformly contracted, i.e., there exists $n_0\in \N$ and $\mu\in (0,1)$ such that for any $n\geq n_0$ it holds 
	$$
	\|Df^n(x)|_{E^s}\|\leq\mu^n,\quad \forall\, x \in N\,.
	$$
	Moreover, we say that $N$ is \emph{$\ell$-normally contracted} if the above splitting satisfies the following stronger condition: there exist $C>0$ and $0<\nu<1$ such that for any $x\in N$, and for any $1 \leq j \leq \ell$,  
	$$
	\Vert Df^n(x)\vert_{E^s}\Vert \,\cdot \Vert Df^{-n}(f^n(x))\vert_{TN}\Vert^j \leq C\, \nu^n,\quad \forall\, n \geq 0\,.
	$$ 
\end{definition}

Once we have a $\ell$-normally contracted manifold, then the following theorem by Hirsch-Pugh-Shub assures that the manifold is as regular as the dynamics.
\begin{theorem}\label{l normal contr man}\cite{HirschPughShub}
	Let $M$ be a compact Riemannian manifold without boundary. Let $f\colon M \to M$ be a $C^\ell$ diffeomorphism onto its image, $\ell\geq 1$. Let $N$ be a closed $C^1$ manifold, invariant under $f$. If $N$ is $\ell$-normally contracted, then $N$ is actually a $C^\ell$ manifold. 
\end{theorem}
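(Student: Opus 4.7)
The plan is to use the classical graph transform (Perron–Irwin) method of Hirsch–Pugh–Shub. Since $N$ is a $C^1$ submanifold of $M$ and the splitting $T_N M = E^s \oplus TN$ is $Df$-invariant and continuous, I would first pick a $C^0$ tubular neighborhood of $N$ modelled on the bundle $E^s \to N$: a homeomorphism $\Phi$ from a neighborhood of the zero section in $E^s$ onto a neighborhood of $N$ in $M$, sending zero section to $N$ with fiberwise derivative the identity along $N$. In these normal coordinates, $f$ is conjugate to a fiber-bundle-like map $\tilde f$ whose linearization along the zero section is block diagonal with blocks $Df|_{E^s}$ and $Df|_{TN}$. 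Any continuous section $\sigma : N \to E^s$ of small enough sup-norm has a graph that is $\tilde f$-invariant precisely when $\sigma = \Gamma(\sigma)$, where $\Gamma$ is the graph transform. An explicit formula for $\Gamma$ shows it is well defined on a closed ball in $C^0(N, E^s)$ and is a contraction there with ratio essentially $\mu$, the uniform contraction rate of $E^s$ given in Definition \ref{defi norm contra man}. Banach's theorem yields a unique fixed point; since the zero section is one such fixed point, it is the only locally invariant $C^0$ section, so $N$ is recovered as the unique invariant manifold in a neighborhood.

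The real work is in upgrading from $C^0$ to $C^\ell$. The standard device is to lift $\Gamma$ to an operator $\Gamma^{(j)}$ on the bundle of $j$-jets of sections, for $j = 1, \ldots, \ell$. A direct computation of $D\Gamma$ along the zero section shows that its fiber action on the $j$-jet bundle is conjugate to a bundle automorphism whose norm is controlled by $\|Df^n|_{E^s}\| \cdot \|Df^{-n}|_{TN}\|^j$. The $\ell$-normal contraction hypothesis states precisely that this quantity is dominated by $C\nu^n$ with $\nu < 1$, which gives a fiberwise contraction and hence, by a fibered contraction-mapping argument, a unique continuous candidate $j$-jet field at each level $j \leq \ell$. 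A Whitney-type assembly lemma (a continuous field of $j$-jets that are compatible on overlaps and consistent with the jet of the $C^{j-1}$ fixed point from the previous step assembles into a $C^j$ section) then promotes these jet-level fixed points to genuine $C^j$ regularity of $\sigma$; iterating for $j = 1, \ldots, \ell$ gives $\sigma \in C^\ell$, so $N \in C^\ell$.

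The chief obstacle is the familiar loss-of-derivative problem: the graph transform involves composing with $\tilde f^{-1}$ on the base, and each differentiation of $\sigma \circ \tilde f^{-1}$ brings down a factor of $\|D\tilde f^{-1}|_{TN}\|$. After $j$ differentiations one accumulates $\|Df^{-n}|_{TN}\|^j$, and this must be overwhelmed by the normal contraction $\|Df^n|_{E^s}\|$ — otherwise the bootstrap to $C^j$ fails. The $\ell$-normal contraction condition in Definition \ref{defi norm contra man} is calibrated exactly so that this balance holds for every $j \leq \ell$; merely assuming $N$ is normally contracted (the case $j = 0$) is insufficient and in general gives only $C^1$ regularity, not $C^\ell$. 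Hence the exponent $\ell$ in the hypothesis is sharp, and the entire bootstrap argument hinges on that quantitative spectral-gap inequality.
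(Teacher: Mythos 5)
This statement is quoted from Hirsch--Pugh--Shub: the paper gives no proof of its own, only the citation, so there is nothing internal to compare your argument against. Your sketch is, in outline, the correct HPS strategy (graph transform on sections of a normal bundle, $C^0$ contraction, then a fibered-contraction bootstrap on $j$-jets for $j=1,\dots,\ell$), and you correctly identify that the $\ell$-normal contraction inequality $\Vert Df^n\vert_{E^s}\Vert\cdot\Vert Df^{-n}\vert_{TN}\Vert^j\le C\nu^n$ is precisely the bunching needed to beat the loss of derivatives coming from composing with the base map, and that plain normal contraction only buys $C^1$.

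There is, however, one genuine gap in the way you have set the argument up. You model the tubular neighborhood on the bundle $E^s\to N$ and run the graph transform over $N$ itself. But $N$ is a priori only $C^1$, so $E^s\to N$ is merely a continuous bundle, your chart $\Phi$ is at best $C^0$, and the base dynamics $\tilde f\vert_N$ is only $C^1$; in such coordinates the fixed-point section can only inherit regularity relative to the $C^1$ structure of $N$, so the bootstrap cannot output ``$N$ is $C^\ell$ in $M$'' --- at stage $j$ of the induction you need the bundle and base map to already be $C^j$, which your setup does not provide. The HPS proof avoids this circularity by choosing a $C^\ell$ (or $C^\infty$) reference manifold $N'$ that is $C^1$-close to $N$, equipping it with a smooth tubular neighborhood and smooth normal bundle, and writing $N$ as the graph of a section over $N'$; the graph transform is then taken over $N'$ with the induced (no longer invariant, but overflowing) base dynamics, and the $C^r$-section theorem applied in that smooth ambient model yields the $C^\ell$ regularity of the section, hence of $N$. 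With that modification your argument goes through; without it, the jet-level induction stalls at $j=1$.
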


We can now state an interesting outcome of Proposition \ref{normal hyp}.
 
 \begin{corollary}\label{coro dom splitting}
 		Let $\Omega\in\mathcal{D}^k$, 
 		$k \geq 2$. Let $\lambda(\Omega)\in(0,1)$ be 
 		given by Proposition \ref{normal hyp}. 
 		Then, 
 		for any $\lambda\in(0,\lambda(\Omega))$, the attractor $\Lambda_\lambda^0$ has a dominated splitting 
 		$
 		E^s\oplus E^c
 		$
 		where the bundle $E^s$ is uniformly contracted, and each point $(s,r)\in \Lambda_\lambda^0$ has a stable manifold $\mathcal{W}^s(s,r)$, which is transverse to the horizontal. Moreover, there exists $0<\lambda'(\Omega)<\lambda(\Omega)$ such that for some $C>0$ and $0<\nu<1$, we have that for any $\lambda \in (0,\lambda'(\Omega))$, for any $x\in \Lambda_\lambda^0$, and for any $1 \leq j \leq k-1$,  
 		\begin{equation}\label{k contracting}
 		\Vert Df_\lambda^n(x)\vert_{E^s}\Vert \,\cdot \Vert Df_\lambda^{-n}(f_\lambda^n(x))\vert_{E^c}\Vert^j \leq C\, \nu^n,\quad \forall\, n \geq 0\,.
 		\end{equation}
 \end{corollary}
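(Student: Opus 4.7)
For $\lambda \in (0, \lambda(\Omega))$, possibly after decreasing $\lambda(\Omega)$ from Proposition~\ref{normal hyp} to guarantee all the estimates below go through, the attractor satisfies $\Lambda_\lambda^0 \subset f_\lambda(\A) \subset \T \times [-\lambda(\Omega), \lambda(\Omega)]$, so the forward-invariant cone field $\mathcal{C}$ of Proposition~\ref{normal hyp} is available at every point of $\Lambda_\lambda^0$, with $Df_\lambda \mathcal{C}(x) \subset \mathrm{int}\,\mathcal{C}(f_\lambda(x))$. The plan is first to build the $1$-dimensional invariant center bundle via cone contraction, and then to exploit the dissipation $\det Df_\lambda = \lambda$ to produce the stable complement by solving a cohomological equation.

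I would define
\[E^c(x) := \bigcap_{n \geq 0} Df_\lambda^n(f_\lambda^{-n}(x))\,\mathcal{C}(f_\lambda^{-n}(x)), \qquad x \in \Lambda_\lambda^0,\]
which is meaningful since $\Lambda_\lambda^0$ is $f_\lambda$-invariant. Because the iterates of $\mathcal{C}$ form a nested sequence of closed cones strictly contracted in the projective sense (by Proposition~\ref{normal hyp}), a Birkhoff--Hopf type argument along orbits yields that this intersection collapses to a line depending continuously on $x$. A direct computation using the matrix \eqref{matrice differ} and the bounds \eqref{cond cones} from the proof of Proposition~\ref{normal hyp} then gives a uniform lower bound $\|Df_\lambda|_{E^c(x)}\| \geq c_1 > 0$ on $\Lambda_\lambda^0$, for some $c_1$ depending only on the geometric constants $c_0, \alpha_0, \delta_0$ (the first coordinate of $Df_\lambda u$ dominates, with absolute value at least $\sim c_0/2$ after using $\alpha_0 \delta_0 \leq c_0/2$).

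To construct $E^s$, I would choose a continuous orthonormal frame $(u(x), w(x))$ on $\Lambda_\lambda^0$ with $u(x) \in E^c(x)$. In this frame the matrix of $Df_\lambda$ is upper triangular with diagonal entries $a(x), d(x)$ and upper-right entry $c(x)$; since $|a| \geq c_1$ and $|ad| = \lambda$, we get $|d| \leq \lambda/c_1$. I look for an invariant complement of the form $E^s(x) = \R(w(x) + \phi(x) u(x))$; invariance is equivalent to the cohomological equation $\phi(x) = -c(x)/a(x) + (d(x)/a(x))\,\phi(f_\lambda(x))$. Since $|d/a| \leq \lambda/c_1^2 < 1$ for $\lambda < c_1^2$, the equation admits the unique bounded continuous solution
\[\phi(x) = -\sum_{n \geq 0} \biggl(\prod_{j=0}^{n-1} \frac{d(f_\lambda^j(x))}{a(f_\lambda^j(x))}\biggr) \frac{c(f_\lambda^n(x))}{a(f_\lambda^n(x))},\]
and $E^s(x)$ is the desired invariant $1$-dim bundle transverse to $E^c(x)$.

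By construction $Df_\lambda|_{E^s}$ acts as multiplication by $d$, so $\|Df_\lambda^n|_{E^s}\| \leq (\lambda/c_1)^n$ and $\|Df_\lambda^{-n}|_{E^c}\| \leq c_1^{-n}$, whence
\[\|Df_\lambda^n|_{E^s}(x)\|\cdot \|Df_\lambda^{-n}|_{E^c}(f_\lambda^n(x))\|^j \leq \bigl(\lambda\,c_1^{-(j+1)}\bigr)^n, \qquad n \geq 0,\]
which decays geometrically for every $1 \leq j \leq k-1$ provided $\lambda < c_1^{j+1}$. Taking $\lambda'(\Omega) := \min(\lambda(\Omega), c_1^2, c_1^k)$ handles all these conditions simultaneously and yields \eqref{k contracting}. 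The existence of a $1$-dim stable manifold $\mathcal{W}^s(x)$ tangent to $E^s(x)$ at every $x \in \Lambda_\lambda^0$ then follows from the standard stable manifold theorem in the dominated setting, and transversality to the horizontal direction comes from $E^c \subset \mathcal{C}$ being close to the horizontal, which forces $E^s$ to be bounded away from it. The main technical obstacle is verifying that the nested cone intersection defining $E^c$ collapses to a line varying continuously with $x$; this reduces to establishing a \emph{uniform} projective contraction rate of $Df_\lambda$ on $\mathcal{C}$ over the compact set $\Lambda_\lambda^0$, which follows from compactness combined with the strict inclusion $Df_\lambda \mathcal{C} \subset \mathrm{int}\,\mathcal{C}$ from Proposition~\ref{normal hyp}.
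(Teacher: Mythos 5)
Your proposal reaches the right conclusions and follows the same overall architecture as the paper (invariant cone field $\Rightarrow$ dominated splitting $\Rightarrow$ use $\det Df_\lambda=\lambda$ to contract $E^s$ $\Rightarrow$ stable manifolds $\Rightarrow$ bunching estimate \eqref{k contracting}), but you unpack the black boxes: where the paper simply invokes the standard cone-field criterion to produce $E^s\oplus E^c$, you build $E^c$ by hand as a nested intersection of cone images and then construct $E^s$ by solving the cohomological equation $\phi=-c/a+(d/a)\,\phi\circ f_\lambda$ in an adapted frame. That construction is correct, and the per-step bounds $|a|\geq c_1$, $|d|\leq\lambda/c_1$ do yield \eqref{k contracting} exactly as the paper's bounds $C_1\leq\|Df_\lambda|_{E^c}\|\leq C_2$, $\|Df_\lambda|_{E^s}\|\leq C_3\lambda$ do. Two caveats. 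First, your route imposes extra smallness conditions ($\lambda<c_1^2$ for the convergence of the series defining $\phi$, $\lambda<c_1$ for $E^s$ to be contracted), so strictly speaking you only obtain the \emph{first} assertion of the corollary for a possibly smaller threshold than the $\lambda(\Omega)$ of Proposition \ref{normal hyp}; the paper avoids this by combining the domination $\|Df_\lambda^n|_{E^s}\|\leq\mu^n\|Df_\lambda^n|_{E^c}\|$ with $\det Df_\lambda^n=\lambda^n$ and the uniform angle between the bundles to get $\|Df_\lambda^n|_{E^s}\|\leq\lambda^{n/2}$ for \emph{every} $\lambda\in(0,\lambda(\Omega))$, with no extra restriction. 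You flag that you may need to shrink $\lambda(\Omega)$, so this is a weakening rather than an error, but it does change the statement. Second, your justification of the transversality of $\mathcal{W}^s$ to the horizontal is not right as written: in dimension two, $E^s$ being uniformly transverse to $E^c$ while $E^c$ is close to horizontal does \emph{not} prevent $E^s$ from also being close to horizontal (two distinct lines can both be nearly horizontal). What one actually needs is that $E^s$ lies outside the forward-invariant cone $\mathcal{C}$ — which the cone-field criterion gives for free ($E^s$ is the backward nested intersection of the complementary cones), or which you can recover from your construction by a growth-rate comparison: vectors in $\mathcal{C}$ have their first coordinate multiplied by at least $c_0/2$ at each step, while vectors in $E^s$ contract like $(\lambda/c_1)^n$, so $E^s\cap\mathcal{C}=\{0\}$ for $\lambda$ small. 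This is a fixable but genuine imprecision in the argument as proposed.
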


 \begin{proof}
	Let $\lambda(\Omega)\in (0,1)$ be as in Proposition \ref{normal hyp}. Take $\lambda \in (0,\lambda(\Omega))$. By the cone-field criterion (see e.g. \cite[Theorem 2.6]{CrovisierPotrie} and \cite[Proposition 2.2]{Sambarino}) for the cone-field $\mathcal{C}=(\mathcal{C}(s,r))_{(s,r)\in\A}$ constructed in Proposition \ref{normal hyp}, we deduce that 
	the attractor $\Lambda_\lambda^0\subset f_\lambda(\mathbb{A})$ has a dominated splitting $E_\lambda^s\oplus E_\lambda^c=E^s\oplus E^c$, where $E^c(s,r)$ is contained in the horizontal cone $\mathcal{C}(s,r)$, for each $(s,r)\in \Lambda_\lambda^0$. Moreover, the fiber bundle $E^s$ is uniformly contracted. Indeed, by the domination, there exist $\mu\in (0,1)$ and $n_0 \in \mathbb{N}$ such that for each $(s,r)\in \Lambda_\lambda^0$, and for each $n \geq n_0$,
	$$
	\|Df_\lambda^n(s,r)|_{E^s}\|\leq \mu^n \|Df_\lambda^n(s,r)|_{E^c}\|\,.
	$$
	Observe that, by \eqref{determinant appl billard}, we have $\det Df_\lambda^n(s,r)=\lambda^n$. As the angle between $E^s$ and $E^c$ is bounded away from zero at each point of $\Lambda_\lambda^0$, thus uniformly as $\Lambda_\lambda^0$ is compact, and by a change of basis, we conclude that there exists $n_1 \in \mathbb{N}$ such that for each $(s,r)\in \Lambda_\lambda^0$, and for each $n \geq n_1$,
	$$
	\|Df_\lambda^n(s,r)|_{E^s}\|\leq \lambda^{\frac n 2},
	$$
	 i.e., the bundle $E^s$ is uniformly contracted. In particular, by the Stable Manifold Theorem (see e.g. \cite{HirschPughShub} or \cite{CrovisierPotrie}), each point $(s,r)\in \Lambda_\lambda^0$ has a stable manifold $\mathcal{W}^s(s,r)$, which is uniformly transverse to the cone-field $\mathcal{C}$ which contains the horizontal direction. 
	 
	 The center bundle $E^c$ is contained in a cone around the horizontal direction and independent of $\lambda$. 
	 By \eqref{cond diff}, the modulus of the projection over the first coordinate of $Df_\lambda(s,r)(1,0)^T$ does not depend on $\lambda$. Since the central direction $E^c$ is contained in a cone around the horizontal direction, we deduce that there exist constants $0<C_1<C_2$ 
	 such that for any $\lambda \in (0,\lambda(\Omega))$, it holds 
	 \begin{equation}\label{premier eq ec control}
	 C_1\leq \|Df_\lambda(s,r)|_{E^c}\|\leq C_2,\quad \forall\, (s,r)\in \Lambda_\lambda^0\,.
	 \end{equation}
	 Since $\det Df_\lambda(s,r)=\lambda$, reasoning as above, we deduce that there exists a constant $C_3>0$ such that for any $\lambda \in (0,\lambda(\Omega))$, it holds
	 \begin{equation}\label{premier eq es control}
	 \|Df_\lambda(s,r)|_{E^s}\|\leq C_3 \lambda,\quad \forall\, (s,r)\in \Lambda_\lambda^0\,.
	 \end{equation}
	 By \eqref{premier eq ec control} and \eqref{premier eq es control}, we conclude that for $\lambda'(\Omega)\in (0,\lambda(\Omega))$ sufficiently small, \eqref{k contracting} holds for any $\lambda \in (0,\lambda'(\Omega))$, for any $x\in \Lambda_\lambda^0$, and for any $1 \leq j \leq k-1$. 
\end{proof} 

 \begin{remark}
	Observe that, if we could say \textit{a priori} that $\Lambda^0_\lambda$ is a $C^1$ manifold, then Corollary \ref{coro dom splitting} would be saying that $\Lambda^0_\lambda$ is $\ell$-normally contracted.
\end{remark}
	
The following proposition guarantees that the center space $E^c$ of the dominated splitting of $\Lambda_\lambda^0$ integrates uniquely to the Birkhoff attractor (see the work \cite{BonattiCrovisier} of Bonatti-Crovisier for related results in this direction). 

\begin{theorem}\label{coro norm hyp}
Let $\Omega\in\mathcal{D}^k$, $k \geq 2$, and let $\lambda(\Omega)\in(0,1)$ be 
given by Proposition \ref{normal hyp}.
Then, for $\lambda \in (0,\lambda(\Omega))$, the Birkhoff attractor $\Lambda_\lambda$ of $f_\lambda$ coincides with the attractor $\Lambda_\lambda^0$ and is a normally contracted $C^{1}$ graph over $\mathbb{T}\times\{0\}$.  
Let $\lambda'(\Omega)<\lambda(\Omega)$ be given by Corollary \ref{coro dom splitting}. Then, for $\lambda \in (0,\lambda'(\Omega))$, $\Lambda_\lambda=\Lambda_\lambda^0$ is actually a $C^{k-1}$ graph and $\Lambda_\lambda$ converges in the $C^1$ topology to the zero section $\T \times \{0\}$ as $\lambda \to 0$.
\end{theorem}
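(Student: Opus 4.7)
The plan is to combine the cone-invariance of Proposition~\ref{normal hyp} with the area-contraction rate $\det Df_\lambda = \lambda < 1$ to show first that $\Lambda_\lambda^0$ is a Lipschitz graph over $\T\times\{0\}$; then to upgrade Lipschitz to $C^1$ using the continuity of the center bundle $E^c$ from Corollary~\ref{coro dom splitting}; then to identify $\Lambda_\lambda$ with $\Lambda_\lambda^0$ via Lemma~\ref{lemma disconnected annulus} and Proposition~\ref{minimality}; then to obtain $C^{k-1}$ regularity by applying the Hirsch--Pugh--Shub Theorem~\ref{l normal contr man} to the $(k-1)$-normal contraction estimate~\eqref{k contracting}; and finally to establish $C^1$ convergence to the zero section by refining the cone-field construction so that its aperture is $O(\lambda)$.

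For the graph construction, set $\Gamma_n^\pm := f_\lambda^n(\T\times\{\pm 1\})$ for $n \geq 1$. Since $\Gamma_1^\pm = \T\times\{\pm\lambda\}$ has horizontal tangent, iterating the cone-invariance from Proposition~\ref{normal hyp} yields that every $\Gamma_n^\pm$ is an essential closed curve in $\A$ whose tangent at every point lies in $\mathcal{C}$; as this cone excludes the vertical direction, $\Gamma_n^\pm$ is the graph of a Lipschitz function $\phi_n^\pm\colon\T\to[-1,1]$ with Lipschitz constant uniform in $n$, and $f_\lambda^n(\A)$ is exactly the closed region between the two graphs. The nesting $f_\lambda^{n+1}(\A) \subset f_\lambda^n(\A)$ makes $(\phi_n^+)_n$ pointwise decreasing and $(\phi_n^-)_n$ pointwise increasing. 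Moreover, the area contraction gives
\[
\|\phi_n^+ - \phi_n^-\|_{L^1(\T)} = m(f_\lambda^n(\A)) \leq 2\lambda^n,
\]
and combining this with the uniform Lipschitz bound via a standard triangle-cap interpolation yields $\|\phi_n^+ - \phi_n^-\|_{C^0} = O(\lambda^{n/2})$. Both $\phi_n^\pm$ therefore converge uniformly to a common Lipschitz function $\phi_*$, whose graph $\Gamma_*$ is $f_\lambda$-invariant and equals $\Lambda_\lambda^0 = \bigcap_n f_\lambda^n(\A)$.

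At every point $x$ of $\Lambda_\lambda^0$ where $\phi_*$ is differentiable, its tangent lies in $\bigcap_{n\geq 0} Df_\lambda^n\,\mathcal{C}(f_\lambda^{-n}(x))$ (backward iteration is legal since $f_\lambda$ is invertible on the invariant set $\Lambda_\lambda^0$), which coincides with the center bundle $E^c(x)$ of Corollary~\ref{coro dom splitting}; continuity of $E^c$ then promotes the Lipschitz graph to $C^1$ with $TN = E^c$, and combined with uniform contraction of $E^s$ this gives normal contraction in the sense of Definition~\ref{defi norm contra man}. To identify $\Lambda_\lambda = \Lambda_\lambda^0$: the set $\Lambda_\lambda^0$ belongs to $\mathcal{X}(f_\lambda)$ and, being an essential $C^1$ circle, removing any point leaves a connected arc that does not separate $\A$, so Lemma~\ref{lemma disconnected annulus} forces $\Lambda_\lambda^0 \subset \Lambda_\lambda$, while the reverse inclusion is Proposition~\ref{minimality}. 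For $\lambda \in (0,\lambda'(\Omega))$, the $(k-1)$-normal contraction~\eqref{k contracting} combined with Theorem~\ref{l normal contr man} delivers $C^{k-1}$ regularity. Finally, re-examining the proof of Proposition~\ref{normal hyp} shows that cones of aperture $\alpha$ around the horizontal are invariant whenever $\alpha \gtrsim \lambda$, so $E^c$ is contained in a cone of aperture $O(\lambda)$; together with $\Lambda_\lambda \subset f_\lambda(\A) = \T\times[-\lambda,\lambda]$, this yields $C^1$ convergence of $\Lambda_\lambda$ to $\T\times\{0\}$ as $\lambda \to 0$. The main obstacle lies in the second paragraph: the uniform-in-$n$ Lipschitz control on the graphs $\Gamma_n^\pm$, which permits the $L^1$-to-$C^0$ passage, depends crucially on the strict forward-invariance of the fixed cone field $\mathcal{C}$.
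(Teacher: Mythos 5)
Your proof is correct and reaches the same structure as the paper's (nested graphs squeezing down to $\Lambda_\lambda^0$, identification with $\Lambda_\lambda$ via Lemma~\ref{lemma disconnected annulus}, Hirsch--Pugh--Shub for $C^{k-1}$, cone aperture $O(\lambda)$ for the $C^1$ convergence), but the key quantitative step is done by a genuinely different mechanism. The paper sets up a graph transform on the family of $C^1$ graphs tangent to $\mathcal{C}$ and proves it is a contraction in the sup-norm (its Claim on the contraction of $\mathscr{G}_{f_\lambda}$) by comparing vertical distance to distance along the leaves of the stable foliation via holonomy maps; this requires the stable lamination of $\Lambda_\lambda^0$ and uniform transversality, and gives a contraction rate $c\lambda^{n/2}$ between \emph{any} two graphs in the family. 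You instead compare only the two boundary curves $f_\lambda^n(\T\times\{\pm1\})$ and exploit that they bound $f_\lambda^n(\A)$, whose area is exactly controlled by $\det Df_\lambda=\lambda$; the uniform Lipschitz bound from the invariant cone field then converts $\|\phi_n^+-\phi_n^-\|_{L^1}\le 2\lambda^n$ into $\|\phi_n^+-\phi_n^-\|_{C^0}=O(\lambda^{n/2})$. This is more elementary (no holonomy, no distortion estimates) and yields the same rate, at the cost of not giving the full contraction-mapping statement on $\mathscr{F}$. One step deserves more care: your claim that the tangent at \emph{every} differentiability point of $\phi_*$ lies in $\bigcap_n Df_\lambda^n\,\mathcal{C}(f_\lambda^{-n}(x))$ requires differentiability at all backward iterates, not just at $x$ (there is an order-of-limits issue if one tries to pass from the secants of $\phi_n^\pm$ to those of $\phi_*$ directly). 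The standard repair is to apply Rademacher and note that the induced circle map is bi-Lipschitz, so the set of points whose entire backward orbit consists of differentiability points has full measure; there $\phi_*'$ equals the slope of the continuous bundle $E^c$, and absolute continuity of the Lipschitz function $\phi_*$ then gives $C^1$. The paper avoids this entirely by showing that $(\gamma_n^+)'$ converges uniformly, using the exponential shrinking of the iterated cones, which is the cleaner route to the same conclusion.
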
	
\begin{proof}
	Fix $\lambda \in (0,\lambda(\Omega))$, 
and let $\mathcal{C}=(\mathcal{C}(s,r))_{(s,r)\in\A}$ be the cone-field in $\mathbb{T}\times [-\lambda,\lambda]$ constructed in Proposition \ref{normal hyp}; let us recall that it contains the horizontal direction, as $\mathcal{C}(s,r)=\{v\in T_{(s,r)}\A : v=(v_s,v_r), \vert v_r\vert \leq \alpha_0\,\nu(r)\,\vert v_s\vert\}$. Let 
	\begin{equation*}
		\mathscr{F}:=\Big\{\gamma\colon\mathbb{T}\to [-\lambda,\lambda]\text{ such that }\gamma\in C^1(\T)\text{ and }(1,\gamma'(s))\in \mathcal{C}(s,\gamma(s)),\ \forall\, s \in \T 
		\Big\}\, .
	\end{equation*} 
	The map $f_\lambda$ acts on $\mathscr{F}$ by the graph transform 
	$$
	\mathscr{G}_{f_\lambda}\colon \mathscr{F} \to \mathscr{F},\quad \gamma \mapsto \left(s \mapsto \pi_2 \circ f_\lambda \big(g_\lambda^{-1}(s),\gamma(g_\lambda^{-1}(s)) \big)\right),
	$$
	where $\pi_1,\pi_2$ denote the projection on the first and second coordinate, respectively, and $g_\lambda\colon \T \to \T$ is the map $s\mapsto \pi_1 \circ f_\lambda(s,\gamma(s))$. Indeed, the cone-field $\mathcal{C}$ around the horizontal direction is contracted by the dynamics, i.e., $Df_\lambda(s,r)\mathcal{C}(s,r)\subset \mathrm{int}(\mathcal{C}(f_\lambda(s,r)))\cup\{0\}$, 
	hence for any $\gamma\in\mathscr{G}_{f_\lambda}$, the image by $f_\lambda$ of the graph of $\gamma$ is still the graph of a $C^1$ function, such that the vector tangent to $f_\lambda(\mathrm{graph}(\gamma))$ is in $\mathcal{C}$, and $\pi_1 \circ f_\lambda|_{\mathrm{graph}(\gamma)}$ is a homeomorphism between $\mathrm{graph}(\gamma)$ and $\mathbb{T}$. In particular, $f_\lambda\big(\mathrm{graph}(\gamma)\big)=\mathrm{graph}(\mathscr{G}_{f_\lambda}(\gamma))$ for a well-defined function $\mathscr{G}_{f_\lambda}(\gamma)\in \mathscr{F}$.
	
	 For any $k\in \mathbb{N}$, let denote $\mathbb{A}_{k}:=f_\lambda^{k}(\mathbb{A})$, and let $\mathscr{F}_k$ be the subset of functions $\gamma \in \mathscr{F}$ whose graph is contained in $\A_k$. Note that $\mathscr{F}_{k+1}\subset \mathscr{F}_k$, and by construction, it holds $\mathscr{F}= \mathscr{F}_1$. Moreover, if $\gamma \in \mathscr{F}_k$, $k \geq 1$, then it holds $\mathscr{G}_{f_\lambda}(\gamma)\in \mathscr{F}_{k+1}$. 
	
	In the sequel, let $\|\cdot\|_\infty$ be the sup-norm on the space $C^0(\T,[-1,1])$. That is, for $\gamma_1,\gamma_2\in C^0(\T,[-1,1])$, we let $\|\gamma_2-\gamma_1\|_\infty:=\max_{s \in \mathbb{T}}|\gamma_2(s)-\gamma_1(s)|$. The graph transform acts as a contraction on $\mathscr{F}$, for $\|\cdot\|_\infty$.
	\begin{claim}\label{claim contr sup norm}
	There exists a contant $c>0$ such that for any
    $\gamma_1,\gamma_2\in \mathscr{F}$, it holds
    $$
    \|\mathscr{G}_{f_\lambda}^n(\gamma_2)-\mathscr{G}_{f_\lambda}^n(\gamma_1)\|_\infty\leq c\lambda^{\frac n 2} \|\gamma_2-\gamma_1\|_\infty,\quad \forall\, n \geq 0.
    $$		
	\end{claim}

\begin{proof}[Proof of the claim]
	For $k_0 \geq 1$ sufficiently large, $\mathbb{A}_{k_0}$ is foliated by stable leaves $\{\mathcal{W}^s(x)\cap \mathbb{A}_{k_0}:x\in \Lambda_\lambda^0\}$, and by the transversality between $E^s$ and $\mathcal{C}$ on $\Lambda_\lambda^0$, there exists $\theta_0>0$ such that 
	\begin{equation}\label{transv cond} 
	\angle(T_{x}\mathcal{W}^s(x),T_x\Gamma_\gamma)\geq \theta_0,\quad \forall\, x\in \mathrm{graph}(\gamma),\, \gamma \in \mathscr{F}_{k_0}\,,
	\end{equation} 
	where $\angle$ denotes the (non-oriented) angle between the considered vector subspaces.
	
	Let $\gamma_1,\gamma_2\in \mathscr{F}_{k_0}$. 
	For each $s \in \mathbb{T}$, we denote by $H_{\gamma_1,\gamma_2}^s(s)\in \mathrm{graph}(\gamma_2)$ the image of $(s,\gamma_1(s))\in \mathrm{graph}(\gamma_1)$ by the holonomy map from $\mathrm{graph}(\gamma_1)$ to $\mathrm{graph}(\gamma_2)$ along the leaves of $\mathcal{W}^s$. That is, follow the stable leaf passing through $(s,\gamma_1(s))$ until it intersects the graph of $\gamma_2$: such intersection point is $H^s_{\gamma_1,\gamma_2}(s)$.
	
	For $n \geq 1$, denote by $\gamma_i^n$ the image $\mathscr{G}_{f_\lambda}^n(\gamma_i)$, $i=1,2$. Then, by \eqref{transv cond}, there exists a constant $c>1$ such that for each $\gamma_1,\gamma_2\in \mathscr{F}_{k_0}$, 
	\begin{equation}\label{compa stable vert}
	c^{-1} |\gamma_2(s)-\gamma_1(s)|\leq d_{\mathcal{W}^s}((s,\gamma_1(s)),H_{\gamma_1,\gamma_2}^s(s))\leq c |\gamma_2(s)-\gamma_1(s)|,\quad \forall\, s \in \mathbb{T}\,,
	\end{equation}
	where $d_{\mathcal{W}^s}$ denotes the distance along a stable leaf\footnote{More precisely, the distance $d_{\mathcal{W}^s}$ comes from the restriction of the Riemannian metric to the stable leaves.}. 
	Moreover, since $(s,\gamma_1(s))$ and $H_{\gamma_1,\gamma_2}^s(s))$ belong to the same stable leaf, there exists $n_0\in \mathbb{N}$ such that for $n \geq n_0$, 
	\begin{equation}\label{contr stabl}
	d_{\mathcal{W}^s}(f_\lambda^n (s,\gamma_1(s)),f_\lambda^n \circ H_{\gamma_1,\gamma_2}^s(s))\leq \lambda^{\frac n 2} d_{\mathcal{W}^s}((s,\gamma_1(s)),H_{\gamma_1,\gamma_2}^s(s)),\quad \forall\, s \in \mathbb{T}. 
	\end{equation}
	For each $s\in \mathbb{T}$ and $n \geq 0$, let us set $s_{-n}:=g_\lambda^{-n}(s)$, with $g_\lambda \colon s\mapsto \pi_1 \circ f_\lambda(s,\gamma_1(s))$ as above, so that $f_\lambda^n (s_{-n},\gamma_1(s_{-n}))=(s,\gamma^n_1(s))$, and $f_\lambda^n \circ H_{\gamma_1,\gamma_2}^s(s_{-n})=H_{\gamma_1^n,\gamma_2^n}^s(s)$. Indeed, for $i=1,2$, $f_\lambda^n$ sends the graph of $\gamma_i$ to the graph of $\gamma_i^n$, and it sends stable leaves to stable leaves. Then, by \eqref{compa stable vert}-\eqref{contr stabl}, for $n \geq n_0$, it holds 
	\begin{align*}
	 |\gamma_2^n(s)-\gamma_1^n(s)|&\leq c d_{\mathcal{W}^s}((s,\gamma_1^n(s)),H_{\gamma_1^n,\gamma_2^n}^s(s))\\
	 &=c d_{\mathcal{W}^s}(f_\lambda^n (s_{-n},\gamma_1(s_{-n})),f_\lambda^n \circ H_{\gamma_1,\gamma_2}^s(s))\\
	 &\leq c\lambda^{\frac n 2} d_{\mathcal{W}^s}((s_{-n},\gamma_1(s_{-n})),H_{\gamma_1,\gamma_2}^s(s_{-n}))\\
	 &\leq c^2 \lambda^{\frac n 2} |\gamma_2(s_{-n})-\gamma_1(s_{-n})|\leq c^2 \lambda^{\frac n 2} \|\gamma_2-\gamma_1\|_\infty.
	\end{align*}
	Now, for any $\bar \gamma_1,\bar \gamma_2\in \mathscr{F}=\mathscr{F}_1$, their images under $\mathscr{G}_{f_\lambda}^{k_0-1}$ are in $\mathscr{F}_{k_0}$, hence, up to enlarging the constant $c$, we conclude that for any $n \geq 0$,
	$$
	\|\mathscr{G}_{f_\lambda}^n(\bar\gamma_2)-\mathscr{G}_{f_\lambda}^n(\bar\gamma_1)\|_\infty\leq c^2\lambda^{\frac n 2} \|\bar\gamma_2-\bar\gamma_1\|_\infty.\qedhere
	$$
	\end{proof}
	Let $\gamma_0^\pm\colon s\in\mathbb{T} \mapsto \pm\lambda\in(-1,1)$; clearly, $\gamma_0^\pm\in\mathscr{F}$. For $n \geq 0$, let  $\gamma_{n}^\pm:=\mathscr{G}_{f_\lambda}^{n}(\gamma_0^\pm)$. Observe that for any $n\geq 0$, $\gamma^\pm_{n}$ is in $\mathscr{F}_{n+1}$. 
	By Claim \ref{claim contr sup norm}, the sequences 
	$(\gamma_n^\pm)_{n \geq 0}$ are Cauchy sequences, hence converge to a continuous function $\gamma_\infty^\pm$. 
	\begin{claim}
		The equality $\gamma_\infty^+=\gamma_\infty^-=:\gamma_\infty$ holds and $\Lambda_\lambda^0=\Lambda_\lambda=\mathrm{graph}(\gamma_\infty)$.
	\end{claim}
\begin{proof}	
	Let us denote by $\Gamma_\infty^\pm$ the graph of $\gamma_\infty^\pm$. By construction,  $\Gamma_\infty^\pm$ is invariant under $f_\lambda$, it is compact, connected, and separates the annulus, i.e., $\Gamma_\infty^\pm\in \mathcal{X}(f_\lambda)$. On one hand, by Proposition \ref{minimality}, we have $\Lambda_\lambda\subset \Gamma_\infty^\pm$. On the other hand, by the graph property, for any $x \in \Gamma_\infty^\pm$, the set $\Gamma_\infty^\pm\setminus \{x\}$ does not separate the annulus. By Lemma \ref{lemma disconnected annulus}, we conclude that $\Gamma_\infty^+=\Gamma_\infty^-=\Lambda_\lambda$. 
	
	We can now show that the Birkhoff attractor coincides with the attractor. By definition of the attractor $\Lambda_\lambda^0$, we have $\Lambda_\lambda^0 \subset \cap_{n \geq 0}\A_n$. Observe that, for each $n \geq 0$,  $\A_n$ is a cylinder bounded by the graphs of $\gamma_n^\pm$. Since the sequences $(\mathrm{graph}(\gamma_n^\pm))_{n \geq 0}$ converge to the same limit graph $\Gamma_\infty^+=\Gamma_\infty^-=\Lambda_\lambda$, it follows that $\Lambda_\lambda^0\subseteq \Gamma_\infty^+=\Gamma_\infty^-=\Lambda_\lambda\subseteq \Lambda^0_\lambda$. 
\end{proof}
	
\begin{claim}
	The function $\gamma_\infty$ is $C^1$, and $(1,\gamma_\infty(s))\in E^c(s,\gamma_\infty(s))$  for every $s\in \T$.
\end{claim}

\begin{proof}
	The function $\gamma_\infty=\gamma_\infty^+$ is the $C^0$ limit of the sequence of $C^1$ functions $(\gamma_n^+)_{n \geq 0}$. 
	To show that $\gamma_\infty$ is $C^1$, it suffices to show that the derivatives $((\gamma_n^+)')_{n \geq 0}$ also converge uniformly. 
	For each $n \geq 0$, let us denote by $\Gamma_n^+$ the graph of $\gamma_n^+$. 
	With the same notations as in Claim \ref{claim contr sup norm},  for each $s \in \mathbb{T}$, it holds
	$$
	T_{(s,\gamma_n^+(s))}\Gamma_{n}^+=Df_\lambda^n(s_{-n},\gamma_0^+(s_{-n}))T_{(s_{-n},\gamma_0^+(s_{-n}))}\Gamma_{0}^+\subset Df_\lambda^n(s_{-n},\gamma_0^+(s_{-n})) \mathcal{C}(s_{-n},\gamma_0^+(s_{-n})),
	$$
	and by the cone-field criterion, the cone $Df_\lambda^n(s_{-n},\gamma_0^+(s_{-n})) \mathcal{C}(s_{-n},\gamma_0^+(s_{-n}))$ is exponentially small with respect to $n$, uniformly in $s \in \T$, that is, the amplitude of each cone $Df_\lambda^n \mathcal{C}$ is, up to a uniform constant, equal to $\mu^n$ times the amplitude of the cone $\mathcal{C}$, for some uniform constant $\mu\in(0,1)$. We conclude that the sequence $((\gamma_n^+)')_{n \geq 0}$ converges in the $C^1$-topology. Moreover, it also implies that at any point $(s,\gamma_\infty(s))=\lim_{n \to+\infty} (s,\gamma_n^+(s))$, the tangent space of the limit graph $\Gamma_{\infty}$ is equal to $E^c(s,\gamma_\infty(s))$. 
\end{proof}

So far, we have shown that the (Birkhoff) attractor $\Lambda_\lambda^0=\Lambda_\lambda$ is the graph of a $C^1$ function $\gamma_\infty$. 
Moreover, the graph of $\gamma_\infty$ is tangent to $E^c$, and so $\Lambda_\lambda$ is a normally contracted  $C^1$ graph, since 
$$
T\A|_{\Lambda_\lambda}=E^s\oplus E^c=E^s \oplus T\Lambda_\lambda. 
$$
	
	Now, let $\lambda'(\Omega)<\lambda(\Omega)$ be given by Corollary \ref{coro dom splitting}. The domain $\partial \Omega$ is of class $C^k$, and thus, the dissipative billiard map $f_\lambda$ is of class $C^{k-1}$ for any $\lambda \in (0,1)$. Fix $\lambda \in (0,\lambda'(\Omega))$. Since $T\Lambda_\lambda=E^c$, and by \eqref{k contracting}, we deduce that $\Lambda_\lambda$ is $(k-1)$-normally contracted in the sense of Definition \ref{defi norm contra man}, hence by Theorem \ref{l normal contr man}, the function $\gamma_\infty$ is actually $C^{k-1}$.  
	
	By construction, $\Lambda_\lambda=\Lambda_\lambda^0\subset f_\lambda(\A)=\T \times [-\lambda,\lambda]$, hence $\Lambda_\lambda$ converges to the zero section $\T \times \{0\}$ in the $C^0$-topology. To show the convergence in the $C^1$-topology, it suffices to show that  $T \Lambda_\lambda=E^c$ converges uniformly to the horizontal space. By construction, at any $s \in \T$, $E^c(s,\gamma^\infty(s))\subset Df_\lambda(s_{-1},\gamma_0^+(s_{-1})) \mathcal{C}(s_{-1},\gamma_0^+(s_{-1}))$, and by \eqref{cond diff}, the vertical component of vectors in the latter cone is less than $\tilde c \lambda$, for some constant $\tilde c>0$. 
\end{proof}

\subsection{Examples and further consequences}
	
	As a first consequence of Theorem \ref{coro norm hyp}, we prove that --when the dissipation is strong, i.e., $\lambda$ is close to zero-- the Birkhoff attractor of ellipses is a normally contracted $C^\infty$ graph. Given an ellipse $\mathcal{E}$ of non-zero eccentricity, we let $\{E_1,E_2\}$ be the $2$-periodic orbit corresponding to the minor axis; it is a sink, by Lemma \ref{lemme vp reelles ellipse}. Then, we define  
	$$
	\lambda_-(\mathcal{E}) :=\frac{1-\sqrt {1-(-2 (\frac{a_2}{a_1})^2+1)^2}}{1+\sqrt {1-(-2 (\frac{a_2}{a_1})^2+1)^2}}\in (0,1),
	$$
	i.e., $\lambda_-(\mathcal{E})=\lambda_-(p)$ for $p = E_1$ or $E_2$ as in \eqref{serve dopo}.

	By Theorem \ref{ellisse}, we have $\Lambda_\lambda= \overline{\mathcal{W}^u(H_1;f_\lambda^2)\cup \mathcal{W}^u(H_2;f_\lambda^2)}$, and for $i=1,2$, $\mathcal{W}^u(H_i;f_\lambda^2)\setminus \{H_i\}$ is the disjoint union of two branches $\mathscr{C}_i^1,\mathscr{C}_i^2$, with $\overline{\mathscr{C}_i^j}=\mathscr{C}_i^j\cup \{H_i,E_j\}$, $j=1,2$. Thus, $\Lambda_\lambda$ is a manifold if and only if for $i,j \in \{1,2\}$, the tangent space $T_x \mathscr{C}_i^j=T_x \Lambda_\lambda$ has a limit $V_i^j$ as $\mathscr{C}_i^j\ni x \to E_j$. Indeed, by Corollary \ref{COR a}, if so, we necessarily have $V_1^j=V_2^j$. Clearly, a necessary condition for this to hold is that the eigenvalues of $Df_\lambda^2(E_i)$ are real, for $i=1,2$, i.e., $\lambda \in (0,\lambda_-(\mathcal{E}))$. Actually, the following holds.
	\begin{corollary}\label{coro dom splitting ellipse} 
		Let $f_\lambda \colon \mathbb{A} \to \mathbb{A}$ be the dissipative billiard map inside an ellipse
		$\mathcal{E}$ with eccentricity $e\in (0,\frac{\sqrt 2}{2})$. Then, there exists $\lambda(\mathcal{E})<\lambda_-(\mathcal{E})$ such that, for $\lambda \in (0,\lambda(\mathcal{E}))$, the Birkhoff attractor $\Lambda_\lambda= \overline{\mathcal{W}^u(H_1;f_\lambda^2)\cup \mathcal{W}^u(H_2;f_\lambda^2)}$ is a normally contracted $C^1$ graph, which is actually $C^\infty$ except possibly at $E_i$, $i=1,2$, where $\Lambda_\lambda$ is tangent to the weak stable space of $Df_\lambda^2(E_i)$.   
	\end{corollary}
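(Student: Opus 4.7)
The plan is to deduce the corollary by combining Theorem \ref{coro norm hyp} (valid when $\mathcal{E} \in \mathcal{D}^\infty$) with Theorem \ref{ellisse} (describing $\Lambda_\lambda$ as the closure of the unstable manifold of $\{H_1,H_2\}$) and Lemma \ref{lemme vp reelles ellipse}. The first task is to verify that $\mathcal{E} \in \mathcal{D}^\infty$ precisely when $e \in (0,\frac{\sqrt{2}}{2})$. By Definition \ref{defi set d k}, this amounts to showing that all centers of osculating circles of $\mathcal{E}$ lie inside $\mathcal{E}$. The maximal radius of curvature of $\mathcal{E}$, attained at the endpoints $E_1,E_2$ of the minor axis, equals $R_{\max}=a_1^2/a_2$, with corresponding osculating-circle centers lying on the minor axis at height $\pm(a_2-R_{\max})$. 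Requiring these centers to lie strictly inside $\mathcal{E}$ reads $R_{\max}<2a_2$, i.e., $a_1^2<2a_2^2$, equivalently $e<\frac{\sqrt{2}}{2}$; as the minor-axis cusps are the farthest points of the evolute of $\mathcal{E}$ from the center, this also ensures the whole evolute is contained in $\mathcal{E}$, so $\mathcal{E}\in \mathcal{D}^\infty$.

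Given this, I apply Theorem \ref{coro norm hyp} to obtain $\lambda_1(\mathcal{E})\in (0,1)$ such that for every $\lambda\in (0,\lambda_1(\mathcal{E}))$ the Birkhoff attractor $\Lambda_\lambda=\Lambda_\lambda^0$ is a normally contracted $C^1$ graph over $\mathbb{T}\times\{0\}$, tangent to the center bundle $E^c$ of the dominated splitting $T_{\Lambda_\lambda}\mathbb{A}=E^s\oplus E^c$. I then set $\lambda(\mathcal{E}):=\min(\lambda_1(\mathcal{E}),\lambda_-(\mathcal{E}))$, which, by Lemma \ref{lemme vp reelles}, ensures moreover that for every $\lambda\in(0,\lambda(\mathcal{E}))$ the derivative $Df_\lambda^2(E_i)$ has two distinct real positive eigenvalues $\mu_1<\mu_2<1$, with associated one-dimensional strong stable and weak stable spaces $E^{ss}(E_i)$ and $E^{ws}(E_i)$. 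Next, Theorem \ref{ellisse} yields $\Lambda_\lambda\setminus\{E_1,E_2\} = \mathcal{W}^u(\mathcal{O}_\lambda(H_1))=\mathcal{W}^u(H_1;f_\lambda^2)\cup\mathcal{W}^u(H_2;f_\lambda^2)$. Since $\partial\mathcal{E}$ is analytic, $f_\lambda$ is $C^\infty$, and the classical stable/unstable manifold theorem for hyperbolic saddles guarantees that each $\mathcal{W}^u(H_i;f_\lambda^2)$ is a $C^\infty$ immersed curve; moreover, by the horizontal cone-field of Proposition \ref{normal hyp}, this curve is locally a $C^\infty$ graph over the $s$-coordinate. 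This shows that the graph parametrizing $\Lambda_\lambda$ is $C^\infty$ on $\mathbb{T}\setminus\{s_{E_1},s_{E_2}\}$.

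It remains to identify the tangent $T_{E_i}\Lambda_\lambda$. Since $\Lambda_\lambda$ is an $f_\lambda^2$-invariant $C^1$ curve through the fixed point $E_i$ of $f_\lambda^2$, its one-dimensional tangent line at $E_i$ is preserved by $Df_\lambda^2(E_i)$ and thus coincides with one of the eigenspaces $E^{ss}(E_i)$ or $E^{ws}(E_i)$. On the other hand, by Theorem \ref{coro norm hyp} that tangent equals $E^c(E_i)$. Because $E^s$ is uniformly more contracted than $E^c$ by the dominated splitting, $E^s(E_i)$ must correspond to the stronger contraction rate $\mu_1$, i.e., $E^s(E_i)=E^{ss}(E_i)$, and therefore $T_{E_i}\Lambda_\lambda=E^c(E_i)=E^{ws}(E_i)$, which is the claimed weak stable direction.

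The main subtlety I anticipate is the $C^\infty$ regularity away from the sinks: Theorem \ref{coro norm hyp} in its stated form only produces $C^{k-1}$ regularity under a $\lambda$-threshold that a priori depends on $k$, so it cannot be invoked directly to obtain $C^\infty$. The workaround is precisely the alternative description of $\Lambda_\lambda$ via the unstable manifolds of the saddle orbits $\{H_i,f_\lambda(H_i)\}$, which are intrinsically as smooth as the dynamics; the passage from immersed $C^\infty$ curve to $C^\infty$ graph then relies on the uniform transversality between $T\mathcal{W}^u(H_i;f_\lambda^2)$ and the vertical direction, itself a direct consequence of the invariant horizontal cone-field from Proposition \ref{normal hyp}.
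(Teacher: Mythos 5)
Your proposal is correct and follows essentially the same route as the paper: verify $\mathcal{E}\in\mathcal{D}^\infty$ exactly when $e<\frac{\sqrt 2}{2}$, invoke Theorem \ref{coro norm hyp} for the normally contracted $C^1$ graph with $\lambda(\mathcal{E})<\lambda_-(\mathcal{E})$, use the description $\Lambda_\lambda=\overline{\mathcal{W}^u(\mathcal{O}_\lambda(H_1))}$ from Theorem \ref{ellisse} to get $C^\infty$ regularity away from $E_1,E_2$, and combine invariance of the $C^1$ tangent line with normal contraction to identify $T_{E_i}\Lambda_\lambda$ with the weak stable eigenspace. The only cosmetic difference is that you check membership in $\mathcal{D}^\infty$ via the osculating-circle/evolute characterization, whereas the paper computes $\tau(s)\mathcal{K}(s)$ directly and uses its monotonicity on $[0,\tfrac14]$ — these are the same condition, both reducing to $a_1^2<2a_2^2$.
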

	
	\begin{proof}
		Let $\Upsilon\colon \mathbb{T} \to \R^2$ be a parametrization of the boundary by arclength such that $\Upsilon(0),\Upsilon(\frac 12)$ correspond to the trace on $\mathcal{E}$ of the $2$-periodic orbit of maximal length. For  each $s \in \mathbb{T}$, let $\tau(s)=\tau(s,0):=\|\Upsilon(s)-\Upsilon(s')\|$, where $\Upsilon(s),\Upsilon(s')$ are the two points of intersection of $\mathcal{E}$ and the normal to $\mathcal{E}$ at $\Upsilon(s)$, and let $\mathcal{K}(s)\leq 0$ be the curvature at $\Upsilon(s)$. The function $[0,\frac{1}{4}]\ni s \mapsto \tau(s) \mathcal{K}(s)$ is increasing, with $\tau(\frac{1}{4})\mathcal{K}(\frac 14)=-2\big(\frac{a_2}{a_1}\big)^2=2(e^2-1)$, where $e>0$ is the eccentricity  of $\mathcal{E}$. Thus, if $e\in(0,\frac{\sqrt{2}}{2})$, the domain bounded by $\mathcal{E}$ is in $\mathcal{D}^\infty$ (recall Definition \ref{defi set d k}), hence by Corollary \ref{coro dom splitting} and Theorem \ref{coro norm hyp}, there exists $\lambda(\mathcal{E})<\lambda_-(\mathcal{E}))$ such that for $\lambda \in (0,\lambda(\mathcal{E}))$, the Birkhoff attractor $\Lambda_\lambda$ is a normally contracted $C^1$ graph. In fact, it is $C^\infty$ everywhere except possibly at $E_1,E_2$; indeed, near any other point, it coincides with some piece of the unstable manifold of $H_1$ or $H_2$, which is $C^\infty$. 
		By Lemma \ref{lemme vp reelles}, the eigenvalues $\mu_1,\mu_2$ of $Df_\lambda^2(E_i)$ satisfy $\lambda^2<\mu_1<\mu_2<1$. As $\Lambda_\lambda$ is $C^1$ and $f_\lambda$-invariant, for $i=1,2$, any tangent vector $v\in T_{E_i}\Lambda_\lambda$ is an eigenvector of $Df_\lambda^2(E_i)$; since $\Lambda_\lambda$ is normally contracted, any such $v$ has to be in the eigenspace associated to the weak eigenvalue $\mu_2$. 
	\end{proof}
\begin{remark}
		If the eccentricity is larger than $\frac{\sqrt 2}{2}$, then we loose the graph property, even for small dissipation parameters $\lambda \in (0,1)$, see Proposition \ref{propo not graph} below. 
\end{remark}
\begin{remark}
	It was asked to us by Viktor Ginzburg whether the phase space $\A$ of dissipative billiards admits an invariant foliation by curves homotopic to the zero-section $\T \times \{0\}$. Indeed, in the case of a dissipative billiard within a circle considered at the beginning of Section \ref{quattro}, it is clear that the horizontal foliation $\{\T \times \{r\}\}_{r \in [-1,1]}$ is preserved by any dissipative map $f_\lambda$, $\lambda\in(0,1)$. More generally, the existence of such foliations seems much less rigid than in the conservative case, where it is related to the famous Birkhoff conjecture (see e.g. \cite{AvilaKalSimoi,KalSorr,BialyMironov} for recent progress in this direction). 
	
	Indeed, fix a domain $\Omega \in \mathcal{D}^k$, $k \geq 2$, and a dissipation parameter $\lambda \in (0,\lambda(\Omega))$. With the notations of Theorem \ref{coro norm hyp}, for any $k\geq 0$, let $\A_k:=f_\lambda^k(\A)$, and let $\mathcal{F}_1$ be a foliation of $\A_1\setminus \A_2$ defined as follows. Note that $\A_1\setminus \A_2$ has two connected components $\A_1^+$ and $\A_1^-$, where $\A_1^\pm$ is bounded by the leaves $\T \times \{\pm\lambda\}$ (in $\A_1^\pm$) and $f_\lambda(\T \times \{\pm\lambda\})$ (in the complement of $\A_1^\pm$). Let then $\mathcal{F}_1$ be the disjoint union of two foliations $\mathcal{F}_1^+$ and $\mathcal{F}_1^-$, where $\mathcal{F}_1^\pm$ is a foliation of $\A_1^\pm$ by $C^1$ graphs over $\T\times \{0\}$ whose tangent space remains in the cone-field $\mathcal{C}$ constructed in Proposition \ref{normal hyp}, and whose boundary leaves are $\T \times \{\pm\lambda\}$ and $f_\lambda(\T \times \{\pm\lambda\})$.  For $k \geq 0$, let $\mathcal{F}_k$ be the foliation of $\A_{k}\setminus \A_{k+1}$ whose leaves are images by $f_\lambda^{k-1}$ of the leaves of $\mathcal{F}_1$. Since the cone-field $\mathcal{C}|_{\T\times [-\lambda,\lambda]}$ is contracted under forward iteration, for each $k \geq 1$, the leaves of $\mathcal{F}_k$ are $C^1$ graphs over $\T \times \{0\}$ whose tangent space is contained in the cone-field $\mathcal{C}$. Moreover, the same argument as in the proof of Theorem \ref{coro norm hyp} says that the collection of leaves of $\mathcal{F}_k$ converges uniformly to the Birkhoff attractor $\Lambda_\lambda$ in the $C^1$-topology as $k \to +\infty$. Let $\mathcal{F}_0:=f_\lambda^{-1}(\mathcal{F}_1)$, and let $\mathcal{F}$ be the foliation $\mathcal{F}:=\sqcup_{k \geq 0} \mathcal{F}_k\sqcup \Lambda_\lambda$. By construction, it is a foliation of $\A$ by $C^1$ curves, and it is (forward-)invariant under $f_\lambda$. Moreover, the leaves of $\mathcal{F}\cap \A_1$ are $C^1$ graphs over $\T\times \{0\}$. 
\end{remark}

As $\lambda$ gets smaller and smaller, the Birkhoff attractor of $f_\lambda$ is contained in a smaller and smaller strip around the zero section; actually, we can use the $C^1$ convergence of the Birkhoff attractor to $\T\times\{0\}$ to deduce interesting information on the dynamics of $f_\lambda\vert_{\Lambda_\lambda}$, when $\lambda \in (0,1)$ is small, from the degenerate $1$-dimensional dynamics of $f_0$, namely when $\lambda=0$. 
\begin{theorem}\label{theo impr res norm contr}
	Let $k \geq 2$. For a $C^k$-generic billiard $\Omega\in\mathcal{D}^k$ there exists $\lambda''(\Omega)\in(0,1)$ such that for any $\lambda \in (0,\lambda''(\Omega))$, 
	the Birkhoff attractor $\Lambda_\lambda$ is a $C^{k-1}$ normally contracted graph of rotation number $\frac 12$, and, moreover, 
	$$
	\Lambda_\lambda=\bigcup_{i=1}^\ell \overline{\mathcal{W}^u(H_i;f^2_{\lambda}) \cup \mathcal{W}^u(f_{\lambda}(H_i);f^2_{\lambda})}\, ,
	$$
	for some finite collection $\{H_i,f_\lambda(H_i)\}_{i=1,\cdots,\ell}$ of $2$-periodic orbits of saddle type.  
\end{theorem}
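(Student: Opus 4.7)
The plan is to combine Theorem \ref{coro norm hyp} with Corollary \ref{nondeg billiard in dk} and a Morse--Smale analysis of the circle dynamics that $f_\lambda$ induces on $\Lambda_\lambda$. Let $\mathscr{U}$ be the open and dense set of $C^k$ strongly convex domains given by Corollary \ref{nondeg billiard in dk}, and restrict attention to $\Omega\in \mathcal{D}^k\cap \mathscr{U}$; by the last assertion of that corollary, the (finite) common set $\mathrm{II}$ of $2$-periodic points of $\{f_\lambda\}_{\lambda\in[0,1]}$ consists only of sinks and saddles. Theorem \ref{coro norm hyp} then guarantees that for every $\lambda\in(0,\lambda'(\Omega))$ the Birkhoff attractor $\Lambda_\lambda=\Lambda_\lambda^0$ is a $C^{k-1}$ normally contracted graph over $\T\times\{0\}$, tangent to the center bundle $E^c$. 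I then take $\lambda''(\Omega)\in(0,\lambda'(\Omega))$ small enough that at every sink $p\in \mathrm{II}$ the eigenvalues of $Df_\lambda^2(p)$ are real and distinct, i.e. $\lambda^2<\mu_1(p)<\mu_2(p)<1$; by Lemma \ref{lemme vp reelles}\ref{item cc en} and finiteness of $\mathrm{II}$ such a choice is possible.

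Because $\Lambda_\lambda$ is a $C^{k-1}$ graph over the circle, $f_\lambda|_{\Lambda_\lambda}$ is conjugate to a $C^{k-1}$ circle homeomorphism $g_\lambda\colon \T\to \T$. Since $T\Lambda_\lambda=E^c$ while $E^s$ is uniformly contracted, the derivative $(g_\lambda^2)'(p)$ at a fixed point $p$ of $g_\lambda^2$ equals the eigenvalue of $Df_\lambda^2(p)$ carried by $E^c$, which is the largest eigenvalue $\mu_2(p)$; by Lemma \ref{lemme vp reelles} this is $>1$ if $p$ is a saddle of $f_\lambda$ and lies in $(\lambda^2,1)$ if $p$ is a sink. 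Hence every periodic point of $g_\lambda^2$ is hyperbolic, with sources (saddles of $f_\lambda$) alternating with sinks along $\Lambda_\lambda$. Now $g_\lambda$ has no fixed point, since a billiard bounce never returns to the same arclength parameter, and does have $2$-periodic points (for example those provided by the Birkhoff min--max construction recalled before Proposition \ref{remark point crit func}), so its rotation number must be $\tfrac 12$, and all periodic orbits of $g_\lambda$ are $2$-periodic, i.e.\ they form $\mathrm{II}$.

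Finiteness and hyperbolicity of $\mathrm{Fix}(g_\lambda^2)$ then impose a Morse--Smale picture: every non-periodic orbit of $g_\lambda^2$ lies on an open arc of $\Lambda_\lambda$ bounded by a consecutive source/sink pair and is heteroclinic between them. At each saddle $H_i\in\mathrm{II}$ the unstable direction of $Df_\lambda^2(H_i)$ coincides with $E^c(H_i)=T_{H_i}\Lambda_\lambda$, so by uniqueness of the unstable manifold together with $f_\lambda^2$-invariance of $\Lambda_\lambda$ one gets $\mathcal{W}^u(H_i;f_\lambda^2)\subset \Lambda_\lambda$; each branch of $\mathcal{W}^u(H_i;f_\lambda^2)\setminus\{H_i\}$ sweeps out the arc of $\Lambda_\lambda$ joining $H_i$ to an adjacent sink. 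Taking the union over the finite family $\{H_i,f_\lambda(H_i)\}_{i=1,\dots,\ell}$ of saddle $2$-periodic orbits exhausts $\Lambda_\lambda$ and gives the announced identity. The main obstacle I foresee is pinning the rotation number of $g_\lambda$ down to $\tfrac 12$: this requires both the absence of fixed points and the existence of $2$-periodic orbits, and then the Morse--Smale structure has to be propagated globally from the local hyperbolic picture at the points of $\mathrm{II}$, which is precisely where the generic hyperbolicity from $\mathscr{U}$ enters the argument.
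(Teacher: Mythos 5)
Your argument is correct and reaches the stated conclusion, but it takes a genuinely different route for the two key steps. The paper's proof extends the family of induced circle maps $g_\lambda=\pi_1\circ f_\lambda|_{\Lambda_\lambda}$ continuously in the $C^1$-topology down to $\lambda=0$, where $g_0\colon s\mapsto \pi_1\circ f_0(s,0)$ is an explicit circle diffeomorphism with $(g_0^2)'=k_{1,2}$ at the $2$-periodic points; the rotation number $\frac 12$ and the repelling/attracting dichotomy for $g_\lambda$ are then obtained by a persistence argument from $g_0$, which is where the further threshold $\lambda''(\Omega)<\lambda'(\Omega)$ enters. You instead work at a fixed $\lambda$: you read the multiplier of $g_\lambda^2$ at a periodic point directly off the eigenvalue of $Df_\lambda^2$ along $E^c=T\Lambda_\lambda$ (which domination forces to be the larger eigenvalue $\mu_2$, hence $>1$ at saddles and in $(\lambda^2,1)$ at sinks by Lemma \ref{lemme vp reelles}, using that $k_{1,2}>0$ on $\mathcal{D}^k$), and you pin down the rotation number via the elementary fact that a fixed-point-free circle homeomorphism possessing an orbit of minimal period $2$ (namely the projection of $\mathrm{II}\subset\Lambda_\lambda^0=\Lambda_\lambda$) has rotation number $\frac 12$. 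This bypasses the $C^1$-continuity claim and the limit map $g_0$ entirely, and in fact yields rotation number $\frac 12$ for every $\lambda<\lambda'(\Omega)$ rather than only for $\lambda$ small; your extra smallness condition at the sinks is harmless (and, as you could note, real distinct eigenvalues there are already forced by the dominated splitting). The concluding Morse--Smale bookkeeping — finitely many hyperbolic fixed points of $g_\lambda^2$ alternating between sources and sinks, with the unstable branches of the saddles sweeping out the complementary arcs — coincides with the paper's final step. One small point worth making explicit: the alternation argument also guarantees that at least one saddle exists (alternatively, invoke that the diameter orbit is a saddle), so that $\ell\geq 1$ and the union is nonempty.
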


\begin{proof}
Let $\Omega \in \mathcal{D}^k$ be a $C^k$-generic billiard as in Corollary \ref{nondeg billiard in dk}, and let $\lambda'(\Omega) \in (0,1)$ be given by Theorem \ref{coro norm hyp}. For any $\lambda \in (0,\lambda'(\Omega))$, the Birkhoff attractor $\Lambda_\lambda$ of $f_\lambda$ is normally contracted, and is equal to the graph $\Gamma_{\gamma_\lambda}$ of some $C^{k-1}$ function $\gamma_\lambda \colon \T \to [-1,1]$. We let $g_\lambda \colon \T\to \T$ be the circle map $\T\ni s \mapsto\pi_1 \circ f_\lambda(s,\gamma_\lambda(s))$ induced by $f_\lambda|_{\Lambda_\lambda}$, where $\pi_1\colon \A=\T \times [-1,1] \to \T$ is the projection over the first coordinate. 
For any $s \in \T$, let $(s_1,\gamma_\lambda(s_1)):=f_\lambda(s,\gamma_\lambda(s))$. By \eqref{matrice differ}, it holds 
\begin{equation}\label{multiplier g lambda}
g_\lambda'(s)=
-\frac{\tau(s,\gamma_\lambda(s)) \mathcal{K}(s) + \nu(s)}{\nu'(s)} +\frac{\tau(s,\gamma_\lambda(s))}{\nu(s) \nu'(s)}\gamma_\lambda'(s),
\end{equation} 
where $\tau(s,\gamma_\lambda(s))$ is the length of the orbit segment for $f_\lambda$ (also of $f_1$) connecting the points $\Upsilon(s)$ and $\Upsilon(s_1)$, $\mathcal{K}(s)$ is the curvature at $\Upsilon(s)$, and $\nu(s) = \sqrt{1 - \gamma_\lambda^2(s)}$, $\nu' (s):= \sqrt{1 - \left(\frac{\gamma_\lambda(s_1)}{\lambda}\right)^2}$. 

Let us note that the function $\T\ni s\mapsto \pi_1\circ f_\lambda(s,0)$ is independent of the value of $\lambda \in [0,1]$. In particular, for any $\lambda\in(0,1)$, it holds $\pi_1\circ f_\lambda|_{\T \times \{0\}}=\pi_1\circ f_1|_{\T \times \{0\}}=\pi_1\circ f_0|_{\T \times \{0\}}$.
We denote such a function by $g_0$. Note that the function $g_\lambda=\pi_1\circ f_\lambda |_{\Lambda_\lambda}$ is $C^0$-converging to $g_0$ as $\lambda \to 0$, since $\Lambda_\lambda$ converges to the zero section, by Theorem \ref{coro norm hyp}. The extended family $(g_\lambda)_{\lambda \in [0,\lambda'(\Omega))}$ satisfies:
\begin{claim}\label{claim circle diffeo}
	The family of maps $(g_\lambda)_{\lambda \in [0,\lambda'(\Omega))}$ depends continuously on $\lambda$ in the $C^1$-topology. 
\end{claim}

\begin{proof}
By the theory of normally contracted invariant manifolds and their persistence (see e.g. \cite[Theorem 2.1 and Corollary 2.2]{BergerBounemoura}), $\gamma_\lambda$ depends continuously on $\lambda\in (0,\lambda'(\Omega))$ in $C^{k-1}$-topology, hence $g_\lambda$ also depends continuously on $\lambda$ in $C^{k-1}$-topology. Moreover, by Theorem \ref{coro norm hyp},  $\gamma_\lambda$ converges to the zero function $0$ in the $C^1$-topology as $\lambda \to 0$, hence $g_\lambda$ converges to the map $g_0$ in the $C^1$-topology. 
\end{proof} 
In particular, $g_\lambda$ converges to the map $g_0$ in the $C^1$-topology, with $g_0'\colon s \mapsto -\tau(s,0) \mathcal{K}(s)-1$, where $\tau(s,0)$ is the length of the first orbit segment for $f_0$ (also for $f_1$) starting at $(s,0)$, and $\mathcal{K}(s)$ is the curvature at $\Upsilon(s)$. Note that $-\tau(s,0) \mathcal{K}(s)-1\neq 0$, since $\Omega$ is in $\mathcal{D}^k$. In particular, $g_0$ is a circle diffeomorphism. Let us denote by $\mathrm{II}$ the set of $2$-periodic points of the family $\{f_\lambda\}_{\lambda \in[0,1]}$. As already observed, the set $\Pi$ is common to every $f_\lambda$. Since the set $\mathrm{II}$ is contained in the zero section $\T\times \{0\}$, the circle diffeomorphism  $g_0$ has rotation number $\frac 12$. Moreover, by Corollary \ref{nondeg billiard in dk}, for a $C^k$-generic domain $\Omega$, for any $\lambda \in [0,1)$, all the $2$-periodic points of the billiard map $f_\lambda$ are either saddles or sinks. In particular, the latter persist under perturbation, even when we consider the $1$-dimensional dynamics on the corresponding Birkhoff attractor, as we are going to show.  In fact, for any $2$-periodic point $p=(s,0)\in \mathrm{II}$, denoting by  $\mathcal{K}_1,\mathcal{K}_2$ the curvatures at the two bounces, and by $\tau$ the Euclidean distance between the two bounces, according to \eqref{multiplier g lambda}, the multiplier $(g_0^2)'(s)=\frac{d}{ds}g_0(g_0(s))$ is equal to $k_{1,2}:=(\tau \mathcal{K}_1 + 1)(\tau \mathcal{K}_2 + 1)$. In particular, for the circle diffeomorphism $g_0$, the $2$-periodic point $s$ is repelling when $|k_{1,2}|>1$, and attracting when $|k_{1,2}|<1$.  
By Claim \ref{claim circle diffeo}, for $\lambda>0$ small, the circle diffeomorphism $g_\lambda$ is $C^1$-close to $g_0$. Thus, for any $p=(s,0)\in \mathrm{II}$, the $2$-periodic point $s$ for $g_0$ admits a continuation for $g_\lambda$. Since the set $\Pi$ is common to all functions $f_\lambda$ and since generically $2$-periodic points are isolated, we deduce that $s$ is also $2$-periodic for $g_\lambda$. Therefore, there exists $\lambda''(\Omega)\in (0,\lambda'(\Omega))$ such that for any $\lambda \in (0,\lambda''(\Omega))$, the restriction $f_\lambda|_{\Lambda_\lambda}$ still has rotation number $\frac 12$. Observe that, on the one hand, if $p=(s,0)$ is a sink for $f_\lambda$, then $s$ is an attracting $2$-periodic point for $g_\lambda$; on the other hand, if $p=(s,0)$ is of saddle type for $f_\lambda$, then $s$ is a $2$-periodic repulsive point, because the Birkhoff attractor is normally contracted. By standard facts of the theory of circle homeomorphisms with rational rotation number, the $\alpha$-limit set $\alpha_{f_\lambda^2}(s,r)$ of any point $(s,r)\in \Lambda_\lambda \setminus \mathrm{II}$ is a $2$-periodic point $H=H(s,r)$, which has to be of saddle type (as sinks are repulsive for the past dynamics). Arguing as in the proof of Proposition \ref{coro attra}, we deduce that $(s,r) \in \mathcal{W}^u(H;f_\lambda^2)$.  Similarly, $\omega_{f_\lambda^2}(s,r)=E\in \mathrm{II}$, with $E$ a sink periodic point in $ \overline{\mathcal{W}^u(H;f_\lambda^2)}$. We conclude that 
$$
\Lambda_\lambda=\bigcup_{i=1}^\ell \overline{\mathcal{W}^u(H_i;f^2_{\lambda}) \cup \mathcal{W}^u(f_{\lambda}(H_i);f^2_{\lambda})}\, ,
$$
for some finite collection $\{H_i,f_\lambda(H_i)\}_{i=1,\cdots,\ell}$ of $2$-periodic orbits of saddle type, which concludes the proof. 
\end{proof}
 
 We will now show that for any $C^k$ convex domain in the interior of the complement of $\mathcal{D}^k$, $k \geq 2$, we loose the graph property of $\Lambda_\lambda$ for small dissipation parameters $\lambda \in (0,1)$. This is the case in particular for any ellipse $\mathcal{E}$ of eccentricity $e$ larger than $\frac{\sqrt 2}{2}$.\footnote{Indeed, if $\{(s_0,0),f_1(s_0,0)\}$ is the $2$-periodic orbit along the minor axis of $\mathcal{E}$, then with the notations of Proposition \ref{propo not graph}, an easy computation shows that  $\tau(s_0,0)\mathcal{K}(s_0)=2(e^2-1)>-1$, hence the assumption of Proposition \ref{propo not graph} is satisfied.} As previously, given a strongly convex billiard $\Omega$, for $(s,r)\in \A$, we denote by $\tau(s,r)$ the length of the first orbit segment for the (conservative) billiard map starting at $(s,r)$, and by $\mathcal{K}(s)<0$ the curvature at the point of $\partial \Omega$ associated to $s$. 
 \begin{proposition}\label{propo not graph}
 	Let $k \geq 2$, and let $\Omega$ be a strongly convex domain with $C^k$ boundary in the complement of $\mathcal{D}^k$, such that there exists $s_0 \in \T$ with $\tau(s_0,0)\mathcal{K}(s_0)>-1$. 
 	 Then, for $\lambda \in (0,1)$ sufficiently small, the Birkhoff attractor $\Lambda_\lambda$ is not a graph over $\T \times \{0\}$. 
 \end{proposition}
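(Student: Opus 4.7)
The plan is to derive a contradiction from the degenerate limit dynamics on $\T \times \{0\}$ as $\lambda \to 0$, using the fact that a continuous essential invariant graph of a twist homeomorphism induces a (weakly) monotone circle map. Since the contraction $\mathcal{H}_\lambda$ fixes the first coordinate, we have $\pi_1 \circ f_\lambda = \pi_1 \circ f_1$ for every $\lambda \in [0,1]$, where $\pi_1 \colon \A \to \T$ denotes the projection on the first coordinate. Consider then the degree-one circle map $g_0 \colon \T \to \T$ defined by $g_0(s) := \pi_1 \circ f_1(s, 0)$, so that $g_0(s)$ is the first coordinate of the next bounce of the perpendicular trajectory starting at $\Upsilon(s)$. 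Reading the upper-left entry of $Df_1(s, 0)$ from formula \eqref{matrice differ}, we obtain
$$
g_0'(s) = -\frac{\tau(s, 0)\mathcal{K}(s) + 1}{\nu'(s, 0)},
$$
so the hypothesis $\tau(s_0, 0)\mathcal{K}(s_0) > -1$ forces $g_0'(s_0) < 0$. By continuity, $g_0$ is strictly decreasing on some open neighborhood $I$ of $s_0$, and in any lift to $\R$ there exist $s_1 < s_2$ in $I$ with $g_0(s_1) > g_0(s_2)$.

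Suppose now, toward a contradiction, that there is a sequence $\lambda_n \to 0^+$ for which $\Lambda_{\lambda_n}$ is a graph over $\T \times \{0\}$. Compactness and single-valuedness then force $\Lambda_{\lambda_n}$ to be the graph of a continuous function $\gamma_{\lambda_n} \colon \T \to [-1,1]$, and the inclusion $\Lambda_{\lambda_n} \subset \Lambda_{\lambda_n}^0 \subset f_{\lambda_n}(\A) = \T \times [-\lambda_n, \lambda_n]$ gives $\|\gamma_{\lambda_n}\|_\infty \leq \lambda_n$, so $\gamma_{\lambda_n} \to 0$ uniformly. Since $f_{\lambda_n}$ is a twist homeomorphism of $\A$ isotopic to the identity that preserves the essential simple closed curve $\Lambda_{\lambda_n}$, the induced map
$$
g_{\lambda_n}(s) := \pi_1 \circ f_{\lambda_n}(s, \gamma_{\lambda_n}(s)) = \pi_1 \circ f_1(s, \gamma_{\lambda_n}(s))
$$
is an orientation-preserving circle homeomorphism, whose lift to $\R$ is strictly increasing.

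Finally, uniform continuity of $f_1$ on the compact annulus $\A$ together with $\gamma_{\lambda_n} \to 0$ uniformly imply $g_{\lambda_n} \to g_0$ uniformly on $\T$. A uniform limit of non-decreasing lifts is non-decreasing, contradicting $g_0(s_1) > g_0(s_2)$ for $s_1 < s_2$ established above. Hence for $\lambda \in (0,1)$ sufficiently small, $\Lambda_\lambda$ is not a graph over $\T \times \{0\}$. The main technical point I would want to verify carefully in the write-up is the claim that a continuous essential $f_\lambda$-invariant graph induces an orientation-preserving circle homeomorphism; this is classical for twist homeomorphisms of the annulus isotopic to the identity, and I would invoke it rather than reprove it.
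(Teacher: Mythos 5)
Your proof is correct, and its overall skeleton coincides with the paper's: argue by contradiction along a sequence $\lambda_n\to 0$, note that $\Lambda_{\lambda_n}\subset\T\times[-\lambda_n,\lambda_n]$ forces the graphs $\gamma_{\lambda_n}$ to converge uniformly to $0$, hence $g_{\lambda_n}\to g_0$ uniformly, and exploit that $g_0'(s_0)<0$ under the hypothesis $\tau(s_0,0)\mathcal{K}(s_0)>-1$. The difference lies in the final contradiction. The paper only uses that $g_{\lambda_n}$ is \emph{injective} (immediate from the graph property and invariance), and therefore must also produce a point where $g_0'\geq 1$; it does so via a $2$-periodic orbit of maximal perimeter, for which $(\tau\mathcal{K}_1+1)(\tau\mathcal{K}_2+1)\geq 1$ forces $\tau\mathcal{K}_i+1\leq -1$ at one of the two bounces. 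Non-injectivity of $g_0$ near a local extremum then contradicts injectivity of the nearby $g_{\lambda_n}$. You instead invoke that $g_{\lambda_n}$ is an \emph{orientation-preserving} circle homeomorphism, so a single interval of strict decrease of $g_0$ already contradicts the monotonicity of the limiting lift; this spares you the maximal-orbit computation. Your stated justification (``twist homeomorphism isotopic to the identity'') is not quite the right reason: the twist condition is irrelevant here, and what is actually needed is that $f_{\lambda_n}$ has positive Jacobian (so it preserves orientation of $\A$) and preserves the two connected components of $\A\setminus\Lambda_{\lambda_n}$ (which follows since $f_{\lambda_n}$ is injective, $f_{\lambda_n}(\Lambda_{\lambda_n})=\Lambda_{\lambda_n}$, and $f_{\lambda_n}$ maps the lower boundary circle into the lower component); together these imply the restriction to the invariant graph is orientation-preserving, hence its lift is increasing. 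With that point made precise, the argument is complete, and is marginally more economical than the paper's, at the price of a slightly less elementary input than bare injectivity.
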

 
 \begin{proof}
 	By contradiction, let us assume that there exists a sequence $(\lambda_n)_{n \in \mathbb{N}}\in (0,1)^\mathbb{N}$ converging to $0$ such that $\Lambda_{\lambda_n}$ is the graph 
 	of some function $\gamma_n\colon \T \to [-1,1]$. As $\Lambda_{\lambda_n}$ separates $\A$, the function $\gamma_n$ is necessarily continuous. We can then define the map 
 	$$
 	g_{\lambda_n}\colon \T \to \T,\quad s\mapsto \pi_1 \circ f_{\lambda_n}(s,\gamma_n(s))\,,
 	$$ 
 	where $\pi_1 \colon \A \to \T$ denotes the projection on the first coordinate. Note that by the graph hypothesis, $g_{\lambda_n}$ is invertible. Let us also define $g_0\colon s \mapsto \pi_1 \circ f_0(s,0)$; note that $g_0=\pi_1 \circ f_\lambda(s,0)$, for any $\lambda\in [0,1]$, and that $g_0$ is $C^1$. By construction, $\Lambda_\lambda \subset \T \times [-\lambda,\lambda]$, and $f_\lambda \colon (s,r)\mapsto (s',\lambda r_1')$. Hence for any $\varepsilon>0$, there exists $n_\varepsilon\in \mathbb{N}$ such that for any $n\geq n_\varepsilon$, $d_{C^0}(g_0,g_{\lambda_n})< \varepsilon$. Now, $g_0'(s)=-(\tau(s)\mathcal{K}(s)+1)$. On the one hand, if $\{(s_1,0),(s_2,0)\}$ is a $2$-periodic orbit of maximal perimeter, then, as in the proof of Proposition \ref{remark point crit func}, 
 	$$
 	(\tau(s_1)\mathcal{K}(s_1)+1)(\tau(s_2)\mathcal{K}(s_2)+1)\geq 1\,.
 	$$ 
 	Since $\mathcal{K}< 0$, we conclude that there exists $i\in \{1,2\}$ such that $\tau(s_i)\mathcal{K}(s_i)+1\leq -1$, i.e., $g_0'(s_i)\geq 1$. On the other hand, by assumption, $g_0'(s_0)<0$. We conclude that there exist $s_*\in \T$ and $\eta_1,\eta_2>0$ such that $g_0(s_*-\eta_1)=g_0(s_*+\eta_2)$ but $g_0(s_*)\neq g_0(s_*-\eta_1)$. Let  $\varepsilon:=\frac{1}{3}|g_0(s_*)-g_0(s_*-\eta_1)|$. We deduce that for any $n \geq n_\varepsilon$, 
 	\begin{align*}
 	\text{either }&g_{\lambda_n}(s_*)>g_{\lambda_n}(s_*-\eta_1)\text{ and }g_{\lambda_n}(s_*)>g_{\lambda_n}(s_*+\eta_2)\,,\\
 	\text{or }&g_{\lambda_n}(s_*)<g_{\lambda_n}(s_*-\eta_1)\text{ and }g_{\lambda_n}(s_*)<g_{\lambda_n}(s_*+\eta_2)\,.
 \end{align*} 
 By the continuity of $g_{\lambda_n}$, we deduce that $g_{\lambda_n}$ is not injective, a contradiction. 
 \end{proof}
  
 We conclude this section by discussing the case where the dissipative billiard map $f_\lambda$ has non-constant dissipation. 
 \begin{remark}\label{argue variable}
 	Let us consider a general dissipative billiard map $f_\lambda$ as in Definition \ref{definit diss bill}, for some $C^{k-1}$ dissipation function $\lambda \colon \A \to (0,1)$. We claim that the results presented in this section can be obtained for such a map $f_\lambda$, as long as $\|\lambda\|_{C^1}\ll 1$. Indeed, the main points are: 
 	\begin{itemize}
 		\item the cone-fields constructed in Proposition \ref{normal hyp} will be contracted by $f_\lambda$ provided that $\|\lambda\|_{C^1}\ll 1$; we let $E^s \oplus E^c$ be the dominated splitting on the attractor $\Lambda_\lambda^0$ resulting from the cone-field criterion; 
 		\item as in Corollary \ref{coro dom splitting}, if $\|\lambda\|_{C^1}\ll 1$, then the contraction of $Df_\lambda$ along $E^s$ will be much stronger than the action of $Df_\lambda$ on $E^c$; 
 		\item following the same line as in the proof of Theorem \ref{coro norm hyp}, we can then show that for $\|\lambda\|_{C^1}$ sufficiently small, $\Lambda_\lambda$ will be a $C^1$ graph over $\T \times \{0\}$, and by the previous point, it will be actually $C^{k-1}$ after taking $\|\lambda\|_{C^1}$ possibly even smaller;
 		\item by the previous discussion, for $\|\lambda\|_{C^1}\ll 1$, $\Lambda_\lambda$ will be a smooth graph over $\T\times \{0\}$; then, we claim that the proof of Theorem \ref{theo impr res norm contr} also adapts in this case; indeed, for $\|\lambda\|_{C^1}\ll 1$, the dynamics of $f_\lambda|_{\Lambda_\lambda}$ approximates the limit dynamics of $f_0|_{\T \times \{0\}}$; the key point is that $f_0|_{\T \times \{0\}}$ has only non-degenerate $2$-periodic points, which follows from Lemma \ref{lemma noncst disp} in the case where $\lambda$ is non-constant. 
 	\end{itemize}  
 \end{remark} 

\section{Topologically complex Birkhoff attractors for mild dissipation}\label{section different rho}

\noindent Birkhoff attractors for dissipative billiards described in Sections \ref{quattro} and \ref{cinque} do not make the idea of their possible topological complexity. In fact, following a celebrated result by M. Charpentier \cite[Section 20]{Charpentier} --here Theorem \ref{charp}-- a Birkhoff attractor for a dissipative diffeomorphism may be an ``indecomposable continuum'' (see Fig. \ref{indeccont}), and a sufficient condition for this occurrence is that two rotation numbers associated to the Birkhoff attractor itself are different. 
The aim of this section is proving that such a phenomenon occurs also for Birkhoff attractors of dissipative billiard maps. Moreover, we discuss some topological and dynamical consequences of this phenomenon. \\
\indent The section is organized as follows. After recalling the main definition and properties of a twist map, we present the construction of the upper and the lower rotation numbers associated to the Birkhoff attrcator, as well as the statement of Charpentier's Theorem. Finally, in the case of dissipative billiards, we give a sufficient condition assuring that the corresponding Birkhoff attractor has different upper and lower rotation numbers  and we discuss the dynamical consequences of this fact. 

\begin{figure}[h]
	\centering
	\includegraphics[scale=0.04]{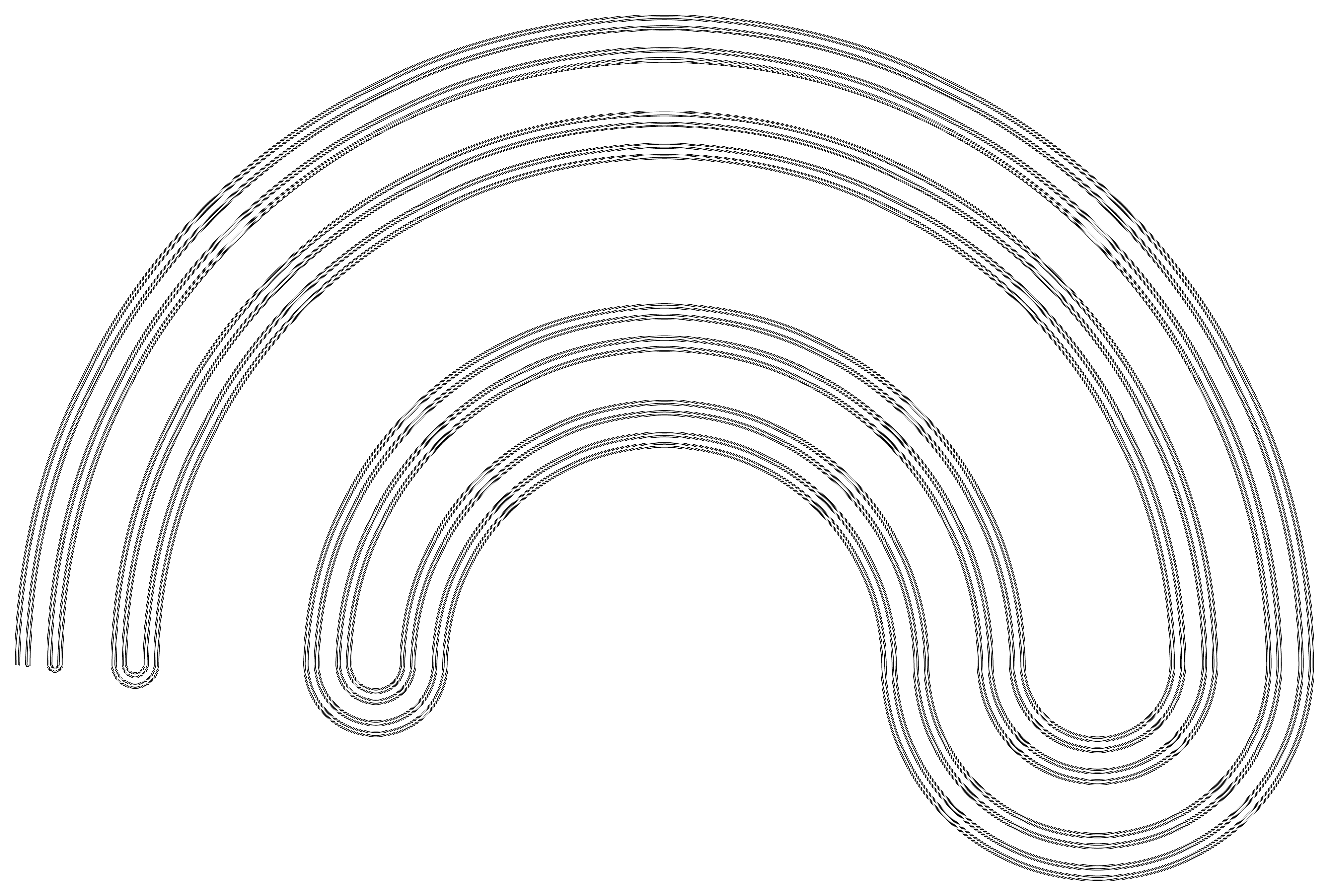}
	\caption{An example of indecomposable continuum (L Rempe-Gillen, CC BY-SA 3.0 \texttt{https://creativecommons.org/licenses/by-sa/3.0}, via Wikimedia Commons).}
	\label{indeccont}
\end{figure} 

\subsection{Twist diffeomorphisms} \label{var}
Fix the standard metric and trivialisation of the tangent space of $\mathbb{A} := \T\times[-1,1]$, as well as the counterclockwise orientation of the plane. Let $\beta\in(0,\frac{\pi}{2})$ and denote by $v$ the unitary vertical vector $(0,1)$. For any $x\in\A$, the cone $C_+(x,\beta)$ is the set of vectors $w\in T_x\A$ such that the angle $\theta(v,w)$ (with respect to the fixed metric and trivialisation) admits a lift in $(-\frac{\pi}{2}+\beta,-\beta)$; similarly, the cone $C_-(x,\beta)$ is the set of vectors $w\in T_x\A$ such that the angle $\theta(v,w)$ admits a lift in $(\frac{\pi}{2}-\beta,\beta)$, see Fig. \ref{figcone}. For the next definition, we refer to \cite[Section 1.2]{Herman}.
\begin{figure}[h]
	\centering
	\includegraphics[scale=0.10]{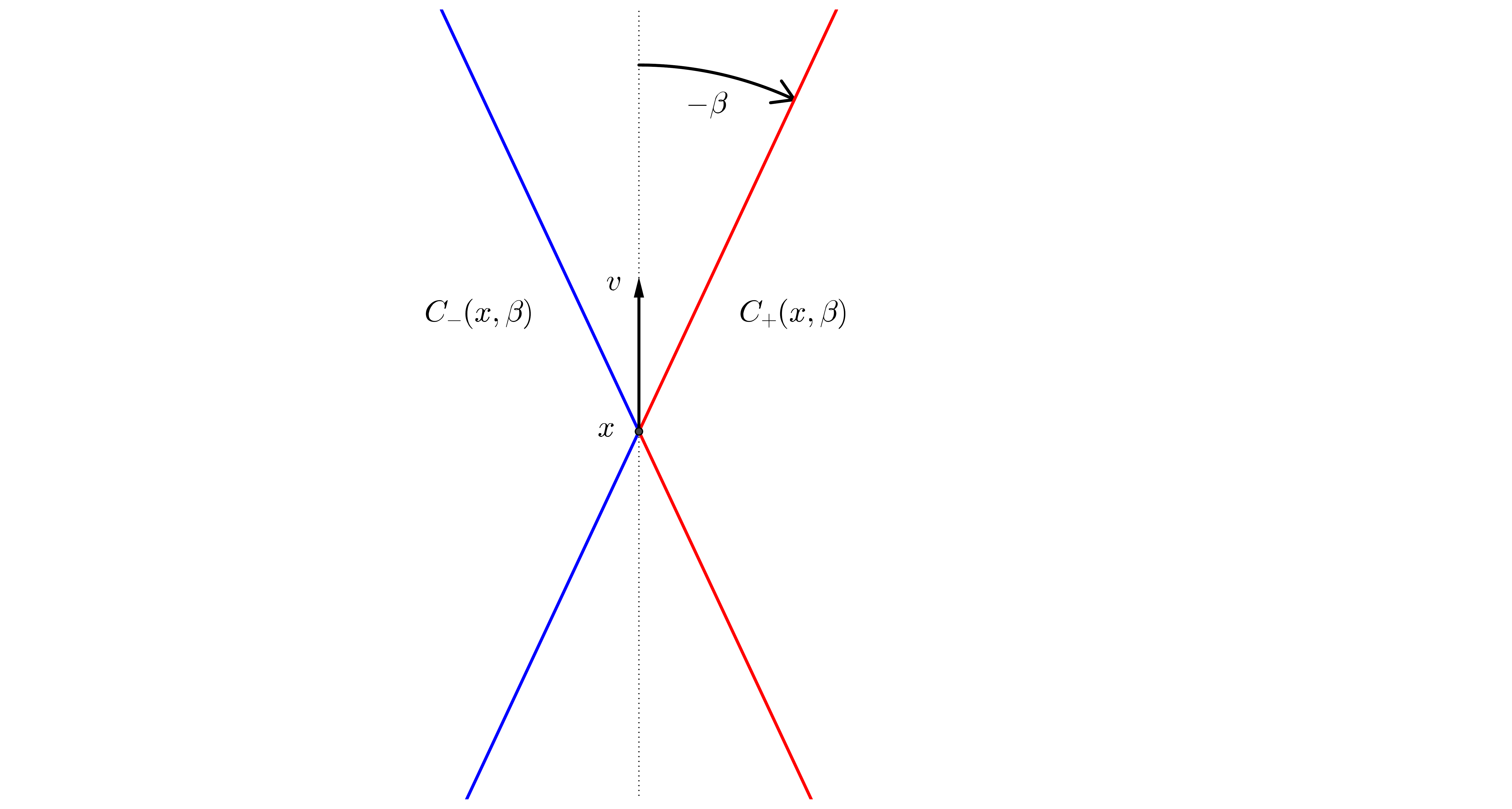}
	\caption{The cones $C_+(,\beta)$ and $C_-(x,\beta)$.}
	\label{figcone}
\end{figure}

\begin{definition}\label{def twist}
	Let $U$ be an open subset of $\A$. A $C^1$ orientation-preserving diffeomorphism $f\colon U\subset \A \to f(U)\subset \A$ is a positive, resp. negative twist map on $U$ if there exists $\beta\in(0,\frac{\pi}{2})$ such that:
	\[
	Df(x)v\in C_+(f(x),\beta), \quad \text{resp. }Df(x)v\in C_-(f(x),\beta),\qquad \forall\, x \in U.
	\]
\end{definition}

We are mostly interested in dissipative twist maps. Nevertheless, if we restrict to constant conformally symplectic twist maps, a variational setting can be described, following [Bangert]. Let $f$ be a constant conformally symplectic twist diffeomorphism of $\text{int}(\mathbb{A})$ into its image of conformality ratio $a > 0$ with respect to the area form $\omega = dr \wedge ds = d\alpha$, where $\alpha = rds$ is the Liouville 1-form. Denote by $F\colon (S,r) \in \R\times[-1,1]\mapsto (S',r')\in\R\times[-1,1]$ a lift of $f$ to the universal cover. The map $f/a$ is an exact symplectic twist diffeomorphism of $\text{int}(\mathbb{A})$; this means that there exists a generating function $H \in C^2(\R^2;\R)$ for $F/a$ such that $F^*\alpha - a \alpha = a dH$, that is
\begin{equation} \label{con a}
	r'dS' - a rdS = a dH(S,S')\, .
\end{equation}
We define a (formal) action functional as
$$
\mathcal{H}\colon \{S_i\}_{i\in\Z}\in\R^\Z\longmapsto \sum_{i\in\Z}\frac{H(S_i,S_{i+1})}{a^i}\, .
$$
\begin{definition}
	A bi-infinite sequence $\{S_i\}_{i\in\Z}\in\R^\Z$ is stationary for $\mathcal{H}$ if
	$$
	\partial_1 H(S_i,S_{i+1})+a\partial_2H(S_{i-1},S_i)=0\qquad\forall i\in\Z\, .
	$$
\end{definition}
We can then characterize the orbits of $F$ in terms of stationary sequences. Indeed, equality (\ref{con a}) means that, for every $S,S'\in\R$,
\begin{equation*} 
	\left\{
	\begin{array}{rcl}
		r &=& -\partial_1H(S,S'), \\
		r' &=& a \partial_2H(S,S').
	\end{array}
	\right.
\end{equation*}
As a consequence, $\{ (S_i,r_i)\}_{i \in \mathbb{Z}}$ is an orbit of $F$ if and only if for every $i \in \mathbb{Z}$, it holds
\begin{equation} \label{comp} 
	-\partial_1H(S_i,S_{i+1}) = r_i = a \partial_2 H(S_{i-1},S_{i})\, .
\end{equation} 
This implies the following.
\begin{proposition} A bi-infinite sequence $\{ (S_i,r_i)\}_{i \in \mathbb{Z}}$ is an orbit of $F$ if and only if the bi-infinite sequence $\{S_i\}_{i \in \mathbb{Z}}$ is stationary.
\end{proposition}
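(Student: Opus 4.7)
The plan is to read off both implications directly from the identities in \eqref{comp}, which are nothing but the coefficient-by-coefficient content of the generating-function relation $r'dS' - a\, r\, dS = a\, dH(S,S')$.

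For the direct implication, suppose $\{(S_i,r_i)\}_{i\in\Z}$ is an orbit of $F$. Then \eqref{comp} holds at every $i$, and in particular it gives two expressions for $r_i$: viewing $(S_{i-1},r_{i-1})\mapsto(S_i,r_i)$ as the transition yields $r_i = a\,\partial_2 H(S_{i-1},S_i)$, while viewing $(S_i,r_i)\mapsto(S_{i+1},r_{i+1})$ as the transition yields $r_i = -\partial_1 H(S_i,S_{i+1})$. Equating the two expressions gives $\partial_1 H(S_i,S_{i+1}) + a\,\partial_2 H(S_{i-1},S_i)=0$ for every $i\in\Z$, which is exactly the stationarity equation for $\mathcal{H}$.

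For the converse, I assume $\{S_i\}_{i\in\Z}$ is stationary and define $r_i := -\partial_1 H(S_i,S_{i+1})$. The goal is to show $F(S_i,r_i)=(S_{i+1},r_{i+1})$ for every $i$. Here the twist condition enters: since $f$ is a positive twist map and $F$ is its lift, the generating function $H$ satisfies $\partial_{12}H\neq 0$, so the first equation in \eqref{comp} implicitly defines $S'$ uniquely as a function of $(S,r)$, and this function is precisely the first coordinate of $F$. Applied at $(S_i,r_i)$, the definition of $r_i$ then identifies $S_{i+1}$ as the first coordinate of $F(S_i,r_i)$. The second coordinate of $F(S_i,r_i)$ is $a\,\partial_2 H(S_i,S_{i+1})$ by the second equation of \eqref{comp}, and the stationarity condition at index $i+1$ rewrites this as $-\partial_1 H(S_{i+1},S_{i+2})$, which is $r_{i+1}$ by construction. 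Hence $F(S_i,r_i)=(S_{i+1},r_{i+1})$, as required.

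There is no substantive obstacle: the whole argument is a mechanical manipulation of \eqref{comp} together with one appeal to the twist hypothesis to guarantee that $S_{i+1}$ is the unique $S'$ solving $r_i = -\partial_1 H(S_i,S')$. The only point worth flagging is that the damping factor $a^{-i}$ in $\mathcal{H}$ is exactly what is needed so that the Euler--Lagrange equation carries the weight $a$ in front of $\partial_2 H(S_{i-1},S_i)$, matching the conformally symplectic relation $r' = a\,\partial_2 H$ rather than the usual symplectic one.
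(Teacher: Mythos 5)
Your proof is correct and follows essentially the same route as the paper, which derives the proposition directly from the coefficient identities \eqref{comp} obtained from the generating-function relation \eqref{con a}. Your write-up is merely more explicit than the paper's one-line deduction, in particular in flagging the twist condition as what guarantees that $S_{i+1}$ is the unique solution of $r_i=-\partial_1 H(S_i,S')$ in the converse direction.
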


An important example of twist map is given by the billiard map within a convex domain, as recalled in the following proposition. 

\begin{proposition}\label{bill is twist}
	Let $\Omega \subset \mathbb{R}^2$ be a convex domain, with $C^k$ boundary, $k \geq 2$. Then, the associated billiard map $f=f_1\colon \A\to \A$ given by \eqref{stnd billi} is a positive twist map when restricted to $\mathrm{int}(\A)$.  
\end{proposition}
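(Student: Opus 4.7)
My plan is to derive the twist property from the generating-function structure of the billiard map, the same variational setup previewed in Section~\ref{conti 2 orbite}. I would take $\ell(s,s')=\|\Upsilon(s)-\Upsilon(s')\|$ as the generating function on the lift $\R^2$, and first verify the standard identities that if $(s',r')=f_{1}(s,r)$, then
$$r=-\partial_1\ell(s,s'),\qquad r'=\partial_2\ell(s,s').$$
These come directly from projecting the unit tangent $\Upsilon'(s)$ onto the chord $\Upsilon(s')-\Upsilon(s)$ and recalling that $r=\sin\varphi$ is the sine of the angle to the inward normal; they also show that an orbit of $f_1$ is precisely a stationary sequence of the functional $\sum_i \ell(s_i,s_{i+1})$.

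The decisive step is to compute the crossed partial $\partial_1\partial_2\ell$. A short calculation using that $\Upsilon$ is arclength-parametrized gives
$$\partial_1\partial_2\ell(s,s')=-\frac{\nu\,\nu'}{\tau},$$
where $\tau=\ell(s,s')$ and $\nu=\sqrt{1-r^2}$, $\nu'=\sqrt{1-(r')^2}$ are the cosines of the incidence and reflection angles. On $\mathrm{int}(\A)$ this is strictly negative since $\tau>0$ (two distinct bounces) and $\nu,\nu'>0$ (because $|r|,|r'|<1$). I expect this to be the main technical step, although it is entirely routine geometry; strict convexity of $\Omega$ is what guarantees $\tau$ is bounded away from $0$ locally and the formula stays non-degenerate.

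Implicit differentiation of $r=-\partial_1\ell(s,s')$ with respect to $r$ (with $s$ fixed) then yields
$$\partial_r s'=-\frac{1}{\partial_1\partial_2\ell(s,s')}=\frac{\tau}{\nu\nu'}>0,$$
which matches the $(1,2)$-entry of the explicit matrix \eqref{matrice differ}. Consequently, $Df_1(s,r)(0,1)^\top$ has strictly positive horizontal component, so the oriented angle from $v=(0,1)$ to its image admits a lift in $(-\pi/2,0)$. To realise the cone condition of Definition~\ref{def twist} uniformly, I would restrict to a compact subset $K\subset\mathrm{int}(\A)$ (with $f_1(K)$ also compact in $\mathrm{int}(\A)$); on such $K$, $\tau$ and $\mathcal{K}'$ are bounded while $\nu,\nu'$ are bounded away from $0$, so the image angle is uniformly bounded away from both $-\pi/2$ and $0$, producing a $\beta=\beta(K)\in(0,\pi/2)$ with $Df_1(x)v\in C_+(f_1(x),\beta)$ for every $x\in K$. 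This concludes the twist statement.
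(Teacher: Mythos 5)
Your generating-function computation is correct and is really the same calculation as the paper's: $\partial_1\partial_2\ell=-\nu\nu'/\tau<0$ is exactly the statement that the $(1,2)$-entry of $Df_1$ equals $\tau/(\nu\nu')>0$, so up to that point the two arguments coincide. The problem is the last step. Definition \ref{def twist} asks for a \emph{single} constant $\beta\in(0,\frac\pi2)$ such that $Df_1(x)v\in C_+(f_1(x),\beta)$ for \emph{every} $x$ in the open set $U=\mathrm{int}(\A)$; producing a $\beta(K)$ for each compact $K\subset\mathrm{int}(\A)$ proves a strictly weaker statement and does not conclude the proposition. The uniformity is not a formality: the entire difficulty of the proof lives at the grazing limit $r\to\pm1$, where $\tau\to0$ and $\nu'\to0$ simultaneously, so the slope $\bigl|\tau\mathcal{K}'+\nu'\bigr|\,\nu'/\tau$ of $Df_1(x)v$ is a priori of the form $0/0$ and your compact-exhaustion bound says nothing about it. The paper resolves exactly this point by comparing with the osculating circle: if the limiting curvature is nonzero then $\tau\sim 2\nu' R_\infty$, giving $\nu'/\tau\to -\mathcal{K}_\infty/2$, and if it vanishes then $\nu'/\tau\to0$; either way one gets a bound on $\nu'/\tau$ that is uniform up to the boundary of the annulus, hence a single $\beta$ valid on all of $\mathrm{int}(\A)$. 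Without this, later results that invoke a global twist constant --- the cotangent Lipschitz bound of Proposition \ref{cotangente}, the proof of Proposition \ref{prop different rho}, and especially Corollary \ref{coro mather}, where the instability region is all of $\mathrm{int}(\A)$ --- would not follow.

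A secondary, smaller inaccuracy: from the positivity of the horizontal component $\tau/(\nu\nu')$ you may only conclude that the angle from $v=(0,1)$ to $Df_1(x)v$ has a lift in $(-\pi,0)$, not in $(-\pi/2,0)$. The vertical component of $Df_1(x)v$ is $-(\tau\mathcal{K}'+\nu')/\nu$, and $\tau\mathcal{K}'+\nu'$ can be positive (e.g.\ for near-perpendicular bounces across an elongated table with small curvature), so the image of the vertical is not always in the first quadrant. What the proof actually needs, and what the uniform bound on $\bigl|\tau\mathcal{K}'+\nu'\bigr|\nu'/\tau$ delivers, is that the image vector has positive horizontal component and slope uniformly bounded in absolute value, i.e.\ it stays in a closed cone around the horizontal, uniformly away from the vertical.
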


\begin{proof}
	Let $(s,r)\in\mathrm{int}(\A)$. We consider the image of the vertical direction by the differential of $f$: 
	$$
	Df(s,r)\begin{bmatrix}
		0\\1
	\end{bmatrix}=\begin{bmatrix}
		\frac{\tau}{\nu\nu'} \\
		-\frac{(\tau \mathcal{K}'+\nu')}{\nu}
	\end{bmatrix}\,.
	$$
	In order to conclude that $f$ is a positive twist map, it is sufficient to show that for some $M>0$, independent of the point $(s,r)\in\mathrm{int}(\A)$, it holds
	$$
	\dfrac{\vert \tau \mathcal{K}'+\nu'\vert/\nu}{\tau/\nu\nu'}=\vert \tau \mathcal{K}'+\nu'\vert \dfrac{\nu'}{\tau}\leq M\,.
	$$
	Observe that for any point we have $\vert \tau \mathcal{K}'+\nu'\vert\leq \mathrm{diam}(\Omega)\mathcal{K}_0+1$, where  
	$\mathcal{K}_0$ denotes the maximum in absolute value of the curvature of $\partial\Omega$. Thus, it suffices to get a uniform upper bound on $\frac{\nu'}{\tau}$ to conclude. Whenever $\tau$ --which is the Euclidean distance between two consecutive points-- is bounded away from zero, the quantity we are interested in is then clearly bounded. The points for which $\tau$ is approaching zero are points closer and closer to the boundary. Let then $(s_n,r_n)_n\in(\mathrm{int}(\A))^{\mathbb{N}}$ be a sequence of points converging to a point $(s_\infty,\pm 1)$. 
	Without loss of generality, assume that we converge to $(s_\infty, 1)$. Let $\mathcal{K}_\infty\leq 0$ be the curvature at the point on $\partial \Omega$ corresponding to $s_\infty$. We distinguish between two cases: either $\mathcal{K}_\infty<0$, or $\mathcal{K}_\infty=0$. 
	
	In the first case, we let $R_\infty:=|\mathcal{K}_\infty^{-1}|>0$ be the radius of curvature at $s_\infty$. 
	By approximating our convex domain with the osculating circle at the point 
	$s_\infty$, we obtain
	$$
	\tau_n \sim 2\nu_n'R_\infty\,,
	$$
	denoting by $\tau_n$ the Euclidean distance between the points corresponding to $s_n$ and $s_n'$, where $(s_n',r_n'):=f(s_n,r_n)$, and with $\nu_n':=\sqrt{1-(r_n')^2}$ (see e.g. \cite[Chapter 4, I.3.4.]{Douady}). Thus
	$$
	\lim_{n \to+\infty}\dfrac{\nu'_n}{\tau_n}=\frac{1}{2R_\infty}=-\frac{\mathcal{K}_\infty}{2}\leq \frac{\mathcal{K}_0}{2}\, ,
	$$
	which provides the required uniform bound.
	
	In the second case, namely when $\mathcal{K}_\infty=0$, the boundary $\partial\Omega$ is approximated up to order $2$ by the tangent space at $s_\infty$. Let $(\tilde s_n,\tilde r_n)$ and $(\tilde s_n',\tilde r_n')$ be the respective approximations of $(s_n,r_n)$ and $(s_n',r_n'):=f(s_n,r_n)$; then $\tilde r_n=\tilde r_n'=1$ (and the corresponding $\tilde \nu_n,\tilde \nu _n'$ satisfy $\tilde \nu_n=\tilde \nu _n'=0$). Besides, in our approximation, $\tau_n$ is approximated by $|\tilde s_n-\tilde s_n'|$. This yields
	$$
	\lim_{n \to+\infty}\dfrac{\nu'_n}{\tau_n}=0\, .
	$$
	In either case, we go get the required uniform bound. 
\end{proof}

\subsection{Upper and lower rotation numbers and Charpentier's result} \label{6 punto 1}


We follow  the presentation contained in~\cite[Sections 4 and 5]{LeCalvez}. Recall from Definition \ref{def diss map} that 
$$C=\{(s,r)\in\A :\ \phi^-(s)\leq r\leq \phi^+(s)\}\subset \A,$$ 
where $\phi^-,\phi^+\colon\T\to\R$ are continuous maps. For $\lambda\in(0,1)$, let $f_\lambda$ be a dissipative (see Definition \ref{def diss map}) positive twist map of $C$ into its image and $\Lambda_\lambda$ be its corresponding Birkhoff attractor (see Definition~\ref{minimality}). Denote by $C_{\lambda}^+$ (resp. $C_{\lambda}^-$) the connected component of $C\setminus \Lambda_\lambda$ containing $\{(s,\phi^+(s))\in\A :\ s\in\T\}$ (resp. $\{(s,\phi^-(s))\in\A :\ s\in\T\}$). For any $(s,r)\in\A$ the upper (resp. lower) vertical line is
\[
V^+(s,r):=\{(s,y)\in\A :\ y\geq r\}
\]
$\text{(resp.} \ V^-(s,r):=\{(s,y)\in\A :\ y\leq r\}$).
Let us now define (see Fig. \ref{figverticals})
\[
\Lambda_\lambda^+:=\{ x\in\Lambda_\lambda :\  V^+(x)\setminus\{x\}\subset C_{\lambda}^+\}\qquad\text{and}\qquad\Lambda_\lambda^-:=\{ x\in\Lambda_\lambda :\  V^-(x)\setminus\{x\}\subset C_{\lambda}^-\}\,.
\]
\begin{figure}[h]
	\centering
	\begin{overpic}[width=0.5\textwidth,tics=10]{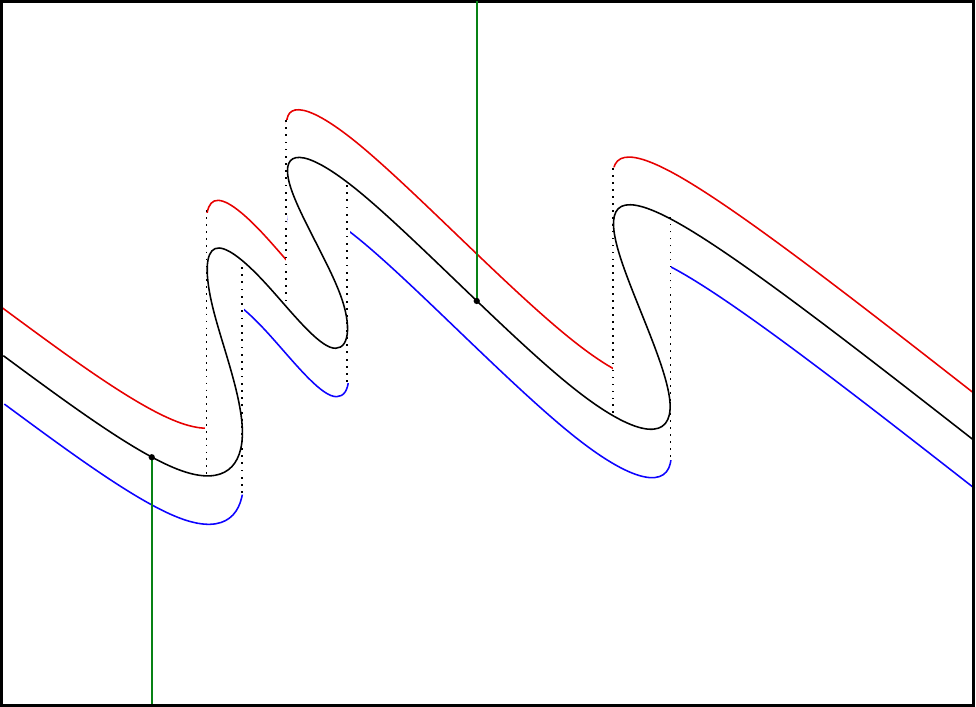}  
		\put(14.7,27){\color{black}$x$}
		\put(17,5){\color{ForestGreen}$V^-(x)$}
		\put(50.2,42.1){\color{black}$y$}
		\put(51,65){\color{ForestGreen}$V^+(y)$}
		\put(-6,35){\color{black}$\Lambda_\lambda$}
		\put(8,37){\color{red}$\Lambda_\lambda^+$}
		\put(29,28){\color{blue}$\Lambda_\lambda^-$}
	\end{overpic}
	\caption{The lower and upper verticals.}
	\label{figverticals}
\end{figure}
Therefore, we can define two functions
$\mu_\lambda^{\pm}\colon\T\to[-1,1]$ 
whose graphs $\Gamma_{\mu_{\lambda}^{\pm}}$ satisfy $\Gamma_{\mu_{\lambda}^\pm}=\Lambda_\lambda^\pm$. \\
\noindent
In the sequel, we fix a covering $\pi\colon\R\times[-1,1]\to\T\times[-1,1]$ of $\A$, and let $\tilde\Lambda_\lambda:=\pi^{-1}(\Lambda_\lambda)$, $\tilde\Lambda_\lambda^\pm:=\pi^{-1}(\Lambda_\lambda^\pm)$; we also denote by  $\tilde\mu_\lambda^\pm\colon\R\to[-1,1]$ the lifts of $\mu_\lambda^{\pm}\colon\T\to[-1,1]$. Moreover,  we let $\pi_1\colon\T\times[-1,1]\to\T$ and $\tilde \pi_1\colon\R\times[-1,1]\to\R$ be the first coordinate projections. The next propositions are Corollary 4.8 and Corollary 4.7-Corollary 4.5 in \cite{LeCalvez} respectively. 

\begin{proposition} \label{cotangente}
	The map $\tilde\mu^+_\lambda\colon\R\to[-1,1]$ $(\text{resp.} \ \tilde\mu_\lambda^-\colon\R\to[-1,1])$ is upper (resp. lower) semi-continuous. Moreover, 
	\[
	\tilde\mu_\lambda^\pm(\tilde\theta')-\tilde\mu_\lambda^\pm(\tilde\theta)\leq (\tilde\theta'-\tilde\theta)\, \mathrm{cotan}\, \beta, \qquad \forall\, \tilde\theta<\tilde\theta',
	\]
	where $\beta\in(0,\frac{\pi}{2})$ is the constant in Definition \ref{def twist}.
\end{proposition}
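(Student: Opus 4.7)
The approach is to first identify $\tilde\mu_\lambda^\pm$ with the upper/lower envelopes of $\tilde\Lambda_\lambda$ in each vertical fiber, then to derive both the semi-continuity and the slope bound from the closedness and $F_\lambda$-invariance of $\tilde\Lambda_\lambda$ combined with the positive twist property.

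First I would observe that, by the defining condition $\Lambda_\lambda^+=\{x\in\Lambda_\lambda: V^+(x)\setminus\{x\}\subset C_\lambda^+\}$, a point $(\theta,r)\in\Lambda_\lambda$ belongs to $\Lambda_\lambda^+$ if and only if no other point of $\Lambda_\lambda$ lies strictly above it in its vertical fiber; equivalently, $\mu_\lambda^+(\theta)=\max\{r:(\theta,r)\in\Lambda_\lambda\}$ (and symmetrically $\mu_\lambda^-(\theta)=\min\{r:(\theta,r)\in\Lambda_\lambda\}$). The extrema are attained by compactness of $\Lambda_\lambda$, and the fibers are nonempty because $\Lambda_\lambda$ separates $\A$. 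This characterization lifts unchanged to $\tilde\mu_\lambda^\pm$. Upper semi-continuity of $\tilde\mu_\lambda^+$ is then immediate: if $\tilde\theta_n\to\tilde\theta$ with $\tilde\mu_\lambda^+(\tilde\theta_n)\to\ell$, closedness of $\tilde\Lambda_\lambda$ gives $(\tilde\theta,\ell)\in\tilde\Lambda_\lambda$, whence $\ell\leq\tilde\mu_\lambda^+(\tilde\theta)$; lower semi-continuity of $\tilde\mu_\lambda^-$ is symmetric.

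For the slope bound I would argue by contradiction for $\tilde\mu_\lambda^+$ (the case of $\tilde\mu_\lambda^-$ being analogous). Suppose $\tilde\theta<\tilde\theta'$ satisfy $\tilde\mu_\lambda^+(\tilde\theta')-\tilde\mu_\lambda^+(\tilde\theta) > (\tilde\theta'-\tilde\theta)\,\mathrm{cotan}\,\beta$, and set $p=(\tilde\theta,\tilde\mu_\lambda^+(\tilde\theta))$, $p'=(\tilde\theta',\tilde\mu_\lambda^+(\tilde\theta'))$. Letting $F_\lambda$ denote a lift of $f_\lambda$, the positive twist condition says that for every $x\in\tilde C$ one has $DF_\lambda(x)v\in C_+(F_\lambda(x),\beta)$, and integrating along a vertical arc yields that $F_\lambda$ sends that arc to a Lipschitz graph over its first coordinate with slope bounded above by $\mathrm{cotan}\,\beta$. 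The strategy is to convert the assumed excessive chord slope between $p$ and $p'$ into a violation of the $F_\lambda$-invariance of $\tilde\Lambda_\lambda$: taking preimages $F_\lambda^{-1}(p), F_\lambda^{-1}(p')\in\tilde\Lambda_\lambda$ together with a suitable vertical arc in $\tilde C$ connecting their fibers, and applying $F_\lambda$, should produce a $\mathrm{cotan}\,\beta$-Lipschitz curve in $\tilde C$ passing through $p$ and $p'$ whose existence is incompatible with $p'-p$ having slope exceeding $\mathrm{cotan}\,\beta$.

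The main obstacle is rigorously bridging the infinitesimal cone-field estimate on $DF_\lambda$ to the global slope bound on $\tilde\mu_\lambda^+$. Following \cite[Chapter 6]{LeCalvez1990}, the cleanest route exploits that the upper complementary component $\tilde C_\lambda^+$ is itself $F_\lambda$-invariant — indeed, this follows from the $F_\lambda$-invariance of $\tilde\Lambda_\lambda$ and the fact that $F_\lambda$ is homotopic to the identity, hence cannot swap the two complementary components — so that $\tilde\Lambda_\lambda^+=\mathrm{Fr}(\tilde C_\lambda^+)$ behaves as a pseudo-graph for $F_\lambda$. Applying the twist cone-field pointwise to vertical segments that straddle the hypothetical excess slope then yields the required contradiction; this geometric step, inherent to the theory of pseudo-graphs for twist maps, is the technical heart of the proof.
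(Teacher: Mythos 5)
First, note that the paper does not actually prove this statement: it is quoted verbatim from Le Calvez (\cite[Corollary 4.8]{LeCalvez}), so there is no in-paper argument to match yours against. Your first half is essentially correct: the identification $\tilde\mu_\lambda^+(\tilde\theta)=\max\{r:(\tilde\theta,r)\in\tilde\Lambda_\lambda\}$ holds (every fiber meets $\tilde\Lambda_\lambda$ because $\Lambda_\lambda$ separates, and the open ray above the fiberwise maximum is connected, disjoint from $\Lambda_\lambda$, and touches $\Gamma_{\phi^+}\subset C_\lambda^+$, hence lies in $C_\lambda^+$ and not merely in the complement --- a point worth spelling out, since $C\setminus\Lambda_\lambda$ may a priori have more than two components), and upper semi-continuity then follows from closedness of $\tilde\Lambda_\lambda$ exactly as you say.

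The slope bound, however, has a genuine gap. The contradiction you describe --- producing ``a $\mathrm{cotan}\,\beta$-Lipschitz curve passing through $p$ \emph{and} $p'$'' --- is not what the construction yields: the curve $\gamma:=F_\lambda\bigl(V^+(F_\lambda^{-1}(p))\bigr)$ passes through $p$ only, and nothing forces it through $p'$. The cone estimate on $DF_\lambda$ by itself says nothing about where $\tilde\Lambda_\lambda$ sits relative to $\gamma$. The missing ingredient is precisely Proposition \ref{prop technique}\ref{punt b} (Le Calvez's Corollaries 4.5 and 4.7, stated just before the proposition in the paper): $\gamma$ is contained in the set $\hat U^+_\lambda$ of points radially accessible from above, so the upper vertical over every point of $\gamma$ is disjoint from $\Lambda_\lambda$. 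Since $\gamma$ leaves $p$ moving up and to the right with slope in $(\tan\beta,\cot\beta)$, this forces $p'=(\tilde\theta',\tilde\mu^+_\lambda(\tilde\theta'))$ to lie on or below $\gamma$ over every abscissa that $\gamma$ covers, giving $\tilde\mu^+_\lambda(\tilde\theta')\le\tilde\mu^+_\lambda(\tilde\theta)+(\tilde\theta'-\tilde\theta)\cot\beta$ there; one must still deal with the case where $\gamma$ terminates (on $F_\lambda(\Gamma_{\phi^+})$) before reaching the abscissa $\tilde\theta'$, e.g.\ by iterating with $F_\lambda^{-n}$. Your appeal to the $F_\lambda$-invariance of $\tilde C_\lambda^+$ and to the ``pseudo-graph'' picture points in the right direction, but as written the argument stops exactly where the work begins; to be a proof rather than a pointer to \cite{LeCalvez}, it must invoke and use the radial-accessibility invariance explicitly.
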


\begin{proposition} \label{prop technique} The following properties hold:
	\begin{enumerate}[label=(\alph*)]
		\item\label{punt a} $f_\lambda^{-1}(\Lambda_\lambda^\pm) \subset \Lambda_\lambda^\pm$, and the order defined by the first coordinate projection is preserved by $f_\lambda^{-1}$. 
		\item\label{punt b} Let $\hat{U}_\lambda^\pm := \{x \in C : \ V^\pm(x)\subset C_{\lambda}^\pm\}$ be the set of points radially accessible from below/above. If $x\in f_\lambda(C) \cap \hat{U}^\pm_\lambda$, then $f_\lambda^{-1}(x)\in\hat{U}^\pm_\lambda$ and $f_\lambda(V^\pm(f_\lambda^{-1}(x)))\subset\hat{U}^\pm_\lambda$.
	\end{enumerate}
\end{proposition}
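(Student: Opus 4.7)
The plan is to leverage the positive twist character of $f_\lambda$ together with the characterization of $C_\lambda^\pm$ as connected components to transport information from the ``top'' of $C$ through $f_\lambda$. The proofs of \textbf{(a)} and \textbf{(b)} have a common architecture, which I describe below.

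\textbf{Preliminary step.} I would first show that $f_\lambda(C_\lambda^\pm)\subset C_\lambda^\pm$ and hence $f_\lambda^{-1}(C_\lambda^\pm\cap f_\lambda(C))\subset C_\lambda^\pm$. Indeed, since $f_\lambda$ is a homeomorphism of $C$ into its image that is homotopic to the identity, the connected image $f_\lambda(C_\lambda^\pm)$ lies in $C\setminus \Lambda_\lambda=C_\lambda^+\sqcup C_\lambda^-$; the homotopy class forbids swapping the two sides, hence $f_\lambda(C_\lambda^\pm)\subset C_\lambda^\pm$. Reversing yields that any $w\in C_\lambda^+\cap f_\lambda(C)$ satisfies $f_\lambda^{-1}(w)\notin\Lambda_\lambda\cup C_\lambda^-$, so $f_\lambda^{-1}(w)\in C_\lambda^+$ (and symmetrically for $-$).

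\textbf{Part (a).} Fix $x\in\Lambda_\lambda^+$ and set $y=f_\lambda^{-1}(x)\in\Lambda_\lambda$; write $y=(s_0,r_0)$. I want to prove $V^+(y)\setminus\{y\}\subset C_\lambda^+$. The crucial ingredient is the positive twist: the map $r\mapsto \pi_1\circ f_\lambda(s_0,r)$ is strictly increasing, so the curve $\gamma:=f_\lambda(V^+(y))$ starts at $x$, has tangent at $x$ in the cone $C_+(x,\beta)$, and has strictly increasing first coordinate; in particular $\gamma\cap V^+(x)=\{x\}$. Suppose by contradiction that $r^*:=\inf\{r>r_0:(s_0,r)\in\Lambda_\lambda\cup C_\lambda^-\}<\infty$. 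The open segment $\sigma=\{(s_0,r):r_0<r<r^*\}$ is connected and avoids $\Lambda_\lambda$, so it lies in one of $C_\lambda^\pm$; by applying the preliminary step to $f_\lambda(\sigma)$ and using that $\gamma$ leaves $x$ into the positive-$s$ direction, which by openness of $C_\lambda^+$ and the assumption $V^+(x)\setminus\{x\}\subset C_\lambda^+$ sits locally on the $C_\lambda^+$ side of $\Lambda_\lambda$, I identify $\sigma\subset C_\lambda^+$, and hence $f_\lambda(\sigma)\subset C_\lambda^+$. But then the boundary point $p^*:=(s_0,r^*)$, if in $\Lambda_\lambda$, would have image $f_\lambda(p^*)\in\Lambda_\lambda$ with $\pi_1(f_\lambda(p^*))>\pi_1(x)$, approached from $C_\lambda^+$ by $\gamma$, which, combined with $x\in\Lambda_\lambda^+$, contradicts the monotone Lipschitz property of $\tilde\mu_\lambda^+$ from Proposition \ref{cotangente} applied on the arc $[\pi_1(x),\pi_1(f_\lambda(p^*))]$; if $p^*\in C_\lambda^-$, the sign contradicts $\sigma\subset C_\lambda^+$. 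For the order preservation statement, given $x_1,x_2\in\Lambda_\lambda^+$ with $\pi_1(x_1)<\pi_1(x_2)$, I apply the first part to both, noting that by positive twist, $Df_\lambda^{-1}$ sends the vertical vector to a vector with negative first coordinate; combining this with the Lipschitz bound of Proposition \ref{cotangente} on $\tilde\mu_\lambda^+$ forbids the verticals $V^+(f_\lambda^{-1}(x_i))$ from crossing, yielding $\pi_1(y_1)<\pi_1(y_2)$.

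\textbf{Part (b).} Now let $x\in f_\lambda(C)\cap\hat U_\lambda^+$, so $V^+(x)\subset C_\lambda^+$, and set $y=f_\lambda^{-1}(x)$. To show $y\in\hat U_\lambda^+$, I proceed as in \textbf{(a)} but use the stronger hypothesis that the \emph{entire} vertical $V^+(x)$ lies in the open set $C_\lambda^+$: the curve $\gamma=f_\lambda(V^+(y))$ is connected, emerges from $x$ into $C_\lambda^+$ by positive twist, and cannot cross $\Lambda_\lambda$ without yielding an $f_\lambda$-invariant point on $V^+(y)$, ruled out by the preliminary step applied pointwise. Finally, for $f_\lambda(V^+(y))\subset\hat U_\lambda^+$, I fix $z\in V^+(y)$ and aim to show $V^+(f_\lambda(z))\subset C_\lambda^+$; this follows by ``sliding'' the argument: the region $\mathcal R$ swept out by upward verticals $V^+(f_\lambda(z))$ for $z$ ranging over $V^+(y)$ is a connected open subset of $C$, it contains $V^+(x)\subset C_\lambda^+$, and by the positive twist it is bounded away from $\Lambda_\lambda$ on its lower side by $\gamma$; hence $\mathcal R\subset C_\lambda^+$.

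\textbf{Main obstacle.} The delicate step is converting the infinitesimal twist condition into the global separation statement of \textbf{(a)}: since $\Lambda_\lambda$ may be an indecomposable continuum, one cannot argue purely locally, and one must combine connectedness, the Lipschitz control given by Proposition \ref{cotangente}, and the $f_\lambda$-invariance of $\Lambda_\lambda$ to rule out pathological returns of the vertical $V^+(y)$ into $\Lambda_\lambda$.
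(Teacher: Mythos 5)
First, a remark on the comparison itself: the paper does not prove this proposition — it is imported verbatim from Le Calvez (Corollaries 4.5 and 4.7 of \cite{LeCalvez}), so there is no in-paper argument to measure yours against. Judged on its own, your write-up has the right architecture (the preliminary step $f_\lambda(C_\lambda^\pm)\subset C_\lambda^\pm$, the twist turning verticals into monotone graphs, connectedness of $\sigma$ and of $\gamma$), but the decisive step — excluding that the curve $\gamma=f_\lambda(V^+(f_\lambda^{-1}(x)))$ returns to $\Lambda_\lambda$ — is not actually established, and this is precisely where all the work lies.

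Concretely, two steps fail. In (a), the identification $\sigma\subset C_\lambda^+$ rests on the claim that $\gamma$ ``sits locally on the $C_\lambda^+$ side of $\Lambda_\lambda$'' near $x$; but $x\in\Lambda_\lambda$ and $\Lambda_\lambda$ need not have local sides (it may be an indecomposable continuum), and knowing $V^+(x)\setminus\{x\}\subset C_\lambda^+$ gives no information about points off that vertical arbitrarily close to $x$ — which is exactly where $\gamma$ goes. Worse, the terminal contradiction does not materialize: if $z=f_\lambda(p^*)\in\Lambda_\lambda\cap\gamma$ with $\theta:=\pi_1(x)<\theta':=\pi_1(z)$, then since the tangent of $\gamma$ lies in the cone $C_+(\cdot,\beta)$ one only gets $\pi_2(z)-\pi_2(x)<(\theta'-\theta)\cot\beta$, while Proposition \ref{cotangente} gives $\mu_\lambda^+(\theta')\leq\mu_\lambda^+(\theta)+(\theta'-\theta)\cot\beta=\pi_2(x)+(\theta'-\theta)\cot\beta$; the chain $\pi_2(z)\leq\mu_\lambda^+(\theta')\leq\pi_2(x)+(\theta'-\theta)\cot\beta$ is perfectly consistent with what you know about $\gamma$. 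The one-sided Lipschitz bound allows the upper envelope of $\Lambda_\lambda$ to rise to the right \emph{faster} than $\gamma$ does, so $\gamma$ can re-enter the region below the envelope and meet $\Lambda_\lambda$: no contradiction. In (b), the phrase ``cannot cross $\Lambda_\lambda$ without yielding an $f_\lambda$-invariant point on $V^+(y)$, ruled out by the preliminary step'' is circular: a point of $\gamma\cap\Lambda_\lambda$ corresponds (by invariance and injectivity) exactly to a point of $\Lambda_\lambda$ on $V^+(y)$, which is precisely what you are trying to exclude, and the preliminary step says nothing about it. Your ``Main obstacle'' paragraph correctly names the difficulty, but the proof does not overcome it; what is missing is the global separation/accessibility argument of Le Calvez (which, for the record, proves the $\hat U_\lambda^\pm$ statement (b) \emph{first} and deduces (a) from it, and derives the Lipschitz estimate of Proposition \ref{cotangente} \emph{afterwards} — so a self-contained proof should not lean on \ref{cotangente} in the first place).
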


\noindent Let $F_\lambda\colon\R\times[-1,1]\to F_\lambda(\R\times[-1,1])$ be a continuous lift of $f_\lambda$. The next result, due to G.D. Birkhoff, is \cite[Proposition 4.11]{LeCalvez}.

\begin{proposition} \label{defproprotnb}
	The sequence $\left( \frac{\tilde \pi_1\circ F_\lambda^n-\tilde \pi_1}{n} \right)_{n\in\N}$ converges uniformly on $\tilde\Lambda_\lambda^+$ $(\text{resp.} \ \tilde\Lambda_\lambda^-)$ to a constant $\rho_\lambda^+$ $(\text{resp.} \ \rho_\lambda^-)$. The constants $\rho_\lambda^+$ and $\rho_\lambda^-$ --called upper and lower rotation numbers-- do depend on the chosen lift, but not their difference.
\end{proposition}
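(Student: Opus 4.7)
My plan is to reduce the proposition to the classical Poincaré theory of rotation numbers for orientation-preserving circle homeomorphisms. The setup is governed by Proposition \ref{prop technique}\ref{punt a}: the set $\tilde\Lambda_\lambda^\pm$ is backward-invariant, $F_\lambda^{-1}(\tilde\Lambda_\lambda^\pm)\subset \tilde\Lambda_\lambda^\pm$, and the order induced by the first-coordinate projection $\tilde\pi_1$ is preserved by $F_\lambda^{-1}$. Since $\tilde\Lambda_\lambda^\pm=\Gamma_{\tilde\mu_\lambda^\pm}$ is the graph of a function, $\tilde\pi_1|_{\tilde\Lambda_\lambda^\pm}\colon \tilde\Lambda_\lambda^\pm\to\R$ is a strictly monotone bijection, and conjugating $F_\lambda^{-1}|_{\tilde\Lambda_\lambda^\pm}$ through it yields a strictly monotone bijection $\tilde g_{\pm}\colon\R\to\R$ commuting with the unit translation $T\colon \tilde\theta\mapsto\tilde\theta+1$ (since $F_\lambda$ commutes with $T$). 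Monotonicity and $\Z$-equivariance then give the standard key estimate $|(\tilde g_{\pm}^n(\tilde\theta)-\tilde\theta)-(\tilde g_{\pm}^n(\tilde\theta')-\tilde\theta')|< 1$ for all $\tilde\theta,\tilde\theta'\in\R$ and $n \in \N$. A Fekete-type subadditivity argument applied to $a_n(\tilde\theta):=\tilde g_{\pm}^n(\tilde\theta)-\tilde\theta$ yields uniform convergence of $a_n/n$ to a constant (independent of $\tilde\theta$); pulling back via $\tilde\pi_1$, this gives uniform convergence of $\frac{\tilde\pi_1\circ F_\lambda^{-n}-\tilde\pi_1}{n}$ on $\tilde\Lambda_\lambda^\pm$ to a constant, which I denote $-\rho_\lambda^{\pm}$.

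To convert this into the stated convergence for forward iterates, I would use that $F_\lambda|_{\tilde\Lambda_\lambda}$ is a homeomorphism of the compact invariant set $\tilde\Lambda_\lambda$. Given $x\in\tilde\Lambda_\lambda^\pm$ and $n\geq 1$, set $y_n:=F_\lambda^n(x)\in\tilde\Lambda_\lambda$. Although $y_n$ need not lie in $\tilde\Lambda_\lambda^\pm$, the unique point $\bar y_n\in\tilde\Lambda_\lambda^\pm$ with $\tilde\pi_1(\bar y_n)=\tilde\pi_1(y_n)$ is well defined, and one can set $z_n:=F_\lambda^{-n}(\bar y_n)\in\tilde\Lambda_\lambda^\pm$. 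The quantitative bound of Proposition \ref{cotangente} combined with the order preservation of $F_\lambda^{-n}$ allows one to show that $|\tilde\pi_1(z_n)-\tilde\pi_1(x)|=o(n)$. Applying the already-proven backward convergence to $\bar y_n$, we obtain $\tilde\pi_1(z_n)-\tilde\pi_1(\bar y_n)=-n\rho_\lambda^{\pm}+o(n)$, which together with $\tilde\pi_1(\bar y_n)=\tilde\pi_1(y_n)$ and $|\tilde\pi_1(z_n)-\tilde\pi_1(x)|=o(n)$ gives $\tilde\pi_1\circ F_\lambda^n(x)-\tilde\pi_1(x)=n\rho_\lambda^{\pm}+o(n)$, uniformly in $x\in \tilde\Lambda_\lambda^\pm$.

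Finally, the dependence on the lift is straightforward: any other lift equals $T^k\circ F_\lambda$ for some $k\in\Z$, so $\tilde\pi_1\circ F_\lambda^n$ is shifted by $kn$ and $\rho_\lambda^{\pm}$ is shifted by $k$; the difference $\rho_\lambda^+-\rho_\lambda^-$ is therefore lift-independent. The main obstacle is the backward-to-forward transfer, since $\tilde\Lambda_\lambda^\pm$ is only backward-invariant: the squeeze controlling $|\tilde\pi_1(z_n)-\tilde\pi_1(x)|=o(n)$ requires a careful use of the semi-continuity of $\tilde\mu_\lambda^\pm$ and of the uniform Lipschitz-type estimate of Proposition \ref{cotangente} in order to show that the forward orbit of a point in $\tilde\Lambda_\lambda^\pm$, although possibly leaving $\tilde\Lambda_\lambda^\pm$, stays close enough in horizontal displacement that the desired limit is preserved.
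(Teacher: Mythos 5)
Your first paragraph is correct and is exactly the classical argument behind the result (the paper itself gives no proof: Proposition \ref{defproprotnb} is quoted from Le Calvez, so the only thing to compare against is that citation). Since $F_\lambda^{-1}(\tilde\Lambda_\lambda^\pm)\subset\tilde\Lambda_\lambda^\pm$ and $F_\lambda^{-1}$ preserves the order of $\tilde\pi_1$ by Proposition \ref{prop technique}\ref{punt a}, and since $\tilde\Lambda_\lambda^\pm$ is a graph over all of $\R$, the conjugated map $\tilde g_\pm$ is monotone, defined on all of $\R$, and commutes with the unit translation; continuity and surjectivity (you call it a ``bijection'', which is not guaranteed, since $F_\lambda(\tilde\Lambda_\lambda^\pm)\not\subset\tilde\Lambda_\lambda^\pm$ in general) are not needed for the oscillation bound and the subadditivity argument. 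This yields the uniform convergence of $\bigl(\tilde\pi_1\circ F_\lambda^{-n}-\tilde\pi_1\bigr)/n$ on $\tilde\Lambda_\lambda^\pm$, which is precisely the form of the statement given in the Introduction of the paper, and the lift-dependence discussion is fine.

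The gap is the backward-to-forward transfer in your second paragraph, and it is a genuine one. The whole difficulty is that $\tilde\Lambda_\lambda^\pm$ is only negatively invariant: $y_n:=F_\lambda^n(x)$ in general leaves $\tilde\Lambda_\lambda^+$, and your key estimate $|\tilde\pi_1(z_n)-\tilde\pi_1(x)|=o(n)$, with $z_n=F_\lambda^{-n}(\bar y_n)$ and $x=F_\lambda^{-n}(y_n)$, asks for control on the backward orbits of two points of $\tilde\Lambda_\lambda$ that are merely vertically aligned, one of which ($y_n$) need not lie in $\tilde\Lambda_\lambda^+$. Neither Proposition \ref{prop technique}\ref{punt a} (which controls the order only for points of $\tilde\Lambda_\lambda^+$) nor Proposition \ref{cotangente} (which constrains the shape of the graph of $\tilde\mu_\lambda^\pm$, not the dynamics of points lying below it) gives any such control. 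Worse, in the regime the paper is interested in ($\rho_\lambda^+\neq\rho_\lambda^-$), vertically aligned points of $\tilde\Lambda_\lambda$ can have backward orbits whose first coordinates separate linearly in $n$ --- this is precisely the phenomenon under study --- and, for instance, a point of $\Lambda_\lambda^+$ lying on the stable manifold of a periodic orbit of intermediate rotation number would have a forward rotation number different from $\rho_\lambda^+$. So the $o(n)$ bound cannot be obtained by the soft arguments you invoke, and the forward-iterate version does not follow from the backward one. The sensible reading is that the exponent $n$ in Proposition \ref{defproprotnb} is a slip for $-n$ (the Introduction states the result with $F_\lambda^{-n}$, which is the version that negative invariance makes provable); your first paragraph already establishes that statement, and the transfer step should be dropped rather than repaired.
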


\noindent From the previous result, we immediately conclude that if $\Lambda_\lambda^+\cap\Lambda_\lambda^-\neq \emptyset$ (equivalently, if there is at least a point where $\Lambda_\lambda$ is a graph) then $\rho_\lambda^+=\rho_\lambda^-$. We finally recall that, when the upper and lower rotation numbers are different, then the corresponding Birkhoff attractor is topologically complicated, in the sense made precise by the following result of M. Charpentier (see \cite[Section 20]{Charpentier}).

\begin{theorem}[\cite{Charpentier}] \label{charp}
	If $\rho_\lambda^+-\rho_\lambda^->0$, then $\Lambda_\lambda$ is an indecomposable continuum, i.e., it cannot be written as union of two compact connected non-trivial sets.
\end{theorem}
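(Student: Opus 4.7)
The strategy is to argue by contradiction: assume $\Lambda_\lambda = A_1 \cup A_2$ with $A_1, A_2$ proper subcontinua of $\Lambda_\lambda$, and derive a contradiction from the strict inequality $\rho_\lambda^+ > \rho_\lambda^-$. The plan is to first extract strong structural information about the sets $\Lambda_\lambda^{\pm}$ under this inequality, and then show that any decomposition into proper subcontinua is incompatible with the differential drift of forward orbits.

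First I would establish that $\Lambda_\lambda^+ \cap \Lambda_\lambda^- = \emptyset$ and that both sets are dense in $\Lambda_\lambda$. Disjointness follows at once from Proposition \ref{defproprotnb}: if $x \in \Lambda_\lambda^+ \cap \Lambda_\lambda^-$ and $\tilde x$ is any lift, the uniform limit of $(\tilde\pi_1 \circ F_\lambda^n(\tilde x)-\tilde\pi_1(\tilde x))/n$ would have to equal both $\rho_\lambda^+$ and $\rho_\lambda^-$, contradicting $\rho_\lambda^+>\rho_\lambda^-$. Density follows from Proposition \ref{minimality}(\ref{pt due}), which gives $\Lambda_\lambda = \mathrm{Fr}(U_{\Lambda_\lambda}) = \mathrm{Fr}(V_{\Lambda_\lambda})$: every point of $\Lambda_\lambda$ is a limit of points accessible radially from above, and by the Lipschitz bound in Proposition \ref{cotangente} these accesses can be arranged to realize $\mu_\lambda^+$ (resp.\ $\mu_\lambda^-$) in the limit.

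Next I would analyse the homotopical type of $A_1,A_2$ in $\mathbb{A}$. Since $\Lambda_\lambda$ separates $\mathbb{A}$ and is the union $A_1 \cup A_2$ of two continua, a standard Mayer--Vietoris/connectedness argument forces at least one of them, say $A_1$, to be essential (wrap around the annulus). By Proposition \ref{minimality}(\ref{minimality Lambda}), $A_1$ cannot be $f_\lambda$-invariant, for otherwise minimality of $\Lambda_\lambda$ in $\mathcal{X}(f_\lambda)$ would force $\Lambda_\lambda \subset A_1$, contradicting properness. By density (Step 1), $A_1$ must intersect both $\Lambda_\lambda^+$ and $\Lambda_\lambda^-$; the same holds for $A_2$.

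The main obstacle is the final step: converting the rotational discrepancy into a topological contradiction. The plan is to pick $p^{\pm} \in A_1 \cap \Lambda_\lambda^{\pm}$ with lifts $\tilde p^{\pm}$ and apply Proposition \ref{prop technique}: the order given by $\tilde\pi_1$ is preserved on $\tilde\Lambda_\lambda^{\pm}$ under $F_\lambda^{-1}$, and the graphs $\tilde\mu_\lambda^{\pm}$ are semi-continuous with uniformly bounded one-sided slopes. Lifting a path in $A_1$ joining $p^{-}$ to $p^{+}$ to a connected arc $\tilde\gamma_1 \subset \tilde A_1$, and iterating $F_\lambda^{-n}$, one uses $\tilde\pi_1(F_\lambda^{-n}(\tilde p^{\pm})) - \tilde\pi_1(\tilde p^{\pm}) = -n\rho_\lambda^{\pm} + o(n)$: the endpoints of $F_\lambda^{-n}(\tilde\gamma_1)$ separate horizontally by $\sim n(\rho_\lambda^+ - \rho_\lambda^-) \to +\infty$. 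By the twist condition and the graph-like structure of $\tilde\Lambda_\lambda^{\pm}$, the connected arcs $F_\lambda^{-n}(\tilde\gamma_1)$ must cross every vertical line in an interval whose length grows linearly. Projecting back to $\Lambda_\lambda$ and using the analogous fact for $A_2$, one shows that $A_1$ and $A_2$ each have projection equal to all of $\mathbb{T}$ and that their iterated preimages fill out arbitrarily long translates in the cover; combined with the fact that $\Lambda_\lambda$ is the frontier of its complementary components, this forces $A_1 \supset \Lambda_\lambda^{\pm}$ (and similarly for $A_2$), and hence by density $A_1 = A_2 = \Lambda_\lambda$, contradicting properness. The delicate point --- and the real work in Charpentier's original argument --- is this last reduction, which in modern language is most cleanly packaged using the prime-end compactification of $\mathbb{A}\setminus \Lambda_\lambda$ and the induced rotation numbers on the two circles of prime ends, identified respectively with $\rho_\lambda^{\pm}$.
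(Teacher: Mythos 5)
First, a point of reference: the paper does not prove Theorem \ref{charp} at all --- it is quoted from Charpentier \cite[Section 20]{Charpentier} (see also \cite{BargeGillette2}) --- so your proposal must stand on its own, and as written it does not. Your Step 1 is half right: disjointness of $\Lambda_\lambda^+$ and $\Lambda_\lambda^-$ does follow from Proposition \ref{defproprotnb}. But the density of $\Lambda_\lambda^{\pm}$ in $\Lambda_\lambda$ is not a consequence of $\Lambda_\lambda=\mathrm{Fr}\,V_{\Lambda_\lambda}$: that identity makes every point of $\Lambda_\lambda$ a limit of points of $V_{\Lambda_\lambda}$, not a limit of points of $\Lambda_\lambda$ that are \emph{radially} accessible from $V_{\Lambda_\lambda}$. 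The set $\Lambda_\lambda^+$ is exactly the graph of the upper semicontinuous function $\mu_\lambda^+$, and a point of $\Lambda_\lambda$ lying strictly below the lower limit of $\mu_\lambda^+$ near its abscissa is not in $\overline{\Lambda_\lambda^+}$; whether such points exist is essentially what indecomposability decides, so density cannot be used as an input. Your Step 2 also fails as stated: no Mayer--Vietoris argument forces one of $A_1,A_2$ to be essential, since an essential continuum can be the union of two inessential subcontinua (a circle is the union of two arcs). And the appeal to Proposition \ref{minimality} to rule out invariance of $A_1$ only works if $A_1$ separates, i.e., again only if it is essential.

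The decisive gap is Step 3. The drift estimate $\tilde\pi_1(F_\lambda^{-n}(\tilde p^{\pm}))=\tilde\pi_1(\tilde p^{\pm})-n\rho_\lambda^{\pm}+o(n)$ gives information about the continua $f_\lambda^{-n}(A_1)$, which are not $A_1$ ($A_1$ is not invariant), and you supply no mechanism for transporting that information back to $A_1$; the sentence ``this forces $A_1\supset\Lambda_\lambda^{\pm}$'' is precisely the content of the theorem and is asserted rather than derived --- as you yourself acknowledge. A route that actually closes is the contrapositive one of \cite{BargeGillette2}: if the cofrontier $\Lambda_\lambda$ is decomposable, it admits a decomposition $\Lambda_\lambda=A\cup B$ into proper subcontinua whose intersection has exactly two components; this ``circle-like'' structure yields a cyclic order on which $f_\lambda$ acts monotonically, forcing all points of $\Lambda_\lambda^+\cup\Lambda_\lambda^-$ to share a single rotation number, i.e., $\rho_\lambda^+=\rho_\lambda^-$. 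Either cite the result as the paper does, or rebuild the proof along those lines; the stretching heuristic alone does not yield a contradiction.
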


\subsection{The dissipative billiard case} \label{6 punto 2}

In this section --for a dissipative billiard map-- we give a sufficient condition assuring that the corresponding Birkhoff attractor has different upper and lower rotation numbers (see Proposition \ref{prop different rho}). Clearly, this is not the case of the billiard tables studied respectively in Sections \ref{quattro} and \ref{cinque}. Indeed, for an ellipse, the corresponding Birkhoff attractor --independently from the dissipative parameter-- is not an indecomposable continuum and, in particular, it holds that $\rho_\lambda^+=\rho_\lambda^-=\frac 1 2\, \mod\Z$. This can be proved even more directly. Indeed, since the rotation number is invariant under the dynamics and, for the dissipative billiard map on an ellipse, every point of the Birkhoff attractor is in the omega-limit set of a two periodic point, we can deduce that every point of both $\tilde\Lambda^+_\lambda$ and $\tilde\Lambda^-_\lambda$ has rotation number equal to that of the $2$-periodic point, i.e., equal to $\frac{1}{2}$. In Section \ref{cinque}, we study billiards whose Birkhoff attractor is a graph: in this case, we clearly have that 
$\rho_\lambda^+=\rho_\lambda^-$. \\
\indent Let $\Omega\subset\R^2$ be a strongly convex domain (i.e. whose curvature never vanishes) with $C^k$, $k \ge 2$, boundary $\partial \Omega$. Then the associated (conservative) billiard map $f = f_1$ is a $C^{k-1}$ positive twist map (with respect to some $\beta\in(0,\frac{\pi}{2})$) of $\A:=\T \times [-1,1]$ into itself. Consequently, for every $\lambda\in(0,1)$, the dissipative billiard map $f_\lambda$ defined in Section \ref{DBM} is a $C^{k-1}$ positive twist map (with respect to some $\beta' \ge \beta\in(0,\frac{\pi}{2})$) of $\A:=\T \times [-1,1]$ into its image.

%

We recall that an essential curve in $\A$ is a topological embedding of $\T$ that is not homotopic to a point. 

\noindent The next proposition is an adaptation of \cite{LeCalvez}: mainly, the only difference concerns the type of maps considered, but the proof follows the main lines of \cite[Section 8]{LeCalvez}. Some computations in the proof are simpler because of the kind of maps studied. More precisely, given a $C^1$ function $\lambda\colon \A\to (0,1)\subset \R$, we consider compositions of twist maps with some homothety $\mathcal{H}_{\lambda}$ of factor $\lambda(s,r)$ in the second variable, but in the inverse order with respect to \cite{LeCalvez}. This class of maps contains in particular the dissipative billiard maps considered in the present work. Let us recall the notion of instability region (see e.g. \cite[Definition 2.18]{MCAr}). 

\begin{definition} Let $C=\{(s,r)\in\A:\ \phi^-(s)\leq r\leq \phi^+(s)\}\subset \A$, where $\phi^-,\phi^+\colon\T\to\R$ are continuous maps. Let $f\colon C \to f(C)$ be a twist map on $\mathrm{int}(C)$. Let $\mathscr{V}(f)$ be the union of all $f$-invariant essential curves in $C$. An instability region $\mathscr{I}$ is an open bounded connected component of $C \setminus\mathscr{V}(f)$ that contains in its interior an essential curve.
\end{definition}

\begin{proposition}\label{prop different rho}
	Let $C=\{(s,r)\in\A:\ \phi^-(s)\leq r\leq \phi^+(s)\}\subset \A$, where $\phi^-,\phi^+\colon\T\to\R$ are continuous maps. Let $f\colon C\to C$ be an orientation-preserving homeomorphism, homotopic to the identity, such that 
	\begin{enumerate}
		\item $f$ preserves the standard $2$-form $\omega=dr\wedge ds$;
		\item $f\colon \mathrm{int}(C)\to\mathrm{int}(C)$ is a positive twist map on $\mathrm{int}(C)$ with respect to $\beta\in(0,\frac \pi 2)$;
		\item $\mathscr{I}:=\mathrm{int}(C)$ is an instability region for $f$ that contains the zero section $\T\times\{0\}$.
	\end{enumerate}
	Then, there exists $\bar\epsilon>0$ such that for any $\epsilon\leq \bar\epsilon$, for any $C^1$ function $\lambda\colon C\to (0,1)$ such that $\frac{\epsilon}{2}<d_{C^0}(\lambda,1\footnote{Here, the notation $1$ stands for the constant function.})<\epsilon$ and $\Vert D\lambda \Vert<\epsilon^2$, the Birkhoff attractor\footnote{Let us observe that by the assumptions on $C$, $f$, and by the definition of $f_\lambda$, the Birkhoff attractor of $f_\lambda$ is contained in $\mathrm{int}(C)$.} of $f_\lambda:=\mathcal{H}_\lambda\circ f$ has $\rho^+_\lambda-\rho^-_\lambda>0$, where $\mathcal{H}_\lambda\colon (s,r)\mapsto(s,\lambda(s,r) r)$.
\end{proposition}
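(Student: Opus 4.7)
The argument follows the strategy of \cite[Section 8]{LeCalvez}, adapted to the class of maps $f_\lambda=\mathcal{H}_\lambda\circ f$ (with the composition order reversed compared to Le Calvez's setting).

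The first step is to verify that $f_\lambda$ is a dissipative positive twist map of $C$ into its image, so that the theory of Sections \ref{PRELIMINARI} and \ref{6 punto 1} applies. Since $f$ is area-preserving, $\det Df_\lambda(s,r)=(\lambda+r\partial_r\lambda)\circ f(s,r)$, and the conditions $d_{C^0}(\lambda,1)\in(\epsilon/2,\epsilon)$ together with $\|D\lambda\|<\epsilon^2$ for $\epsilon$ small enough force $\det Df_\lambda<1$ uniformly, while keeping $f_\lambda$ a positive twist map (since $Df_\lambda$ is a $C^0$-small perturbation of $Df$). In particular the Birkhoff attractor $\Lambda_\lambda$ exists and the upper/lower rotation numbers $\rho_\lambda^\pm$ are well-defined via Proposition \ref{defproprotnb}.

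The second step extracts dynamical information from the instability region. Since $\mathrm{int}(C)$ is an instability region of $f$ containing $\T\times\{0\}$, classical Birkhoff--Mather theory provides rational numbers $\rho_-<\tfrac{1}{2}<\rho_+$ and corresponding hyperbolic periodic orbits $\mathcal{O}_\pm$ of $f$ with rotation number $\rho_\pm$ whose orbits sweep an essential strip reaching a neighborhood of $\T\times\{0\}$; one can additionally fix heteroclinic connections (provided by the absence of invariant essential curves through the zero section) realizing a rotational drift from Aubry--Mather sets of rotation number close to $\rho_-$ to ones close to $\rho_+$. These orbits represent the obstruction to the existence of $f$-invariant essential curves near $\T\times\{0\}$ of rotation number $\tfrac{1}{2}$.

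The third and main step is persistence and absorption: for $\|D\lambda\|$ sufficiently small, $f_\lambda$ is a $C^1$-small perturbation of $f$, so $\mathcal{O}_\pm$ persist as hyperbolic $f_\lambda$-periodic orbits $\mathcal{O}_\pm(\lambda)$ with the same rotation numbers $\rho_\pm$. The key point is to show $\mathcal{O}_\pm(\lambda)\subset\Lambda_\lambda$: following Le Calvez, one argues via the minimality characterization (Proposition \ref{minimality}) and Lemma \ref{lemma disconnected annulus} that any compact $f_\lambda$-invariant set separating $C$ must contain these orbits, since otherwise one could produce an essential $f_\lambda$-invariant curve separating them from the zero section, contradicting the persistence of the heteroclinic transition drift inherited from $f$. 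Once $\mathcal{O}_\pm(\lambda)\subset\Lambda_\lambda$, the monotonicity of Proposition \ref{prop technique}\ref{punt a} together with the graph structure of $\Lambda_\lambda^\pm$ (Proposition \ref{cotangente}) show that points of $\mathcal{O}_+(\lambda)$ accessible from above lie in $\Lambda_\lambda^+$ and those of $\mathcal{O}_-(\lambda)$ accessible from below lie in $\Lambda_\lambda^-$. Applying Proposition \ref{defproprotnb} to the lifted orbits then yields $\rho_\lambda^+\geq\rho_+>\tfrac12>\rho_-\geq\rho_\lambda^-$, whence $\rho_\lambda^+-\rho_\lambda^->0$ with $\tfrac12\in(\rho_\lambda^-,\rho_\lambda^+)\!\mod\Z$.

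The main obstacle is the quantitative trade-off between the two conditions on $\lambda$. The hypothesis $d_{C^0}(\lambda,1)>\epsilon/2$ ensures the contraction rate needed for standard Birkhoff attractor theory and for Proposition \ref{prop technique}\ref{punt b} to control accessibility from above and below; the hypothesis $\|D\lambda\|<\epsilon^2$ is exactly what is needed for the $C^1$-perturbation argument to preserve the rotational drift of the instability region on the relevant time scale $\sim 1/\epsilon$, so that the persistence argument actually deposits orbits of different rotation numbers inside $\Lambda_\lambda$ rather than in the transient ``hairs'' of $\Lambda_\lambda^0\setminus\Lambda_\lambda$.
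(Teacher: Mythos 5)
Your first step is fine, but the core of your argument does not go through. The central gap is the ``absorption'' step: you assert that hyperbolic Birkhoff periodic orbits $\mathcal{O}_\pm$ of rotation numbers $\rho_\pm$ on either side of $\tfrac12$ persist for $f_\lambda$ and must lie in $\Lambda_\lambda$, ``since otherwise one could produce an essential $f_\lambda$-invariant curve separating them from the zero section.'' Neither half of this is justified. First, under the stated hypotheses $f$ is only an area-preserving twist homeomorphism that is $C^1$ on $\mathrm{int}(C)$; Aubry--Mather/Birkhoff theory produces minimizing and minimax periodic orbits of every rotation number in the twist interval, but nothing makes them hyperbolic, so their persistence under the perturbation $\mathcal{H}_\lambda$ is not available. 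Second, a compact connected invariant set separating $C$ need not contain a given periodic orbit, and the failure of containment does not produce an invariant essential curve; the fact that periodic orbits of rotation number in $(\rho_\lambda^-,\rho_\lambda^+)$ lie in $\Lambda_\lambda$ (\cite[Proposition 14.2]{LeCalvez}, quoted after Theorem \ref{main theorem different nb rotation}) is a \emph{consequence} of $\rho_\lambda^+-\rho_\lambda^->0$, so invoking anything of that sort here is circular. Third, even granting $\mathcal{O}_\pm(\lambda)\subset\Lambda_\lambda$, to conclude $\rho_\lambda^+\geq\rho_+$ via Proposition \ref{defproprotnb} you need points of $\mathcal{O}_+(\lambda)$ to lie in $\Lambda_\lambda^+$, i.e.\ to be radially accessible from above; a point of $\Lambda_\lambda$ need not belong to $\Lambda_\lambda^+\cup\Lambda_\lambda^-$, and you give no reason why these particular points would.

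The paper's proof follows a different, quantitative route, and this also explains the hypotheses on $\lambda$ that your last paragraph misreads. One first uses $f_\lambda(\Gamma_{\phi^-})=\Gamma_{\lambda\phi^-}$ together with the Jacobian bounds coming from $\frac\epsilon2<d_{C^0}(\lambda,1)<\epsilon$ and $\|D\lambda\|<\epsilon^2$ to sum a geometric series and bound the area of the component $C_\lambda^-$ of $C\setminus\Lambda_\lambda$ below the attractor by $(1+O(\epsilon))\,m(C^-)$; the lower bound $d_{C^0}(\lambda,1)>\epsilon/2$ is exactly what keeps the ratio of this series under control relative to the oscillation $\lambda_{\max}-\lambda_{\min}<\epsilon^2$ of the dissipation. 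This forces a point of $\Lambda_\lambda^-$ to lie below $\min_s\phi^+(s)/2$ --- without such an area estimate nothing prevents $\Lambda_\lambda$ from being a graph close to $\Gamma_{\phi^+}$, in which case $\rho_\lambda^+=\rho_\lambda^-$. One then combines Birkhoff's instability-region recurrence $\Gamma_{\phi^+}\subset\overline{\bigcup_n f^{-n}(H_M)}$ (with $H_M$ a strip of height $M$ above $\Gamma_{\phi^-}$) with the propagation of radial accessibility (Proposition \ref{prop technique}) and the Lipschitz bound of Proposition \ref{cotangente} to show that $\Lambda_\lambda^\pm$ converge to $\Gamma_{\phi^\pm}$ as $\epsilon\to0$, whence $\rho_\lambda^\pm\to\rho^\pm$ with $\rho^-<\rho^+$ by the twist condition. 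If you want to salvage your approach, you would need to replace the persistence-of-hyperbolic-orbits and absorption steps by an argument of this kind.
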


\begin{remark}\label{simplify lambda const}
	Observe that, in Proposition \ref{prop different rho}, if we restrict to the class of constant functions $\lambda$, we are stating that there exists $\lambda_0\in(0,1)$ such that, for any $\lambda\in[\lambda_0,1)$, the Birkhoff attractor of the dissipative map $f_\lambda:=\mathcal{H}_\lambda\circ f$ has $\rho^+_\lambda-\rho^-_\lambda>0$, where $\mathcal{H}_\lambda\colon (s,r)\mapsto(s,\lambda r)$.
\end{remark}

\noindent As a corollary of Proposition \ref{prop different rho}, we obtain a sufficient condition to assure that a dissipative billiard map has different rotation numbers. 
\begin{corollary}\label{prop different rho-billiards}
	Let $\Omega\subset\R^2$ be a strongly convex domain with $C^k$, boundary, $k \ge 2$. Let $f=f_1$ be the associated  (conservative) billiard map. If $f$ admits an instability region $\mathscr{I}$ that contains the zero section $\mathbb{T}\times\{0\}$, then there exists $\lambda_0\in (0,1)$ such that, for any $\lambda\in[\lambda_0,1)$, the Birkhoff attractor of the corresponding dissipative billiard map $f_\lambda$ has $\rho_\lambda^+-\rho^-_\lambda>0$, with $\frac 12\in (\rho_\lambda^-,\rho_\lambda^+)\mod\Z$.
\end{corollary}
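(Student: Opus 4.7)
The plan is to reduce to Proposition \ref{prop different rho} by restricting $f=f_1$ to the closure of the given instability region $\mathscr{I}$. By Birkhoff's theorem on invariant essential curves of positive twist maps, the topological boundary of $\mathscr{I}$ consists of two $f_1$-invariant essential curves which are graphs of Lipschitz functions $\phi^\pm\colon\T\to[-1,1]$. Setting $C:=\{(s,r)\in\A :\ \phi^-(s)\le r\le \phi^+(s)\}$, the restriction $f_1\colon C\to C$ is an orientation-preserving area-preserving homeomorphism homotopic to the identity, which is a positive twist map on $\mathrm{int}(C)$ by Proposition \ref{bill is twist}, and by construction $\mathrm{int}(C)=\mathscr{I}$ is an instability region containing $\T\times\{0\}$. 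All hypotheses of Proposition \ref{prop different rho} being satisfied, Remark \ref{simplify lambda const} (its constant-dissipation version) furnishes $\lambda_0(\Omega)\in(0,1)$ such that for every $\lambda\in[\lambda_0,1)$ the Birkhoff attractor $\Lambda_\lambda^C$ of $f_\lambda=\mathcal{H}_\lambda\circ f_1$ on $C$ satisfies $\rho_\lambda^+-\rho_\lambda^->0$.

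Next I would identify $\Lambda_\lambda^C$ with the Birkhoff attractor $\Lambda_\lambda$ of $f_\lambda$ on $\A$. The inclusion $\Lambda_\lambda\subset\Lambda_\lambda^C$ follows because $\Lambda_\lambda^C$ is a compact, connected, $f_\lambda$-invariant set winding around the cylinder, hence separating $\A$; minimality of $\Lambda_\lambda$ in $\mathcal{X}(f_\lambda)$ (Proposition \ref{minimality}) gives the inclusion. For the reverse inclusion, the $C^1$-proximity of $f_\lambda$ to $f_1$ together with the $f_1$-invariance of the essential graphs $\phi^\pm$ forces $f_\lambda^n(\A\setminus C)\subset\mathrm{int}(C)$ for $n$ sufficiently large (possibly after shrinking $\lambda_0(\Omega)$), confining the global attractor inside $C$ and hence yielding $\Lambda_\lambda=\Lambda_\lambda^C$.

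To obtain the inclusion $\tfrac12\in(\rho_\lambda^-,\rho_\lambda^+)\mod\Z$, I would use the $2$-periodic orbit $\{H,f_\lambda(H)\}$ associated to a diameter of $\Omega$: it is a local maximum of the length functional, hence a saddle for $f_\lambda$ (Proposition \ref{remark point crit func}\ref{premier ppooiinntt}). For $\lambda$ close to $1$, Corollary \ref{nondeg billiard in dk}\ref{ppppppoint trois} provides transverse homoclinic points on each branch of the invariant manifolds of $H$, so by Corollary \ref{coro fer a chev}, $H$ (together with a horseshoe in its homoclinic class) lies in $\Lambda_\lambda$. In the lift for which $\tilde\pi_1\circ F_\lambda^2(\tilde H)-\tilde\pi_1(\tilde H)=1$, the orbit $\mathcal{O}_\lambda(H)$ has rotation number $\tfrac12$, and the horseshoe contained in $\Lambda_\lambda$ realizes a whole non-degenerate interval of rotation numbers around $\tfrac12$, which forbids $\rho_\lambda^\pm=\tfrac12$ and yields the strict inclusion.

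The main obstacle is precisely the strict (as opposed to closed) inclusion $\tfrac12\in(\rho_\lambda^-,\rho_\lambda^+)$: it requires showing that the radially accessible parts $\Lambda_\lambda^\pm$ do not rotate synchronously with the diameter saddle $H$, for which the horseshoe argument above is indicative but calls for careful bookkeeping of the rotation numbers realized on $\Lambda_\lambda^\pm$ versus those captured by the homoclinic class of $H$.
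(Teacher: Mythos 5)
Your first step (restricting $f_1$ to $C=\overline{\mathscr{I}}$ and invoking Proposition \ref{prop different rho} through Remark \ref{simplify lambda const}) is exactly the paper's route to $\rho_\lambda^+-\rho_\lambda^->0$. The identification of the Birkhoff attractor of $f_\lambda|_C$ with that of $f_\lambda$ on $\A$ is a point the paper leaves implicit, but your justification of it does not work: the claim that $f_\lambda^n(\A\setminus C)\subset\mathrm{int}(C)$ for $n$ large fails in general, since for $\lambda$ close to $1$ elliptic periodic orbits of $f_1$ lying above $\Gamma_{\phi^+}$ (which exist for typical tables, as $\mathscr{I}$ need not be all of $\mathrm{int}(\A)$) perturb to sinks of $f_\lambda$ located in $\A\setminus C$, so the global attractor is not confined to $C$. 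The identification should instead go through the minimality characterization of Proposition \ref{minimality}: the Birkhoff attractor of $f_\lambda|_C$ is compact, connected, invariant and separates $\A$, hence contains $\Lambda_\lambda$; conversely $\Lambda_\lambda$, being contained in $\mathrm{int}(C)$, separates $C$ (the two components of $\A\setminus C$ are collars of $\T\times\{\pm1\}$ contained respectively in $U_{\Lambda_\lambda}$ and $V_{\Lambda_\lambda}$), so minimality in $\mathcal{X}(f_\lambda|_C)$ gives the reverse inclusion.

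The genuine gap is the strict inclusion $\frac12\in(\rho_\lambda^-,\rho_\lambda^+)$. First, your route via Corollary \ref{nondeg billiard in dk}\eqref{ppppppoint trois} only applies to domains in an open and dense set and needs $k\geq 3$, whereas the corollary is asserted for \emph{every} strongly convex $C^k$ domain, $k\geq 2$, possessing such an instability region. Second, and more importantly, a horseshoe produced by a single transverse homoclinic orbit of the diameter saddle $H$ consists of orbits shadowing concatenations of $\mathcal{O}_\lambda(H)$ with a homoclinic excursion whose rotation number is itself $\frac12$; unless the excursion gains or loses a full turn relative to the periodic orbit, every orbit of that horseshoe has rotation number exactly $\frac12$, so no non-degenerate interval of rotation numbers is produced. (Rotational horseshoes realizing an interval are obtained in the paper as a \emph{consequence} of $\rho_\lambda^+-\rho_\lambda^->0$, via \cite{PasseggiPotrieSambarino}, not as a means to prove it.) The intended argument is already contained in the proof of Proposition \ref{prop different rho}: by \eqref{Final rho}, $\rho_\lambda^\pm\to\rho^\pm$ as $\lambda\to 1$, where $\rho^\pm$ are the rotation numbers of the boundary graphs $\Gamma_{\phi^\pm}$ of $\mathscr{I}$; since all $2$-periodic orbits lie on $\T\times\{0\}\subset\mathscr{I}$, neither boundary curve can carry a periodic orbit of rotation number $\frac12$, and the twist ordering of Birkhoff orbits forces $\rho^-<\frac12<\rho^+$, whence $\frac12\in(\rho_\lambda^-,\rho_\lambda^+)$ for $\lambda$ close enough to $1$. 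This works for all admissible domains and requires no genericity.
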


\begin{remark}\label{rem prop diff}
	Both in Proposition \ref{prop different rho} and in Corollary \ref{prop different rho-billiards}, the boundary of the instability region is made up of the graphs of two continuous functions $\phi^-<0<\phi^+$. These functions are actually Lipschitz by Birkhoff's Theorem, see \cite{Bir3}. In Corollary \ref{prop different rho-billiards}, by the time-reversal symmetry of the conservative billiard map, it even holds $\phi^-=-\phi^+$. 
\end{remark}

\begin{proof}[Proof of Proposition \ref{prop different rho}]
	
	Since $\lambda$ is, in particular, continuous on the compact set $C$, it takes values in $(0,1)$ and since $d_{C^0}(\lambda,1)<\epsilon$, there exist $\lambda_{\min},\lambda_{\max}\in(1-\epsilon, 1)$ such that, for any $(s,r)\in C$ it holds
	$$
	1-\epsilon<\lambda_{\min}\leq \lambda(s,r)\leq \lambda_{\max}<1\, .
	$$
	Since $\Vert D\lambda\Vert<\epsilon^2$, we also have that $\lambda_{\max}-\lambda_{\min}<\epsilon^2$. Since also $d_{C^0}(\lambda,1)>\frac{\epsilon}{2}$, we have that for every $(s,r)\in C$ it holds $\lambda(s,r)\leq\lambda_{\max}<1-\frac{\epsilon}{2}(1-2\epsilon)$.
	
	The map $f_\lambda\colon C \to f_\lambda(C)\subset \mathrm{int}(C)$, defined by $f_\lambda:=\mathcal{H}_\lambda\circ f$, is a dissipative map, according to Definition \ref{def diss map}. Indeed, for every $(s,r)\in \mathrm{int}(C)$, it holds
	$$
	\mathrm{det}(Df_{\lambda}(s,r))=\mathrm{det}(D\mathcal{H}_{\lambda}(s',r'))=r'\partial_2\lambda(s',r')+\lambda(s',r')<1-\dfrac{\epsilon}{2}+2\epsilon^2\, ,
	$$
	where $f(s,r)=(s',r')$; there exists $\epsilon_0$ small enough such that for every $\epsilon<\epsilon_0$ the value $\mathrm{det}(Df_{\lambda}(s,r))$ is uniformly smaller than 1. Let $F$ be a lift of $f$. We denote by $\Lambda_\lambda$ the Birkhoff attractor of $f_\lambda$ and by $\rho_\lambda^\pm$ its lower an upper rotation numbers with respect to the lift $\mathcal{H}_\lambda\circ F$.
	Recall that $f$ is a positive twist map with respect to $\beta\in(0,\frac{\pi}{2})$. Observe that, up to consider a smaller $\epsilon_0$, for any $\epsilon\leq \epsilon_0$, for any function $\lambda$ that is $\epsilon$-$C^1$-close to $1$, the map $f_{\lambda}$ is still a positive twist map on $\mathrm{int}(C)$ with respect to $\frac{\beta}{2}\in(0,\frac{\pi}{4})$.
	
	\noindent Consider now the annulus $\mathcal A$ bounded by $\Gamma_{\phi^-}:=\{(s,\phi^-(s))\in\A:\ s\in\T\}$ and its image $f_\lambda\big(\Gamma_{\phi^-}\big)$. Since $f\big(\Gamma_{\phi^-}\big)=\Gamma_{\phi^-}$, we have
	\[
	f_\lambda\big(\Gamma_{\phi^-}\big)= \mathcal{H}_\lambda\circ f\big(\Gamma_{\phi^-}\big)=\mathcal{H}_\lambda\big(\Gamma_{\phi^-}\big)=\Gamma_{\lambda\phi^-}\,,
	\]
	and consequently $m(\mathcal{A})=-\int_\T(1-\lambda(s,\phi^-(s)))\phi^-(s)\, ds$; in particular, we have
	$$
 -(1-\lambda_{\max})\int_\T \phi^-(s)\, ds \leq m(\mathcal{A})\leq -(1-\lambda_{\min})\int_\T\phi^-(s)\, ds\, .
	$$
	We denote by $C_\lambda^-$ the connected component of $C\setminus\Lambda_\lambda$ containing $\Gamma_{\phi^-}=\{(s,\phi^-(s))\in\A:\ s\in\T\}$. Observe that, for every $n\in \N^*$, it holds $$m(f^n_{\lambda}(\mathcal{A}))=\int_{f^{n-1}_{\lambda}(\mathcal{A})}\vert r\partial_2\lambda(s,r)+\lambda(s,r)\vert\, dr\wedge ds\leq (\lambda_{\max}+\epsilon^2)m(f^{n-1}_{\lambda}(\mathcal{A}))\, .
	$$
	Then, we have
		\begin{eqnarray}\label{calcul somme geom}
			m(C_\lambda^-)=\sum_{n=0}^{+\infty} m(f_\lambda^n(\mathcal{A}))\leq \sum_{n=0}^{+\infty}(\lambda_{\max}+\epsilon^2)^n\, m(\mathcal{A})\leq \\ -(1-\lambda_{\min})\sum_{n=0}^{+\infty}(\lambda_{\max}+\epsilon^2)^n\,  \int_{\T}\phi^-(s)\, ds= \dfrac{1-\lambda_{\min}}{1-\lambda_{\max}-\epsilon^2}\, m(C^-)\nonumber
			\end{eqnarray}
	where $C^\pm:=\{(s,r)\in C,\, \pm r\geq 0\}$. 
	A consequence of \eqref{calcul somme geom} is that, up to choose a smaller $\epsilon_0$, for every $\epsilon\leq \epsilon_0$
	\begin{equation}\label{point ordonn neg}
	\forall \, \lambda\text{ such that } \frac{\epsilon}{2}<d_{C^0}(\lambda,1)<\epsilon, \Vert D\lambda\Vert<\epsilon^2,\, \exists\text{ at least one point } (s_\lambda,r_\lambda) \in \Lambda_\lambda^-\text{ with } r \le \min_{s\in\T}\phi^+(s)/2\, .
	\end{equation}
	
\begin{figure}[h]
	\centering
	\includegraphics[scale=0.8]{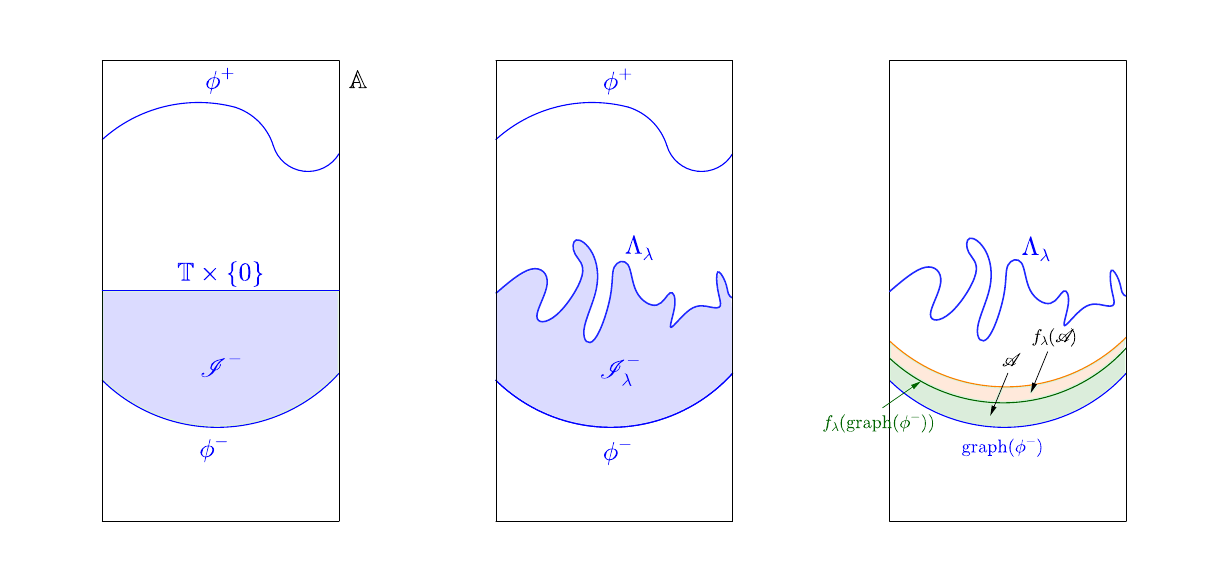}
	\caption{Here, $\mathscr{I}^-:=\mathrm{int}(C^-)$ is the part of the instability region $\mathscr{I}=\mathrm{int}(C)$ that lies below the zero section $\mathbb{T} \times \{0\}$, while $\mathscr{I}_\lambda^-:=\mathrm{int}(C_\lambda^-)$ is the connected component of  $\mathscr{I}\setminus \Lambda_\lambda$ bounded by $\Gamma_{\phi^-}$.}
	\label{ilambdamoins}
\end{figure} 
	
Indeed, if for some function $\lambda$ every point of $\Lambda_\lambda^-$ is contained in the interior of $C^+=\{(s,r)\in C:\ r\geq \min_{s\in\T}\phi^+(s)/2\}$, then $\int_\T\mu_\lambda^-(s)\, ds >\min_{s\in\T}\phi^+(s)/2>0$. We would then obtain $m(C_\lambda^-)=m(C^-)+\int_\T\mu_\lambda^-(s)\, ds>m(C^-)+\min_{s\in\T}\phi^+(s)/2$. Nevertheless, by \eqref{calcul somme geom}, it holds $m(C_\lambda^-)\leq \frac{1-\lambda_{\min}}{1-\lambda_{\max}-\epsilon^2}m(C^-)$ and we get
$$
\min_{s\in\T}\phi^+(s)/2< m(C^-) \left( \dfrac{\lambda_{\max}-\lambda_{\min}+\epsilon^2}{1-\lambda_{\max}-\epsilon^2} \right)\leq m(C^-)\dfrac{2\epsilon}{\frac 1 2 -2\epsilon}\, ;
$$
if $\epsilon$ is small enough, this provides the required contradiction.

\noindent Denote by $\mathscr{C}_\epsilon$ the set of $C^1$ functions $\lambda\colon C\to (0,1)$ such that $\frac{\epsilon}{2}<d_{C^0}(\lambda,1)<\epsilon$ and $\Vert D\lambda\Vert<\epsilon^2$. For every $\epsilon\in(0,\epsilon_0)$, let $\lambda_\epsilon$ be a function in $\mathscr{C}_\epsilon$.
\begin{claim}{\cite[Proposition 8.3]{LeCalvez}}\label{claim 6}
	It holds
	\begin{equation}\label{lim mu}
		\lim_{\epsilon\to 0}\inf_{s\in\T}\mu^\pm_{\lambda_\epsilon}(s)-\phi^\pm(s)=0\,.
	\end{equation}
\end{claim}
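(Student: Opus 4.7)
The argument adapts \cite[Proposition 8.3]{LeCalvez}; we sketch the case of $\mu^-$, the case of $\mu^+$ being symmetric (by applying the same argument to the inverse map, which is itself dissipative and $C^1$-close to $f^{-1}$). Suppose towards a contradiction that there exist $\delta>0$ and a sequence $\epsilon_k\to 0$ with associated dissipations $\lambda_k\in\mathscr{C}_{\epsilon_k}$ such that
\begin{equation*}
\mu^-_{\lambda_k}(s)\geq \phi^-(s)+\delta,\qquad \forall\, s\in\T,\ \forall\, k\in\N.
\end{equation*}

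\textbf{Limit extraction.} By Proposition \ref{cotangente}, each lift $\tilde\mu^-_{\lambda_k}$ is lower semi-continuous and satisfies the one-sided slope bound $\tilde\mu^-_{\lambda_k}(\theta')-\tilde\mu^-_{\lambda_k}(\theta)\leq(\theta'-\theta)\cot\beta$ for $\theta<\theta'$, with $\beta$ independent of $k$; and these functions are uniformly bounded between $\phi^-$ and $\phi^+$. The auxiliary functions $\theta\mapsto\tilde\mu^-_{\lambda_k}(\theta)-\theta\cot\beta$ are thus non-increasing with uniformly bounded variation on any period, so Helly's selection theorem yields a subsequence (still denoted $k$) converging pointwise at every continuity point to a function $\mu^-_\infty$ inheriting the slope bound, with $\mu^-_\infty\geq\phi^-+\delta$. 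Completing the graph of $\mu^-_\infty$ with the (at most countable) vertical segments at its discontinuities produces an essential closed curve $\Gamma^*\subset\mathrm{int}(\mathscr{I})$, strictly separated from $\Gamma_{\phi^-}$ by at least $\delta$.

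\textbf{Invariance and conclusion.} Since $\|\lambda_k-1\|_{C^1}<\epsilon_k\to 0$, the dissipative maps $f_{\lambda_k}=\mathcal{H}_{\lambda_k}\circ f$ converge to $f$ in the $C^1$-topology on $C$. Proposition \ref{prop technique}\ref{punt a} gives $f_{\lambda_k}^{-1}(\Lambda_{\lambda_k}^-)\subset \Lambda_{\lambda_k}^-$, and passing to the Hausdorff limit yields $f^{-1}(\Gamma^*)\subset\Gamma^*$. Since $f$ preserves the area form $\omega$ and $\Gamma^*$ is a Lipschitz essential curve (in particular of zero area), this inclusion must be an equality, i.e. $f(\Gamma^*)=\Gamma^*$. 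Hence $\Gamma^*$ is an $f$-invariant essential curve contained in $\mathrm{int}(\mathscr{I})$, contradicting the very definition of $\mathscr{I}$ as a connected component of $C\setminus\mathscr{V}(f)$.

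\textbf{Main obstacle.} The principal technical difficulty lies in the limit extraction step: since $\mu^-_{\lambda_k}$ is only lower semi-continuous, one must verify carefully that the Hausdorff limit of the closed sets $\overline{\Lambda_{\lambda_k}^-}$ (together with the vertical completions at their jumps) coincides with $\Gamma^*$, and that the inclusion $f_{\lambda_k}^{-1}(\Lambda_{\lambda_k}^-)\subset \Lambda_{\lambda_k}^-$ passes cleanly to $f^{-1}(\Gamma^*)\subset\Gamma^*$ despite those vertical completions. This is the substantive content of Le Calvez's Proposition 8.3, to which we appeal for the precise verification.
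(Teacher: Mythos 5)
Your strategy (compactness: extract a limiting invariant essential curve inside the instability region and contradict its definition) is genuinely different from the paper's proof, which is a direct quantitative argument: it combines Birkhoff's instability property $\Gamma_{\phi^+}\subset\overline{\bigcup_{n}f^{-n}(H_M)}$, the uniform convergence $f_{\lambda_{\epsilon_n}}\to f$, and the propagation of radial accessibility under $f_\lambda^{-1}$ (Proposition \ref{prop technique}\ref{punt b}) to place points of $\Lambda^-_{\lambda_{\epsilon_n}}$ in small boxes along $\Gamma_{\phi^+}$, and then contradicts the one-sided slope bound of Proposition \ref{cotangente} using the point of $\Lambda^-_{\lambda_{\epsilon_n}}$ at height $\leq\min_{s}\phi^+(s)/2$ furnished by \eqref{point ordonn neg}. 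Your route is closer in spirit to the ``alternative proof'' of Proposition \ref{prop different rho} given at the end of the section, but as written it has two genuine gaps.

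First, the final contradiction fails as stated: nothing in your argument excludes that $\mu^-_{\lambda_k}$ converges to $\phi^+$, in which case $\Gamma^*=\Gamma_{\phi^+}$. The upper boundary $\Gamma_{\phi^+}$ is itself an $f$-invariant essential curve, hence lies in $\mathscr{V}(f)$ and not in $\mathscr{I}=\mathrm{int}(C)$, so producing ``an invariant essential curve'' is not yet a contradiction --- you must show that $\Gamma^*$ has at least one point strictly below $\Gamma_{\phi^+}$. This is exactly what the estimate \eqref{point ordonn neg} (a point of $\Lambda^-_\lambda$ at height $\leq\min_s\phi^+(s)/2$, coming from the area computation) is for, and you never invoke it; indeed, the alternative proof in the paper shows that under your contradiction hypothesis the Hausdorff limit is precisely $\Gamma_{\phi^+}$, and the contradiction there comes from \eqref{point ordonn neg}, not from the definition of the instability region. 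Second, the step you flag as the ``main obstacle'' --- that the limit of the sets $\Lambda^-_{\lambda_k}$, vertical completions included, is a single $f$-invariant embedded essential circle --- is the substantive content of the whole argument, and it cannot be outsourced to \cite[Proposition 8.3]{LeCalvez}: that proposition is the statement being adapted here, and its proof (like the paper's) proceeds by the quantitative route above, not by your compactness scheme, so there is no ``precise verification'' to appeal to. Two smaller inaccuracies: the inverse of a dissipative map is not dissipative (it expands area), so the reduction of the $+$ case needs a different justification; and ``$f$ area-preserving and $\Gamma^*$ of zero area'' does not upgrade $f^{-1}(\Gamma^*)\subset\Gamma^*$ to an equality --- what does is that an embedded circle cannot be a proper subset of an embedded circle.
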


\begin{proof}[Proof of the claim:]
	Let us show the claim when $\pm=-$. By contradiction, assume that this does not hold. In particular, there exists $M>0$ and a sequence $\epsilon_n\to 0$ as $n\to+\infty$ such that, for every $n\in\N$, the function $\lambda_{\epsilon_n}\in\mathscr{C}_{\epsilon_n}$ and it holds
	\[
	\Lambda_{\lambda_{\epsilon_n}}\cap H_M=\emptyset\,,
	\]
	where $H_M:=\{(s,r) : \phi^-(s)\leq r\leq \phi^-(s)+M\}\subset C$. \\
	By hypothesis, $f$ preserves the standard $2$-form and it is a positive twist map on $\mathrm{int}(C)$. Thus, by \cite[Section 6]{Birkhoff1} (see also \cite[Proposition 5.9.2]{Herman}), we have
	\begin{equation} \label{inclusione}
		\Gamma_{\phi^+}=\{(s,\phi^+(s))\in \A:\ s\in\T\}\subset \overline{\bigcup_{k\in\Z}f^k(H_M)}=\overline{\bigcup_{n\in\N}f^{-n}(H_M)}\,.
	\end{equation}
	Denote by $L$ the Lipschitz constant of $\phi^+$ and let $E:=\min_{s\in\T}\phi^+(s)>0$. Moreover, recall that every $f_{\lambda_{\epsilon_n}}$ is a positive twist map with respect to the constant $\frac{\beta}{2}\in(0,\frac{\pi}{4})$, where $\beta$ is the twist constant of $f$.  \\
	Choose $j\in\N$ such that
	\begin{equation}
		\dfrac{1}{j\frac E 2-2-L}\leq \dfrac{\tan\left(\beta/2\right)}{2}\,.
	\end{equation}
	For $i=0,\dots,j-1$, denote $s_i=i/j\,\mod 1$ and
	\[
	B_i:=\left\{(s,r)\in C : |s-s_i|<\frac{1}{2j},\, |\phi^+(s)-r|<\frac{2}{j}\right\}\,.
	\]
	
	\begin{figure}[h]
		\centering
		\includegraphics[scale=0.8,trim=2cm 1.5cm 2cm 1.5cm]{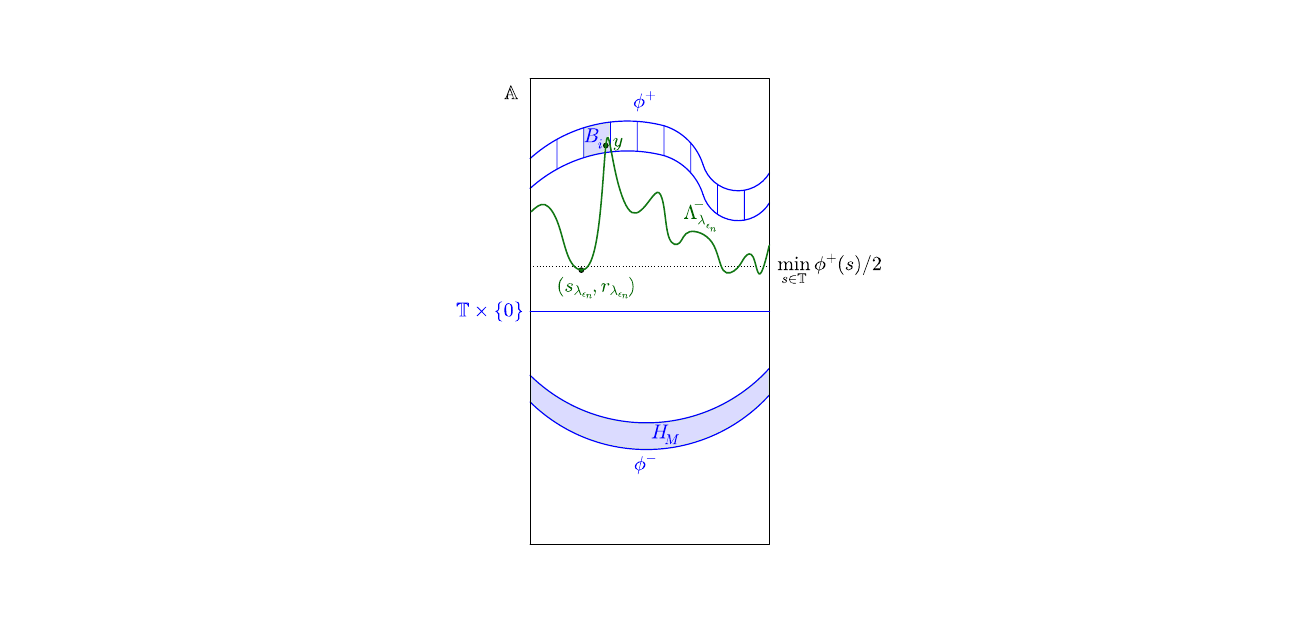}
		\caption{Controlling the shape of $\Lambda_{\lambda_{\epsilon_n}}^-$.}
		\label{endproof}
	\end{figure} 
	
	By using inclusion (\ref{inclusione}), we deduce the existence of an index $m_0\in\N$ such that
	\[
	\bigcup_{0 \le m \le m_0}f^{-m}(H_M)\cap B_i\neq\emptyset\,,
	\]
	for every $i=0,\ldots,j-1$. Therefore, since $f_{\lambda_{\epsilon_n}}$ converges uniformly to $f$ when $\epsilon_n\to 0$, there exists $n_0 \in \N$ such that for all $n\geq n_0$ and for all $i=0,\ldots,j-1$, 
	\begin{equation}\label{int Bi}
		\bigcup_{0\leq m\leq m_0}f_{\lambda_{\epsilon_n}}^{-m}(H_M)\cap B_i\neq \emptyset\,.
	\end{equation}
	Recall that $V^-(s,r):=\{(s,y)\in\A :\ y\leq r\}$ and that, for any $\lambda$, the set $C_\lambda^-$ is the connected component of $C\setminus \Lambda_\lambda$ containing $\{(s,\phi^-(s))\in\A:\ s\in\T\}$.  Let us denote by $\hat U^-_{\lambda}$ the set of points which are radially accessible from below, i.e., the points $(s,r)\in C$ such that $V^-(s,r)\cap C\subset\overline{C_\lambda^-}$. Clearly, $H_M\subset \hat U_{\lambda}^-$ for any $\lambda$. \\
	For any $n\geq n_0$ and any $i\in\{0,\dots,j-1\}$, by (\ref{int Bi}), there exist $m =m(n,i)\in \{0,\dots,m_0\}$ and a point $y=y(n,i,m)\in f^{-m}_{\lambda_{\epsilon_n}}(H_M)$ such that 
	$$y \in f^{-m}_{\lambda_{\epsilon_n}}(H_M) \cap B_i\,.$$
	This means that
	$$x := f^m_{\lambda_{\epsilon_n}}(y) \in H_M \subset \hat{U}^-_{\lambda_{\epsilon_n}} \implies x \in f^m_{\lambda_{\epsilon_n}}(C) \cap \hat{U}^-_{\lambda_{\epsilon_n}},$$
	and --by using Proposition \ref{prop technique}\ref{punt b}-- we get $y=f^{-m}_{\lambda_{\epsilon_n}}(x) \in \hat U^-_{\lambda_{\epsilon_n}}$. Since $y \in B_i$, it holds
	\begin{equation} \label{quasi fine}
		U^-_{\lambda_{\epsilon_n}} \cap B_i \ne \emptyset	\implies \Lambda_{\lambda_{\epsilon_n}}^-\cap B_i\neq \emptyset\,.
	\end{equation}
	In order to conclude the proof, fix $n\geq n_0$. By \eqref{point ordonn neg}, there exists a point $(s_{\lambda_{\epsilon_n}},r_{\lambda_{\epsilon_n}})\in \Lambda_{\lambda_{\epsilon_n}}^-$ such that $r_{\lambda_{\epsilon_n}}\leq \min_{s\in\T}\phi^+(s)/2$. Let us denote by $(\tilde s_{\lambda_{\epsilon_n}},r_{\lambda_{\epsilon_n}})\in \tilde \Lambda_{\lambda_{\epsilon_n}}^-$ a lift of $(s_{\lambda_{\epsilon_n}},r_{\lambda_{\epsilon_n}})$. By (\ref{quasi fine}), we can also find a point $(\tilde s'_{\lambda_{\epsilon_n}}, r'_{\lambda_{\epsilon_n}})\in\tilde \Lambda_{\lambda_{\epsilon_n}}^-$ such that
	\[
	r'_{\lambda_{\epsilon_n}}\geq \phi^+(s'_{\lambda_{\epsilon_n}})-\frac{2}{j}\qquad\text{and}\qquad \tilde s_{\lambda_{\epsilon_n}}<\tilde s'_{\lambda_{\epsilon_n}}<\tilde s_{\lambda_{\epsilon_n}}+\frac{1}{j}\,.
	\]
	Thus
	\[
	r'_{\lambda_{\epsilon_n}}-r_{\lambda_{\epsilon_n}}\geq \frac E 2-\frac{2}{j}-\frac{L}{j}\qquad \text{and}\qquad \frac{\tilde s'_{\lambda_{\epsilon_n}}-\tilde s_{\lambda_{\epsilon_n}}}{r'_{\lambda_{\epsilon_n}}-r_{\lambda_{\epsilon_n}}}\leq \frac{1}{j\frac E 2-2-L}\leq \frac{\tan\left(\beta/2\right)}{2}\,.
	\]
	Since $r_{\lambda_{\epsilon_n}}=\tilde \mu_{\lambda_{\epsilon_n}}^-(\tilde s_{\lambda_{\epsilon_n}})$ and $r'_{\lambda_{\epsilon_n}}=\tilde \mu_{\lambda_{\epsilon_n}}^-(\tilde s'_{\lambda_{\epsilon_n}})$, this contradicts Proposition \ref{cotangente} and completes the proof. 
\end{proof}
\noindent From the twist condition on the conservative map $f$, it holds that, once we fix a lift $F$ of the map, the rotation numbers of the graphs of $\phi^+$ and $\phi^-$ are well-defined. Denoting them by $\rho^+$ and $\rho^-$ respectively, we also have that $\rho^-<\rho^+$. 
It is then sufficient to show that Claim \ref{claim 6} implies
\begin{equation}\label{Final rho}
	\lim_{\epsilon\to 0}\rho_{\lambda_\epsilon}^\pm=\rho^\pm\,.
\end{equation}
We can follow \textit{verbatim} the proof of \cite[Corollary 4.8]{LeCalvez} to deduce \eqref{Final rho} and then conclude the proof.
\end{proof}

Now, Corollary \ref{prop different rho-billiards} is a straightforward consequence of Proposition \ref{bill is twist}, Proposition \ref{prop different rho}, Remark \ref{simplify lambda const}, Remark \ref{rem prop diff}, and the following observation: with the notation of the above proof, $\frac 12\in (\rho^-,\rho^+) \mod \Z$, hence for any $C^1$ function $\lambda$ whose $C^0$-distance from the constant function $1$ is in $(\frac \epsilon 2, \epsilon)$ and such that $\Vert D\lambda\Vert<\epsilon^2$,  \eqref{Final rho} implies that $\frac 12\in (\rho_\lambda^-,\rho_\lambda^+) \mod \Z$. \\

In the sequel we show that the Birkhoff attractor of a dissipative billiard map may have different upper and lower rotation numbers, provided that the dissipation is mild enough. Moreover, we emphasize the main dynamical consequences of this fact. We start by recalling Corollary \ref{coro main theorem different nb rotation}, stated in the Introduction.

\newtheorem*{cor:intro}{\textsc{\textbf{Corollary \ref{coro main theorem different nb rotation}}}}
\begin{cor:intro} 
	Fix $k\geq 3$.  There exists an open and dense subset $\mathscr{U}$ of $C^k$ strongly convex domains such that the following holds. For any $\Omega\in \mathscr{U}$, there exists $\lambda_0(\Omega)\in (0,1)$ such that, for any $\lambda\in[\lambda_0(\Omega),1)$, the Birkhoff attractor $\Lambda_\lambda$ of the corresponding dissipative billiard map $f_\lambda$ has $\rho_\lambda^+-\rho^-_\lambda>0$, with $\frac 12 \in (\rho_\lambda^-,\rho^+_\lambda)\mod \Z$. Moreover, there exists $\lambda_1(\Omega)\in [\lambda_0(\Omega),1)$ such that for any $\lambda\in[\lambda_1(\Omega),1)$, 
	and for any $2$-periodic point $p$ of saddle type (e.g., when the $2$-periodic orbit $\{p,f_\lambda(p)\}$ corresponds to a diameter of the table), there exists a horseshoe $K_\lambda(p)\subset \Lambda_\lambda$ in the homoclinic class $H_\lambda(p):=\overline{\mathcal{W}^s(\mathcal{O}_{f_\lambda}(p))\pitchfork \mathcal{W}^u(\mathcal{O}_{f_\lambda}(p))}$ of $p$; more precisely, it holds 
	$$
	K_\lambda(p)\subset H_\lambda(p):=\overline{\mathcal{W}^s(\mathcal{O}_{f_\lambda}(p))\pitchfork \mathcal{W}^u(\mathcal{O}_{f_\lambda}(p))}\subset \overline{
		\mathcal{W}^u(\mathcal{O}_{f_\lambda}(p))}\subset \Lambda_\lambda.
	$$ 
\end{cor:intro}


\noindent The proof of Corollary \ref{coro main theorem different nb rotation} relies on the following proposition. 

\color{black}

\begin{proposition}\label{generic instability region}
	For each $k \geq 2$, there exists an open and dense subset $\mathscr{U}$ of $C^k$ strongly convex domains such that for any $\Omega\in \mathscr{U}$, the corresponding billiard map has an instability region $\mathscr{I}\subset \mathbb{A}:= \mathbb{T} \times [-1,1]$ that contains (a neighborhood of) the zero-section $\mathbb{T} \times \{0\}$. 
\end{proposition}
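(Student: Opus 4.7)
I plan to take $\mathscr{U}$ to be the open and dense subset of $C^k$ strongly convex domains provided by Corollary \ref{nondeg billiard in dk} (which rests on Theorem \ref{nondeg pc}(3)): for every $\Omega \in \mathscr{U}$ the conservative billiard map $f=f_1$ has only finitely many $2$-periodic points, all non-degenerate, and each $2$-periodic saddle admits transverse homoclinic intersections on every branch of its invariant manifolds. By Proposition \ref{remark point crit func}\ref{premier ppooiinntt}, the diameter $2$-periodic orbit $\{p,f(p)\}$ (the unique maximizer of the length functional $\ell$ in $\mathscr{U}$) is a hyperbolic saddle of $f^2$ with transverse homoclinic connections. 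The heart of the proof is to establish the following \emph{claim}: for every $\Omega \in \mathscr{U}$, no $f$-invariant essential Lipschitz graph $\gamma \subset \A$ meets the zero section $\T \times \{0\}$. Combined with Lazutkin's theorem, which for $C^k$ strongly convex domains with $k$ sufficiently large produces $f$-invariant essential curves accumulating on $\T \times \{\pm 1\}$, this claim realizes the connected component $\mathscr{I}$ of $\A \setminus \mathscr{V}(f)$ containing $\T \times \{0\}$ as a bounded component containing an essential curve in its interior, i.e., an instability region; closedness of $\mathscr{V}(f)$ and compactness of $\T \times \{0\}$ then yield a whole neighborhood.

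To prove the claim, I first exploit the time-reversal symmetry $R(s,r):=(s,-r)$, which satisfies $R \circ f \circ R = f^{-1}$ for the conservative billiard map. If $(s_1,0)\in\gamma$, writing $f^n(s_1,0)=(s_n,r_n)$, one computes $f^{-n}(s_1,0) = R(f^n(R(s_1,0))) = (s_n,-r_n)$ using $R(s_1,0) = (s_1,0)$. Both the forward and backward $f$-orbits of $(s_1,0)$ lie in the $f$-invariant graph $\gamma$, so $(s_n,r_n)$ and $(s_n,-r_n)$ both lie on $\gamma$, and the graph property forces $r_n=0$ for every $n$. In particular, $f(s_1,0)=(s_2,0)$ is a perpendicular bounce, so the chord $[\Upsilon(s_1),\Upsilon(s_2)]$ is orthogonal to $\partial\Omega$ at both endpoints, meaning $\{(s_1,0),(s_2,0)\}$ is a $2$-periodic orbit of $f$. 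Therefore $\gamma$ contains a $2$-periodic orbit, and $f|_\gamma$ is a circle homeomorphism of rotation number $\frac{1}{2}$. Applying Aubry-Mather theory to the positive twist map $f$ (see Proposition \ref{bill is twist}), $\gamma$ must contain the Aubry-Mather minimizing set of rotation $\frac{1}{2}$; for the billiard generating function $-\ell$, this set consists of length-maximizing $2$-periodic orbits, which in $\mathscr{U}$ reduces to $\{p,f(p)\}$. Hence $p \in \gamma$.

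The final step is to derive a contradiction from $p \in \gamma$. Since $\gamma$ is an $f^2$-invariant Lipschitz graph through the hyperbolic saddle $p$ of $f^2$, its Dini tangent cone at $p$ is a closed subset of $\R P^1$ which is $Df^2(p)$-invariant and contained in the non-vertical sector (by the Lipschitz bound on $\gamma$); since $Df^2(p)$ has $E^s(p),E^u(p)$ as its only fixed directions and sweeps every other direction toward $E^u(p)$ under forward iteration, this tangent cone is contained in $\{E^s(p),E^u(p)\}$. It follows that $\gamma$ coincides locally near $p$ with a subset of $\mathcal{W}^s_{\mathrm{loc}}(p;f^2) \cup \mathcal{W}^u_{\mathrm{loc}}(p;f^2)$, and by $f^2$-invariance and closedness of $\gamma$, one concludes $\gamma \supset \overline{\mathcal{W}^*(p;f^2)}$ for some $*\in\{s,u\}$. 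But the transverse homoclinic intersections at $p$, together with the $\lambda$-lemma and the Smale-Birkhoff theorem, produce a horseshoe inside $\overline{\mathcal{W}^*(p;f^2)}$, so this closure cannot be a Lipschitz graph over $\T$ --- contradiction. The most delicate point I expect is this tangent-cone analysis: because $\gamma$ is only Lipschitz and not $C^1$, one has to work with the Dini-derivative definition of the tangent cone, and the argument hinges on combining the hyperbolic dynamics of $Df^2(p)$ on $\R P^1$ with the uniform Lipschitz bound on $\gamma$ (which excludes the vertical direction from the cone) to pin the cone down inside $\{E^s(p),E^u(p)\}$.
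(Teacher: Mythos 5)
Your overall skeleton matches the paper's: exploit the time-reversal involution $I(s,r)=(s,-r)$, $f\circ I=I\circ f^{-1}$, to show that any $f$-invariant essential (Lipschitz, by Birkhoff) graph meeting $\mathbb{T}\times\{0\}$ must carry a $2$-periodic orbit and hence have rotation number $\tfrac 12$, and then rule such curves out generically; your computation $f^{-n}(s_1,0)=(s_n,-r_n)$ forcing the whole orbit onto the zero section is correct and is exactly the paper's first step. Two remarks on the middle of your argument: the appeal to Lazutkin is both unavailable for $k=2$ and unnecessary (the component of $\A\setminus\mathscr{V}(f)$ containing $\mathbb{T}\times\{0\}$ already contains the essential curve $\mathbb{T}\times\{0\}$ in its interior, and $\mathbb{T}\times\{\pm 1\}$ are always invariant essential curves, so that component is a bona fide instability region); and the Aubry--Mather detour to locate the specific diameter orbit on $\gamma$ is superfluous and rests on an extra, unproved genericity claim (uniqueness of the length maximizer) --- since $f|_\gamma$ has rotation number $\tfrac 12$ it carries \emph{some} $2$-periodic orbit, which lies on $\mathbb{T}\times\{0\}$, is non-degenerate for $\Omega\in\mathscr{U}$, cannot be elliptic on a Lipschitz graph, hence is a saddle with transverse homoclinics on every branch by Theorem \ref{nondeg pc}(3).

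The genuine gap is your final, flagged step. First, a closed $Df^2(p)$-invariant subset of $\mathbb{R}P^1$ avoiding the vertical direction need \emph{not} be contained in $\{E^s(p),E^u(p)\}$: the induced dynamics on $\mathbb{R}P^1$ is North--South, and $\{E^s,E^u\}$ together with the closure of any single non-fixed orbit is a closed invariant set that misses the vertical provided that orbit does. Second, and more importantly, even the conclusion ``tangent cone $\subset\{E^s,E^u\}$'' would not imply that $\gamma$ locally lies in $\mathcal{W}^s_{\mathrm{loc}}\cup\mathcal{W}^u_{\mathrm{loc}}$: the parabola $\{y=x^2\}$ has tangent cone $\{E^s\}$ at the origin for the linear saddle $\mathrm{diag}(\mu,\mu^{-1})$ without lying on the $x$-axis. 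The correct (and simpler) route is dynamical, not infinitesimal: $f^2|_\gamma$ is a circle homeomorphism with rotation number $0$ and finitely many fixed points (the $2$-periodic points on $\gamma$, all saddles), so every non-periodic point of $\gamma$ converges forward and backward to such saddles, i.e.\ $\gamma$ is a finite union of saddles and saddle connections contained in stable/unstable branches; whole branches are then contained in $\gamma$ by invariance, and their closures contain a horseshoe by the transverse homoclinics and the $\lambda$-lemma, contradicting the zero topological entropy of a circle homeomorphism. This is essentially the argument of \cite{PintoC} that the paper cites: generically an invariant circle of rational rotation number would have to consist of non-degenerate periodic points and saddle connections, both of which are excluded by \cite[Theorems 1 and 2]{PintoC}. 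With your last step replaced by this, the proof closes.
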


\begin{proof}
	The argument follows the work \cite{PintoC} of Dias Carneiro, Oliffson Kamphorst and  Pinto-de-Carvalho. 
	For a convex domain $\Omega$, let $f=f_1\colon \mathbb{A} \to \mathbb{A}$ be the associated (conservative) billiard map. 
	We denote by $I \colon (s,r)\mapsto (s,-r)$ the time-reversal involution; recall that $f \circ I =I \circ f^{-1}$. 
	Let $\Gamma\subset\A$ be a $f$-invariant essential curve. In particular, by Birkhoff's Theorem (see \cite{Bir3}), there exists a Lipschitz function $\phi:\T\to\R$ such that $\Gamma=\{(s,\phi(s))\in\A:\ s\in\T\}$.
	The symmetric graph $I(\Gamma)=\{(s,-\phi(s))\in\A:\ s\in \mathbb{T}\}$ is also $f$-invariant, as $f(I(\Gamma))=I(f^{-1}(\Gamma))=I(\Gamma)$. Moreover, we observe that $\Gamma\cap I(\Gamma)\subset \mathbb{T} \times \{0\}$. In particular, any point in $\Gamma\cap I(\Gamma)$ is a $2$-periodic point. Indeed, the intersection is also $f$-invariant: given any $x_0=(s_0,0)\in\Gamma\cap I(\Gamma)$, then also $f(x_0)=(s_1,0)\in\Gamma\cap I(\Gamma)$. Thus, the bounces at $x_0$ and $f(x_0)$ are perpendicular, hence $\{x_0,f(x_0)\}$ is a $2$-periodic orbit. We conclude that the rotation number of $\Gamma$ (and so of $I(\Gamma)$) is equal to $\frac 12$. 
	
	
	Now, by \cite[Section 3]{PintoC}, given a rational number $\frac{p}{q}\in \mathbb{Q}$, there exists an open and dense subset $\mathscr{U}_{\frac p q}$ of $C^k$ strongly convex billiards which have no rotational invariant curve with rotation number $\frac pq$. Let us briefly recall the argument. If $\gamma$ is such a curve, then the restriction $f|_\gamma$ of the billiard map to $\gamma$ is a homeomorphism of the circle, and since the rotation number is equal to $\frac pq$, there are periodic points on $\gamma$ of type $(p,q)$. But these cannot be linearly elliptic, since the curve is a Lipschitz graph over $\T$, by Birkhoff's Theorem. Then, these periodic points are either degenerate, as in the circular billiard, or hyperbolic, in which case, $\gamma$ will be a union of periodic points and saddle connections. By \cite[Theorem 1 and Theorem 2]{PintoC} (see also \cite{CarneiroOliffsonPC,XiaZhang}), both cases are not allowed for a strongly convex billiard $\Omega$ in an open and dense subset of domains. 
	
	It follows from the previous discussion that there exists an open and dense subset $\mathscr{U}=\mathscr{U}_{\frac 12}$ of $C^k$ strongly convex domains such that any $\Omega\in \mathscr{U}$ has no invariant essential curve crossing the zero-section $\mathbb{T} \times \{0\}$, and thus, has an instability region containing a neighborhood of the zero-section $\mathbb{T} \times \{0\}$. 
\end{proof}

\begin{proof}[Proof of Corollary \ref{coro main theorem different nb rotation}]
	Fix $k \geq 3$. 
	As an immediate outcome of Corollary \ref{prop different rho-billiards} and Proposition \ref{generic instability region}, there exists an open and dense subset $\mathscr{U}$ of $C^k$ strongly convex domains such that for any $\Omega\in \mathscr{U}$, there exists $\lambda_0(\Omega)\in (0,1)$ such that, for any $\lambda\in[\lambda_0(\Omega),1)$, the Birkhoff attractor $\Lambda_\lambda$ of the corresponding dissipative billiard map $f_\lambda$ has $\rho_\lambda^+-\rho^-_\lambda>0$, with $\frac 12 \in (\rho_\lambda^-,\rho^+_\lambda)\mod \Z$. Let us denote by $\mathrm{II}_{\text{max}}$ the set of $2$-periodic points for $f_1$ with locally maximal perimeter. As noted at the beginning of Section \ref{conti 2 orbite}, for any $p\in \mathrm{II}_{\text{max}}$, $\{p,f_\lambda(p)\}$ is still a $2$-periodic orbit for each dissipative map $f_\lambda$, $\lambda\in(0,1)$, and by Proposition \ref{remark point crit func}\ref{premier ppooiinntt}, it is actually of saddle type, for any $\Omega$ in an open and dense subset of  $C^k$ domains. Moreover, for $\lambda\in[\lambda_0(\Omega),1)$, $\frac 12 \in (\rho_\lambda^-,\rho^+_\lambda)\mod \Z$, hence $\mathrm{II}_{\text{max}}\subset \Lambda_\lambda$, by \cite[Proposition 14.2]{LeCalvez}. 
	By Corollary \ref{nondeg billiard in dk}\eqref{ppppppoint trois}, 
	there exists  $\lambda_1(\Omega)\in [\lambda_0(\Omega),1)$ such that, for any $p \in \mathrm{II}_{\text{max}}$, and for any $\lambda\in[\lambda_1(\Omega),1)$, each branch of $\mathcal{W}^s(p;f_\lambda^2)\setminus \{p\}$ and $\mathcal{W}^u(p;f_\lambda^2)\setminus \{p\}$ contains a transverse homoclinic point. Now, Corollary \ref{coro fer a chev} implies that for any $\lambda\in[\lambda_1(\Omega),1)$, the Birkhoff attractor $\Lambda_\lambda$ of $f_\lambda$ contains a horseshoe $K_\lambda(p)$, with
	$$
	K_\lambda(p)\subset H_\lambda(p):=\overline{\mathcal{W}^s(\mathcal{O}_{f_\lambda}(p))\pitchfork \mathcal{W}^u(\mathcal{O}_{f_\lambda}(p))}\subset \overline{
		\mathcal{W}^u(\mathcal{O}_{f_\lambda}(p))}\subset \Lambda_\lambda.\qedhere
	$$ 
\end{proof}
\noindent 

We can guarantee that the upper and lower rotation numbers of the Birkhoff attractor are different also in the case of or every $C^2$-convex domain with a point with vanishing curvature, as explained in the sequel.

\begin{coro}\label{coro mather}
	Let $\Omega$ be a convex domain with $C^2$ boundary such there is a point at which the curvature vanishes. Then for any $\epsilon>0$, there exists $\lambda_0=\lambda_0(\Omega,\epsilon)\in (0,1)$ such that for any $\lambda\in[\lambda_0,1)$, the Birkhoff attractor of $f_\lambda$ has $\rho^+_\lambda-\rho^-_\lambda\in (1-\epsilon,1)$.  
\end{coro}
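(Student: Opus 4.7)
The plan is to reduce the statement to Proposition \ref{prop different rho} (equivalently, Corollary \ref{prop different rho-billiards}) applied in the limiting situation $C=\A$, with degenerate bounding graphs $\phi^\pm\equiv\pm 1$, so that the whole open annulus $\mathrm{int}(\A)$ plays the role of the instability region. The key new input, replacing the usual existence of bounding $f_1$-invariant essential curves, is Mather's theorem \cite{Mather82}: when $\Omega$ is convex with $C^2$ boundary having a point of zero curvature, the conservative billiard map $f_1$ admits no continuous invariant essential curve in $\mathrm{int}(\A)$. Consequently $\mathscr{V}(f_1)\subset \T\times\{\pm 1\}$, the unique connected component of $\A\setminus\mathscr{V}(f_1)$ is $\mathrm{int}(\A)$, and this instability region obviously contains the zero section $\T\times\{0\}$.

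Once this is established, I would check that the remaining hypotheses of Proposition \ref{prop different rho} hold for $C=\A$: under our assumptions, $f_1$ extends to a homeomorphism of $\A$ homotopic to the identity, preserves $\omega=dr\wedge ds$ on $\mathrm{int}(\A)$, and is a positive twist map on $\mathrm{int}(\A)$ by Proposition \ref{bill is twist}. The proof of Proposition \ref{prop different rho} then applies in this limiting setup and actually yields the stronger conclusion (already contained in that proof) that $\rho^\pm_\lambda \to \rho^\pm$ as $\lambda\to 1$, where $\rho^\pm$ is the rotation number of $\phi^\pm$ for a chosen lift of $f_1$. Fixing the lift $F_1$ that restricts to the identity on $\R\times\{-1\}$, one has $\rho^-=0$ and $\rho^+=1$, since grazing iterates on the top boundary advance by exactly one normalized perimeter; hence $\rho^+_\lambda - \rho^-_\lambda\to 1$ as $\lambda\to 1$. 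Given $\epsilon>0$, this provides $\lambda_0=\lambda_0(\Omega,\epsilon)\in(0,1)$ such that $\rho^+_\lambda-\rho^-_\lambda>1-\epsilon$ for every $\lambda\in[\lambda_0,1)$; the strict upper bound $\rho^+_\lambda-\rho^-_\lambda<1$ is automatic, since $\Lambda_\lambda\subset f_\lambda(\A)\subset\mathrm{int}(\A)$.

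The hard part will be verifying that the crucial intermediate claim in the proof of Proposition \ref{prop different rho} --- the assertion that $\inf_s(\mu^\pm_{\lambda_\epsilon}(s)-\phi^\pm(s))\to 0$ as $\epsilon\to 0$ --- still goes through in the degenerate case $\phi^\pm\equiv\pm 1$. That step rests on the Birkhoff-type density inclusion $\Gamma_{\phi^+}\subset \overline{\bigcup_{k\in\Z} f_1^k(H_M)}$, where $H_M$ is a thin horizontal strip adjacent to the lower boundary; the original argument exploits $\Gamma_{\phi^+}$ being an honest invariant bounding curve. In our setting the target $\Gamma_{\phi^+}=\T\times\{1\}$ is the edge of the phase space, but I would handle this by approximation from inside: for each small $\eta>0$, Mather's theorem again forbids an invariant essential curve separating $H_M$ from $\T\times\{1-\eta\}$, so Birkhoff's classical density lemma for instability regions supplies iterates of $H_M$ meeting every box adjacent to $\T\times\{1-\eta\}$; letting $\eta\to 0$ and using continuity of $f_1$ up to the boundary recovers the density inclusion, after which the twist/Lipschitz contradiction closing the argument proceeds exactly as in the proof of Proposition \ref{prop different rho}.
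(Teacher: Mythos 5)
Your proposal is correct and follows essentially the same route as the paper: Mather's theorem makes the whole of $\mathrm{int}(\A)$ an instability region, and then Proposition \ref{prop different rho} (via the convergence $\rho^\pm_\lambda\to\rho^\pm$ established in its proof, with $\rho^-=0$, $\rho^+=1$ for the standard lift) gives $\rho^+_\lambda-\rho^-_\lambda\in(1-\epsilon,1)$. Your extra caution about the degenerate case $\phi^\pm\equiv\pm 1$ is not needed, since Proposition \ref{prop different rho} is already stated for arbitrary continuous bounding graphs and the Birkhoff density inclusion it invokes holds verbatim for the full annulus once Mather's theorem excludes all invariant essential curves.
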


Corollary \ref{coro mather} is a consequence of Proposition \ref{bill is twist}, Proposition \ref{prop different rho}, Remark \ref{simplify lambda const} and the next well-known result by Mather (see \cite{Mather82} and also \cite[Corollary 5.29]{TabBook}-\cite[Theorem 1.1]{GutkinKatok}).
\begin{theorem}\label{theo mather}
	If the curvature of a $C^{2}$-convex billiard curve vanishes at some point, then the associated conservative billiard map has no invariant essential curves.
\end{theorem}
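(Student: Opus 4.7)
My plan is to argue by contradiction in the classical Aubry--Mather spirit. Suppose $f = f_1$ admits an invariant essential curve $\Gamma\subset\A$. By Birkhoff's theorem (used already in the paper, e.g.\ Remark \ref{rem prop diff}), $\Gamma$ is the graph of a Lipschitz function $\psi\colon\T\to(-1,1)$, and the restriction $f|_\Gamma$ is conjugate via $\pi_1$ to an orientation-preserving circle homeomorphism with some rotation number $\rho$. Denoting the generating function of the billiard by $h(s,s'):=\|\Upsilon(s)-\Upsilon(s')\|$, the orbits lying on $\Gamma$ are then length-maximizing configurations: for any finite piece $(s_{N_1},\dots,s_{N_2})$ of an orbit on $\Gamma$, and any competitor $(\tilde s_{N_1},\dots,\tilde s_{N_2})$ with $\tilde s_{N_i}=s_{N_i}$, one has $\sum h(\tilde s_k,\tilde s_{k+1})\le \sum h(s_k,s_{k+1})$.

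Next, I would fix the point $s^*\in\T$ with $\mathcal{K}(s^*)=0$ and examine the second variation of the action at an orbit bouncing near $s^*$. A direct computation (see e.g.\ the formula in the proof of Proposition \ref{remark point crit func} and \cite{KaloshinZhang}) gives, for a triple $(s_{-1},s_0,s_1)$ with $s_0$ close to $s^*$ satisfying the reflection law,
\[
\frac{\partial^2}{\partial s_0^2}\bigl(h(s_{-1},s_0)+h(s_0,s_1)\bigr)
=\mathcal{K}(s_0)\Bigl(\tfrac{1}{\sin\varphi_-}+\tfrac{1}{\sin\varphi_+}\Bigr)+\tfrac{\sin\varphi_-}{\ell_-}+\tfrac{\sin\varphi_+}{\ell_+},
\]
where $\varphi_\pm$ are the reflection angles at $s_0$ and $\ell_\pm$ are the distances to $s_{\pm 1}$. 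For a strictly convex table ($\mathcal{K}<0$) the first term is strongly negative and dominates the other two, which is why local maximality holds everywhere and invariant curves can exist in general. At a flat point, however, the curvature contribution degenerates.

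The core step is then to exhibit an orbit on $\Gamma$ passing near $s^*$ in a configuration where the second-variation bound above fails to be $\le 0$, contradicting maximality. Using $f|_\Gamma$ is a minimal or periodic circle homeomorphism, the $\pi_1$-images of orbits on $\Gamma$ are dense (or equidistributed on the support of a rotation set) in $\T$, so I can produce an orbit with $s_0$ arbitrarily close to $s^*$. By flatness of $\partial\Omega$ at $\Upsilon(s^*)$ (vanishing curvature, so order of contact $\ge 3$ with the tangent line), one can then choose a two-parameter modification of this orbit, adding one extra "nearly tangent" bounce very close to $\Upsilon(s^*)$, whose action contribution can be estimated via Taylor expansion of $\|\Upsilon(s)-\Upsilon(s^*)\|$ against the tangent line. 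The higher-order flatness produces a competing sequence whose total length strictly exceeds that of the original orbit — violating the maximizing property of orbits on $\Gamma$ — and we reach the desired contradiction.

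The main obstacle is this last step: turning the pointwise degeneracy of the second variation into an actual competitor that breaks global maximality. It requires carefully exploiting the higher-order contact between $\partial\Omega$ and its tangent at $s^*$ (not just $\mathcal{K}(s^*)=0$) via Taylor expansion to show the length gain is of strictly higher order than any competing loss from displacing neighboring bounces, and in particular controlling the error terms uniformly along the sequence of approaching orbits. This is exactly the delicate estimate carried out in Mather's original paper \cite{Mather82}.
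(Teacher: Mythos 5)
First, a remark on scope: the paper does not prove Theorem \ref{theo mather} at all --- it is quoted from \cite{Mather82} (see also \cite{TabBook} and \cite{GutkinKatok}) --- so there is no internal proof to compare with; your proposal has to stand on its own. Its skeleton (Birkhoff's graph theorem, the fact that every orbit on an invariant essential Lipschitz circle is a length-maximizing configuration, and a second-variation contradiction at the flat point) is indeed the skeleton of Mather's argument. But there is a genuine gap, and the route you propose for closing it would fail. Your ``core step'' inserts an extra, nearly tangent bounce near $\Upsilon(s^*)$: this produces a configuration indexed by a different set, which is not an admissible competitor in the fixed-endpoint maximization problem. Indeed, by the triangle inequality, replacing a chord $[\Upsilon(s_k),\Upsilon(s_{k+1})]$ by a broken path through any third boundary point \emph{always} strictly increases total length, for every convex table and every orbit; if such competitors were allowed, no orbit would ever be maximizing and your argument would prove too much. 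Moreover, the ``higher-order contact'' you want to exploit is not available: the boundary is only assumed $C^2$, so the only usable information at $s^*$ is $\mathcal{K}(s^*)=0$. Finally, your displayed second-variation formula is incorrect --- the term $\mathcal{K}(s_0)\bigl(\tfrac{1}{\sin\varphi_-}+\tfrac{1}{\sin\varphi_+}\bigr)$ blows up at perpendicular bounces; compare with the Hessian $A$ in the proof of Proposition \ref{remark point crit func}.

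The contradiction in fact closes immediately, with no extra bounces and no Taylor expansion. Since $\Gamma\subset\mathrm{int}(\mathbb{A})$ is a compact Lipschitz graph $r=\psi(s)$ with $\|\psi\|_\infty<1$, the point $(s^*,\psi(s^*))$ lies on $\Gamma$ and its full orbit $(s_k)_{k\in\Z}$, with $s_0=s^*$, is a maximizing configuration whose bounce at $s^*$ is \emph{not} tangential. For $h(s,s')=\|\Upsilon(s)-\Upsilon(s')\|$ one has
\begin{equation*}
\frac{d^2}{ds_0^2}\bigl(h(s_{-1},s_0)+h(s_0,s_1)\bigr)=\frac{\cos^2\varphi_-}{\ell_-}+\frac{\cos^2\varphi_+}{\ell_+}+\mathcal{K}(s_0)\left(\cos\varphi_-+\cos\varphi_+\right),
\end{equation*}
where $\varphi_\pm$ are measured from the normal as in the paper. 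When $\mathcal{K}(s_0)=0$ and $|\varphi_\pm|<\frac{\pi}{2}$ this is strictly positive, so $s\mapsto h(s_{-1},s)+h(s,s_1)$ has a strict local minimum, not a maximum, at $s_0$, contradicting the maximizing property. This single evaluation replaces the entire final paragraph of your proposal; as written, that paragraph defers the decisive estimate to Mather's paper and proposes an inadmissible variation, so the proof is not complete.
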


\begin{proof}[Proof of Corollary \ref{coro mather}]
	Let $\Omega$ be a convex domain with $C^2$ boundary. Then, the associated billiard map $f\colon\A\to\A$ is an orientation-preserving homeomorphism, homotopic to the identity, it preserves the standard $2$-form $\omega=dr\wedge ds$, and the restriction of $f$ to $\mathrm{int}(\A)$ is a $C^1$ diffeomorphism, see \cite{LeCalvez1990} and also \cite{Douady} for all details. By Proposition \ref{bill is twist}, it is a positive twist map on $\mathrm{int}(\A)$. Since there exists a point of zero curvature, by Theorem \ref{theo mather}, the whole $\mathrm{int}(\A)$ is an instability region. 
	We conclude the proof by applying Proposition \ref{prop different rho}. 
\end{proof}

Let us conclude this section by the following remark, which provides a different proof of Proposition \ref{prop different rho}. We are grateful to Patrice Le Calvez for suggesting this argument. 

\begin{proof}[Alternative proof of Proposition \ref{prop different rho}]
	We start by claiming that, as $\lambda$ tends to $1$ (in the $C^0$ topology), the Birkhoff attractor comes closer and closer to both $\Gamma_{\phi^+}$ and $\Gamma_{\phi^-}$. Recall that $\Gamma_{\phi^\pm}:=\{(s,\phi^\pm(s))\in\A:\ s\in\T\}$ are the graphs corresponding to the boundary of the instability region $\mathscr{I}$. For simplicity, we assume in the rest of the proof that $\lambda$ is merely a constant function. 
	
	Let us assume by contradiction that 
	there exists a sequence $(\lambda_n)_{n}\in (0,1)^\mathbb{N}$ with $\lim_{n \to +\infty} \lambda_n=1$, such that for some $\varepsilon>0$, we have
	$d_H(\Lambda_{\lambda_n},\Gamma_{\phi^+})\geq \varepsilon$ or $d_H(\Lambda_{\lambda_n},\Gamma_{\phi^-})\geq \varepsilon$, for all $n \geq 0$, where $d_H$ is the Hausdorff distance. Without loss of generality, we assume that we are in the second case. The other one can be treated analogously. 
	
	Since the set of compact subsets of $\A$ is compact for the Hausdorff distance $d_H$, up to passing to a subsequence, we can assume that there exists a compact subset $\Lambda_\infty\subset \overline{\mathscr{I}}=C$ such that $\lim_{n \to +\infty} d_H(\Lambda_{\lambda_n},\Lambda_\infty)=0$. Moreover, by our assumption, we have $d_H(\Lambda_\infty,\Gamma_{\phi^-})\geq\frac \varepsilon 2$. 
	\begin{claim}\label{first claim patrice}
		The set $\Lambda_\infty$ 
		has the following properties:
		\begin{enumerate}[label=(\roman*)]
			\item\label{zero cll} $\Lambda_\infty$ is connected;  
			\item\label{i clllima} $\Lambda_\infty$ is $f$-invariant;
			\item\label{ii clllima} there is no continuous path $\gamma\colon [0,1]\to C\setminus \Lambda_\infty$ such that $\gamma(0)\in \Gamma_{\phi^-}$ and $\gamma(1)\in \Gamma_{\phi^+}$. 
		\end{enumerate}
		\begin{proof}[Proof of the claim]
		Point \ref{zero cll} follows from the fact that $\A$ is compact, and that each $\Lambda_{\lambda_n}$ is connected. 
		
		Let us show \ref{i clllima}. 
		Let $x_\infty \in \Lambda_\infty$. We want to show that $f(x_\infty)\in \Lambda_\infty$; for that, it is enough to show that $d_H(\{f(x_\infty)\},\Lambda_\infty)=0$. As $x_\infty \in \Lambda_\infty$, there exists a sequence $(x_n)_n$ such that $x_n \in \Lambda_{\lambda_n}$ converging to $x_\infty$. Since $(\lambda,x)\mapsto f_\lambda(x)$ is continuous, we thus have 
		\begin{align*}
			d_H(\{f(x_\infty)\},\Lambda_\infty)&=\lim_{n \to +\infty}d_H(\{f_{\lambda_n}(x_n)\},\Lambda_{\infty})\\
			&\leq \lim_{n \to +\infty}d_H(\{f_{\lambda_n}(x_n)\},\Lambda_{\lambda_n})+d_H(\Lambda_{\lambda_n},\Lambda_\infty)=0\, ,
		\end{align*}
		as $d_H(\{f_{\lambda_n}(x_n)\},\Lambda_{\lambda_n})=0$ for each $n \geq 0$, since  $\Lambda_{\lambda_n}$ is $f_{\lambda_n}$-invariant. 
		
		Let us now turn to the proof of \ref{ii clllima}. Assume by contradiction that there exists a continuous path $\gamma\colon [0,1]\to C\setminus \Lambda_\infty$ such that $\gamma(0)\in \Gamma_{\phi^-}$ and $\gamma(1)\in \Gamma_{\phi^+}$. Since $\gamma([0,1])$ and $\Lambda_\infty$ are compact, we have  $\delta:=d_H(\gamma([0,1]),\Lambda_\infty)>0$. By the definition of $\Lambda_\infty$, we deduce that there exists $n_0 \in \mathbb{N}$ such that for $n \geq n_0$, it holds that $d_H(\gamma[0,1],\Lambda_{\lambda_n})\geq \frac \delta 2>0$. In particular, $\Gamma_{\phi^-}$ and $\Gamma_{\phi^+}$ belong to the same connected component of the open set $\mathscr{I}\setminus \Lambda_{\lambda_n}$, which contradicts the fact that $\Lambda_{\lambda_n}$ separates $\overline{\mathscr{I}}=C$. 
		\end{proof}
	\end{claim}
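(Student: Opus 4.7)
My plan is to verify each of the three properties by exploiting the Hausdorff convergence $\Lambda_{\lambda_n}\to\Lambda_\infty$ together with the fact that each $\Lambda_{\lambda_n}$ is the Birkhoff attractor of $f_{\lambda_n}$, hence a compact connected $f_{\lambda_n}$-invariant set separating $C$ (cf.\ Proposition \ref{minimality}). Non-emptiness of $\Lambda_\infty$ is automatic since the Hausdorff limit of non-empty compacta in the compact space $C$ is non-empty.

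For assertion \ref{zero cll}, I will invoke the classical fact that, in a compact metric space, the Hausdorff limit of a sequence of non-empty compact connected sets is again compact and connected. This is standard and requires no additional input beyond what has already been recorded.

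For assertion \ref{i clllima}, the key input is the joint continuity of $(\lambda,x)\mapsto f_\lambda(x)=\mathcal{H}_\lambda\circ f(x)$, which is clear from the explicit expression $\mathcal{H}_\lambda(s,r)=(s,\lambda r)$ (in the constant-dissipation reduction of Remark \ref{simplify lambda const}). Given $x_\infty\in\Lambda_\infty$, I will select a sequence $x_n\in\Lambda_{\lambda_n}$ with $x_n\to x_\infty$ (which exists by the definition of Hausdorff convergence) and deduce $f_{\lambda_n}(x_n)\to f(x_\infty)$ from this continuity together with $\lambda_n\to 1$. Since $f_{\lambda_n}(x_n)\in\Lambda_{\lambda_n}$ by invariance, the triangle inequality
\[
d(f(x_\infty),\Lambda_\infty)\leq d\bigl(f(x_\infty),f_{\lambda_n}(x_n)\bigr)+d_H(\Lambda_{\lambda_n},\Lambda_\infty)
\]
forces $f(x_\infty)\in\Lambda_\infty$ upon letting $n\to\infty$, using that $\Lambda_\infty$ is closed.

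For assertion \ref{ii clllima}, I will argue by contradiction: if such a path $\gamma$ existed, then the compact set $\gamma([0,1])$ would lie in the open set $C\setminus\Lambda_\infty$, so $\delta:=\mathrm{dist}(\gamma([0,1]),\Lambda_\infty)>0$. By Hausdorff convergence, for $n$ large, $\Lambda_{\lambda_n}$ would lie in the $\delta/2$-neighborhood of $\Lambda_\infty$, whence $\gamma([0,1])\cap\Lambda_{\lambda_n}=\emptyset$. But then $\gamma$ would be a path in $C\setminus\Lambda_{\lambda_n}$ joining $\Gamma_{\phi^-}$ to $\Gamma_{\phi^+}$, contradicting the fact that the Birkhoff attractor $\Lambda_{\lambda_n}$ separates $C$. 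No genuine obstacle is anticipated: the entire argument is a compactness-plus-continuity exercise, and the only minor subtlety is ensuring that Hausdorff convergence interacts properly with connectedness, invariance under varying maps, and separation of closed sets.
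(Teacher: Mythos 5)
Your proposal is correct and follows essentially the same route as the paper: connectedness from the standard fact about Hausdorff limits of connected compacta, invariance via a sequence $x_n\in\Lambda_{\lambda_n}$ converging to $x_\infty$ combined with joint continuity of $(\lambda,x)\mapsto f_\lambda(x)$ and the triangle inequality, and the separation property by contradiction using that $\Lambda_{\lambda_n}$ eventually avoids a neighborhood of the putative path. If anything, your use of the point-to-set distance $d(f(x_\infty),\Lambda_\infty)$ and of $\mathrm{dist}(\gamma([0,1]),\Lambda_\infty)$ is cleaner than the paper's slightly abusive use of $d_H$ for these quantities.
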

	Since $d_H(\Lambda_\infty,\Gamma_{\phi^-})\geq \frac \varepsilon 2$, we fix $\delta>0$ small such that 
	$$
	H_\delta:=\big\{(s,r):\phi^-(s)\leq r\leq  \phi^-(s)+\delta\big\}\subset \overline{\mathscr{I}} \setminus \Lambda_\infty\,.
	$$
	Let us consider the set $V:=\bigcup_{k \in \mathbb{N}} f^k(H_\delta)$. Still by \cite[Section 6]{Birkhoff1} (see also \cite[Proposition 5.9.2]{Herman}), we have
		$\Gamma_{\phi^+}\subset \overline{V}$. Observe that the set $V$ is connected, as $\Gamma_{\phi^-}$ is $f$-invariant. Moreover, by the $f$-invariance of $\overline{\mathscr{I}} \setminus \Lambda_\infty$, as $\Lambda_\infty \subset \overline{\mathscr{I}}$ lies above $H_\delta$, and by point \ref{ii clllima} of the above claim, we deduce that $V$ belongs to the same connected component of $\overline{\mathscr{I}} \setminus \Lambda_\infty$ as $H_\delta$, which lies below $\Lambda_\infty$. The inclusion $\Gamma_{\phi^+}\subset \overline{V}$ thus gives
		$$
		\Lambda_\infty=\Gamma_{\phi^+}\,.
		$$
\noindent Then, for any neighborhood $U$ of $\Gamma_{\phi^+}$ there exists $n_0\in\N$ such that for every $n\geq n_0$ the compact set $\Lambda_{\lambda_n}$ is contained in $U$. However, following the first part of the proof of Proposition \ref{prop different rho} presented above, for every $\lambda\in(0,1)$ there exists a point $x_\lambda=(s_\lambda,r_\lambda)\in\Lambda_\lambda$ such that $r_\lambda\leq \min_{s\in\T}\frac{\phi^+(s)}{2}$, see \eqref{point ordonn neg}. Choosing then the neighborhod $U$ of $\Gamma_{\phi^+}$ small enough, we obtain the required contradiction.  
	
	So far, we have shown that for any $\eta>0$, there exists $\bar\lambda=\bar \lambda(\eta)\in (0,1)$ such that for any $\lambda \in [\bar \lambda,1)$, we have 
	$$
	d_H(\Lambda_\lambda,\Gamma_{\phi^-})\leq \eta\quad\text{and}\quad d_H(\Lambda_\lambda,\Gamma_{\phi^+})\leq \eta\, .
	$$

	 From this, again following the argument of Corollary 4.8 in \cite{LeCalvez}, we deduce that $\lim_{\lambda\to 1}\rho_\lambda^\pm=\rho^\pm$, and thus, for $\lambda$ close enough to $1$, it holds $\rho_\lambda^+-\rho_\lambda^->0$, since $\rho^+-\rho^->0$.  
\end{proof}

\appendix
\section{Proof of Lemma \ref{lemme vp reelles} and Lemma \ref{lemma noncst disp}: bifurcation of eigenvalues at $2$-periodic points for dissipative billiard maps}\label{appendix proof bifurcation eigenvalues}
 
Let us recall that $\mathrm{II}$ denotes the set of $2$-periodic points for the conservative billiard map $f=f_1$. For $p=(s,0) \in \mathrm{II}$, we denote by $\tau=\ell(s,s'):=\|\Upsilon(s)-\Upsilon(s')\|$ the Euclidean distance between the points $\Upsilon(s),\Upsilon(s')$, where $(s',0):=f(p)$.  We also denote by $\mathcal{K}_1$, $\mathcal{K}_2$ the respective curvatures at $\Upsilon(s),\Upsilon(s')$.

Let us fix a $C^{k-1}$ function $\lambda\colon \A\to (0,1)$ such that  $f_\lambda:=\mathcal{H}_\lambda \circ f$ is a dissipative billiard map in the sense of Definition \ref{cond lambda dis}, where  $\mathcal{H}_\lambda\colon (s,r)\mapsto(s,\lambda(s,r) r)$. In particular, $f_\lambda$ has the same set $\mathrm{II}$ of $2$-periodic points as $f$, and for any $2$-periodic orbit $\{p=(s,0),f_1(p)=f_\lambda(p)=(s',0)\}$, we have
\begin{equation*}
	D\mathcal{H}_\lambda(p)=\begin{bmatrix}
		1 & 0\\
		0 & \lambda_1
	\end{bmatrix},\quad D\mathcal{H}_\lambda(f_\lambda(p))=\begin{bmatrix}
	1 & 0\\
	0 & \lambda_2
	\end{bmatrix},
\end{equation*} 
with $\lambda_1:=\lambda(p)$, and $\lambda_2:=\lambda(f_\lambda(p))$. From the formula of $Df$ given in \cite[Section 2.11]{CheMar_book}, we deduce that 
\begin{align*}
Df_\lambda(p)=D\mathcal{H}_\lambda(f_\lambda(p))Df(p)&=\begin{bmatrix}
	-(\tau \mathcal{K}_1 + 1) & \tau \\
	\frac{\lambda_2}{\tau} (k_{1,2}-1) &- \lambda_2(\tau \mathcal{K}_2 + 1)
\end{bmatrix},\\
Df_\lambda(f_\lambda(p))=D\mathcal{H}_\lambda(p)Df(f_\lambda(p))&=\begin{bmatrix}
	-(\tau \mathcal{K}_2+ 1) & \tau \\
	\frac{\lambda_1}{\tau} (k_{1,2}-1)&- \lambda_1(\tau \mathcal{K}_1 + 1)
\end{bmatrix},
\end{align*}
where we have set 
$$k_{1,2}:=(\tau \mathcal{K}_1 + 1)(\tau \mathcal{K}_2 + 1)\,.$$
Observe that $\det Df_\lambda(p)=D\mathcal{H}_\lambda(f_\lambda(p))=\lambda_2$, as $f$ is conservative; similarly, $\det Df_\lambda(f_\lambda(p))=\lambda_1$. 
We thus obtain
\begin{equation*} 
	Df_\lambda^2(p)=\begin{bmatrix}
		k_{1,2}(1+\lambda_2)-\lambda_2 & * \\
		* & k_{1,2}\lambda_1(1+\lambda_2)-\lambda_1
	\end{bmatrix}. 
\end{equation*}
In particular,  
\begin{equation}\label{trace d f lambda carre bis}
	\det Df_\lambda^2(p)=\lambda_1\lambda_2,\quad \mathrm{tr}Df_\lambda^2(p)=(1+\lambda_1)(1+\lambda_2)k_{1,2}-(\lambda_1+\lambda_2)\,.
\end{equation}  
\begin{proof}[Proof of Lemma \ref{lemme vp reelles}]
		We consider the case where the dissipation is constant, equal to some $\lambda \in (0,1)$. Let us denote by $\mu_1=\mu_1(\lambda),\mu_2=\mu_2(\lambda)$ the eigenvalues of $Df_\lambda^2(p)$, with $|\mu_1|\leq |\mu_2|$. In particular, with the above notations, we have $\lambda_1=\lambda_2=\lambda$, and 
		\begin{equation}\label{trace d f lambda carre}
			\mu_1\mu_2=\det Df_\lambda^2(p)=\lambda^2,\quad \mathrm{tr}Df_\lambda^2(p)=(1+\lambda)^2k_{1,2}-2\lambda\,.
		\end{equation} 
	By \eqref{trace d f lambda carre}, the characteristic polynomial $\chi_{p,\lambda}(x)=\det (Df_\lambda^2(p)-\mathrm{id})$ is equal to
	$$
	\chi_{p,\lambda}(x)=x^2 -\big((1+\lambda)^2k_{1,2}-2\lambda\big) x + \lambda^2\,,
	$$
	with $k_{1,2}=(1+\tau \mathcal{K}_1)(1+\tau \mathcal{K}_2)$. Then, $\chi_{p,\lambda}$ has discriminant 
	$\Delta=\big((1+\lambda)^2 k_{1,2}-2\lambda\big)^2-4 \lambda^2=k_{1,2}(1+\lambda)^2\big((1+\lambda)^2k_{1,2}-4\lambda\big)$, which has the same sign as 
	$$
	\tilde\Delta:=k_{1,2}\left((1+\lambda)^2k_{1,2}-4\lambda\right)=\lambda^2k_{1,2}^2 + 2\lambda k_{1,2}(k_{1,2}-2)+k_{1,2}^2\,.
	$$
	The quantity $\tilde\Delta$ is a quadratic polynomial in $\lambda$, whose discriminant is equal to 
	$$
	\delta=4k_{1,2}^2\big((k_{1,2}-2)^2-k_{1,2}^2\big)
	=-16k_{1,2}^2(k_{1,2}-1)\,.
	$$
	\noindent $(a)$ If $k_{1,2}>1$, then $\delta<0$, hence $\tilde \Delta> 0$, $\Delta> 0$, and the eigenvalues $\mu_1,\mu_2$ of $Df_\lambda^2(p)$ are real, with $|\mu_1|\leq |\mu_2|$. 
	Their product $\mu_1 \mu_2=\mathrm{det}Df_\lambda^2(p)=\lambda^2$ is positive; their sum $\mu_1+\mu_2$ is also positive because
	\begin{equation}\label{trace carre diff}
		\mu_1+\mu_2=\mathrm{tr}Df_\lambda^2(p)=(1+\lambda)^2k_{1,2}-2\lambda> (1+\lambda)^2-2\lambda=1+\lambda^2>0\, ,
	\end{equation}
	where the first inequality comes from the hypothesis $k_{1,2}>1$. Therefore, both eigenvalues are positive, and $0<\mu_1\leq \mu_2$. In particular, by \eqref{trace d f lambda carre}, $0<\mu_1^2\leq \mu_1\mu_2=\lambda^2$ hence $\mu_1\in (0,1)$. Let us show that in fact, $0<\mu_1\leq \lambda^2<1<\mu_2$. Indeed, for $i=1,2$, by \eqref{trace d f lambda carre}-\eqref{trace carre diff}, we have  $\mu_i+\frac{\lambda^2}{\mu_i}>1+\lambda^2$, hence $P(\mu_i)>0$, where $P(X)=X^2-(1+\lambda^2)X+\lambda^2$. Since the roots of $P$ are $\{\lambda^2,1\}$, and as $\mu_1\in (0,1)$, we deduce that $\mu_1\in (0,\lambda^2)$, and then $\mu_2=\frac{\lambda^2}{\mu_1}>1$. 
	\\
	~\newline
	\noindent $(b)$ If $k_{1,2}=1$, then $\chi_{p,\lambda}(x)=x^2 -(1+\lambda^2) x + \lambda^2=(x-\lambda^2)(x-1)$, thus $\mu_1=\lambda^2$, $\mu_2=1$. \\
	
	\noindent If $k_{1,2}<1$, $k_{1,2}\neq 0$, then $\delta> 0$.  Let $\lambda_\pm:=-1+\frac{2}{k_{1,2}}\pm 2\sqrt{\frac{1-k_{1,2}}{k_{1,2}^2}}$.
	\\
	~\newline
	\noindent $(c)$ Assume now that $k_{1,2}\in (0,1)$. Note that $\lambda_+=-1+\frac{2}{k_{1,2}}(1+\sqrt{1-k_{1,2}})\geq -1+\frac{2}{k_{1,2}}> 1$, and $\lambda_-=-1+\frac{2}{k_{1,2}}(1-\sqrt{1-k_{1,2}})=\frac{\xi^2-2\xi+1}{1-\xi^2}=\frac{1-\xi}{1+\xi}$, with $\xi:=\sqrt {1-k_{1,2}}\in (0,1)$, so that $\lambda_-\in (0,1)$. 
	Then, 
	\begin{enumerate}
		\item\label{premier cassss} for $\lambda \in (\lambda_-,1)$, $\tilde \Delta<0$, $\Delta<0$, hence the eigenvalues of $Df_\lambda^2(p)$ are complex conjugate;
		\item\label{deuxieme cassss} for $\lambda\in (0,\lambda_-]$, $\tilde \Delta\geq 0$, $\Delta\geq 0$, hence the eigenvalues of $Df_\lambda^2(p)$ are real. Moreover, $\mu_1=\mu_2$ if and only if $\lambda=\lambda_-$.
	\end{enumerate} 
	
	In case \eqref{premier cassss}, 
	by \eqref{trace d f lambda carre}, it holds $|\mu_1|=|\mu_2|=\sqrt{\mu_1\mu_2}=\sqrt{\lambda^2}=\lambda\in (0,1)$, hence the $2$-periodic orbit $\{p,f_\lambda(p)\}$ is a sink.
	
	Let us consider case \eqref{deuxieme cassss}. By \eqref{trace d f lambda carre}, $\mu_1,\mu_2$ have the same sign, and $0<\mu_1^2\leq \mu_1\mu_2=\lambda^2$ hence $|\mu_1|\in (0,1)$. Moreover, as $k_{1,2}\in(0,1)$, and by  \eqref{trace d f lambda carre}, for $i=1,2$, we have 
	\begin{equation}\label{aux inq} 
		\mu_i+\frac{\lambda^2}{\mu_i}=\mathrm{tr}Df_\lambda^2(p)=(1+\lambda)^2k_{1,2}-2\lambda \in (-2\lambda,1+\lambda^2)\,.
	\end{equation}
	
	Assume that $\mu_1,\mu_2$ are negative. By \eqref{aux inq}, for $i=1,2$, it holds $\mu_i+\frac{\lambda^2}{\mu_i}>-2\lambda$,  hence $0>\mu_i^2+2\lambda \mu_i+\lambda^2=(\mu_i+\lambda)^2$, a contradiction. 
	Thus, $\mu_1,\mu_2$ are positive, and then, by \eqref{aux inq}, for $i=1,2$, it holds $P(\mu_i)<0$, where $P(X)=X^2-(1+\lambda^2)X+\lambda^2$. Since the roots of $P$ are $\{\lambda^2,1\}$, we deduce that $\mu_1,\mu_2\in (\lambda^2,1)$, and then the $2$-periodic orbit $\{p,f_\lambda(p)\}$ is a sink as well. 
	\\
	~\newline
	\noindent $(d)$ If $k_{1,2}=0$, then $\chi_{p,\lambda}(x)=x^2 +2\lambda x + \lambda^2=(x+\lambda)^2$, hence $\mu_1=\mu_2=-\lambda$. \\
	~\newline
	\noindent $(e)-(f)$ Finally, assume that $k_{1,2}<0$. In that case, it is easy to check that $\lambda_\pm=-1+\frac{2}{k_{1,2}}\big( 1\mp\sqrt{1-k_{1,2}} \big)<0$, and then, for $\lambda \in (0,1)$, $\Delta>0$, hence the eigenvalues of $Df_\lambda^2(p)$ are real. By \eqref{trace d f lambda carre}, $\mu_1,\mu_2$ have the same sign, and $0<\mu_1^2\leq \mu_1\mu_2=\lambda^2$, thus $|\mu_1|\in (0,\lambda)$. Moreover, as $k_{1,2}<0$, and by  \eqref{trace d f lambda carre}, for $i=1,2$, we have 
	$
	\mu_i+\frac{\lambda^2}{\mu_i}=\mathrm{tr}Df_\lambda^2(p)<-2\lambda$, hence $\mu_2<-\lambda<\mu_1<0$. We have
	$$
	\mu_2=\mu_2(\lambda)=\frac{1}{2}\Big((1+\lambda)^2 k_{1,2}-2\lambda-\sqrt{(1+\lambda)^2k_{1,2}\big((1+\lambda)^2 k_{1,2}-4 \lambda\big)}\Big)\,.
	$$
	Observe that 
	\begin{equation*}
		\lim_{\lambda \to 0^+} \mu_2(\lambda)=k_{1,2},\qquad \lim_{\lambda \to 1^-} \mu_2(\lambda)=-1+2k_{1,2}-2\sqrt{k_{1,2}\big(k_{1,2}- 1\big)}<-1\,.
	\end{equation*}
	By direct computation, we see that the equation $\mu_2(\lambda)=-1$ admits a solution in $(0,1)$ if and only if $k_{1,2}\in (-1,0)$; in that case, the only solution in $(0,1)$ is $\lambda=\bar \lambda$, with
	$$
	\bar \lambda=\bar \lambda(p):=\frac{1-\sqrt{-k_{1,2}}}{1+\sqrt{-k_{1,2}}}\in (0,1)\,.
	$$
	We conclude that  
	\begin{enumerate}
		\item[$(e)$] if $k_{1,2}\in (-1,0)$, then $\bar \lambda \in (0,1)$, and 
		\begin{enumerate}[label=(\roman*)]
			\item for any $\lambda \in (0,\bar \lambda)$, $-1<\mu_2<-\lambda<\mu_1<0$, and the $2$-periodic orbit $\{p,f_\lambda(p)\}$ is a sink;
			\item for $\lambda=\bar \lambda$, $\mu_1=-\lambda^2$, $\mu_2=-1$, and the $2$-periodic orbit $\{p,f_\lambda(p)\}$ is parabolic;
			\item for any $\lambda \in (\bar \lambda,1)$, $\mu_2<-1<-\lambda^2<\mu_1<0$, and the $2$-periodic orbit $\{p,f_\lambda(p)\}$ is a saddle;
		\end{enumerate} 
		\item[$(f)$] if $k_{1,2}\leq -1$, then for any $\lambda \in (0,1)$, $\mu_2<-1<-\lambda^2<\mu_1<0$, and the $2$-periodic orbit $\{p,f_\lambda(p)\}$ is a saddle.\qedhere
	\end{enumerate}
\end{proof}
 
\begin{proof}[Proof of Lemma \ref{lemma noncst disp}]
We now consider the case where $\lambda\colon \A\to (0,1)$ is a general $C^{k-1}$ function such that $f_\lambda:=\mathcal{H}_\lambda \circ f$ is a dissipative billiard map in the sense of Definition \ref{cond lambda dis}, where  $\mathcal{H}_\lambda\colon (s,r)\mapsto(s,\lambda(s,r) r)$.

Fix a $2$-periodic orbit $\{p,f_\lambda(p)\}$ for $f_\lambda$. Let us denote by $\mu_1=\mu_1(\lambda),\mu_2=\mu_2(\lambda)$ the eigenvalues of $Df_\lambda^2(p)$, with $|\mu_1|\leq |\mu_2|$. By \eqref{trace d f lambda carre bis}, the characteristic polynomial  $\chi_{p,\lambda}(x)=\det (Df_\lambda^2(p)-\mathrm{id})$ is equal to
$$
\chi_{p,\lambda}(x)=x^2 -\big((1+\lambda_1)(1+\lambda_2)k_{1,2}-(\lambda_1+\lambda_2)\big) x + \lambda_1\lambda_2\,,
$$
with $k_{1,2}=(1+\tau \mathcal{K}_1)(1+\tau \mathcal{K}_2)$. Recall that we assume that $k_{1,2}\geq 0$. 

On the one hand, if the eigenvalues $\mu_1,\mu_2$ are not real, then they are complex conjugate, and as $\mu_1\mu_2=\lambda_1\lambda_2\in (0,1)$,  their modulus is strictly less than $1$, and $\{p,f_\lambda(p)\}$ is a sink.  

On the other hand, if $\mu_1,\mu_2\in \R$, then as $|\mu_1|\leq |\mu_2|$, and $\mu_1\mu_2=\lambda_1\lambda_2\in (0,1)$, we deduce that $|\mu_1|<1$. Thus, the $2$-periodic orbit $\{p,f_\lambda(p)\}$ is a saddle or a sink, unless $\mu_2=1$ or $\mu_2=-1$. 
But 
\begin{equation}\label{eq saddddd}
\chi_{p,\lambda}(1)=1 -(1+\lambda_1)(1+\lambda_2)k_{1,2}+(\lambda_1+\lambda_2) + \lambda_1\lambda_2=(1+\lambda_1)(1+\lambda_2)(1-k_{1,2})\,,
\end{equation}
with $(1+\lambda_1)(1+\lambda_2)>0$, hence $\chi_{p,\lambda}(1)=0$ if and only if $k_{1,2}=1$. In that case, we have
$$
\chi_{p,\lambda}(x)=(x-1)(x-\lambda_1\lambda_2)\,,
$$
hence $1$ is an eigenvalue no matter which $\lambda$ we choose. In particular, $\{p,f_\lambda(p)\}$ is parabolic for the conservative billiard map $f_1$. 

Moreover, as $k_{1,2}\geq 0$, we have
$$
\chi_{p,\lambda}(-1)=1 +(1+\lambda_1)(1+\lambda_2)k_{1,2}-(\lambda_1+\lambda_2) + \lambda_1\lambda_2\geq(1-\lambda_1)(1-\lambda_2)>0\,,
$$
thus $-1$ is never an eigenvalue.

To conclude the proof, it remains to show that under the assumption that there is no parabolic $2$-periodic orbit, then for any $2$-periodic point $p$, for the dissipative billiard map $f_\lambda$,  the point $p$ is a saddle if and only if $k_{1,2}>1$, and a sink if and only if $k_{1,2}<1$.

Indeed, on the one hand, if $k_{1,2}>1$, then \eqref{eq saddddd} above shows that $\chi_{p,\lambda}(1)<0$. Since $\lim_{x \to +\infty} \chi_{p,\lambda}(x)=+\infty$, we deduce that $\chi_{p,\lambda}$ vanishes somewhere on $(1,+\infty)$,  hence $\mu_2>1$, and then the $2$-periodic orbit $\{p,f_\lambda(p)\}$ is a saddle. 

On the other hand, if $k_{1,2}<1$, then \eqref{eq saddddd} above shows that $\chi_{p,\lambda}(1)>0$. But $1>x_{\text{min}}$, where $x_{\text{min}}\in \R$ is the point at which the quadratic polynomial $\chi_{p,\lambda}$ attains its minimum; indeed, as $k_{1,2}<1$, we have 
$$x_{\text{min}}=\frac 12 ((1+\lambda_1)(1+\lambda_2)k_{1,2}-(\lambda_1+\lambda_2))<\frac{1}{2}(1+\lambda_1\lambda_2)<1\,.$$
Thus, $\chi_{p,\lambda}(1)>0$ implies that $\chi_{p,\lambda}$ is positive on $[1,+\infty)$, hence no eigenvalue has modulus $>1$ (recall that if it were the case, then $\mu_2>1$ would be a real zero of $\chi_{p,\lambda}$), and the $2$-periodic orbit $\{p,f_\lambda(p)\}$ is a sink. 
\end{proof}

\bibliographystyle{alpha}
\bibliography{CSbilliards-final.bib} 
\begin{flushleft}
	\vspace{0.5cm}
	{\scshape Olga Bernardi}\\
	Dipartimento di Matematica Tullio Levi-Civita, Universit\`a di Padova,\\ 
	via Trieste 63, 35121 Padova, Italy.\\
	E-mail: \texttt{obern@math.unipd.it}

	\vspace{0.2cm}
	
	{\scshape Anna Florio}\\
	\textsc{Ceremade}-Universit\'e Paris Dauphine-PSL,\\ 
	75775 Paris, France.\\
	E-mail: \texttt{florio@ceremade.dauphine.fr}
	
	\vspace{0.2cm}
	
	{\scshape Martin Leguil}\\
	\'Ecole polytechnique, CMLS,\\
	Route de Saclay, 91128 Palaiseau Cedex, France.\\
	E-mail:  \texttt{martin.leguil@polytechnique.edu}
\end{flushleft}
\end{document}